\newcounter{thecounter}
\numberwithin{thecounter}{section}
\newtheorem{lemma}[thecounter]{Lemma}
\newtheorem{proposition}[thecounter]{Proposition}
\newtheorem{theorem}[thecounter]{Theorem}
\newtheorem{corollary}[thecounter]{Corollary}
\newtheorem{conjecture}[thecounter]{Conjecture}
\theoremstyle{definition}
\newtheorem{defn}[thecounter]{Definition}
\DeclareMathOperator{\SL}{SL}
\DeclareMathOperator{\GL}{GL}
\DeclareMathOperator{\PGL}{PGL}
\DeclareMathOperator{\gl}{\mathfrak{gl}}
\DeclareMathOperator{\Aut}{Aut}
\DeclareMathOperator{\Inn}{Inn}
\DeclareMathOperator{\End}{End}
\DeclareMathOperator{\Exp}{Exp}
\DeclareMathOperator{\Ad}{Ad}
\DeclareMathOperator{\ad}{ad}
\DeclareMathOperator{\re}{{re}}
\DeclareMathOperator{\im}{{im}}
\renewcommand{\a}{\alpha}
\renewcommand{\b}{\beta}
\newcommand{\g}{\gamma}
\newcommand{\Z}{{\mathbb Z}}
\newcommand{\R}{{\mathbb R}}
\newcommand{\C}{{\mathbb C}}
\newcommand{\N}{{\mathbb N}}
\newcommand{\Q}{{\mathbb Q}}
\newcommand{\M}{{\mathbb M}}
\newcommand{\Up}{U^+}
\newcommand{\Um}{U^-}
\newcommand{\Pp}{{{P}^+}}
\newcommand{\Pm}{{{P}^-}}
\newcommand{\Ppm}{{{P}^\pm}}
\newcommand{\Upm}{U^\pm}
\newcommand{\Uhp}{{\widehat{U}^+}}
\newcommand{\Uhm}{{\widehat{U}^-}}
\newcommand{\UhPi}{{\widehat{U}^{\,\Pi}}}
\newcommand{\Php}{{\widehat{P}^+}}
\newcommand{\Phm}{{\widehat{P}^-}}
\newcommand{\Phpm}{{\widehat{P}^\pm}}
\newcommand{\Uhpm}{{\widehat{U}^\pm}}
\newcommand{\Uhimp}{{\widehat{U}_{\im}^+}}
\newcommand{\Uhimpm}{{\widehat{U}_{\im}^\pm}}
\newcommand{\m}{{\mathfrak{m}}}
\newcommand{\mhat}{{\widehat{\mathfrak{m}}}}
\newcommand{\n}{{\mathfrak{n}}}
\newcommand{\nh}{\widehat{\mathfrak{n}}}
\newcommand{\np}{{\mathfrak{n}^+}}
\newcommand{\nhp}{{\widehat{\mathfrak{n}}^+}}
\newcommand{\nm}{{\mathfrak{n}^-}}
\newcommand{\nhm}{{\widehat{\mathfrak{n}}^-}}
\newcommand{\npm}{{\mathfrak{n}^\pm}}
\newcommand{\nimp}{{\mathfrak{n}_{\im}^+}}
\newcommand{\nhimp}{{\widehat{\mathfrak{n}}_{\im}^+}}
\newcommand{\nimm}{{\mathfrak{n}_{\im}^-}}
\newcommand{\nhimm}{{\widehat{\mathfrak{n}}_{\im}^-}}
\newcommand{\nimpm}{{\mathfrak{n}_{\im}^\pm}}
\newcommand{\php}{\widehat{\mathfrak{p}}^+}
\newcommand{\e}{\overline{e}}
\newcommand{\f}{\overline{f}}
\begin{document}

\title[A Lie group analog for the Monster Lie algebra]{A Lie group analog for the Monster Lie algebra}
\author{Lisa Carbone, Elizabeth Jurisich and Scott H.\ Murray}

\includecomment{longver}\excludecomment{shortver}

\begin{abstract}
The Monster Lie algebra $\m$, which admits an action of the Monster finite simple group $\mathbb{M}$, was introduced by Borcherds as part of his work on the Conway--Norton Monstrous Moonshine conjecture.
Here we construct an analog~$G(\frak m)$ of a Lie group or Kac--Moody group
associated to~$\frak m$. The group~$G(\frak m)$ is given by generators and relations, similar to a  construction of a Kac--Moody group given by Tits.   Our group $G(\mathfrak m)$ contains generators corresponding to the
unique real simple root and to a distinguished family of
positive and negative imaginary root vectors.
In the absence of local nilpotence of the adjoint representation  of $\frak m$, we introduce the notion of pro-summability of an infinite sum of operators. 
We consider the completion $\widehat{\mathfrak{m}}=\frak n^-\ \oplus\ \frak h\ \oplus\ \widehat{\frak n}^+$ of~$\mathfrak{m}$,  where $\widehat{\frak n}^+$ is the formal direct product of the positive root spaces of $\frak m$.  
 We construct a group ~$\Uhimp $, which is a group of `smearing automorphisms' of the subalgebra $\nhimp$ of $\widehat{\frak n}^+$ corresponding to the positive imaginary roots of~$\frak m$. This extends to a group $\Uhp$ of automorphisms  of $\nhp$.  We prove that the elements of $\Uhp$  are pro-summable   series with constant term 1, and hence $\Uhp$ is pro-unipotent.
We show that the action of the Monster finite group $\M$ on $\mathfrak m$ induces an action of~$\M$ on~$\widehat{\frak m}$, and that this induces a compatible  action of $\M$ on~$\Uhp$. 
Although the group $G(\mathfrak m)$ does not act on $\mathfrak m$, 
we show that  rank two linear groups and certain
parabolic and unipotent groups admit natural homomorphisms to
automorphism groups of $\mathfrak m$, of certain
$\mathfrak{gl}_2$ subalgebras of $\mathfrak m$, of the completion
$\widehat{\mathfrak m}$, and of related completions.
We also construct the negative completion $\widehat{\mathfrak{m}}^-$ and as a first step in  constructing a representation of $G(\mathfrak{m})$, we give a homomorphism from $G(\mathfrak{m})$ to a quotient of $\text{Aut}(\widehat{\mathfrak{m}}^+) *_{\GL_2(-1)} \text{Aut}(\widehat{\mathfrak{m}}^-).$
\end{abstract}

\thanks{The first author's research is partially supported by the Simons Foundation, Mathematics and Physical Sciences Collaboration Grants for Mathematicians, Award Number 961267.
The authors would like to thank Jim Lepowsky, Yi-Zhi Huang and Siddhartha Sahi for their interest in this work. The first two authors would like to thank Universit\`a dell'Insubria, where some of this work was undertaken. The first author would like to thank the IAS-SNS for its hospitality and productive work environment during a sabbatical visit. MSC 22E65, 20N99.}

\maketitle

\begin{longver}
\tableofcontents
\end{longver}

\section{Introduction}\label{S-intro}
Let $\mathbb{M}$ be the Monster finite simple group and let $V^\natural$ be the Moonshine module  of  Frenkel, Lepowsky and Meurman \cite{FLM}, a  vertex operator algebra with $\Aut(V^\natural)=\mathbb{M}$.    With the construction of~$V^\natural$, Frenkel, Lepowsky and Meurman proved the McKay--Thompson conjecture \cite{Th} that there  exists an infinite dimensional $\mathbb M$-module whose graded dimension is the normalized elliptic modular invariant~$J(q)$.    In their seminal work \cite{FLM}, they also partially settled the Conway--Norton Monstrous Moonshine conjecture \cite{CN} by proving it for  a particular subgroup of $\mathbb M$  that is an extension of the Conway group Co$_1$ by a certain extraspecial 2-group. 

The \emph{Monster Lie algebra} $\m$, which also admits an action of the Monster finite simple group $\mathbb{M}$, was constructed by Borcherds \cite{BoPNAS86} as a  quotient of the physical space  of the vertex algebra $V=V^\natural\otimes V_{{1,1}}$, where  $V_{{1,1}}$ is  the vertex  algebra  for the even unimodular  2-dimensional Lorentzian lattice  $\text{II}_{1,1}$.   Borcherds showed that  the Monster Lie algebra is  isomorphic to $\mathfrak{g}(A)/\mathfrak{z},$ where $\mathfrak g(A)$ is the infinite dimensional Lie algebra associated with  the Borcherds Cartan matrix $A$ and $\mathfrak{z}$ is the center of~$\mathfrak g(A)$ (Section~\ref{S-borcherds}). Using $\frak m$  and the No-ghost Theorem from string theory~\cite{GT}, Borcherds proved the remaining cases of the Conway--Norton  conjecture.  The Monster Lie algebra was one of the first examples of a new type of Lie algebra introduced by Borcherds, now known as a Borcherds algebra or generalized Kac--Moody algebra.

In addition to the profound mathematical importance of $\mathfrak{m}$, the appearance of $\mathfrak{m}$ and other Borcherds  algebras  as symmetries in Heterotic string theory has been noted. In particular, 
Harvey and Moore~\cite{HM1996} showed that Borcherds  algebras appear as  gauge symmetry algebras of a compactification of the Heterotic string  to two dimensions. 
More recently, Paquette, Persson and  Volpato~\cite{PPV2016} showed that the Monster Lie algebra is an algebra of spontaneously broken gauge symmetries in their model of the compactified Heterotic string. 

Before the current work, there was no construction of an analog of a reductive Lie group for infinite dimensional Borcherds  algebras (see \cite{Bo99} for the construction of 
the analogous group for the fake Monster Lie algebra).
%
There are certain difficulties with constructing groups for Borcherds  algebras in general. A fundamental difference between  Kac--Moody algebras and Borcherds  algebras is that, in the Kac--Moody case,  the  simple roots are all real, that is, they have positive squared norm. 
Hence the entire Lie algebra is generated by real root vectors, namely those associated to the simple root vectors and their negatives. 
For Borcherds  algebras this is no longer the case, as there are simple roots which are imaginary, that is, with non-positive squared norm. 

One construction of a Kac--Moody group for a Kac--Moody algebra $\frak g$ is as a group generated by certain automorphisms of an integrable representation of $\frak g$, such as an integrable highest weight or adjoint representation. The  root vectors associated with real roots of a Kac--Moody or Borcherds algebra are, by definition, locally nilpotent 
in their 
action on an integrable module. Thus, when considering the adjoint representation,  there are automorphisms of the form $\exp(u\,\ad(e_{\a}))$ for $u\in\C$ and $e_{\a}$ a root vector corresponding to a real root~$\a$. This provides an infinite-dimensional Kac--Moody group, which is analogous to  the Chevalley group for a finite dimensional semisimple Lie algebra. 
However, this method  breaks down for Borcherds  algebras, since root vectors  corresponding to  imaginary simple roots do not act locally nilpotently on the adjoint representation or on any other faithful weight module. 
In fact, the adjoint representation of the Monster Lie algebra is not integrable  \cite{jurisich1996exposition}. Hence elements of the form $\exp(u\,\ad(e_{\a}))$ do not represent well-defined automorphisms of~$\m$.

Our approach to overcoming these obstacles is two-fold. Our first step is to 
introduce the notion of \emph{pro-summability}, which allows us to define exponentials without the local nilpotence condition (Subsection~\ref{SS-graded} and \cite{CJM}). In analogy with  the Kac--Moody case, where it is natural and useful to associate a group to the completion of the underlying Kac--Moody algebra, we construct a group of automorphisms of a completion $\widehat{\mathfrak{m}}$ of $\frak m$.   We start with the triangular decomposition 
$\mathfrak{m}=\mathfrak{n}^-\oplus\mathfrak{h}\oplus\mathfrak{n^+}$,
where $\Delta_\pm$ denotes the set of positive (respectively negative) roots, $\frak m_{\alpha}$ is the root space associated to $\a$,
and $\frak n^{\pm}= \oplus_{\alpha\in \Delta_\pm}\frak m_{\alpha}$. We also define $\nimpm = \bigoplus_{\a\in\Delta^{\im}_\pm} \frak m_\a.$
We obtain the formal completion of $\mathfrak{n}^+$ by replacing the infinite direct sum with the infinite direct product 
$\nhp = \prod_{\a\in\Delta_+}\mathfrak{m}_\a$, that is, by allowing elements with infinitely many nonzero components. 
We define the completion of $\mathfrak{m}$ as
$$\widehat{\mathfrak{m}}=\nm\oplus\mathfrak{h}\oplus\nhp.$$
Following the methods for Kac--Moody algebras  in~\cite{kumar2012kac}, we show that $\nhp$ is a pro-nilpotent Lie algebra (Section~\ref{S-unip}).
We construct a group $\Uhimp$ associated to the imaginary roots of $\mathfrak m$  (Section~\ref{S-unip}). 
The group $\Uhimp$  is now constructed as a group of \emph{smearing  automorphisms} of $\nhimp=\prod_{\a\in\Delta^{\im}_+} \frak m_{\a}$. In Subsection~\ref{SS-pro}, we show that the group $\Uhimp$ is naturally  pro-unipotent. 
This construction naturally extends to a pro-unipotent group construction 
$\Uhp \leq \Aut(\widehat{\mathfrak{n}}^+)\leq \Aut(\widehat{\mathfrak{m}})$ for $\widehat{\mathfrak{n}}^+$ (Section~\ref{S-related}, see also \cite{CJM}).
We also construct  analogs of the exponential map 
 $\Exp: \widehat{\mathfrak{n}}^+\to\Uhp$
 and  the adjoint representation
${\Ad} :\Uhp \to \Aut(\widehat{\frak{n}}^+)$ and show that they satisfy the familiar relationship $\Exp\left(\Ad(g)\right)(x)=g\Exp(x) g^{-1}$ (Section~\ref{SS-adjoint}).

We show that the action of $\M$ on $\mathfrak m$ induces an action of $\M$ on $\widehat{\frak m}$, and that this in turn induces an action of $\M$ on~$\Uhp$ (Theorem~\ref{action}). We show that the action of $\M$ on $\widehat{\mathfrak n}^+$ commutes with
the action of  $\widehat{U}^+$ on $\widehat{\mathfrak n}^+$. In fact, these actions also commute when extended to the parabolic subalgebra $\widehat{\mathfrak p}^+$ and parabolic group $\widehat{P}^+$ containing $\widehat{\mathfrak n}^+$ and $\widehat{U}^+$, respectively (Corollary~\ref{compat}).

Our second approach (Section~\ref{S-Grels}) is to construct a group $G(\frak m)$ given by generators and relations.  This is  an analog for $\frak m$ of  the group presentation for a symmetrizable Kac--Moody algebra given by Tits (as in~\cite{Ti87}).  The Monster Lie algebra $\mathfrak{m}$ is symmetrizable since the Borcherds generalized Cartan matrix for $\mathfrak{m}$ is symmetric. However, our approach is fundamentally different to that of \cite{Ti87}, as we include explicit generators for  positive imaginary roots.

We utilize the fact that $\mathfrak m$ is generated by $\mathfrak{gl}_2$ subalgebras corresponding to  both real and imaginary simple root vectors.
In fact, we find it convenient to extend this to include some non-simple imaginary roots, as this gives a set of free generators for $\nhimp$.
The group ${G}(\frak m)$ is generated by a group $\GL_2(-1)$ corresponding to the unique real positive simple root $(-1,1)$,  and an infinite family of groups $\GL_2(\ell,j,k)$ corresponding to imaginary  roots $(\ell+1,j-\ell)\in \text{II}_{1,1}$ for $j\geq 1$ and $1\leq k\leq c(j)$, where $c(j)$ is the coefficient of~$q^j$ in the normalized elliptic modular function $J(q)$. 
We determine the relations of $G(\frak m)$ by analyzing the corresponding $\mathfrak{gl}_2$ subalgebras of $\frak m$ (see Section~\ref{SS-identities}).
Since the adjoint representation of the Monster Lie algebra is not integrable, it is not possible to construct all the group elements of $G(\frak m)$ as automorphisms of $\frak m$. 
However, we show that the  rank \(2\) linear groups associated to
real and imaginary roots act as automorphism groups of the corresponding $\frak{gl}_2$ subalgebras of $\mathfrak{m}$ (Subsections ~\ref{SS-gl2re} and~\ref{SS-gl2im}).

In the Kac--Moody case, it suffices to consider the group generated by root groups corresponding to real roots. We must consider roots that are  not real in order to get a group $G(\mathfrak{m})$ that reflects the full structure of $\mathfrak{m}$.  For example, if we consider the group generated only by  locally $\ad$-nilpotent elements of $\frak{m}$, this would  give $\GL_2$ as an analog of a Kac--Moody group associated to $\frak{m}$ (see also \cite{Kum22}).

Although the group $G(\mathfrak m)$ does not act on $\mathfrak m$, 
we show in Section~\ref{S-Grels} that  rank two linear groups and certain
parabolic and unipotent groups admit natural homomorphisms to
automorphism groups of $\mathfrak m$, of certain
$\mathfrak{gl}_2$ subalgebras of $\mathfrak m$, of the completion
$\widehat{\mathfrak m}$, and of related completions.
In Section~\ref{rep}, we construct the negative completion $\widehat{\mathfrak{m}}^-$ and as a first step in  constructing a representation of $G(\mathfrak{m})$, we give a homomorphism from $G(\mathfrak{m})$ to a quotient of $\text{Aut}(\widehat{\mathfrak{m}}^+) *_{\GL_2(-1)} \text{Aut}(\widehat{\mathfrak{m}}^-).$
We conjecture that this homomorphism is injective.
 
Our two main group constructions are connected.
The structure of  $\Uhp$ gives rise to relations in the group~$G(\frak m)$.   Conversely, we show in Section~\ref{S-Grels} that the relevant group relations in $G(\frak m)$ are satisfied in $\Uhp$. 
 
For any Borcherds algebra $\frak g$, whose nontrivial highest weight and adjoint representations are not  integrable, we expect that it is not possible in general to construct a Lie group analog for the whole Lie algebra $\frak g$ that acts as automorphisms of $\frak g$. However, as we show in \cite{ACJM2}, the construction of a pro-summable  complete pro-unipotent group $\Uhp$ holds for all Borcherds algebras $\frak g$ and $\Uhp$  acts on a completion~$\widehat{\frak g}$ of~$\frak g$ as automorphisms. 
It should also be possible to construct an analog~$G(\frak g)$ of our group~$G(\frak m)$ for any Borcherds algebra~$\frak g$, relative to a Borcherds  Cartan matrix~$A$. We hope to consider this in future work.


Our results in this work use the construction of $\mathfrak{m}$ as $\mathfrak{m}=\mathfrak{g}(A)/\mathfrak{z},$ where $\mathfrak g(A)$ is the infinite dimensional Lie algebra associated with  the Borcherds Cartan matrix $A$ (Section~\ref{S-borcherds}) and  $\mathfrak{z}$ is the center of~$\mathfrak g(A)$. In a future work~\cite{ACJKM2} we will give a construction  of a Lie group analog $\mathcal{G}(\mathfrak{m})$ using the equivalent construction of 
$\mathfrak{m}$ as $\mathfrak{m}=P_1/R$, where  $P_1$ is the physical space of $V=V^\natural\otimes V_{{1,1}}$ and $R$ is the radical of a natural bilinear form on $P_1$. Our construction of $\mathcal{G}(\mathfrak{m})$
encodes the action of $\M$ on primary vectors in $V$ induced from the~\cite{FLM}-action of~$\M$ on~$V^\natural$ and $\mathcal{G}(\mathfrak{m})$  is conjecturally a proper subgroup of the group~$G(\frak m)$.

The authors would like to thank Abid Ali,  Yi-Zhi Huang, James Lepowsky, Ugo Moschella and Siddhartha Sahi for their interest in this project and for helpful discussions. We thank Darlayne Addabbo for pointing out that Theorem~\ref{action} holds in our setting.
Much of the early part of this work was undertaken at Universit\'a degli studi dell'Insubria. The first two authors gratefully acknowledge the hospitality and discussions with Ugo Moschella that made this work possible. The first author would like to thank the IAS-SNS for its hospitality and productive work environment during a sabbatical visit.

\section{The Monster Lie algebra}\label{S-borcherds}
In this section, we describe the Monster Lie algebra $\mathfrak{m}$ and its central extension $\mathfrak{g}^e$. Most of the preliminary material here is routine. More detail is given in \cite{CarXiv}.
Note that we use the convention that $\N=\{1,2,3,\dots\}$.

\subsection{Presentation of the Monster Lie algebra}\label{SS-pres}
Borcherds \cite{BoInvent}
 gave two equivalent constructions of the Monster Lie algebra:
\begin{enumerate} 
\item  Let $V^\natural$ be the Moonshine module of  Frenkel, Lepowsky and Meurman \cite{FLM}, let $V_{{1,1}}$ denote the vertex  algebra  for the even unimodular two-dimensional Lorentzian lattice  $\textrm{II}_{1,1}$, and let $V=V^\natural\otimes V_{{1,1}}$. The {\emph{physical space}} of $V$ is
$$P_1=\{\psi\in V^\natural\otimes V_{1,1}\mid L(0)\psi=\psi,\ L(j)\psi=0 \text{ for all $j> 0$}\},$$
where $L(j)$ is  the $j$th Virasoro generator.
Then the Monster Lie algebra is $P_1/R$, where  $R$ is the radical of a natural bilinear form on $P_1$. 

\item Let $\mathfrak g(A)$ be the Lie algebra associated with  the Borcherds Cartan matrix $A$ given below. Then the Monster Lie algebra is  $\mathfrak{g}(A)/\mathfrak{z},$ where $\mathfrak{z}$ is the center of~$\mathfrak g(A)$. 
\end{enumerate}
The proof  that these two Lie algebras are isomorphic uses 
the No-ghost Theorem \cite{GT}. Here we consider only the second construction.

Following \cite{BoJAlg},  \cite{JLW},  \cite{jurisich1996exposition} and \cite{JurJPAA},  the Borcherds Cartan matrix for the Monster Lie algebra $\mathfrak m$ is
\begin{equation*}
{A=\smaller \begin{blockarray}{cccccccccc}
 & & \xleftrightarrow{c(-1)}  &   \multicolumn{3}{c}{$\xleftrightarrow{\hspace*{0.7cm}c(1)\hspace*{0.7cm}}$}   &  \multicolumn{3}{c}{$\xleftrightarrow{\hspace*{0.7cm} { c(2)}\hspace*{0.7cm}}$}   & \\
\begin{block}{cc(c|ccc|ccc|c)}
  &\multirow{1}{*}{$c(-1)\updownarrow$} & 2 & 0 & \dots & 0 & -1 & \dots & -1 & \dots \\ \cline{3-10}
    &\multirow{3}{*}{ $ \,\,c(1)\,\,\left\updownarrow\vphantom{\displaystyle\sum_{\substack{i=1\\i=0}}}\right.$}& 0 & -2 & \dots & -2 & -3 & \dots & -3 &   \\
      & & \vdots & \vdots & \ddots & \vdots & \vdots & \ddots & \vdots & \dots  \\
        & & 0 & -2 & \dots & -2 & -3 & \dots & -3 &   \\ \cline{3-10}
         & \multirow{3}{*}{ $\,\,c(2)\,\, \left\updownarrow\vphantom{\displaystyle\sum_{\substack{i=1\\i=0}}}\right.$}  & -1 & -3 & \dots & -3 & -4 & \dots & -4 &   \\
    && \vdots & \vdots & \ddots & \vdots & \vdots & \ddots & \vdots & \dots  \\
        && -1 & -3 & \dots & -3 & -4 & \dots & -4 &   \\ \cline{3-10}
         && \vdots &  & \vdots &  &  & \vdots & \vdots &   \\
\end{block}
\end{blockarray}}\;\;,
\end{equation*}
The block form of $A$ uses the coefficients of $q$-series expansion for the normalized modular function $J(q)$, that is, $c(j)$ denotes  the coefficient of $q^j$ in 
$$J(q)=
\sum_{j\geq -1}c(j)q^j= {1 \over q}  + 196884 q + 21493760 q^2 + 864299970 q^3  + \cdots.$$
So $c(-1) = 1$, $c(0) = 0$, $c(1) = 196884$, $\dots $.
We define the index set 
$$I = \left\{(j,k)\mid j\in\{-1,1,2,3,\dots\},\; 1 \leq k\leq c(j) \right\} $$
to reflect the block form of~$A$,
so that 
$$ A = \left( a_{jk,pq} \right)_{(j,k),(p,q)\in {I}}$$ where $a_{jk,pq} = -(j+p)$.
For another approach to this indexing see~\cite{Kang2018}. Note that in the notation $ a_{jk,pq}$, $jk$ and $pq$ are pairs of indices not a product. 

%

The \emph{Serre--Chevalley generators} of $\mathfrak{g}(A)$ are 
$e_{jk}$, $f_{jk}$, $h_{jk}$ for all $(j,k)\in  I$, with
 defining relations 
\begin{align*}
\tag{R:1}\label{Rmhh} \left[h_{jk},h_{pq}\right]&=0,\\
\tag{R:2}\label{Rmhe} \left[h_{jk},{e}_{pq} \right]&=a_{jk,pq}{e}_{pq} =-(j+p) {e}_{pq},\\
\tag{R:3}\label{Rmhf} \left[h_{jk},{f}_{pq} \right]&=-a_{jk,pq}{f}_{pq} =(j+p) {f}_{pq},\\
\tag{R:4}\label{Rmef} \left[{e}_{jk},{f}_{pq} \right]&=\delta_{jp}\delta_{kq}h_{jk},\\ 
\tag{R:5}\label{Rmee} \left(\ad {e}_{-1\,1} \right)^j \,{e}_{jk}&= \left(\ad {f}_{-1\,1} \right)^j \,{f}_{jk}=0,
\end{align*}
for all $(j,k),\,(p,q) \in {I}$. 

The \emph{Cartan subalgebra} of $\mathfrak{g}(A)$ is $\mathfrak{h}_{A}=\sum_{(j,k)\in I} \C h_{jk}$.
The \emph{Cartan involution} $\eta:\mathfrak{g}(A)\to\mathfrak{g}(A)$  acts as~$-1$ on~$\frak h_A$ and interchanges~$e_{jk}$ and~$f_{jk}$. 

The \emph{Monster Lie algebra} is  $\mathfrak{m}=\mathfrak{g}(A)/\frak{z}$, where $\frak z$ is the center of $\frak g(A)$.
Note that $\mathfrak{z}$ is contained in $\mathfrak{h}_A$ and so the Cartan subalgebra of $\mathfrak{m}$ is 
$ \mathfrak{h}:= \mathfrak{h}_A/\mathfrak{z}$.
The Cartan involution induced on $\mathfrak{m}$ is also denoted by~$\eta$. 
The matrix~$A$ has rank~2 and so $\mathfrak{h}$ has dimension~2 (see Proposition~\ref{P-Monster} below for details).

Define the following elements in $\frak m = \frak g(A)/\frak z$:
\begin{align*}
h_1 &:= \frac12(h_{-1\,1}-h_{1\,1})+\mathfrak{z}, & h_2 &:= -\frac12(h_{-1\,1}+h_{1\,1})+\mathfrak{z},\\
e_{-1} &:= {e}_{-1\,1}+\mathfrak{z}, & f_{-1} &:= {f}_{-1\,1}+ \mathfrak{z},
\end{align*}
 and write $e_{jk}$ for $e_{jk}+\mathfrak{z}$ and $f_{jk}$ for ${f}_{jk}+ \mathfrak{z}$, 
 for all $(j,k)\in  I-\left\{(-1,1) \right\}$. 
The following proposition now gives explicit generators and relations for $\mathfrak{m}$.
\begin{proposition}\label{P-Monster} \cite{JurJPAA}
The Serre--Chevalley generators of $\mathfrak{m}$ are
$h_{1}$, $h_2$, $e_{-1}$, $f_{-1}$, and $e_{jk}$, $f_{jk}$ for all
$(j,k)\in  I-\left\{(-1,1) \right\}$,
with defining relations:
\begin{align*}
\tag{M:1}\label{Mhh} \left[h_{1},h_2\right]&=0,\\
\tag{M:2a}\label{Mhe-} \left[h_1,e_{-1} \right] &= e_{-1},&              
              \left[h_2, e_{-1} \right] &= -e_{-1},\\
\tag{M:2b}\label{Mhe} \left[h_1,e_{jk} \right]&= e_{jk},&
              \left[h_2, e_{jk} \right] &= j e_{jk},\\
\tag{M:3a} \label{Mhf-} \left[h_1,f_{-1} \right] &= -f_{-1},&
             \left[h_2,f_{-1} \right] &= f_{-1},\\
\tag{M:3b}\label{Mhf} \left[h_1,f_{jk} \right]&= - f_{jk},&
              \left[h_2,f_{jk} \right] &= -j f_{jk},\\
\tag{M:4a}\label{Me-f-} \left[e_{-1},f_{-1} \right]&=h_1-h_2, \\
\tag{M:4b}\label{Mef-} \left[e_{-1},f_{jk} \right]&=0,  & \left[e_{jk},f_{-1} \right]&=0,\\
\tag{M:4c}\label{Mef} \left[e_{jk},f_{pq} \right]&=-\delta_{jp}\delta_{kq} \left(jh_1 + h_2\right),\\ 
\tag{M:5}\label{Mee}
  \left(\ad e_{-1} \right)^j e_{jk}&=0,&\qquad \left(\ad f_{-1} \right)^j f_{jk}&=0,
\end{align*}
for all $(j,k),\,(p,q) \in  I-\left\{(-1,1) \right\}$. Also ${\frak h} = \C h_1\oplus \C h_2 = \mathfrak{h}_A/\mathfrak{z}$, the Cartan subalgebra of~$\m$.
\end{proposition}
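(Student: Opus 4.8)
The plan is to recover $\mathfrak{m}=\mathfrak{g}(A)/\mathfrak{z}$ from the defining presentation (R:1)--(R:5) of $\mathfrak{g}(A)$ in two stages: first determine $\mathfrak{z}$ explicitly, then eliminate the redundant Cartan generators so as to transform (R:1)--(R:5) into (M:1)--(M:5).

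Since $\mathfrak{z}\subseteq\mathfrak{h}_A$, an element $h=\sum_{(j,k)\in I}c_{jk}h_{jk}$ is central exactly when $[h,e_{pq}]=0$ for all $(p,q)\in I$. By (R:2) this says $\sum_{(j,k)}c_{jk}(j+p)=0$ for every $p\in\{-1,1,2,\dots\}$; reading the left-hand side as an affine function of $p$, this is equivalent to the two independent conditions $\sum_{(j,k)}c_{jk}=0$ and $\sum_{(j,k)}j\,c_{jk}=0$. Thus $\mathfrak{z}$ is the codimension-$2$ subspace of $\mathfrak{h}_A$ cut out by these two functionals, so $\dim\mathfrak{h}=2$, the rank of $A$. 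Writing $\bar h_1=\tfrac12(h_{-1\,1}-h_{1\,1})$ and $\bar h_2=-\tfrac12(h_{-1\,1}+h_{1\,1})$ for the lifts of $h_1,h_2$, the same two functionals show that $\bar h_1,\bar h_2$ are independent modulo $\mathfrak{z}$, and that for each $(j,k)\in I$ the element $z_{jk}:=h_{jk}+j\bar h_1+\bar h_2$ lies in $\mathfrak{z}$; equivalently $h_{jk}\equiv-(jh_1+h_2)\pmod{\mathfrak{z}}$.

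I would then check that (M:1)--(M:5) hold in $\mathfrak{m}$, producing a canonical surjection onto $\mathfrak{m}$ from the Lie algebra $\tilde{\mathfrak{m}}$ presented by these relations. Relation (M:1) is inherited from (R:1); evaluating $\ad\bar h_1,\ad\bar h_2$ on $e_{pq}$ via (R:2) gives $[h_1,e_{pq}]=e_{pq}$ and $[h_2,e_{pq}]=p\,e_{pq}$, which are (M:2a)--(M:2b), and (M:3a)--(M:3b) follow by applying the Cartan involution $\eta$. From (R:4) one gets $[e_{-1},f_{-1}]=h_{-1\,1}+\mathfrak{z}=h_1-h_2$ together with the off-diagonal vanishing, giving (M:4a)--(M:4b); for matching indices the congruence $h_{jk}\equiv-(jh_1+h_2)$ converts (R:4) into (M:4c). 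Finally (M:5) is the image of the Serre relations (R:5).

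It remains to show this surjection is an isomorphism, i.e. that (M:1)--(M:5) are \emph{defining}; I would establish this by Tietze transformations of presentations. The quotient $\mathfrak{g}(A)/\mathfrak{z}$ is presented by the generators of $\mathfrak{g}(A)$, the relations (R:1)--(R:5), and the relations $z=0$ as $z$ runs over a basis of $\mathfrak{z}$. The elements $z_{jk}$ with $(j,k)\in I\setminus\{(-1,1),(1,1)\}$ are linearly independent (each carries its own $h_{jk}$ with coefficient $1$) and number $\dim\mathfrak{z}=|I|-2$, hence form such a basis. The relations $z_{jk}=0$ solve $h_{jk}=-(j\bar h_1+\bar h_2)$, letting me delete every Cartan generator except $h_{-1\,1},h_{1\,1}$, which an invertible linear change then replaces by $h_1,h_2$. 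Substituting throughout turns (R:1)--(R:5) into exactly (M:1)--(M:5), the surplus instances of (R:2)--(R:4) becoming consequences of the surviving relations; this Tietze-equivalence proves the claim. The step needing the most care is precisely this elimination: one must verify that collapsing the infinite-dimensional $\mathfrak{h}_A$ to the two-dimensional $\mathfrak{h}$ introduces no new relation among the $e_{jk},f_{jk}$. Conceptually this is transparent, since $\mathfrak{z}\subseteq\mathfrak{h}_A$ leaves the triangular parts $\mathfrak{n}^{\pm}$ unchanged, so only the Cartan datum is being re-presented, and the explicit description of $\mathfrak{z}$ above is what makes the bookkeeping rigorous.
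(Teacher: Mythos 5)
Your proposal is correct, and at its core it follows the same route as the paper's proof: describe the center $\mathfrak{z}\subseteq\mathfrak{h}_A$ explicitly, observe that $h_{-1\,1}\equiv h_1-h_2$ and $h_{jk}\equiv-(jh_1+h_2)\pmod{\mathfrak{z}}$, and push the relations (R:1)--(R:5) down to (M:1)--(M:5). The differences are ones of completeness, in your favor. Where the paper simply asserts that $\mathfrak{z}$ is spanned by the elements $(p-j)h_{-1\,1}-(p+1)h_{jk}+(j+1)h_{pq}$ (leaning on the cited reference), you derive $\mathfrak{z}$ from (R:2) as the common kernel of the two functionals $\sum_{(j,k)} c_{jk}$ and $\sum_{(j,k)} j\,c_{jk}$ on $\mathfrak{h}_A$; the two descriptions agree. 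More substantially, the paper's proof only checks that the relations (M:1)--(M:5) \emph{hold} in $\mathfrak{m}$ and that $h_1,h_2$ form a basis of $\mathfrak{h}$; it never argues that these relations are \emph{defining}. Your elimination step --- presenting $\mathfrak{g}(A)/\mathfrak{z}$ by adjoining the relations $z_{jk}=0$ for a basis of $\mathfrak{z}$ (legitimate because $\mathfrak{z}$ is central, so the ideal generated by any spanning set of $\mathfrak{z}$ is $\mathfrak{z}$ itself), solving $h_{jk}=-(j\bar h_1+\bar h_2)$ to delete the redundant Cartan generators, and checking that the displaced instances of (R:1)--(R:4) become consequences of the surviving relations --- supplies exactly this missing half of the argument, which is what the word ``defining'' in the statement requires and which the paper implicitly defers to the citation.

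One small repair is needed: your justification that the $z_{jk}$, $(j,k)\notin\{(\pm1,1)\}$, form a basis of $\mathfrak{z}$ --- ``linearly independent and number $\dim\mathfrak{z}=|I|-2$'' --- is not valid as stated, because $I$ is infinite and cardinality counting cannot establish spanning in infinite dimensions. The fix is immediate from what you already have: given $z=\sum_{(j,k)} c_{jk}h_{jk}\in\mathfrak{z}$ (a finite sum), the element $z-\sum_{(j,k)\ne(\pm1,1)}c_{jk}z_{jk}$ lies in $\mathfrak{z}\cap\left(\C h_{-1\,1}+\C h_{1\,1}\right)$, and your two functionals, evaluated on $h_{-1\,1}$ and $h_{1\,1}$, show that this intersection is zero; hence the $z_{jk}$ span $\mathfrak{z}$. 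With that one sentence inserted, the argument is complete.
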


\begin{proof}
Now $\mathfrak{m}=\mathfrak{g}(A)/\mathfrak{z}$ and  the center $\mathfrak{z}$ is spanned by
$(p-j)h_{-1\,1}-(p+1)h_{jk}+(j+1)h_{pq}$.
So 
$h_{-1\,1}+\mathfrak{z}=h_1-h_2$; 
$\;h_{1\,k} + \mathfrak{z}=-h_1-h_2$ 
for $(1,k)\in I$; and 
\begin{align*}
h_{jk} + \mathfrak{z} &= h_{jk} + \frac{1}{2}\left((1-j)h_{-1\,1}-(1+1)h_{jk}+(j+1)h_{1\,1} \right) +\mathfrak{z} \\
&=\frac{1-j}{2}\,h_{-1\,1} +\frac{1+j}{2}\, h_{1\,1} +\mathfrak{z} = \frac12\left(h_{-1\,1}+h_{1\,1} \right) + \frac{j}2\left(-h_{-1\,1}+h_{1\,1} \right)+{\frak z}=-jh_1-h_2
\end{align*}
when $j\ne 1$. Hence $h_1$ and $h_2$ span ${\frak h}$. The relations  \eqref{Mhh}--\eqref{Mee} follow from the corresponding relations \eqref{Rmhh}--\eqref{Rmee} for ${\frak g}(A)$. Finally $h_1$ and $h_2$ are linearly independent by \eqref{Mhe-} and \eqref{Mhe}.
\end{proof}

Before defining the root system, we construct an extension of the Lie algebra~$\mathfrak{g}(A)$ in order to ensure that the simple roots form a linearly independent set (see also \cite{JLW}).
The row space of $A$ is spanned by the two rows indexed by~$\{(\pm1,1)\}$.
Thus, for $(j,k)\in I-\left\{(\pm1,1) \right\}$,  let  $D_{jk}$  be the derivation of $\mathfrak{g}(A)$ defined by
$$D_{jk} (e_{pq}) = \delta_{jp}\delta_{kq}e_{jk}, \quad D_{jk} (f_{pq}) = -\delta_{jp}\delta_{kq}f_{jk},\quad D_{jk} (h_{pq})=0,$$
for all $(p,q) \in I$.
The span of the $D_{jk}$ is an abelian Lie algebra of derivations of $\mathfrak{g}(A)$, denoted $\mathfrak{d}_0$. 
Now $\mathfrak{d}_0+\mathfrak{h}_A=\mathfrak{d}_0\oplus\mathfrak{h}_A$ is an abelian subalgebra of 
$\mathfrak{d}_0 \ltimes \mathfrak{g}(A)$.
We  define the \emph{extended Monster Lie algebra} to be $\mathfrak{g}^e(A)=\mathfrak{g}^e:=\mathfrak{d}_0 \ltimes \mathfrak{g}(A)$,
with Cartan subalgebra $\frak h^e=\frak d_0 \oplus \frak h_A$.

\subsection{Decompositions of $\mathfrak m$}\label{SS-decomp}
 
We use $\mathfrak{h}^e\subseteq \mathfrak{g}^e$ to define the roots of $\mathfrak{g}(A)$.  In Subsection~\ref{SS-mroots}, we show
 how to obtain the  root system of $\frak m$.
For $\alpha \in (\frak h^e)^*$, define  
$$\mathfrak{m}_\alpha = \left\{x \in \mathfrak{g}(A)  \mid [h,x] = \alpha (h)x \mbox{ for all } h \in \mathfrak{h}^e \right\}.$$ 
Nonzero elements $\alpha \in (\mathfrak{h}^e)^*$ such that  $\mathfrak{m}_\alpha \neq 0$ are called \emph{roots} of $\mathfrak{g}(A)$; the set of all roots is denoted $\Delta\subseteq (\mathfrak{h}^e)^*$. 
The root space $\mathfrak{m}_{\a}$ is contained in $\mathfrak{g}(A)$, but can be considered to be a subset of $\mathfrak{g}^e$ or $\frak m$,
since $\mathfrak{m}_\a \cap \frak h ^e=\{0\}$.

The {\em simple root} $\alpha_{jk} \in(\mathfrak{h}^e)^*$, for $(j,k)\in I$, is defined by the condition
$\left[h,e_{jk}\right]=\alpha_{jk}(h)e_{jk}$ for all $h \in \mathfrak{h}^e$.
Now $\alpha_{jk}(h_1)=1$, $\alpha_{jk}(h_2)=j$,
and $\alpha_{jk}(D_{pq})=\delta_{jp}\delta_{kq}$ for  $(p,q)\in I-\left\{(\pm1,1) \right\}$, so the simple roots are linearly independent in~$(\mathfrak{h}^e)^*$.

Every root $\a\in\Delta$ is an integral sum $\a=\sum_{(j,k)\in I} a_{jk}\a_{jk}$, so is contained in the 
\emph{root lattice}  $Q := \bigoplus_{(j,k)\in I}\Z\a_{jk}\cong \Z^I$.
We call $\a$ a \emph{positive root} if every $a_{jk}\ge0$ and  a \emph{negative root} if every $a_{jk}\le0$.
The set of all positive (respectively negative) roots is denoted $\Delta_+$ (respectively $\Delta_-$).  
We have subalgebras
$$ \npm:= \bigoplus_{\a\in\Delta_\pm} \mathfrak{m}_\alpha.$$

Some elementary properties of $\mathfrak{m}$ are summarized 
in the following, which follows from Proposition~1.5 of~\cite{jurisich1996exposition}. 
\begin{proposition} \label{P-all}
The Borcherds algebra $\mathfrak{m}=\mathfrak{g}(A)/\mathfrak{z}$ has the following properties:
\begin{enumerate}
\item $\mathfrak{m}= \frak n^{-} \oplus \frak h \oplus \frak n^{+}= \frak h \oplus \bigoplus_{\alpha\in \Delta}\frak g_{\alpha}$.
\item  $\left\{h_{jk}\mid (j,k)\in I \right\}$ is a basis of ${\frak h_A}$.
\item $\frak n^+ = \bigoplus_{\a \in \Delta_+} \frak m_\a$
   and $\frak n^-  = \bigoplus_{\a \in \Delta_-} \frak m_\a$ .
\item $\Delta = \Delta_+ \cup \Delta_-$ (disjoint union).
\item $\left[\frak m_\a, \frak m_\b \right] \subset \frak m_{\a+\b}$ for all $\a,\b \in \left(\mathfrak{h}^e \right)^*$.
\item $\eta \left(\frak m_\a \right) = \frak m_{- \a}$ and $\dim \frak m_\a = \dim \frak m_{-\a} < \infty$ for all $\a \in (\frak h^e)^*$.
\item $\frak m_{\alpha_{jk}} = \C e_{jk}$  and $\frak m_{-\alpha_{jk}} = \C f_{jk}\,$ for all  $(j,k) \in I$.
\item $\alpha_{jk} \left(h_{pq} \right) =  a_{jk,pq}$ $\text{ for all } (j,k),\,(p,q) \in I$.
\item Every nonzero ideal of $\mathfrak{g}^e$ intersects nontrivially with $\mathfrak{h}^e$  and every nonzero ideal of $\mathfrak{m}$ intersects nontrivially with $\mathfrak{h}$.
\end{enumerate}
\end{proposition}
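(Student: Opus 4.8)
The plan is to reduce the entire statement to the standard structure theory of Borcherds (generalized Kac--Moody) algebras applied to $\mathfrak{g}(A)$, and then to transport the conclusions across the central quotient $\mathfrak{m}=\mathfrak{g}(A)/\mathfrak{z}$. The starting point is the triangular decomposition $\mathfrak{g}(A)=\nm\oplus\mathfrak{h}_A\oplus\np$, obtained in the usual way: form the free Lie algebra on the $e_{jk}$, $f_{jk}$ together with $\mathfrak{h}_A$, impose the bracket relations \eqref{Rmhh}--\eqref{Rmef}, and then the Serre relations \eqref{Rmee}. Since the latter only involve products of the $e$'s (resp.\ the $f$'s), they live entirely inside $\np$ (resp.\ $\nm$) and do not disturb the splitting. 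Because each generator is an eigenvector for the adjoint action of $\mathfrak{h}^e$ and the simple roots $\alpha_{jk}$ are linearly independent in $(\mathfrak{h}^e)^*$ (as recorded just before the Proposition, using the derivations $D_{jk}$), the algebra is graded by the root lattice $Q=\bigoplus_{(j,k)}\Z\alpha_{jk}$, with $\alpha$-graded piece exactly $\mathfrak{m}_\alpha$. This yields the root space decomposition and items (1)--(4): property~(5) is immediate from compatibility of the bracket with the $Q$-grading, and (4) holds because every homogeneous component of $\np$ (resp.\ $\nm$) has degree a nonnegative (resp.\ nonpositive) sum of simple roots, with $\mathfrak{h}_A$ the degree-$0$ piece.

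Next I would dispose of the formal items. Property~(8), $\alpha_{jk}(h_{pq})=a_{jk,pq}$, is read off directly from \eqref{Rmhe}. Property~(7), that $\mathfrak{m}_{\pm\alpha_{jk}}=\C e_{jk}$ (resp.\ $\C f_{jk}$), follows since the simple-root graded pieces of the free $\np$ are one-dimensional and the Serre relations act only in higher degree, the $e_{jk}$ remaining linearly independent by independence of the $\alpha_{jk}$. For~(6), the Cartan involution $\eta$ (which is $-1$ on $\mathfrak{h}_A$ and swaps $e_{jk}\leftrightarrow f_{jk}$) is a Lie algebra automorphism sending a weight-$\alpha$ vector to a weight-$(-\alpha)$ vector, giving $\eta(\mathfrak{m}_\alpha)=\mathfrak{m}_{-\alpha}$ and hence the dimension equality; finite-dimensionality of each $\mathfrak{m}_\alpha$ follows by a standard counting argument, as the degree-$\alpha$ piece is spanned by the finitely many iterated brackets of generators summing to $\alpha$. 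Item~(2) is the identification of the zero-weight space with $\mathfrak{h}_A$ together with the basis statement, both part of the triangular decomposition.

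The one genuinely nonformal ingredient is property~(9): every nonzero ideal of $\mathfrak{g}^e$ (resp.\ $\mathfrak{m}$) meets $\mathfrak{h}^e$ (resp.\ $\mathfrak{h}$) nontrivially. This is where the construction of $\mathfrak{g}(A)$ as the \emph{maximal} quotient trivially intersecting the Cartan must be used: $\mathfrak{g}(A)$ is the quotient of the Lie algebra presented by the bracket relations by the largest graded ideal $\mathfrak{r}$ with $\mathfrak{r}\cap\mathfrak{h}_A=0$, which forces every ideal of $\mathfrak{g}(A)$ avoiding $\mathfrak{h}_A$ to vanish. I expect this to be the main obstacle, since for Borcherds algebras the imaginary simple roots carry no analog of the usual $\mathfrak{sl}_2$ Serre relations and one must invoke Borcherds' maximality statement rather than the classical Gabber--Kac theorem; extending it from $\mathfrak{g}(A)$ to $\mathfrak{g}^e=\mathfrak{d}_0\ltimes\mathfrak{g}(A)$ is routine because $\mathfrak{d}_0$ acts diagonally in the grading.

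Finally I would descend to $\mathfrak{m}=\mathfrak{g}(A)/\mathfrak{z}$. Since $\mathfrak{z}$ lies inside $\mathfrak{h}_A=\mathfrak{m}_0$ (as noted in the text), quotienting alters only the zero-weight space, replacing $\mathfrak{h}_A$ by $\mathfrak{h}=\mathfrak{h}_A/\mathfrak{z}$ and leaving every nonzero $\mathfrak{m}_\alpha$ untouched (recall $\mathfrak{m}_\alpha\cap\mathfrak{h}^e=0$). Thus (1), (3), (4), (5), (7), (8) transfer verbatim, (6) transfers because $\eta$ descends to $\mathfrak{m}$, and the $\mathfrak{m}$-half of (9) follows from the $\mathfrak{g}^e$-half via the order-preserving correspondence between ideals of $\mathfrak{m}$ and ideals of $\mathfrak{g}(A)$ containing $\mathfrak{z}$, using $\mathfrak{h}=\mathfrak{h}_A/\mathfrak{z}$. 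All of this is precisely the content of Proposition~1.5 of~\cite{jurisich1996exposition}, so in practice the cleanest route, once the reductions above are in place, is to cite that result directly.
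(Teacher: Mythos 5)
Your proposal is correct and ends up exactly where the paper does: the paper gives no argument of its own, stating only that the proposition ``follows from Proposition~1.5 of~\cite{jurisich1996exposition},'' which is precisely the citation you close with. Your preliminary sketch via the standard Borcherds structure theory --- including singling out item (9) as the one nonformal point requiring Borcherds' maximality theorem (the generalized Gabber--Kac statement, applicable here since $A$ is symmetric) and the observation that the central quotient only affects the zero-weight space --- is a sound expansion of what that reference contains, so there is nothing to correct.
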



Following Section~1.3 of~\cite{jurisich1996exposition}, 
we define a  symmetric bilinear form on $(\mathfrak{h}^e)^*\supseteq Q$ with
$$ \left(\a_{jk},\a_{pq} \right)_{Q}:=a_{jk,pq} = -(j+p) $$
(note that this formula includes $\a_{-1}=\a_{-1\,1}$).
This induces a form on $\mathfrak{h}^e$, which in turn extends to an
invariant symmetric bilinear form $\left(\cdot ,\cdot \right)_{\mathfrak{g}^e}$ on $\mathfrak{g}^e$ satisfying, for all $(j,k)$, $(p,q)\in {I}$, 
\begin{align*}
\tag{B:1}\label{B-gAe-hh}
\left(h_{jk},h_{pq} \right)_{\mathfrak{g}^e} &= - (j+p),\\
\tag{B:2}\label{B-gAe-ef}
  \left({e}_{jk},{f}_{pq} \right)_{\mathfrak{g}^e} &= \delta_{jp} \delta_{kq}, \\
\tag{B:3}\label{B-gAe-gg}
\left(\mathfrak{m}_{\a}, \mathfrak{m}_{\b} \right)_{\mathfrak{g}^e}&=0 \quad\text{if $\a,\b\in\Delta$ with $\a+\b\ne0$,}\\
\tag{B:4}\label{B-gAe-hg}
\left(\mathfrak{h}^e, \mathfrak{m}_{\a} \right)_{\mathfrak{g}^e}&=0 \quad\text{if $\a\in\Delta$,}\\
\tag{B:5}\label{B-gAe-dd-hd}
\left(D_{jk},D_{pq} \right)_{\mathfrak{g}^e} =   \left(h_{jk},D_{pq} \right)_{\mathfrak{g}^e} &= \delta_{jp} \delta_{kq},\\
\tag{B:6}\label{B-gAe-def}
  \left(D_{jk} ,e_{pq} \right)_{\mathfrak{g}^e}=\left(D_{jk}, f_{pq} \right)_{\mathfrak{g}^e}&=0.
\end{align*}  
This also induces symmetric invariant bilinear forms on $\mathfrak{g}(A)$ and on $\mathfrak{m}$, denoted $ \left(\circ,\circ \right)_{\mathfrak{g}(A)}$ and $ \left(\circ,\circ \right)_{\mathfrak{m}}$.

We call $\a\in\Delta$ a \emph{real root} if $(\alpha, \alpha)_Q>0$ and an \emph{imaginary root} if $(\alpha, \alpha)_Q\le0$. 
The set of real (respectively imaginary) roots is denoted $\Delta^{\re}$ (respectively $\Delta^{\im}$); 
we also denote the set of  positive real roots (respectively  negative real roots) by $\Delta^{\re}_{\pm}$ and the set of  positive imaginary roots (respectively   negative imaginary roots) by $\Delta^{\im}_{\pm}$.
Hence $\a_{-1}:=\a_{-1\,1}$ is a real root with squared norm $2$, while $\a_{jk}$ is an imaginary root
with squared norm $-2j$ for $(j,k)\in I$ with $j>0$.
Hence  we define
\begin{align*}
I^{\re}&:= \left\{ (j,k)\in I \mid a_{jk,jk}>0 \right\}=\left\{(-1,1) \right\}, \text{ and} \\
I^{\im} &:= \left\{ (j,k)\in I \mid a_{jk,jk}\le0 \right\}= \left\{(j,k)\mid j\in\N,\; 1 \leq k\leq c(j) \right\}.
\end{align*}

We now describe the  decomposition of $\mathfrak m$ as a direct sum of a $\gl_2$ subalgebra and two free subalgebras as in~\cite{JurJPAA}.
Define an \emph{extended index set}
\begin{align*}
 E &= \{(\ell,j,k)\mid( j,k)\in I^{\im},\, 0\leq \ell<j \}\\ &= \{(\ell,j,k)\mid j\in\N,\, 1 \leq k\leq c(j),\, 0\leq \ell<j \}.
 \end{align*}
and set
$$
   e_{\ell,jk}:= \frac{\left(\ad e_{-1} \right)^\ell e_{jk}}{\ell!}\quad\text{and}\quad f_{\ell,jk}:= \frac{\left(\ad f_{-1} \right)^\ell f_{jk}}{\ell!},
$$
for $ (\ell,j,k) \in E$.

\begin{theorem}[\cite{JurJPAA}]\label{T-m-free}
Let $\frak m$ be the Monster Lie algebra and 
let $\mathfrak{gl}_2(-1)$ be the Lie subalgebra generated by $\left\{e_{-1}, f_{-1}, h_{1}, h_2 \right\}$. Then
$$\mathfrak{m} = \nimm \oplus \mathfrak{gl}_2(-1)  \oplus \nimp$$
where
$\mathfrak{gl}_2(-1) 
\cong \mathfrak{gl}_2$,  $\;\nimp$ is a subalgebra freely generated by  $\left\{e_{\ell,jk}\mid (\ell,j,k)\in E \right\}$,  and
$\nimm$ is a subalgebra freely generated by  $\left\{f_{\ell,jk}\mid (\ell,j,k)\in E \right\}$.
\end{theorem}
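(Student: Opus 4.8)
The plan is to produce the whole decomposition from a single grading and then reduce the freeness assertion to a homology-vanishing statement. For a root $\a=\sum_{(j,k)\in I}a_{jk}\a_{jk}$, set its \emph{imaginary degree} $\deg_{\im}\a=\sum_{(j,k)\in I^{\im}}a_{jk}$; since the bracket adds roots, this defines a $\Z$-grading of $\mathfrak m$. First I would identify the degree-$0$ summand with $\gl_2(-1)$: the only roots of imaginary degree $0$ are $\pm\a_{-1}$ (as $\a_{-1}$ is a real root, whose only root multiples are $\pm\a_{-1}$), so the degree-$0$ part is $\C f_{-1}\oplus\mathfrak h\oplus\C e_{-1}$, which is $\cong\gl_2$ by relations (M:1)--(M:4a), with $\{e_{-1},f_{-1},h_1-h_2\}$ spanning an $\mathfrak{sl}_2$ and $h_1+h_2$ central. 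Taking $\nimp$ (resp.\ $\nimm$) to be the sum of the strictly positive (resp.\ negative) degree parts then yields the asserted vector-space decomposition, and each is a subalgebra because the grading is additive. Because $\gl_2(-1)$ lies in degree $0$, relations (M:4b) and (M:5) give $[\gl_2(-1),\nimp]\subseteq\nimp$, so $\nimp$ is an $\mathfrak{sl}_2(-1)$-module; moreover $\ad e_{-1}$ and $\ad f_{-1}$ act locally nilpotently on $\nimp$ (they do so on the generators by (M:5) and (M:4b), and the locally nilpotent elements for a fixed derivation form a subalgebra). Concretely, for each $(j,k)\in I^{\im}$ the string $e_{0,jk},\dots,e_{j-1,jk}$ is an irreducible $\mathfrak{sl}_2(-1)$-module of dimension $j$ with lowest weight vector $e_{jk}$.

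Next I would show the $e_{\ell,jk}$ generate $\nimp$. Since $\mathfrak m$ is a Borcherds algebra, $\np$ is generated by the simple root vectors $e_{-1}$ and the $e_{jk}$. Let $\mathfrak a$ be the subalgebra generated by $\{e_{\ell,jk}\}$; then $\mathfrak a\subseteq\nimp$, and since $[e_{-1},e_{\ell,jk}]=(\ell+1)e_{\ell+1,jk}\in\mathfrak a$, the subspace $\mathfrak a$ is stable under the derivation $\ad e_{-1}$. Hence $\C e_{-1}+\mathfrak a$ is a subalgebra containing every simple root vector, so it equals $\np=\C e_{-1}\oplus\nimp$; comparing strictly positive imaginary degrees gives $\mathfrak a=\nimp$.

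The crux is freeness. The point is that $\nimp$ is positively graded (by imaginary degree) with $H_1(\nimp)$ having the classes $e_{\ell,jk}$ as a basis — they span by the previous paragraph and are independent because they are nonzero vectors in distinct root spaces — so $\nimp$ is free on $\{e_{\ell,jk}\}$ if and only if $H_2(\nimp)=0$ (Chevalley--Eilenberg homology with trivial coefficients). To compute $H_2(\nimp)$ I would use that $\nimp$ is an ideal of $\np$ with one-dimensional quotient $\np/\nimp=\C e_{-1}$: the Hochschild--Serre spectral sequence then has only the two columns $p=0,1$ and degenerates into short exact sequences, which in total degree $2$ yield an injection of coinvariants $(H_2\nimp)_{e_{-1}}\hookrightarrow H_2(\np)$. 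By the Gabber--Kac/Borcherds description of the defining relations of $\np$, the only relations are the Serre relations $(\ad e_{-1})^{j}e_{jk}=0$, so $H_2(\np)$ is concentrated in imaginary degree $1$; on the other hand $H_2(\nimp)$, a subquotient of $\bigwedge^{2}\nimp$, lives in imaginary degree $\ge 2$. As the injection respects the root-lattice grading, the degree mismatch forces $(H_2\nimp)_{e_{-1}}=0$. Finally, $H_2(\nimp)$ is an integrable $\mathfrak{sl}_2(-1)$-module (local nilpotence of $\ad e_{\pm1}$ passes to the complex and hence to its homology), and an integrable $\mathfrak{sl}_2$-module whose $e_{-1}$-coinvariants vanish must be zero, since every finite-dimensional irreducible summand has a surviving lowest weight space. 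Thus $H_2(\nimp)=0$ and $\nimp$ is free on $\{e_{\ell,jk}\}$.

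The statement for $\nimm$ then follows by applying the Cartan involution $\eta$, an automorphism of $\mathfrak m$ with $\eta(\mathfrak m_\a)=\mathfrak m_{-\a}$ sending each $e_{\ell,jk}$ to a scalar multiple of $f_{\ell,jk}$, so $\eta(\nimp)=\nimm$ is free on $\{f_{\ell,jk}\}$. The main obstacle is the third paragraph: one must pin down $H_2(\np)$ exactly, invoking the Gabber--Kac-type theorem for Borcherds algebras to rule out relations beyond the Serre relations, and then carry the root-lattice grading carefully through the spectral sequence so that the degree argument applies. If one wishes to avoid homological algebra, the alternative is a formal-character count: compare the graded dimension of $\nimp$, read off from the root multiplicities of $\mathfrak m$, with that of the free Lie algebra on $\bigoplus_{(j,k)}V_{jk}$ via the Witt/denominator formula; the difficulty then migrates to making this dimension comparison exact.
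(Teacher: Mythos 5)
Your proposal is correct, but note that the paper itself does not prove this theorem: it is quoted from \cite{JurJPAA}, so the relevant comparison is with Jurisich's cited argument, which is constructive and avoids homology entirely. There one works in the auxiliary algebra with no Serre relations, where the positive part is the free Lie algebra on $\{e_{-1}\}\cup\{e_{jk}\}$; Bourbaki's elimination theorem splits off $\C e_{-1}$ and exhibits the ideal generated by the $e_{jk}$ as the free Lie algebra on $\{(\ad e_{-1})^n e_{jk} \mid n\ge 0\}$, and the Serre ideal is precisely the ideal generated by the subset $\{(\ad e_{-1})^n e_{jk} \mid n\ge j\}$ of these free generators, so the quotient $\nimp$ is manifestly free on the survivors $e_{\ell,jk}$, $0\le \ell<j$. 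Your route --- the characterization of graded free Lie algebras by $H_2=0$, Hopf's formula showing $H_2(\np)$ is concentrated in imaginary degree $1$, the two-column Hochschild--Serre sequence for the extension $\nimp \subseteq \np$ with one-dimensional quotient, and the integrable-$\mathfrak{sl}_2$ bootstrap from vanishing $e_{-1}$-coinvariants --- is genuinely different and, as far as I can check, sound, including the subtle points (local nilpotence passing to the Chevalley--Eilenberg complex, grading compatibility of the spectral sequence, and the fact that positivity of the grading, not finite-dimensionality of its pieces, is what the freeness criterion needs). Two remarks. First, both arguments ultimately rest on the same nontrivial input, that $\np$ is presented by the Serre relations alone (your ``Gabber--Kac'' step; in Jurisich's treatment this is the decomposition of the Serre ideal in the algebra without Serre relations), so your proof is not more self-contained on that point. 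Second, your claim that the classes of the $e_{\ell,jk}$ are independent in $H_1(\nimp)$ ``because they lie in distinct root spaces'' should be justified by observing that $[\nimp,\nimp]$ lives in imaginary degree $\ge 2$ while the $e_{\ell,jk}$ have imaginary degree $1$ --- a one-line fix using the grading you already introduced. What the elimination proof buys is explicitness and economy: the free generators appear directly, with no spectral sequences or complete reducibility; what yours buys is a reusable template (freeness as $H_2$-vanishing) and the pleasant Kostant-style reduction of $H_2(\nimp)=0$ to the vanishing of its $e_{-1}$-coinvariants.
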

%

For fixed $(j,k)\in I^{\im}$,  note that $\left\{e_{\ell,jk}\right\}_{\ell=0}^{j-1}$ is a basis for a lowest weight $\frak{gl}_2(-1)$-module $V_{jk}^+$ with lowest weight vector $e_{jk}$,  and $\left\{f_{\ell,jk}\right\}_{\ell=0}^{j-1}$ is a basis for a 
highest weight $\frak{gl}_2(-1)$-module $V_{jk}^-$ with highest weight vector $f_{jk}$.

\subsection{Roots and reflections for ${\mathfrak g}(A)$}\label{SS-gAeroots}
In this subsection, we explicitly describe the root system $\Delta$ of ${\frak g}(A)$ with respect to $\frak h^e$.
Recall that the extended Monster Lie algebra is $\frak g^e=\mathfrak{d}_0 \ltimes \mathfrak{g}(A)$
with Cartan subalgebra $\frak h^e=\frak d_0 \oplus \frak h_A$, and that $\frak g^e$ and $\frak h^e=\frak d_0 \oplus \frak h_A$ were constructed so that the simple roots are linearly independent in $(\mathfrak{h}^e)^*$. In Subsection~\ref{SS-mroots}, we show
 how to obtain the more familiar root system of $\frak m$ in the  lattice $\text{II}_{1,1}$.

For $\a\in\left(\mathfrak{h}^e \right)^*$,  recall that the root space $\mathfrak{m}_{\a}$ can be considered a subset of either $\mathfrak{g}^e$,  $\mathfrak{g}(A)$, or $\frak m$. 
The simple roots ${\alpha}_{jk} \in (\mathfrak{h}^e)^*$ are then defined for $(j,k) \in I$ by the condition
$[h,e_{jk}]=\alpha_{jk}(h)e_{jk}$, for all $h \in \frak h^e$.
The root lattice of $\mathfrak{g}^e$  is 
$ Q = \Z\a_{-1} \oplus \bigoplus_{(j,k)\in I^{\im} } \Z \a_{jk}\subseteq \left(\mathfrak{h}^e \right)^*$.

We have  one dimensional simple root spaces:
$${\mathfrak m}_{\a_{-1}} = \C {e}_{-1},\quad {\mathfrak m}_{-\a_{-1}} = \C {f}_{-1},\quad {\mathfrak m}_{\a_{jk}} =\C {e}_{jk},\quad {\mathfrak m}_{-\a_{jk}} =\C {f}_{jk}.$$

    
If bases for  the root spaces of $\mathfrak{g}(A)$ are needed, they may be obtained using Theorem~\ref{T-m-free} and the Shirshov basis \cite{shirshov2009bases} for a free Lie algebra. However, for our purposes, 
a description of the set of all positive imaginary roots suffices:
\begin{lemma}\label{L-rts}  Let $$\alpha=\ell\a_{-1} + \sum_{(j,k)\in I^{\im}} c_{jk} \a_{jk}$$ be an element of the root lattice $Q$.
Then $\alpha\in{\Delta}^{\im}_+$ if and only if 
\begin{enumerate}
\item\label{Delta-pos} $\ell\ge0$ and $c_{jk}\ge0$ for all $(j,k)\in I^{\im}$ with at least one $c_{jk}>0$; 
\item\label{Delta-ell} $\displaystyle \ell \le \sum_{(j,k)\in I^{\im}} (j-1)c_{jk}$; and 
\item\label{Delta-norepeat}   if only one of the coefficients $c_{jk}$ appearing in $\alpha$ is nonzero, then either $c_{jk}=1$ or $ 0<\ell<(j-1)c_{jk}$.
\end{enumerate}
\end{lemma}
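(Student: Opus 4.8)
The plan is to deduce everything from the free generation of $\nimp$ established in Theorem~\ref{T-m-free}. Since the decomposition $\m=\nimm\oplus\gl_2(-1)\oplus\nimp$ identifies $\nimp$ with $\bigoplus_{\alpha\in\Delta^{\im}_+}\m_\alpha$ (the only positive real root being $\a_{-1}$), membership $\alpha\in\Delta^{\im}_+$ is equivalent to $(\nimp)_\alpha\neq 0$. First I would record the weight of each free generator: $e_{\ell,jk}=(\ad e_{-1})^\ell e_{jk}/\ell!$ has weight $\ell\a_{-1}+\a_{jk}$. Hence, writing $\alpha=\ell\a_{-1}+\sum_{(j,k)}c_{jk}\a_{jk}$, linear independence of the simple roots forces any expression of $\alpha$ as a sum of generator weights to use exactly $c_{jk}$ generators from the family $(j,k)$ (so $N:=\sum c_{jk}$ generators in all), with the coefficient $\ell$ equal to the total of the lower indices of the chosen generators. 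In particular $N$ and the per-family counts are determined by $\alpha$.

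Next I would reduce to a purely combinatorial question in the free Lie algebra. The space $(\nimp)_\alpha$ is spanned by iterated brackets whose multiset of generators — the \emph{content} — has family-counts $c_{jk}$ and index-sum $\ell$. The key structural input is the standard fact (Witt's formula) that the subspace of the free Lie algebra attached to a fixed content is nonzero precisely when the content is a single generator ($N=1$) or contains at least two distinct generators, since a single generator repeated contributes nothing because $[x,x]=0$. Therefore $(\nimp)_\alpha\neq 0$ if and only if there exists a valid content — one realizing the prescribed family-counts with index-sum $\ell$ — that is not a single generator repeated.

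I would then settle this existence question in three cases matching the three conditions. When $N=1$, exactly one $c_{jk}=1$ and $\alpha$ is the weight of a single generator $e_{\ell,jk}$, which exists iff $0\le\ell\le j-1$, giving (1), (2) and the $c_{jk}=1$ alternative of (3). When at least two distinct $(j,k)$ have $c_{jk}>0$, every content automatically has two distinct generators, so only attainability of the index-sum matters; since family $(j,k)$ can realize any index-sum between $0$ and $(j-1)c_{jk}$, this is exactly $0\le\ell\le\sum (j-1)c_{jk}$, i.e.\ (1) and (2), with (3) vacuous. When a single family $(j_0,k_0)$ has $c:=c_{j_0 k_0}\ge 2$, the extremes $\ell=0$ and $\ell=(j_0-1)c$ are realized only by the all-equal contents (all indices $0$, resp.\ all $j_0-1$), which vanish, whereas for each interior value $0<\ell<(j_0-1)c$ one exhibits a content with two distinct indices summing to $\ell$ by shifting two entries off the all-equal value; this produces the strict inequalities of (3).

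The main obstacle is this last case: one must show simultaneously that the extreme index-sums force the degenerate all-equal content (so the root space vanishes, where the relation $[x,x]=0$ is essential) and that every interior index-sum admits a nonvanishing content (the only genuinely computational point, handled by the explicit two-index shift). Assembling the three cases yields conditions (1)--(3) exactly, and the converse direction — reading off (1)--(3) from $(\nimp)_\alpha\neq 0$ — follows by running the same content analysis backwards.
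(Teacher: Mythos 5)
Your proof is correct and follows essentially the same route as the paper: both rest on Theorem~\ref{T-m-free} (free generation of $\nimp$ by the $e_{\ell,jk}$), read off the weight of each generator, and reduce the lemma to deciding which multidegrees (contents) of the free Lie algebra support nonzero elements, with conditions \eqref{Delta-ell} and \eqref{Delta-norepeat} coming from $\ell_i\le j_i-1$ and the ``not a repeated single generator'' constraint, respectively. The only difference is cosmetic: the paper certifies the combinatorial fact via the Shirshov basis, while you invoke Witt's formula together with $[x,x]=0$, which serves the identical purpose.
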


\begin{longver}
\begin{proof}  Elements in the Shirshov basis \cite{shirshov2009bases} are Lie products of terms
$$ e_{\ell_1,j_1 k_1}, e_{\ell_2,j_2 k_2},\dots,e_{\ell_n,j_n k_n},$$
for any choice of indices $\left(\ell_1,j_1, k_1 \right),\dots,\left(\ell_n,j_n, k_n \right)$ in $E$,  allowing repeats but  such that the indices are not \emph{all} equal.
Such a basis element is in $\frak g_\a$,  for $\mathfrak{g}=\mathfrak g(A)$, where
$$\a= \left(\ell_1+\cdots+\ell_n \right) \a_{-1}+\a_{j_1k_1}+\cdots+\a_{j_nk_n}.$$
The result now follows, with \eqref{Delta-ell} following from the fact that each $\ell_i\le j_i-1$ and  \eqref{Delta-norepeat} coming from the condition that the indices are not all equal.
\end{proof}
\end{longver}
Note that  
 $$\C e_{\ell,jk}\subseteq{\mathfrak m}_{\ell\a_{-1}+\a_{jk}}  \quad\text{and}\quad 
    \C f_{\ell,jk}\subseteq{\mathfrak m}_{-(\ell\a_{-1}+\a_{jk})}, $$
for $\ell=0,1,\dots, j-1$, but equality only holds for $\ell=0$ or $\ell=j-1$.

Let $\R Q$ be the $\R$-span of the roots and note that the bilinear form on $Q$ extends to a form $(\circ,\circ)_{\R Q}$. 
For a root $\a$ of nonzero norm, we define the \emph{reflection in $\a$} by
$$ w_\a:\quad \R Q \to \R Q, \quad x\mapsto x-\frac{2(\a,x)_{\R Q}}{(\a,\a)_{\R Q}}\a.$$
The reflection $w_{{\a}_{-1}}$ preserves the root lattice $Q$ with
$${w}_{\a_{-1}}\;  :\; \a_{-1}\mapsto-\a_{-1}, \qquad \ell \a_{-1}+\a_{jk}\mapsto (j-\ell-1) \a_{-1}+\a_{jk}.$$
Note that $w_{\a_{-1}}$ also preserves $\Delta$ and $\Delta^{\im}_{\pm}$.
For $q=1,\dots,c(1)$, the reflection $w_{\a_{1q}}$ also preserves $Q$ with
$${w}_{\a_{1q}}\;  :\; \a_{-1}\mapsto \a_{-1}, \qquad \ell \a_{-1}+\a_{jk}\mapsto\ell \a_{-1}+\a_{jk}-(j+1)\a_{1q}.$$
However ${w}_{\a_{1q}}$ does not preserve $\Delta$, since $\ell \a_{-1}+\a_{jk}-(j+1)\a_{1q}$ is never a root, as it has both positive and negative coefficients with respect to the simple roots.
Every other reflection of the form $w_{m\a_{-1}+\a_{pq}}$ fails to preserve $Q$.
We define the \emph{Weyl group} 
as $\langle w_{\a_{-1}}\rangle$, which  has order $2$.

\subsection{Roots and reflections for $\mathfrak m$}\label{SS-mroots}
The root system $\Delta$ gives a 
decomposition
$$\frak m = \frak h_A \oplus \bigoplus_{\a\in\Delta} \frak m_\a.$$
In this subsection, we compare this with the coarser decomposition given by the roots with respect to  $\frak h:={\frak h}_A/{\frak z} = \C h_1 \oplus \C h_2$.

Recall from~\cite{conway2013sphere} that the unique even 2-dimensional unimodular Lorentzian lattice~$\text{II}_{1,1}$ is the lattice $\Z\oplus\Z$ 
equipped with the bilinear form with  matrix~
$\left(\begin{smallmatrix}  ~0 & -1\\-1 & ~0
 \end{smallmatrix}\right)$.
We have a $\Z$-linear, inner product preserving, surjective \emph{specialization} map $Q \to \text{II}_{1,1}, \a\mapsto\overline\a$ defined by
$$\overline\a_{-1}=(1,-1)\quad\text{and}\quad\overline\a_{jk}=(1,j),$$
(see Corollary 5.4 of \cite{JurJPAA}) and specialized root system $\overline\Delta=\left\{\overline\a\mid\a\in\Delta \right\}$. 
Only finitely many roots in~$\Delta$ specialize to a given element of~$\overline\Delta$ by Lemma~\ref{L-rts}.
The specialized root space decomposition for $\frak m$ can  be written
$${\mathfrak m}= {\mathfrak h}\oplus \bigoplus_{\b\in\overline\Delta} {\mathfrak m}_{\b},$$
where
$$\frak m_{\b} = \left\{ x \in {\mathfrak m} \mid [h,x] = \b(h)x \text{ for all $h\in{\mathfrak h}$} \right\} 
= \bigoplus_{\substack{\a\in\Delta\text{ s.t.}\\ \overline\a=\b}} \frak m_\a.$$

For $(\ell,j,k)\in E$, the root $\a_{\ell,jk}:=\ell\a_{-1}+\a_{jk}\in\Delta$ specializes to
$\overline\a_{\ell,jk} = (\ell+1,j-\ell)\in  \text{II}_{1,1}.$
We can now see that the positive specialized root system is
\begin{align*}
  \overline\Delta_{+} &=\left\{(1,-1) \right\} \cup \left\{(x,y)\mid x,y\in \N \right\},
\end{align*} 
and further $\overline\Delta_{-}=-\overline\Delta_{+}$,  $\overline\Delta= \overline\Delta_{+}\cup\overline\Delta_{-}$. 
\begin{figure}
\includegraphics[height=9.5cm]{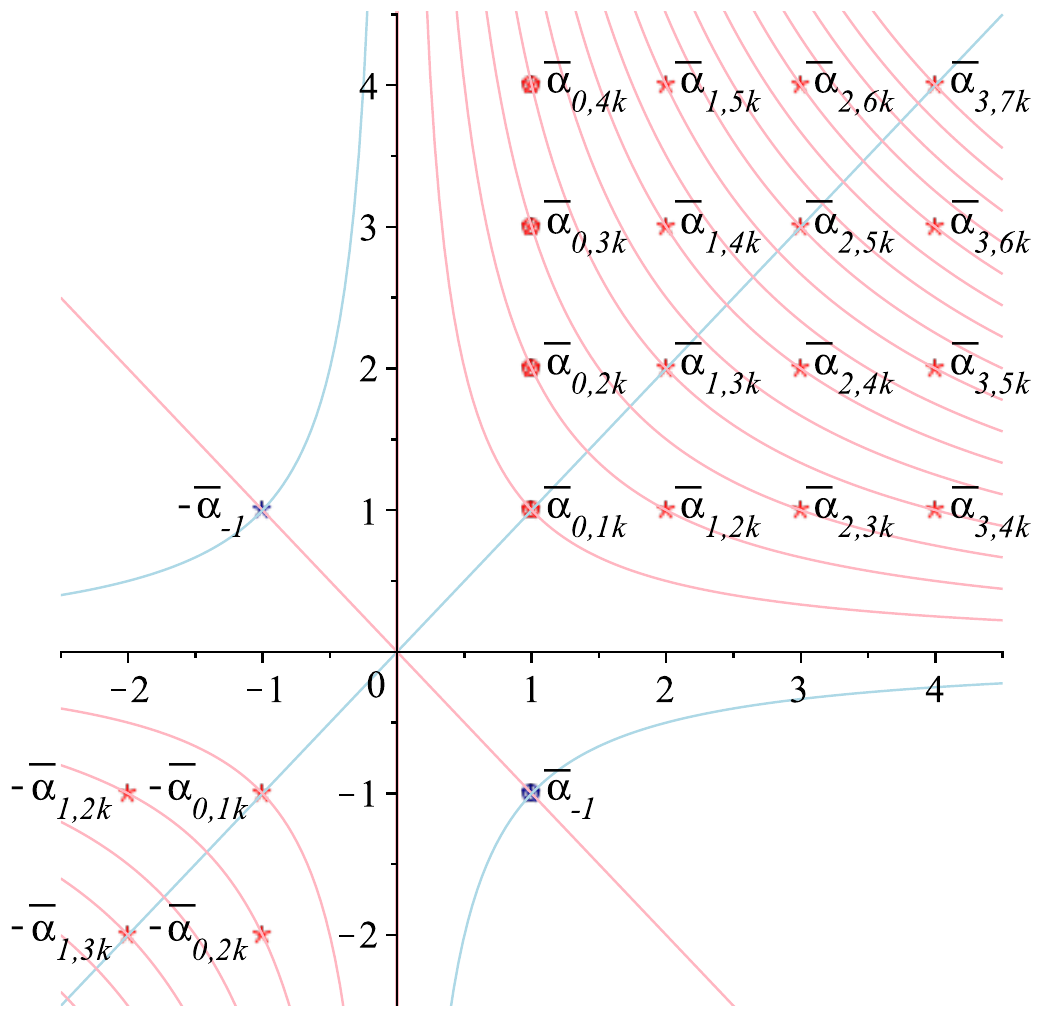} 
\caption{Roots for the Monster Lie algebra $\frak m$}\label{F-rts}
 \end{figure}
See Figure~\ref{F-rts}, where the real roots are denoted by blue asterisks and the imaginary roots by red asterisks; the blue hyperbola indicates elements of squared norm $2$ while the red hyperbolas indicate elements with squared norms in $-2\N$.

It is shown in \cite{BoInvent} that   $\dim(\mathfrak{m}_{(m,n)})=c(mn)$.
This follows from   the denominator identity of this specialized root grading of $\frak m$, which takes the elegant form of a product formula for the modular ~$J$-function:
\begin{equation*}\label{eq:iprod}
u\left(J(u) -J(v) \right) = \prod_{ i\in \N \atop j\in  \{-1\}\cup\N }  \left(1-u^iv^j\right)^{c(ij)}. \end{equation*}

We have  specialized root spaces
\begin{align*}
 \mathfrak{m}_{(1,-1)}& =\C e_{-1}, &
\mathfrak{m}_{(1,j)}&=\bigoplus_{k=1}^{c(j)}\,\C e_{jk}, &
\mathfrak{m}_{(j,1)}&=\bigoplus_{k=1}^{c(j)}\,\C e_{j-1,jk}, \\
 \mathfrak{m}_{(-1,1)}&= \C f_{-1},&
 \mathfrak{m}_{(-1,-j)}&=\bigoplus_{k=1}^{c(j)} \,\C f_{jk},&
 \mathfrak{m}_{(-j,-1)}&=\bigoplus_{k=1}^{c(j)} \,\C f_{j-1,jk}.
 \end{align*}
We also have 
$e_{\ell,jk}\in  
\mathfrak{m}_{(\ell+1,j-\ell)}$
and $f_{\ell,jk}\in  
\mathfrak{m}_{(-\ell-1,-j+\ell)}.$

Now $w_{\a_{-1}}$ induces the reflection $w_{\re}$ 
in the blue line in Figure~\ref{F-rts}
and 
$${w}_{\re}\;  :\; \overline\a_{-1}\mapsto-\overline\a_{-1}, \qquad \overline\a_{\ell,jk}\mapsto \overline\a_{j-\ell-1,jk}.$$
For $1\le q\le c(1)$,  $w_{\a_{1q}}$ induces the new reflection $w_{\im}$
in the red line in Figure~\ref{F-rts}
and 
$${w}_{\im}\;  :\; \overline\a_{-1}\mapsto\overline\a_{-1}, \qquad \overline\a_{\ell,jk}\mapsto -\overline\a_{\ell,jk}.$$
Note that $w_{\im}$ preserves the specialized root system $\overline\Delta$, even though ${w}_{\a_{1q}}$ doesn't preserve $\Delta$.
If $p=2m+1$, then $w_{\a_{m,pq}}$ also induces $w_{\im}$ on the specialized root lattice $ \text{II}_{1,1}$.
In all other cases, the reflection $w_{\a_{m,pq}}$ doesn't preserve the specialized root lattice.

\subsection{Root strings and root spans}\label{SS-strings}
In this subsection, we describe some of the properties of root strings in the root system $\Delta$ for the Lie algebra $\mathfrak{g}(A)$ and the spans of certain pairs of roots, which will be needed in future sections.
\begin{proposition}
Every root string of $\Delta$ in the direction of $\a_{-1}$ is  finite.
\end{proposition}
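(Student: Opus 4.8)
The plan is to show that for any root $\a \in \Delta$, the set of integers $n$ for which $\a + n\a_{-1} \in \Delta \cup \{0\}$ is bounded, hence the $\a_{-1}$-string through $\a$ is finite. The cleanest route is to exploit the fact that $\a_{-1}$ is a \emph{real} simple root, so the reflection $w_{\a_{-1}}$ preserves $\Delta$ (as recorded in Subsection~\ref{SS-gAeroots}), and root strings in the direction of a real root behave as in the finite-dimensional or Kac--Moody theory. Concretely, I would first reduce to the case where $\a$ is not a multiple of $\a_{-1}$ (the string through $\pm\a_{-1}$ and $0$ is obviously $\{-\a_{-1},0,\a_{-1}\}$, finite). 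Writing $\a = \ell\a_{-1} + \sum_{(j,k)\in I^{\im}} c_{jk}\a_{jk}$ as in Lemma~\ref{L-rts}, the quantity $\sum_{(j,k)} c_{jk}$ of imaginary simple root coefficients is invariant under adding multiples of $\a_{-1}$.

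The main step is then to bound the possible $\ell$-coefficients. First I would invoke Lemma~\ref{L-rts}: for $\a + n\a_{-1}$ to lie in $\Delta^{\im}_+$, condition~\eqref{Delta-pos} forces $\ell + n \ge 0$ and condition~\eqref{Delta-ell} forces
\[
\ell + n \;\le\; \sum_{(j,k)\in I^{\im}} (j-1)c_{jk}.
\]
Since the right-hand side is a fixed finite number depending only on $\a$ (the imaginary coefficients $c_{jk}$ are unchanged by shifting by $\a_{-1}$), the admissible values of $n$ are squeezed into the interval $-\ell \le \ell + n \le \sum (j-1)c_{jk}$, which contains only finitely many integers. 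The same bound applied to $-\a \in \Delta^{\im}_-$ handles the negative part of the string by the symmetry $\eta(\frak m_\a) = \frak m_{-\a}$ from Proposition~\ref{P-all}(6). This gives finiteness directly from the explicit description of the root system, without needing any $\frak{sl}_2$-representation-theoretic argument.

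Alternatively, and perhaps more conceptually, one can argue via the $\frak{gl}_2(-1)$-module structure. By Theorem~\ref{T-m-free}, each root vector $e_{\ell,jk}$ sits inside a finite-dimensional $\frak{gl}_2(-1)$-module $V_{jk}^\pm$, and $\ad(e_{-1}), \ad(f_{-1})$ act locally nilpotently since $\a_{-1}$ is real. A root string in the $\a_{-1}$-direction is contained in a single $\frak{gl}_2(-1)$-submodule of $\frak m$ generated by a fixed weight vector, and finite-dimensionality of that submodule (or equivalently local $\ad$-nilpotence of the real root vectors on the adjoint representation) forces the string to terminate in both directions. I expect the main obstacle to be purely bookkeeping: making sure that the decomposition of Theorem~\ref{T-m-free} lets one conclude that the relevant $\frak{gl}_2(-1)$-submodule containing a given root string is finite-dimensional, rather than lying in one of the infinitely generated free subalgebras $\nimpm$ in a way that mixes different $\frak{gl}_2(-1)$-weights. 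For the write-up I would favor the first, combinatorial argument via Lemma~\ref{L-rts}, since it is self-contained and makes the bound \eqref{Delta-ell} do all the work.
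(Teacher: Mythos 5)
Your proof is correct, but it follows a genuinely different route from the paper's. The paper's own proof is a two-line specialization argument: the string $\left\{\a+i\a_{-1}\mid i\in\Z\right\}\cap\Delta$ is pushed through the map $Q\to \text{II}_{1,1}$ of Subsection~\ref{SS-mroots}, where it becomes $\left\{\overline\a+i\overline\a_{-1}\mid i\in\Z\right\}\cap\overline\Delta$; this is visibly finite (Figure~\ref{F-rts}), since a line in the direction $(1,-1)$ meets $\overline\Delta$ in a bounded segment, and finiteness pulls back because distinct $i$ give distinct specializations. You instead stay in the root lattice and make Lemma~\ref{L-rts} do all the work: the imaginary coefficients $c_{jk}$ are invariant under shifts by $\a_{-1}$, so conditions~\eqref{Delta-pos} and~\eqref{Delta-ell} confine $\ell+n$ to the interval $\left[0,\ \sum(j-1)c_{jk}\right]$. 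Your version buys an explicit bound on the length of the string and avoids the specialization map altogether; the paper's buys brevity and, importantly, treats all string elements uniformly, with no need to distinguish real from imaginary roots.

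That distinction is the one point you should tighten. Lemma~\ref{L-rts} characterizes membership in $\Delta^{\im}_+$ only, so before invoking it you must rule out string elements that are \emph{real} roots; a priori the lemma says nothing about those, and this is not vacuous, since the norm $\left(\a+n\a_{-1},\a+n\a_{-1}\right)_Q$ is a quadratic in $n$ that is eventually positive. The fix is short and uses facts already in the paper: every root other than $\pm\a_{-1}$ is imaginary, because its specialization $(x,y)$ has $x,y$ both in $\N$ or both in $-\N$, hence norm $-2xy<0$, and the specialization map preserves the bilinear form; since your string elements retain a nonzero coefficient $c_{jk}$, none of them equals $\pm\a_{-1}$, so all of them are imaginary and Lemma~\ref{L-rts} applies. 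Two smaller remarks: condition~\eqref{Delta-pos} gives $0\le \ell+n$ (that is, $n\ge -\ell$), not $-\ell\le\ell+n$ as written, though the conclusion is unaffected; and your alternative $\gl_2(-1)$-module sketch can indeed be made rigorous ($\ad\left(e_{-1}\right)$ and $\ad\left(f_{-1}\right)$ are locally nilpotent on $\m$), but as you acknowledge it requires the standard, fussier integrable-module argument, so your preference for the combinatorial route is the right call.
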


\begin{longver}
\begin{proof}
Let $\a\in\Delta-\left\{\pm\a_{-1} \right\}$.  The root string through $\a$ in the direction of $\a_{-1}$ consists of the roots
of the form $\a+i\a_{-1}\in \Delta$ for $i\in\Z$.
When we specialize to $\overline\Delta$, this becomes $\overline\a+i\overline\a_{-1}= \overline\a+i(1,-1)\in
\overline\Delta$ for $i\in\Z$, which is clearly finite (see Figure~\ref{F-rts}). So the original root string in $\Delta$ is also finite.
\end{proof}
\end{longver}

As before, we set  $\a_{\ell,jk}= \ell\a_{-1}+\a_{jk}$.
We have root strings in $\Delta$ in the direction of $\a_{-1}$:
\begin{align*}
\a_{jk}=\a_{0,jk},\; \a_{1,jk},\;\dots,\; \a_{j-1,jk}, &\quad \text{and}\\
-\a_{j-1,jk}, \;\dots,\; -\a_{1,jk},\;-\a_{jk}=-\a_{0,jk},&
\end{align*}
for $(j,k)\in I^{\im}$.
After specializing to $\mathfrak{m}$, these can be considered as the weights of the $\mathfrak{gl}_2(-1)$-representations 
$V_{jk}^\pm$ spanned by the corresponding weight vectors
\begin{align*}
e_{jk}=e_{0,jk},\; e_{1,jk},\;\dots,\; e_{j-1,jk}, &\quad \text{and}\\
f_{j-1,jk},\; \dots,\; f_{1,jk},\;f_{jk}=f_{0,jk},&
\end{align*}
respectively. 
These representations have highest weights $\overline{\a}_{j-1,jk}$ and $-\overline{\a}_{0,jk}$, respectively.
The Weyl group of  $\mathfrak{gl}_2(-1)$ is $\left\langle w_{-1} \right\rangle$ and the reflection $w_{-1}$ reverses these root strings. 


Given any root system $\Delta$ and $\a,\b\in\Delta$, the \emph{root span} is $\Sigma(\a,\b)=\bigcup_{n=1}^\infty \Sigma_n$, where we inductively define 
\begin{align*}
 \Sigma_1&= \{\a+\b\}\cap\Delta; &
 \Sigma_{n} &= \left\{\g+\a,\g+\b \mid \g\in\Sigma_{n-1} \right\}\cap\Delta.
\end{align*}
More generally, if $A\subseteq\Delta$, then $\Sigma(A)=\bigcup_{n=1}^\infty \Sigma_n$, where 
$\Sigma_1= \{\a+\b\mid \a,\b\in A\}\cap\Delta$ and
$\Sigma_{n} = \left\{\g+\a \mid \g\in\Sigma_{n-1}, \a\in A \right\}\cap\Delta$.
Note that $\Sigma(\a,\b)=\emptyset$ if and only if $\a+\b\notin\Delta$.
Also note that $\Sigma(\b,\a)= \Sigma(\a,\b)$ and $\Sigma(-\a,-\b)=-\Sigma(\a,\b)$.
The following classification can be proved inductively by repeated application of Lemma~\ref{L-rts}:
\begin{proposition}\label{P-Sigma}
 Let $\Delta$ be the root system for the Lie algebra $\mathfrak{g}(A)$. Then we have the following.
\begin{enumerate}
\item $\displaystyle  \Sigma\left(\a_{-1},\a_{-1} \right)= \Sigma\left(\a_{-1},-\a_{-1} \right)=\emptyset$.
\item $\displaystyle  \Sigma\left(\a_{\ell,jk},\a_{-1} \right) = \begin{cases} 
 \{(a\ell+b)\a_{-1} + a\a_{jk} \mid&\!\! a,b\in\N, b<a(j-\ell)\}, \\
  &\text{\;\;if $0\le\ell<j-1$,}\\ 
  \emptyset&\text{\;\;if $\ell=j-1$.}\end{cases}$
\item $\displaystyle  \Sigma\left(\a_{\ell,jk},-\a_{-1} \right) = \begin{cases} 
 \{(a\ell-b)\a_{-1} + a\a_{jk} \mid&\!\! a,b\in\N, b\le a\ell\},\\ &\text{\;\;if $0<\ell<j$,}\\ 
  \emptyset&\text{\;\;if $\ell=0$.}\end{cases}$
\item $\displaystyle \Sigma\left(\a_{\ell,jk},\a_{m,pq} \right) =\left\{(a\ell+bm)\a_{-1}+a\a_{jk}+b\a_{pq} \mid a,b\in\N \right\}$.
\item $\displaystyle \Sigma\left(\a_{\ell,jk}, -\a_{m,jk} \right) =$ \\  $\begin{cases} 
 \{\a_{-1}\}\cup\{(x\ell+y)\a_{-1} + x\a_{jk}\mid &\!\!\!\! x\in\Z, y\in\N, y<x(j-\ell)\},\\ & \text{\;\;if $\ell-m=1$},\\ 
 \{-\a_{-1}\}\cup\{(x\ell-y)\a_{-1} + x\a_{jk}&\!\!\!\mid x\in\Z, y\in\N, y\le x\ell\},\\& \text{\;\;if $\ell-m=-1$},\\
  \emptyset & \text{\;\;otherwise.} \end{cases}$
\item  $\displaystyle \Sigma\left(\a_{\ell,jk}, -\a_{m,pq} \right)=\emptyset\;$ if $(j,k)\ne(p,q)$.
\end{enumerate}
\end{proposition}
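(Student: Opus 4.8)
The plan is to treat all six cases by a single mechanism. For a pair of roots $\g,\delta$ occurring in the statement, set
$$R(\g,\delta) = \left\{a\g + b\delta \mid a,b\in\N \right\}\cap\Delta,$$
which by the remark preceding the statement already contains $\Sigma(\g,\delta)$. The substance of the argument is the reverse inclusion $R(\g,\delta)\subseteq\Sigma(\g,\delta)$ together with an explicit description of $R(\g,\delta)$ read off from Lemma~\ref{L-rts}. I would first record the reverse inclusion as a \emph{reachability} principle, proved by induction on the total degree $a+b$: an element $a\g+b\delta$ with $a+b=n+1$ lands in $\Sigma_n$, the base case $a+b=2$ being the single element $\g+\delta\in\Sigma_1$. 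The inductive step rests on a \emph{predecessor property}: every $a\g+b\delta\in R(\g,\delta)$ with $a+b\ge3$ admits a predecessor $(a-1,b)$ or $(a,b-1)$, with both coordinates still at least $1$, whose associated element again lies in $R(\g,\delta)$. Granting this, adding back $\g$ or $\delta$ carries one from $\Sigma_{n-1}$ into $\Sigma_n$, so every element of $R(\g,\delta)$ is eventually reached.

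Next I would compute $R(\g,\delta)$ in each case by substituting the coefficient vector of $a\g+b\delta$ into the criterion of Lemma~\ref{L-rts}. Cases~(1) and~(6), and the degenerate branches of~(2),~(3), and~(5), are immediate: in each the sum $\g+\delta\notin\Delta$, so $\Sigma_1=\emptyset$ and hence $\Sigma(\g,\delta)=\emptyset$; for instance $2\a_{-1}$, $0$, and $\a_{\ell,jk}-\a_{m,pq}$ with $(j,k)\ne(p,q)$ are never roots. Case~(4), with $\g=\a_{\ell,jk}$ and $\delta=\a_{m,pq}$ for distinct imaginary indices, is generic: since $a\g+b\delta=(a\ell+bm)\a_{-1}+a\a_{jk}+b\a_{pq}$ has two distinct nonzero imaginary coordinates, the third condition of Lemma~\ref{L-rts} is vacuous, while the second holds automatically because $\ell\le j-1$ and $m\le p-1$; thus $R(\g,\delta)$ is the full quadrant and the predecessor property is trivial. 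Cases~(2) and~(3) each have a single nonzero imaginary coordinate, so the admissible range of the coefficient of $\a_{-1}$ is governed by the second and third conditions of Lemma~\ref{L-rts}, and the resulting region is a staircase whose width grows with $a$; here the predecessor property follows by stepping down in $b$ when $b\ge2$ and down in $a$ when $b=1$.

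The main obstacle is case~(5), where $\g=\a_{\ell,jk}$ and $\delta=-\a_{m,jk}$ share the same imaginary index, so $a\g+b\delta=(a\ell-bm)\a_{-1}+(a-b)\a_{jk}$ undergoes cancellation. I would split on the sign of $a-b$: the region $a>b$ contributes positive imaginary roots, the region $a<b$ contributes negative imaginary roots (handled by the symmetries $\Sigma(\delta,\g)=\Sigma(\g,\delta)$ and $\Sigma(-\g,-\delta)=-\Sigma(\g,\delta)$, which reduce it to the $a>b$ computation with $\ell$ and $m$ interchanged), and the diagonal $a=b$ contributes a multiple of $\a_{-1}$, which is a root precisely when $a=b=1$ and $|\ell-m|=1$, accounting for the isolated elements $\pm\a_{-1}$ in the statement. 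The delicate point throughout is reading off the exact inequalities from Lemma~\ref{L-rts}: its third condition behaves differently according to whether the imaginary coefficient equals $1$ or exceeds $1$ (reflecting that $e_{jk}$ has multiplicity one whereas higher multiples of a single imaginary root require two distinct free generators), so the boundary of $R(\g,\delta)$, and with it the precise strict-versus-nonstrict inequalities in each displayed formula, must be checked separately on the lines $a=1$ and $a\ge2$, and in case~(5) on the diagonal as the sign of $a-b$ changes. Verifying that the predecessor property survives these boundary transitions, so that no root of the prescribed form is cut off from $\g+\delta$, is the crux of the argument.
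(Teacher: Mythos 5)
Your mechanism coincides with the paper's own: the paper disposes of this proposition in one sentence ("can be proved inductively by repeated application of Lemma~\ref{L-rts}"), and your scaffolding --- the trivial inclusion $\Sigma(\g,\delta)\subseteq R(\g,\delta)$, a reachability induction for the reverse inclusion, and a computation of $R(\g,\delta)$ by substituting into Lemma~\ref{L-rts} --- is the natural way to make that one-liner precise. The reachability half of your plan is sound.

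The gap sits exactly at the step you yourself call the crux, and no amount of care at the boundary will close it: the region that Lemma~\ref{L-rts} actually yields is \emph{not} the displayed one, so the matching you defer would fail. In case~(2), condition~(3) of the lemma forces, for imaginary coefficient $a\ge2$, the strict bound $b<a(j-\ell-1)$, not $b<a(j-\ell)$. Concretely, with $j=2$, $\ell=0$, $a=b=2$, the displayed set contains $2\a_{-1}+2\a_{2k}=2\a_{1,2k}$, which is not a root: $\nimp$ is free (Theorem~\ref{T-m-free}), and the only bracket of that weight is $[e_{1,2k},e_{1,2k}]=0$. The same off-by-$a$ defect appears in case~(3) (at $b=a\ell$ with $a\ge2$ the element is $a\a_{jk}$, not a root) and in the positive part of case~(5). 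Worse, your own sign-splitting in case~(5) detects elements the displayed formula omits: for $\ell-m=1$ and $\ell\ge2$ one has $\a_{-1}+\delta=-\a_{\ell-2,jk}\in\Sigma_2$, a genuine negative root, whereas the displayed set contains only $\a_{-1}$ and elements with positive $\a_{jk}$-coefficient --- yet you assert your computation will confirm the statement instead of flagging this conflict. In short, Proposition~\ref{P-Sigma} as printed is inconsistent with Lemma~\ref{L-rts} (the lemma is the one compatible with freeness of $\nimp$; the stated bound in case~(2) is what one gets in the specialized system $\overline\Delta$, where integer multiples of imaginary roots do remain roots). Executed honestly, your plan disproves cases~(2), (3) and~(5) rather than establishing them; claiming the boundary check will come out in the statement's favor is the genuine gap.
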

\begin{corollary}\label{C-Sigma}
$\Sigma(\pm\a_{0,2k},\mp\a_{1,2k})=\{\mp\a_{-1}\}
$
but, in all other cases,
the root span of two roots of the form $\pm\a_{-1}$ or $\pm\a_{\ell,jk}$ is either empty or infinite.
\end{corollary}
As we see in Theorem~\ref{T-commrel}, $\Sigma(\a,\b)$ is the set of roots needed to compute the group commutator of the pro-unipotent generators corresponding to $\a$ and $\b$.

We contrast this situation with real roots $\a,\b$ of a Kac--Moody root system, where the set 
$$S(\a,\b):=\left\{a\a+b\b \mid a,b\in\N \right\}\cap\Delta$$ is finite if and only if $\a$ and $\b$ are a prenilpotent pair,
and furthermore $S(\a,\b)=\Sigma(\a,\b)$.
But $\Sigma(\a,\b)$ can be strictly smaller 
than $S(\a,\b)$ in the Monster root system $\Delta$
(or, indeed, in a Kac--Moody root system if we consider imaginary roots $\a$ and $\b$).
For example, $\Sigma\left(\a_{-1},\a_{j-1,jk} \right)$ is empty while $S(\a_{-1},\a_{j-1,jk})$ is infinite.

\subsection{Positive root systems}\label{SS-pos}
We now define positive subsystems of a root system $\Delta$, following~\cite{morita80} and~\cite{loosneher11}:
\begin{defn}
We say $\Pi\subseteq\Delta$ is a \emph{positive subsystem} if $\Pi\cap(-\Pi)=\emptyset$ and
$\a+\b\in\Pi$ whenever $\a,\b\in\Pi$ and $\a+\b\in\Delta$.
A root $\g\in\Pi$ is called \emph{indecomposable (with respect to $\Pi$)} if it cannot be written as a 
sum $\g=\a+\b$ for $\a,\b\in\Pi$.
If $A\subseteq \Delta$ and $\Sigma(A)\cap\Sigma(-A)=\emptyset$, then $\Sigma(A)$ is a positive subsystem and is called
the positive subsystem \emph{generated by $A$}.
\end{defn}

Note that a indecomposable root is just a minimal element of $\Pi$ with respect to the 
partial ordering with $\g\prec\a$ when $\a-\g\in \Pi$. 
If $\Pi$ is finite, then the set of indecomposable roots are just the simple roots corresponding to $\Pi$.
If $\Pi$ is infinite, then the indecomposable roots may not generate $\Delta$.
For example, in the specialized root system $\overline{\Delta}$, the set
$$ \Pi := \{ (a,b)\in \Delta_{\im} \mid \sqrt{2}a<b\}$$
is easily shown to be a positive root system containing no indecomposable roots.
We say that $\Pi$ is \emph{based} if it is generated by the set of indecomposable roots with respect to $\Pi$.
If $\Pi$ is based then the indecomposable roots are called \emph{simple} roots.

\begin{lemma}
If a positive subsystem $\Pi$ is a finitely generated, then it is based.
In particular, if $\a,\b\in\Delta$ with $\a\ne q\b$ for $q\in\Q$,
then $\Sigma(\a,\b)$ is a based positive subsystem with simple roots $\a$ and $\b$.
\end{lemma}

Not every based positive subsystem is finitely generated however.
For example, $\Delta^+$ is a based positive subsystem with simple roots
$\{\a_{jk} \mid (j,k)\in I\}$, and  $\{-\a_{-1}\}\cup\Delta^+_{\im}$ is a positive root system with simple roots
$\{-\a_{-1},\a_{jk} \mid (j,k)\in I^{\im}\}$.

If $\Pi$ is a positive root system, then $-\Pi$ is also a positive root system.
For every positive root system, we have a corresponding subalgebra
$$
  \mathfrak{n}^\Pi := \bigoplus_{\a\in\Pi} \mathfrak{m}_\a,
$$
and decomposition
$$
  \m^\Pi := \mathfrak{n}^{-\Pi}\oplus\mathfrak{h}\oplus \mathfrak{n}^\Pi .
$$
Note that $\m^{\Delta^+}=\m$, but $\m^{\Sigma(\a,\b)}$ is strictly contained in $\m$.



\subsection{Further identities in $\mathfrak{m}$}\label{SS-identities}
In this section, we give some basic identities in $\mathfrak{m}$ that are needed in the rest of the paper.
\begin{theorem}\label{T-identities-v2}
The following identities hold in  $\mathfrak{m}$ for all $(\ell,j,k), (m,p,q)\in E$. \\
Relations between the free subalgebras $\nimp$ and $\nimm$:
\begin{align*}
\tag{L:1}\label{L-ef0}
  \left[ e_{\ell,jk}, f_{m,pq}\right]&= 0   \qquad\text{ if $j\neq p$, $k\neq q$, or $| \ell- m|>1$}, \\
\tag{L:2}\label{L-efh} 
  \left[e_{\ell,jk}, f_{\ell,jk} \right] &
  =  (-1)^{\ell+1} \binom{j-1}{\ell}  \left( (j-\ell) h_1 +(\ell+1)h_2 \right), \\  
\tag{L:3a}\label{L-efe-}
  \left[e_{\ell,jk}, f_{\ell-1,jk} \right]&= (-1)^{\ell+1}\binom{j-1}{\ell}\,\ell e_{-1} \qquad\text{if $\ell\ne 0$},\\
\tag{L:3b}\label{L-eff-}
  \left[e_{\ell-1,jk}, f_{\ell,jk} \right]&=(-1)^{\ell} \binom{j-1}{\ell}\,\ell f_{-1} \qquad\text{if $\ell\ne 0$},\\
\intertext{\it Relations between the root vectors of $\mathfrak{gl}_2(-1)$  and the free subalgebras:}
\tag{L:4a}\label{L-e-e}
  \left[e_{-1},  e_{\ell,jk} \right]&= \begin{cases}  (\ell+1)e_{\ell+1,jk} &\text{if $0\le\ell<j-1$,}\\ 0&\text{if $\ell=j-1$,}\end{cases} \\ 
\tag{L:4b}\label{L-f-e}
  \left[f_{-1}, e_{\ell,jk} \right]&=\begin{cases}0 &\text{if $\ell=0$,}\\ (j-\ell) e_{\ell-1,jk}&\text{if $0<\ell< j$,}\end{cases} \\
\tag{L:4c}\label{L-f-f}
  \left[f_{-1},  f_{\ell,jk} \right]&= \begin{cases} (\ell+1)f_{\ell+1,jk} &\text{if $0\le\ell<j-1$,}\\ 0&\text{if $\ell=j-1$,}\end{cases} \\ 
\tag{L:4d}\label{L-e-f}
  \left[e_{-1}, f_{\ell,jk} \right]&= \begin{cases}0 &\text{if $\ell=0$,}\\ (j-\ell)  f_{\ell-1,jk}&\text{if $0<\ell< j$,}\end{cases} \\
\intertext{{\it Relations between the toral subalgebra $\frak h$ and the free subalgebras:}}
\tag{L:5a}\label{L-h1e}
  \left[h_1, e_{\ell,jk} \right]&=(\ell+1) e_{\ell,jk},\\
\tag{L:5b}\label{L-h1f}
  \left[h_1,f_{\ell,jk} \right]&=-(\ell+1) f_{\ell,jk},\\
\tag{L:6a}\label{L-h2e}
  \left[h_2,  e_{\ell,jk} \right]&= (j-\ell) e_{\ell,jk},\\
\tag{L:6b}\label{L-h2f}
  \left[h_2,  f_{\ell,jk} \right]&= -(j-\ell) f_{\ell,jk}.
\end{align*}
\end{theorem}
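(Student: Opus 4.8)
The plan is to organize the stated identities by the root space into which each bracket falls, proving first the structural relations that encode the $\mathfrak{gl}_2(-1)$-module structure of the modules $V_{jk}^\pm$ from Theorem~\ref{T-m-free}, and then deducing the genuinely numerical relations \eqref{L-efh}, \eqref{L-efe-}, \eqref{L-eff-} from the invariant bilinear form rather than from a direct (and mutually recursive) Jacobi induction.

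First I would dispose of the weight relations \eqref{L-h1e}--\eqref{L-h2f}. Since $\ad e_{-1}$ raises the $h_1$-eigenvalue by $1$ and lowers the $h_2$-eigenvalue by $1$ by \eqref{Mhe-}, while $e_{jk}$ has $h_1$-weight $1$ and $h_2$-weight $j$ by \eqref{Mhe}, an immediate induction on $\ell$ using the definition $e_{\ell,jk}=(\ad e_{-1})^\ell e_{jk}/\ell!$ shows $e_{\ell,jk}$ has $h_1$-weight $\ell+1$ and $h_2$-weight $j-\ell$; the relations for $f_{\ell,jk}$ follow identically, or by applying the Cartan involution $\eta$. Next, \eqref{L-e-e} and \eqref{L-f-f} are immediate from the definition, since $\ad e_{-1}$ applied to $e_{\ell,jk}$ produces $(\ell+1)e_{\ell+1,jk}$, which vanishes at $\ell=j-1$ precisely by the Serre relation \eqref{Mee}. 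The remaining two module relations \eqref{L-f-e} and \eqref{L-e-f} are the lowering-operator formulas in the finite-dimensional $\mathfrak{gl}_2(-1)$-modules $V_{jk}^\pm$: writing $E=\ad e_{-1}$, $F=\ad f_{-1}$, $H=\ad(h_1-h_2)$, so that $[F,E]=-H$ and $[H,E]=2E$ by \eqref{Me-f-} and \eqref{Mhe-}, I would establish the operator identity $[F,E^\ell]=-\ell\,E^{\ell-1}(H+\ell-1)$ and apply it to the lowest weight vector $e_{jk}$, which satisfies $Fe_{jk}=0$ by \eqref{Mef-} and $He_{jk}=(1-j)e_{jk}$ by \eqref{Mhe}; this yields $Fe_{\ell,jk}=(j-\ell)e_{\ell-1,jk}$, and symmetrically \eqref{L-e-f}.

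With the module structure in hand, the vanishing relation \eqref{L-ef0} follows from the root-space grading: $[e_{\ell,jk},f_{m,pq}]$ lies in $\mathfrak{m}_{(\ell-m)\alpha_{-1}+\alpha_{jk}-\alpha_{pq}}$, and this weight is neither a root nor zero unless $(j,k)=(p,q)$, since $\alpha_{jk}-\alpha_{pq}$ then has mixed signs in the simple-root coordinates and $\Delta=\Delta_+\sqcup\Delta_-$ by Proposition~\ref{P-all}, and unless $|\ell-m|\le 1$, since $\alpha_{-1}$ is a real root and so $(\ell-m)\alpha_{-1}$ is a root only for $\ell-m=\pm1$. For the three surviving brackets I would avoid induction and use the invariant form $(\circ,\circ)_{\mathfrak m}$. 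The key preliminary computation is the form value $(e_{\ell,jk},f_{\ell,jk})_{\mathfrak m}=(-1)^\ell\binom{j-1}{\ell}$, obtained from the one-step recursion $(e_{\ell,jk},f_{\ell,jk})_{\mathfrak m}=\tfrac{1}{\ell}(e_{\ell,jk},[f_{-1},f_{\ell-1,jk}])_{\mathfrak m}=-\tfrac{j-\ell}{\ell}(e_{\ell-1,jk},f_{\ell-1,jk})_{\mathfrak m}$, which uses invariance together with \eqref{L-f-f} and \eqref{L-f-e}, starting from $(e_{jk},f_{jk})_{\mathfrak m}=1$ from \eqref{B-gAe-ef}.

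Since $[e_{\ell,jk},f_{\ell,jk}]\in\mathfrak h$, invariance determines it as the unique $t\in\mathfrak h$ with $(t,h)_{\mathfrak m}=\alpha_{\ell,jk}(h)\,(e_{\ell,jk},f_{\ell,jk})_{\mathfrak m}$ for all $h\in\mathfrak h$; combining the computed form value, the weights $\alpha_{\ell,jk}(h_1)=\ell+1$ and $\alpha_{\ell,jk}(h_2)=j-\ell$, and the nondegenerate Gram matrix $\bigl(\begin{smallmatrix}0&-1\\-1&0\end{smallmatrix}\bigr)$ of $(\circ,\circ)_{\mathfrak m}$ in the basis $(h_1,h_2)$ read off from \eqref{B-gAe-hh} then yields \eqref{L-efh}. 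Relations \eqref{L-efe-} and \eqref{L-eff-} are handled the same way: $[e_{\ell,jk},f_{\ell-1,jk}]\in\C e_{-1}$, so its coefficient is recovered by pairing with $f_{-1}$ and invoking invariance and \eqref{L-f-f}, and similarly \eqref{L-eff-} lands in $\C f_{-1}$ and its coefficient is recovered by pairing with $e_{-1}$ and using \eqref{L-e-f}, the final binomials emerging from the elementary identity $(j-\ell)\binom{j-1}{\ell-1}=\ell\binom{j-1}{\ell}$. I expect the main obstacle to be pinning down exactly these scalars—the signs and binomial coefficients in \eqref{L-efh}, \eqref{L-efe-}, \eqref{L-eff-}—because a naive Jacobi-identity induction entangles the three families with one another, whereas the bilinear-form route decouples them and reduces each to the single form-value recursion above; the one point demanding care is the sign and normalization bookkeeping in the $\ell!$ denominators defining $e_{\ell,jk}$ and in the operator identity $[F,E^\ell]=-\ell\,E^{\ell-1}(H+\ell-1)$, since these feed into every subsequent coefficient.
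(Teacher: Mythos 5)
Your proposal is correct, and it follows the paper's overall architecture --- the root-space grading argument for (L:1), standard $\mathfrak{gl}_2$-module theory for (L:4a)--(L:4d) and the weight relations, and the invariant bilinear form for the Cartan-valued bracket (L:2) --- but it diverges in a genuinely useful way on the two mixed relations (L:3a) and (L:3b). The paper computes these by a second Jacobi-identity induction: it posits $[e_{\ell,jk},f_{\ell-1,jk}]=c_{\ell,j}e_{-1}$, verifies the base case $c_{1,j}=j-1$ directly, and solves the recursion $c_{\ell,j}=(-1)^{\ell}\binom{j-1}{\ell}\frac{j-2\ell-1}{\ell}-c_{\ell-1,j}\frac{j-\ell}{\ell}$ (with a parallel recursion for (L:3b)). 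You instead observe that $[e_{\ell,jk},f_{\ell-1,jk}]$ lies in the one-dimensional space $\C e_{-1}$ with $(e_{-1},f_{-1})_{\mathfrak m}=1$, so the scalar equals $([e_{\ell,jk},f_{\ell-1,jk}],f_{-1})_{\mathfrak m}$, which invariance converts to $-\ell\,(e_{\ell,jk},f_{\ell,jk})_{\mathfrak m}$; this reuses the single form value $(e_{\ell,jk},f_{\ell,jk})_{\mathfrak m}=(-1)^\ell\binom{j-1}{\ell}$ that both you and the paper compute on the way to (L:2), and eliminates the extra recursion entirely. This buys two things: the three numerical relations are genuinely decoupled, and the signs come out right automatically --- your answer $(-1)^{\ell+1}\ell\binom{j-1}{\ell}$ agrees with the theorem statement and with the paper's own base case $c_{1,j}=j-1$, whereas the closing line of the paper's induction for (L:3a) reads $(-1)^{\ell}\binom{j-1}{\ell}\ell$, an apparent sign slip that your route sidesteps. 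Your remaining deviations are cosmetic: for (L:4b)/(L:4d) you use the closed-form operator identity $[F,E^\ell]=-\ell E^{\ell-1}(H+\ell-1)$ applied to the lowest weight vector, where the paper runs the equivalent recursion $a_{\ell,j}=a_{\ell-1,j}+j-2\ell+1$; and for (L:2) you solve against the explicit Gram matrix $\left(\begin{smallmatrix}0&-1\\-1&0\end{smallmatrix}\right)$ of $(\circ,\circ)_{\mathfrak m}$ on $\mathfrak h$ in the basis $(h_1,h_2)$, where the paper invokes the formula $[a,b]=(a,b)\sum n_{jk}h_{jk}$ from \cite{MP95} and then rewrites $\ell h_{-1\,1}+h_{jk}$ in terms of $h_1,h_2$; these are the same computations organized differently.
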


\begin{longver}
Note that relations \eqref{L-ef0}, \eqref{L-efh}, \eqref{L-efe-}, and \eqref{L-eff-} can be summarized as
\begin{align*}
 & \left[ e_{\ell,jk}, f_{m,pq} \right]=\\ &\;\; \delta_{jp}\delta_{kq} (-1)^{\ell+1} \binom{j-1}{\ell}  \big(  \delta_{\ell m} \left((j-\ell) h_1 +(\ell+1)h_2 \right)
  -\delta_{\ell-1,m} \ell e_{-1} +\delta_{\ell+1,m} (\ell+1)(j-\ell-1) f_{-1}\big). 
\end{align*}
\end{longver}

\begin{proof}
\begin{shortver}
Most of these relations are straightforward computations. We will give one example here.
\end{shortver}

Relation \eqref{L-ef0} follows from the corresponding relation in $\mathfrak{g}^e$. To see this, 
suppose $$0\ne\left[\e_{\ell,jk},  \f_{m,pq} \right]\in\mathfrak{m}_{\overline\a}$$ for $\overline\a:=(\ell-m )\overline\alpha_{-1} + \overline\alpha_{jk} - \overline\alpha_{pq}$. 
Since the $\overline\alpha_{jk}$ are simple roots of $\overline\Delta$, these coefficients must be  all positive, all negative, or all zero (recalling that  $\mathfrak{m}_0=\mathfrak{h}$).
So we must have $(j,k)=(p,q)$. Now $\overline\a=(\ell-m )\overline\alpha_{-1}$ and so  $\mathfrak{m}_{\overline\a}\ne0$ only if $|\ell-m| \le 1$.
\begin{longver}

Relations \eqref{L-h1e}, \eqref{L-h1f}, \eqref{L-h2e}, and \eqref{L-h2f} follow from the root space decomposition in Subsection~\ref{SS-mroots}.

Relations \eqref{L-e-e} and \eqref{L-f-f} are true by definition of $e_{\ell,jk}$ or $f_{\ell,jk}$, together with \eqref{Rmee}.  \eqref{L-e-f} follows from the fact that  $f_{jk}$ is a highest weight vector for an irreducible representation of $\mathfrak {gl}_2$ of dimension $j$. Similarly, \eqref{L-f-e} follows from the fact that $e_{ik}$ is a lowest weight vector. 

By properties of weight spaces,  $\left[f_{-1}, \left(\ad e_{-1} \right)^\ell e_{jk} \right]= a_{\ell, j}\left(\ad e_{-1} \right)^{\ell-1} e_{jk}$ and $\left[e_{-1}, \left(\ad f_{-1} \right)^\ell f_{jk} \right]= a'_{\ell, j}\left(\ad f_{-1} \right)^{\ell-1} f_{jk}$, for some $\a_{\ell,j}\in\Z$
with $a_{0,j}=a'_{0,j}=0$. Now
\begin{align*}
 \left[f_{-1}, \left(\ad e_{-1} \right)^\ell e_{jk} \right] &=  \left[f_{-1}, \left[e_{-1},\left(\ad e_{-1} \right)^{\ell-1} e_{jk} \right] \right]\\
 &=- \left[e_{-1},\left[\left(\ad e_{-1} \right)^{\ell-1} e_{jk},f_{-1} \right] \right] - \left[\left(\ad e_{-1} \right)^{\ell-1} e_{jk}, \left[f_{-1},e_{-1} \right] \right]\\
 &= a_{\ell-1,j}\left[e_{-1},\left(\ad e_{-1}\right)^{\ell-2} e_{jk} \right]  + \left[\left(\ad e_{-1} \right)^{\ell-1} e_{jk}, h_1-h_2 \right]\\
 &=\left(a_{\ell-1,j} -(\ell-1+1) +(j-\ell+1)\right)\left(\ad e_{-1} \right)^{\ell-1} e_{jk}.
\end{align*}
So $a_{\ell, k} = a_{\ell-1,j} +j-2\ell+1$ and by induction
$$ a_{\ell,k} = \sum_{p=1}^\ell(j-2p+1)  = \ell j -\ell(\ell+1)+\ell = \ell(j-\ell).$$
Hence
$$\left[f_{-1}, e_{\ell,jk} \right] = \frac{1}{\ell!} \left[f_{-1}, \left(\ad e_{-1} \right)^{\ell-1} e_{jk} \right] =  \frac{\ell(j-\ell)}{\ell!} \left(\ad e_{-1} \right)^{\ell-1} e_{jk}= (j-\ell)e_{\ell-1,jk}.$$
Similarly
\begin{align*}
 \left[e_{-1},\left(\ad f_{-1} \right)^\ell f_{jk} \right] &=  \left[e_{-1}, \left[f_{-1},\left(\ad f_{-1} \right)^{\ell-1} f_{jk} \right] \right]\\
 &=- \left[f_{-1},\left[\left(\ad f_{-1}\right)^{\ell-1} f_{jk},e_{-1} \right] \right] - \left[\left(\ad f_{-1} \right)^{\ell-1} f_{jk}, \left[e_{-1},f_{-1} \right] \right]\\
 &= a'_{\ell-1,j}\left[f_{-1},\left(\ad f_{-1} \right)^{\ell-2} f_{jk} \right]  - \left[\left(\ad f_{-1} \right)^{\ell-1} f_{jk}, h_1-h_2 \right]\\
 &=\left(a'_{\ell-1,j} -(\ell-1+1)+(j-\ell+1)\right)\left(\ad f_{-1} \right)^{\ell-1} f_{jk},
\end{align*}
so $a'_{\ell,j}=a_{\ell,j}$.

For \eqref{L-ef0}, we use the following~\cite[p.\ 534]{MP95}: if $\overline\a=\sum n_{jk}\overline\a_{jk}$, 
$a\in\mathfrak{m}_{\overline\a}$ and $b\in\mathfrak{m}_{-\overline\a}$, then $[a,b] = (a,b)\sum n_{jk}h_{jk}$. So
\begin{align*}
\left[e_{\ell,jk},f_{\ell,jk} \right] &=\left[\e_{\ell,jk},\f_{\ell,jk} \right]+\mathfrak{z}=\left(\e_{\ell,jk},\f_{\ell,jk} \right) \left(\ell  h_{-11}+ h_{jk}+\mathfrak{z} \right) 
=-\left(e_{\ell,jk},f_{\ell,jk} \right)\,\left((j-\ell) h_1 + (\ell+1)h_2\right)\\
&=-\frac{\left(\left(\ad e_{-1} \right)^\ell e_{jk},(\ad f_{-1})^\ell f_{jk}\right)}{(\ell!)^2}\,\left((j-\ell) h_1 + (\ell+1)h_2\right).
\end{align*}
So we need to find $b_{\ell,j}:= \left((\ad e_{-1})^\ell e_{jk},(\ad f_{-1})^\ell f_{jk}\right)$. Now $b_{0,j}=1$ by \eqref{B-gAe-ef}, and for $\ell>0$
\begin{align*}
b_{\ell,j} &= \left( \left[e_{-1},(\ad e_{-1})^{\ell-1} e_{jk} \right],\left(\ad f_{-1} \right)^\ell f_{jk}\right) = - \left( \left[\left(\ad e_{-1} \right)^{\ell-1} e_{jk},e_{-1} \right],\left(\ad f_{-1} \right)^\ell f_{jk}\right)\\
 &=- \left( \left(\ad e_{-1} \right)^{\ell-1} e_{jk},\left[e_{-1},\left(\ad f_{-1} \right)^\ell f_{jk}\right]\right) =- \ell(j-\ell) b_{\ell-1,j} .
\end{align*}
By induction,
$$b_{\ell,j} := \prod_{p=1}^\ell (-p)(j-p) =(-1)^\ell \frac{\ell!(j-1)!}{(j-\ell-1)!}.$$
Hence
$$\left[e_{\ell,jk},f_{\ell,jk} \right]=-(-1)^\ell \frac{\ell!(j-1)!}{(\ell!)^2(j-\ell-1)!}\,\left((j-\ell) h_1 + (\ell+1)h_2\right) 
  =(-1)^{\ell+1}\binom{j-1}{\ell}  \left( (j-\ell) h_1 +(\ell+1)h_2 \right).$$

For \eqref{L-efe-} and  \eqref{L-eff-}, suppose $\left[e_{\ell,jk}, f_{\ell-1,jk} \right]= c_{\ell, j}e_{-1}$.
Then $c_{1,j} =j-1$ since
\begin{align*}
\left[e_{1,jk}, f_{0,jk} \right]&= \left[\left[e_{-1},e_{jk} \right],f_{jk} \right] =-\left[\left[e_{jk},f_{jk} \right],e_{-1} \right] - \left[\left[f_{jk},e_{-1} \right],e_{jk} \right]\\
 &= \left[jh_1+h_2,e_{-1} \right] - 0 = (j-1)e_{-1}.
 \end{align*}
 And
\begin{align*}
\left[e_{\ell,jk}, f_{\ell-1,jk} \right]&= \frac{1}{\ell}\left[\left[e_{-1},e_{\ell-1,jk} \right],f_{\ell-1,jk} \right]\\
&=-\frac{1}{\ell}\left[\left[e_{\ell-1,jk},f_{\ell-1,jk} \right],e_{-1} \right] -\frac{1}{\ell} \left[\left[f_{\ell-1,jk},e_{-1} \right], e_{\ell-1,jk} \right]\\
&=(-1)^{\ell}\binom{j-1}{\ell}\frac{1}{\ell}\left[(j-\ell)h_1+(\ell+1)h_2, e_{-1} \right] + \frac{j-\ell}{\ell}\left[f_{\ell-2,jk}, e_{\ell-1,jk} \right]\\
&=(-1)^{\ell}\binom{j-1}{\ell}\frac{1}{\ell}\left((j-\ell)e_{-1} -(\ell+1)e_{-1} \right)-c_{\ell-1,jk}\frac{j-\ell}{\ell} e_{-1}.
\end{align*}
Hence
\begin{align*}
c_{\ell,j} &=(-1)^{\ell} \binom{j-1}{\ell}\frac{j-2\ell-1}{\ell}  -c_{\ell-1,jk} \frac{j-\ell}{\ell}
\end{align*}
By induction 
\begin{align*}
c_{\ell,j} &=(-1)^{\ell}\binom{j-1}{\ell}\ell.
\end{align*}

Similarly, suppose $\left[e_{\ell,jk}, f_{\ell+1,jk}\right]= c'_{\ell, j}f_{-1}$.
Then $c'_{1,j} =-(j-1)$ since
\begin{align*}
\left[e_{0,jk}, f_{1,jk} \right]&= \left[e_{jk},\left[f_{-1},f_{jk} \right] \right] =-\left[f_{-1},\left[f_{jk},e_{jk} \right] \right] - \left[f_{jk},\left[e_{jk},f_{-1} \right] \right]\\
 &= -\left[f_{-1},jh_1+h_2 \right] - 0 =- (j-1)f_{-1}.
 \end{align*}
 And
\begin{align*}
\left[e_{\ell-1,jk}, f_{\ell,jk} \right]&= \frac{1}{\ell}\left[e_{\ell-1,jk},\left[f_{-1},f_{\ell,jk} \right] \right]\\
&=-\frac{1}{\ell}\left[f_{-1},\left[f_{\ell-1,jk},e_{\ell-1,jk} \right] \right] -\frac{1}{\ell} \left[f_{\ell-1,jk}, \left[e_{\ell-1,jk},f_{-1} \right] \right]\\
&=-(-1)^{\ell}\binom{j-1}{\ell}\frac{1}{\ell}\left[(j-\ell)h_1+(\ell+1)h_2, e_{-1} \right] + \frac{j-\ell}{\ell}\left[f_{\ell-1,jk}, e_{\ell-2,jk} \right]\\
&=-(-1)^{\ell}\binom{j-1}{\ell}\frac{1}{\ell}\left((j-\ell)e_{-1} -(\ell+1)e_{-1} \right)+c'_{\ell-1,jk}\frac{j-\ell}{\ell} e_{-1}.
\end{align*}
Hence $c'_{\ell, j} =-c_{\ell, j}$.
\end{longver}
\end{proof}

\section{Complete pro-unipotent group associated to $\mathfrak m$}\label{S-unip}

In this section, we construct a group $\Uhimp$ associated to the imaginary roots of $\mathfrak m$. We construct  $\Uhimp$ explicitly  as a group of smearing  automorphisms of the positive imaginary subalgebra of a completion~$\widehat{\frak m}$ of $\frak m$. 

In Subsection~\ref{SS-pro}, we show that the group $\Uhimp$ is naturally pro-nilpotent and pro-unipotent.
In Subsection~\ref{SS-PositiveRootSys}, we describe complete pro-unipotent groups for some other completions of~$\mathfrak{m}$.

Complete unipotent groups for  Kac--Moody algebras were constructed in \cite[Section~1.9]{rousseau2012almost}, and in \cite[Section~6.2]{kumar2012kac} in terms of pro-representations.

\subsection{Lie algebra gradings and pro-summability}\label{SS-graded}
Fix a lattice  $\Lambda$ (that is, a free $\Z$-module).
Recall that a $\C$-Lie algebra $\mathfrak{L}$ is \emph{$\Lambda$-graded} if
$$\mathfrak{L} =\bigoplus_{a\in\Lambda} \mathfrak{L}_a,$$
with $\left[\mathfrak{L}_a,\mathfrak{L}_b\right]\subseteq \mathfrak{L}_{a+b}$  and $\dim \mathfrak{L}_a$ finite for all $a,b\in\Lambda$.
Each component $\mathfrak{L}_a$ has the topology of a finite dimensional $\C$-vector space, which induces the product topology on 
$\mathfrak{L}$.
Given $a\in\Lambda$, the map $\phi\in\End(\mathfrak{L})$ is called \emph{graded of degree $a$} if
$$\phi\left(\mathfrak{L}_b \right)\subseteq \mathfrak{L}_{b+a}$$
for all $b\in\Lambda$.
Suppose $\Pi$ is another $\Z$-module  and  $\lambda: \Lambda\to\Pi$ is a homomorphism with the property that, for each $p\in\Pi$, 
$\mathfrak{L}_a=0$ for all but finitely many $a\in \lambda^{-1}(p)$.
Then we get a $\Pi$-grading 
$\mathfrak{L} =\bigoplus_{p\in\Pi} \mathfrak{L}_p$ by setting
$$ \mathfrak{L}_{p} = \bigoplus_{a\in\lambda^{-1}(p)} \mathfrak{L}_a,$$
for all $p\in\Pi$. 

The Monster Lie algebra $\frak m$ is $Q$-graded as shown in Proposition~\ref{P-all}.
Let $\lambda$ be the $\Z$-linear map $\lambda: Q \to \Z$ defined by
$\lambda(\a)=a+b$ where $\overline{\a}=(a,b)$ is the specialization of $\a$.
So $\lambda$ induces a $\Z$-grading of $\frak m$ with components
$$\mathfrak{m}_k := \bigoplus_{\a\in\lambda^{-1}(k)\cap\Delta} {\mathfrak m}_{\a}$$
for $k\ne0$ and $\mathfrak{m}_0:=\mathfrak{gl}_2(-1)$.
From Lemma~\ref{L-rts}, it follows that $\Delta_k:= \lambda^{-1}(k)\cap\Delta$ is finite. This ensures that each $\mathfrak{m}_k$ is finite dimensional.
This $\Z$-grading is compatible with the decomposition
$$\frak m = \nimm \oplus \frak{gl}_2(-1)  \oplus \nimp,$$
where
$$\nimpm := \bigoplus_{\a\in\Delta^{\im}_\pm} \frak m_\a = \bigoplus_{k\in\pm\N} \frak m_k.$$

\begin{lemma} If $x\in\frak m_\a$, then $\ad(x)$ is a graded endomorphism of degree $\a$.
If~$x\in\frak m_k$, then $\ad(x)$ is a graded endomorphism of degree $k$.
\end{lemma}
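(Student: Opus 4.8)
The plan is to verify directly from the definition of a graded endomorphism that $\ad(x)$ shifts degrees by the degree of $x$. First I would recall that an endomorphism $\phi\in\End(\mathfrak{m})$ is graded of degree $a$ precisely when $\phi(\mathfrak{m}_b)\subseteq\mathfrak{m}_{b+a}$ for all $b$, so the task reduces to checking the analogous containment for the bracket with $x$. The key input is property~(5) of Proposition~\ref{P-all}, namely $[\mathfrak{m}_\a,\mathfrak{m}_\b]\subseteq\mathfrak{m}_{\a+\b}$, which is exactly the statement that the Lie algebra is $Q$-graded.

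For the first assertion, suppose $x\in\mathfrak{m}_\a$. Given any $y\in\mathfrak{m}_\b$, I would compute $\ad(x)(y)=[x,y]\in[\mathfrak{m}_\a,\mathfrak{m}_\b]\subseteq\mathfrak{m}_{\a+\b}$ by Proposition~\ref{P-all}(5). Since this holds for every homogeneous component $\mathfrak{m}_\b$, we get $\ad(x)(\mathfrak{m}_\b)\subseteq\mathfrak{m}_{\b+\a}$ for all $\b\in Q$, which is the defining condition for $\ad(x)$ to be graded of degree $\a$ with respect to the $Q$-grading.

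For the second assertion, the argument is formally identical but uses the induced $\Z$-grading $\mathfrak{m}=\bigoplus_{k}\mathfrak{m}_k$ obtained from $\lambda:Q\to\Z$. If $x\in\mathfrak{m}_k$, then for $y\in\mathfrak{m}_n$ I would write $\mathfrak{m}_k=\bigoplus_{\a\in\lambda^{-1}(k)}\mathfrak{m}_\a$ and $\mathfrak{m}_n=\bigoplus_{\b\in\lambda^{-1}(n)}\mathfrak{m}_\b$, reduce to homogeneous elements $x\in\mathfrak{m}_\a$, $y\in\mathfrak{m}_\b$ with $\lambda(\a)=k$, $\lambda(\b)=n$, and observe that $[x,y]\in\mathfrak{m}_{\a+\b}$. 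Since $\lambda$ is $\Z$-linear, $\lambda(\a+\b)=\lambda(\a)+\lambda(\b)=k+n$, so $\mathfrak{m}_{\a+\b}\subseteq\mathfrak{m}_{k+n}$ and hence $\ad(x)(\mathfrak{m}_n)\subseteq\mathfrak{m}_{n+k}$. By bilinearity this extends from homogeneous elements to all of $\mathfrak{m}_k$ and $\mathfrak{m}_n$.

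There is essentially no obstacle here: the statement is a direct unwinding of the grading axiom, and the only mild point to be careful about is passing from the fine $Q$-grading to the coarser $\Z$-grading, where one must use that $\lambda$ is a homomorphism so that degrees add correctly. The compatibility of $\lambda$ with the bracket is automatic once linearity is invoked, so the proof is a short formal verification rather than a computation.
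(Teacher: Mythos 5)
Your proof is correct, and in fact the paper states this lemma without any proof, treating it as an immediate consequence of the grading axiom; your direct verification via Proposition~\ref{P-all}(5) together with the $\Z$-linearity of $\lambda$ is exactly the intended argument. The only cosmetic point to patch is that the degree-zero components are not pure sums of root spaces ($\mathfrak{m}_0=\mathfrak{h}$ in the $Q$-grading, and $\mathfrak{m}_0=\mathfrak{gl}_2(-1)$ in the $\Z$-grading of Subsection~\ref{SS-graded}), but since $[\mathfrak{h},\mathfrak{m}_\b]\subseteq\mathfrak{m}_\b$ these pieces also shift degrees correctly and nothing in your argument changes.
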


We define the \emph{(positive) formal completion} of $\frak m$ to be
$$\widehat{\frak m} := \nm \oplus \frak{h} \oplus \nhp,$$
where
$$\widehat{\frak n}^+ := \prod_{\a\in\Delta_+} \frak m_\a.$$

Equivalently
$$\widehat{\frak m} = \nimm \oplus \gl_2(-1) \oplus \nhimp,$$ 
where
$$ \nhimp=  \prod_{\a\in\Delta^+} \frak m_{\a}= \prod_{k\in\N} \frak m_k$$ since $\nhp = \m_{\a_{-1}}\oplus \nhimp$.

We note that $\widehat{\mathfrak{m}}$ has an induced topology as a subset of 
$\mathfrak{X}:=\prod_{k\in\Z} \frak m_k$ with the product topology.
The Lie product on $\mathfrak{m}$ extends to  $\widehat{\mathfrak{m}}$ but does not extend to $\mathfrak{X}$.
Since $\mathfrak{X}$ is a countable product of finite dimensional vector spaces, it is second countable, 
and hence sequential~\cite{Munkres}.   
Recall that a sequential topology is entirely captured by limits of sequences -- we will prove our topological results using limits  without further comment.

Let
\(\End_{\mathrm{cts}}(\widehat{\mathfrak m})\) denote the vector space of
continuous linear endomorphisms of \(\widehat{\mathfrak m}\). We give
\(\End_{\mathrm{cts}}(\widehat{\mathfrak m})\) the topology of pointwise
convergence: a sequence \(T_n\) converges to \(T\) if and only if, for every
\(x\in \widehat{\mathfrak m}\) and \(k\in\mathbb Z\),
\(
(T_nx)_k\longrightarrow (Tx)_k
\)
in the finite-dimensional space \(\mathfrak m_k\). We give
\(\Aut_{\mathrm{cts}}(\widehat{\mathfrak m})\) the induced topology, where
\(\Aut_{\mathrm{cts}}(\widehat{\mathfrak m})\) denotes the group of 
linear automorphisms of \(\widehat{\mathfrak m}\) that are also homeomorphisms. That is
\[
\Aut_{\mathrm{cts}}(\widehat{\mathfrak m})
=
\left\{
\phi:\widehat{\mathfrak m}\to \widehat{\mathfrak m}
\ \middle|\
\begin{array}{l}
\phi \text{ is a continuous linear bijection},\\[2pt]
\phi^{-1} \text{ is continuous},\\[2pt]
\phi([x,y])=[\phi(x),\phi(y)]
\text{ for all } x,y\in \widehat{\mathfrak m}
\end{array}
\right\}.
\]

Recall that a (formal) series  $\sum_i \vartheta_i$ of operators on a space $\mathfrak{Y}$ is called summable \cite{LL} if, for all $y\in\mathfrak{Y}$,  $\vartheta_i(y)$ is nonzero for only finitely many values of $i$.
We now generalize this concept.
\begin{defn}
Let $I$ and $K$ be countable index sets.
Let $\mathfrak{X}_k$ be a finite dimensional vector space for all $k\in K$, let $\mathfrak Y$ be a subspace of $\prod_k \mathfrak{X}_k$, and write $(y)_k$ for the projection of $y\in\mathfrak Y$ onto $\mathfrak{X}_k$, so that $y=\sum_{k\in K} (y)_k$.
Let $\vartheta_i$ be an operator on $\mathfrak Y$ for all $i\in I$.
We say that $\sum_i \vartheta_i$ is \emph{pro-summable} if $$I_k(y):= \{i\in I\mid \left(\vartheta_i(y) \right)_k\ne 0\}$$
is finite for all $k\in K$ and $y\in \mathfrak{Y}$.
If $\sum_i \vartheta_i$ is pro-summable, then it is a well defined operator on $\mathfrak{Y}$ with
$$ \sum_i \vartheta_i(y) := \sum_{k\in K} \sum_{i\in I_k(y)} \left(\vartheta_i (y) \right)_k.$$
\end{defn}

This definition depends on the choice of direct product decomposition $\prod_{k} \mathfrak{X}_k$. 
Every space we will consider is a subspace of $\prod_{k\in\Z} \mathfrak{m}_k$.
A pro-summable series is well defined without the use of limits, but note that not every well defined series is pro-summable (for example, $\exp\left(\ad\left(h_{1}\right) \right)$ considered in Section~\ref{S-gl2}).
The product of two pro-summable series is well defined, but may not be pro-summable.
Given pro-summable series $\varsigma:= \sum_{j\in J} \varsigma_j$ and $\vartheta:= \sum_{i\in I} \vartheta_i$, the product $\varsigma\vartheta$  is pro-summable if and only if
$$ \left\{(i,j) \in I\times J \mid (\varsigma_j\vartheta_i(y))_k\ne 0 \right\}$$
 is finite for all $k\in K$ and $y\in\mathfrak{Y}$.

Given $x\in\widehat{\frak m}$, there are only
finitely many $k<0$ with $(x)_k\ne 0$.
In other words, our choice of completion ensures that the formal sum  $x=\sum_{k\in\Z}(x)_k$ is finite in the negative direction but potentially infinite in the positive direction.
For $x\in\widehat{\frak m}-\{0\}$, the \emph{order} $N(x)$ is the smallest integer with $(x)_{N(x)}\ne0$, so that $x$  can be written uniquely as a  formal sum
$$x=\sum_{k=N(x)}^\infty (x)_k.$$

\begin{lemma}\label{L-expad}
If $u\in\C$, then
$\exp\left(u\ad\left(e_{-1} \right) \right)$ and $\exp\left(u\ad\left(f_{-1} \right) \right)$ are summable on $\mathfrak{m}$ and pro-summable on 
$\widehat{\mathfrak{m}}$.
If $x\in\widehat{\mathfrak{n}}^+$, then 
$\exp(\ad(x))$ is pro-summable on $\widehat{\mathfrak{m}}$.
\end{lemma}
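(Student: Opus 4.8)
The plan is to treat the two real root operators and the general element of $\nhp$ separately, in both cases exploiting the $\Z$-grading $\m=\bigoplus_k\m_k$ together with the finer specialized grading by $\text{\rm II}_{1,1}$, and to reduce everything to the nilpotence of $\ad(e_{-1})$ on each finite dimensional homogeneous piece.

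First I would record the basic mechanism for $e_{-1}$ and $f_{-1}$. The operator $\ad(e_{-1})$ is graded of specialized degree $\overline{\a}_{-1}=(1,-1)$, hence of $\Z$-degree $\lambda(\a_{-1})=0$; in particular it preserves each finite dimensional component $\m_k$. A graded endomorphism of nonzero specialized degree acting on a finite dimensional graded space is nilpotent, so there is an integer $M_k$ with $\ad(e_{-1})^{M_k}=0$ on $\m_k$ (this is also visible directly from \eqref{L-e-e}, which shows $\ad(e_{-1})$ climbs the finite $\gl_2(-1)$-strings $V_{jk}^+$). The same holds for $\ad(f_{-1})$ by applying the Cartan involution $\eta$. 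Summability on $\m$ is then immediate: every $y\in\m$ lies in a finite sum of the $\m_k$, so $\ad(e_{-1})$ is locally nilpotent and $\exp(u\,\ad(e_{-1}))$ terminates on each $y$. Pro-summability on $\mhat$ follows because $\ad(e_{-1})$ has $\Z$-degree $0$: for $y\in\mhat$ one has $\bigl(\ad(e_{-1})^n y\bigr)_k=\ad(e_{-1})^n\bigl((y)_k\bigr)=0$ once $n\ge M_k$, so the index set $I_k(y)$ is contained in $\{0,\dots,M_k-1\}$ and is finite.

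For the third statement, write $x=\sum_{k\ge0}(x)_k$ with $(x)_0\in\m_0\cap\nhp=\C e_{-1}$, say $(x)_0=c\,e_{-1}$, and $(x)_k\in\m_k$ for $k\ge1$. Then $\ad(x)=\sum_{k\ge0}\ad((x)_k)$ with each summand graded of $\Z$-degree $k\ge0$, and I would expand $\ad(x)^n$ as a sum of monomials $\ad((x)_{k_1})\cdots\ad((x)_{k_n})$. Applied to a homogeneous $y\in\m_j$, such a monomial lands in $\m_{j+k_1+\cdots+k_n}$, so only monomials with $k_1+\cdots+k_n=m-j$ contribute to the $m$-component; in particular at most $m-j$ of the factors have positive degree.

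The main obstacle is the degree-$0$ factors, all equal to $c\,\ad(e_{-1})$, which neither raise the degree nor are individually bounded in number. I would control them by a \emph{run argument}: the at most $m-j$ positive-degree factors split the string of degree-$0$ factors into at most $m-j+1$ maximal runs, and throughout any one run the state stays in a single component $\m_d$ with $j\le d\le m$. By the nilpotence established above, a run of length $\ge M_d$ annihilates the monomial, so in any surviving monomial each run has length $<M$, where $M:=\max_{j\le d\le m}M_d$. Hence $n\le (m-j)+(m-j+1)(M-1)=:B(j,m)$ for every nonzero monomial, giving $\bigl(\ad(x)^n y\bigr)_m=0$ for $n>B(j,m)$ when $y\in\m_j$. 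Finally, for general $y\in\mhat$ only the finitely many homogeneous parts $(y)_j$ with $N(y)\le j\le m$ can reach degree $m$, so $\bigl(\ad(x)^n y\bigr)_m=0$ once $n>\max_{N(y)\le j\le m}B(j,m)$, and therefore $\exp(\ad(x))$ is pro-summable on $\mhat$.
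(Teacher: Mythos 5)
Your proof is correct, and it is in fact more careful than the paper's own argument at the one genuinely delicate point. For the first statement the paper simply cites the explicit $\GL_2(-1)$-action formulas of Proposition~\ref{P-GL-1action}, whereas you get local nilpotence of $\ad(e_{-1})$ and $\ad(f_{-1})$ abstractly: they have $\Z$-degree $0$ but nonzero specialized degree, so they preserve each finite dimensional component $\m_k$ and are nilpotent there; both routes work, the paper's having the side benefit of explicit formulas reused later. The substantive difference is in the third statement. The paper expands $\left(\exp(\ad(x))(y)\right)_k$ into monomials whose factors $\ad\left((x)_{n_i}\right)$ all have degrees $n_i\in\N$, so finiteness is immediate because each surviving monomial raises degree by $n_1+\cdots+n_m\le k-N(y)$. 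Read literally, this only treats the components of $x$ of positive degree, i.e.\ the case $x\in\nhimp$; it says nothing about the degree-zero component $(x)_0\in\C e_{-1}$ of a general $x\in\nhp=\C e_{-1}\oplus\nhimp$, and monomials containing arbitrarily many factors $\ad\left((x)_0\right)$ do not raise degree, so degree counting alone cannot bound their number. Your run argument is precisely what closes this gap: the at most $m-j$ positive-degree factors split any monomial into at most $m-j+1$ runs of copies of $c\,\ad(e_{-1})$, each run confined to a single finite dimensional $\m_d$ with $j\le d\le m$ on which $\ad(e_{-1})$ is nilpotent, so nonvanishing monomials have uniformly bounded length. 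In short, the paper's proof buys brevity but, as written, covers only $\nhimp$; your argument is self-contained and covers all of $\nhp$, which is what the lemma actually asserts.
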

\begin{proof}
The first statement follows from Proposition~\ref{P-GL-1action} (see Section~\ref{S-gl2}).
A summable series on $\mathfrak{m}$
clearly extends to a pro-summable series on $\widehat{\mathfrak{m}}$.
If $x\in\widehat{\mathfrak{n}}^+$, $y\in \mathfrak{m}_k$, $k\in\Z$, then $x=\sum_{n=0}^\infty (x)_n$ and
$$\left(\exp\left(\ad(x) \right)(y) \right)_k  = (y)_k+\sum_{n=0}^{k-N(y)} \sum_{\substack{m,n_1,\dots,n_m\in \N\text{ s.t.}\\n_1+\cdots+n_m=n}}
\frac{\ad\left(x_{n_i} \right)\cdots\ad\left(x_{n_m} \right)}{n_1!\cdots n_m!} 
 \left(y \right)_{k-n}$$
 is a finite sum. So $ \exp(\ad(x))$ is pro-summable. 
\end{proof}

\subsection{The complete group $\Uhp_{\im}$ of smearing automorphisms}\label{SS-Uhat}
We now use the $\Z$-grading from the previous subsection to construct a complete  group $\Uhimp $ for the imaginary roots of $\m$, 
which is an analog of a complete unipotent group for a Kac-Moody algebra.
Note however that the choice of grading is mostly for convenience: 
similar methods could be used to construct groups with respect to the original $Q$-grading, or the $\Z\oplus\Z$-grading from Subsection~\ref{SS-mroots}.
We could also use alternative grading maps~$\lambda$, such as the 
$\Z$-grading on $\frak m$ given in \cite{BoInvent}.
These different choices of grading all give the same topological group up to isomorphism.

The completion $\widehat{\frak m}$ has two decompositions
$$\widehat{\frak m} = \nimm \oplus \frak{gl}_2({-1}) \oplus \nhimp 
  =  \frak n^- \oplus \frak h \oplus \widehat{\frak n}^+$$
related by 
$\widehat{\frak n}^+  = \C e_{-1}\oplus\widehat{\frak u}^+$ and
${\frak n}^- = \C f_{-1}\oplus {\frak u}^-$.
Define subspaces
$$ \nhp_k := \prod_{j\ge k} \mathfrak{m}_j, $$
for $k\in \Z$.
Now $\nhp_k$ is a $\widehat{\mathfrak{n}}^+$-module for all integers $k$;
$\nhp_k$ is a pro-Lie algebra for $k\ge0$; and
$\nhp_k$ is  pro-nilpotent Lie algebra for $k\ge0$ (Section~\ref{SS-pro}). 

\begin{defn}  For $n\in\N$, we define
$$
  \Uhp_n := \left\{ \varphi\in\Aut(\widehat{\frak m}) \mid 
  \varphi(y)\in y + \nhp_{k+n}\text{ whenever }y\in{\frak m}_k\;\text{for }\; k\in\mathbb{Z}\right\},
$$
and $\Uhimp:= \Uhp_1$.
\end{defn}
We refer to $\Uhimp$ as a group of \emph{smearing automorphisms} of $\nhimp$ since the action of $ \Uhimp$ on $\widehat{\frak m}$ smears $x_k\in{\frak m}_k$ across the infinite product  $\nhp_{k+1}=\prod_{j\ge k+1} \mathfrak{m}_{j}$. That is
$\varphi(x_k)\in x_k + \nhp_{k+1}$
for $x_k\in{\frak m}_k\;\text{and }\; k\in\mathbb{Z}.$

\begin{lemma}\label{L-Uhat}
For $n\in\N$, both $\Uhp_{\im}$ and $\Uhp_n$ are closed 
subgroups of $\Aut(\widehat{\frak m})$.
\end{lemma}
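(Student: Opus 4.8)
The plan is to prove both assertions for the family $\Uhp_n$, $n\in\N$, and to note at the outset that $\Uhimp=\Uhp_1$ (an automorphism has leading term $1$ with respect to the $\Z$-grading precisely when it carries each $\frak m_k$ into $y+\nhp_{k+1}$), so that the case $n=1$ delivers $\Uhimp$. The engine of the subgroup part will be a single observation: every $\varphi\in\Uhp_n$ preserves each closed subspace $\nhp_m$. To prove this I would take $w=\sum_{j\ge m}w_j\in\nhp_m$ with $w_j\in\frak m_j$, note that $\varphi(w_j)\in w_j+\nhp_{j+n}\subseteq\nhp_j\subseteq\nhp_m$ for every $j\ge m$, and then use continuity of $\varphi$ together with closedness of $\nhp_m$ in $\mathfrak{X}$ to pass from the partial sums $\sum_{j=m}^N\varphi(w_j)\in\nhp_m$ to $\varphi(w)\in\nhp_m$. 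In pro-summable terms this is just the finite-sum identity $(\varphi(w))_e=\sum_{m\le j\le e}(\varphi(w_j))_e$, which visibly vanishes for $e<m$.

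Granting this, the subgroup axioms are short, since $\varphi\psi$ and $\varphi^{-1}$ already belong to the group $\Aut(\widehat{\frak m})$ and only the degree condition needs checking. The identity clearly lies in $\Uhp_n$. For composition I would take $\varphi,\psi\in\Uhp_n$ and $y\in\frak m_k$, write $\psi(y)=y+z$ with $z\in\nhp_{k+n}$, and observe that $\varphi\psi(y)=\varphi(y)+\varphi(z)\in(y+\nhp_{k+n})+\nhp_{k+n}=y+\nhp_{k+n}$, using the hypothesis on $\varphi(y)$ and the invariance observation (with $m=k+n$) on $\varphi(z)$; hence $\varphi\psi\in\Uhp_n$.

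The inverse is the step I expect to be the main obstacle, and it is where pro-summability takes the place of local nilpotence. Writing $\varphi=1+\nu$, the defining condition reads $\nu(\frak m_k)\subseteq\nhp_{k+n}$, which the invariance observation upgrades to $\nu(\nhp_m)\subseteq\nhp_{m+n}$ and hence to $\nu^{\,i}(\frak m_k)\subseteq\nhp_{k+in}$. This degree shift is exactly what is needed: for each input and each target degree only finitely many powers contribute, so the geometric series $\sum_{i\ge0}(-\nu)^{\,i}$ is pro-summable. A coordinatewise telescoping computation then shows it is a two-sided inverse of $1+\nu$, so it must equal $\varphi^{-1}$; and as its $i=0$ term is $1$ while every higher term carries $\frak m_k$ into $\nhp_{k+n}$, I get $\varphi^{-1}(y)\in y+\nhp_{k+n}$, that is, $\varphi^{-1}\in\Uhp_n$.

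Closedness does not use the group structure, and I would handle it last. For fixed $k$, a vector $y\in\frak m_k$, and a degree $d<k+n$, the map $\varphi\mapsto(\varphi(y))_d$ is continuous for the pointwise topology, being evaluation at $y$ followed by projection onto the finite-dimensional space $\frak m_d$. The requirement $\varphi(y)\in y+\nhp_{k+n}$ is precisely the conjunction, over all such $d$, of the closed conditions $(\varphi(y))_d=(y)_d$. Thus $\Uhp_n$ is an intersection of preimages of points under continuous maps into Hausdorff spaces, hence closed in $\Aut(\widehat{\frak m})$, and the case $n=1$ gives the same conclusion for $\Uhimp$.
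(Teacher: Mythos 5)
Your proof is correct and goes well beyond what the paper itself records, so a comparison is worthwhile. The paper's proof is a single sentence addressing only closedness: since $\nhp_{k+n}=\prod_{j\ge k+n}\mathfrak{m}_j$ is closed in $\widehat{\mathfrak{m}}$, any pointwise (sequential) limit of elements of $\Uhp_n$ still satisfies $\varphi(y)\in y+\nhp_{k+n}$ for homogeneous $y$; your final paragraph is the same argument, phrased as an intersection of preimages of closed sets under the evaluation-plus-projection maps $\varphi\mapsto\left(\varphi(y)\right)_d$ rather than via sequences. The subgroup axioms, which the paper leaves entirely implicit, are where you do genuinely different work: the invariance $\varphi(\nhp_m)\subseteq\nhp_m$, the resulting closure under composition, and above all the inverse, obtained as the pro-summable geometric series $\varphi^{-1}=\sum_{i\ge 0}(-\nu)^i$ with $\nu=\varphi-1$ and $\nu^i(\mathfrak{m}_k)\subseteq\nhp_{k+in}$. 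That series device is the correct substitute for local nilpotence here, very much in the spirit of the paper's pro-summability machinery, and closure under inversion genuinely needs such an argument because the defining condition of $\Uhp_n$ constrains $\varphi$ only on homogeneous elements. One caveat you should state up front rather than invoke in passing: both your invariance step and your inverse step interchange $\varphi$ with infinite (coordinatewise finite) sums, i.e.\ they use that elements of $\Aut(\widehat{\mathfrak{m}})$ are continuous for the product topology. The paper never makes this hypothesis explicit --- its own closedness argument does not need it --- but without it even closure of $\Uhp_n$ under composition would be unjustified, so your reading (and likewise your identification $\Uhimp=\Uhp_1$, which the paper only suggests) is the one under which the lemma and the rest of the section actually work.
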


\begin{longver}
\begin{proof} 
Since $\prod_{j\ge k+n} {\mathfrak{m}}_{j}$ is closed in $\widehat{\mathfrak{m}}$, it is straightforward to show that if $\phi_i\to \phi$ and each 
$\phi_i\in\Uhp_n$ then $\phi\in\Uhp_n$.
\end{proof}
\end{longver}

We now describe the elements of $\Uhp_n$ as  infinite series of operators.  
For convenience, we read all products from the right to the left. For example,
\begin{align*}
\prod_{k=1}^\infty \exp\left(\ad\left(x_k \right) \right)
&:= \cdots  \exp\left(\ad\left(x_3 \right) \right)  \exp\left(\ad\left(x_2 \right) \right) \exp\left(\ad\left(x_1 \right) \right)\\
&= \lim_{n\to\infty}\,  \exp\left(\ad\left(x_n \right) \right) \cdots  \exp\left(\ad\left(x_2 \right) \right) \exp\left(\ad\left(x_1 \right) \right).
\end{align*}

\begin{shortver}
The following lemmas now follow by formal manipulation of the  series:
\end{shortver}

\begin{lemma}\label{L-welldef}
Let $x_k\in \frak m_k$  for all $k\geq1$. Then
$$\prod_{k=n}^\infty \exp\left(\ad\left(x_k \right) \right)$$
expands to a formal sum which is pro-summable. Hence this infinite product converges to
 a well-defined automorphism in $\Uhp_n$.
\end{lemma}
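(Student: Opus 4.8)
The plan is to expand the infinite product into a sum of monomials in the operators $\ad(x_k)$, verify pro-summability of this sum by a degree-counting argument, and then recognize the resulting operator as the limit of the finite partial products, which lie in the closed subgroup $\Uhp_n$. To begin, I would record that each $\ad(x_k)$ is graded of degree $k$, and since the product runs over $k\ge n\ge1$, every such operator strictly raises degree. Expanding $\exp(\ad(x_k))=\sum_{m\ge0}\ad(x_k)^m/m!$ and multiplying out (right to left), the formal expansion of the product is the sum over all finitely supported tuples $(m_k)_{k\ge n}$ of nonnegative integers of the monomials
$$ \vartheta_{(m_k)}:=\frac{1}{\prod_k m_k!}\;\cdots\ad(x_{k_2})^{m_{k_2}}\,\ad(x_{k_1})^{m_{k_1}},\qquad k_1<k_2<\cdots, $$
each graded of degree $D=\sum_k m_k k$. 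In particular the identity monomial has degree $0$ and every other monomial has degree $D\ge n\ge1$.

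The heart of the proof is pro-summability. Fix $y\in\mhat$ and a target degree $j$. A monomial $\vartheta_{(m_k)}$ can contribute to the degree-$j$ component of its image only if it carries some nonzero $(y)_i\in\m_i$ into $\m_j$, which forces $D=j-i$; since $(y)_i=0$ unless $i\ge N(y)$, and $D\ge0$ forces $i\le j$, only finitely many values of $i$, hence of $D$, are admissible. For each fixed $D$ the tuples with $\sum_k m_k k=D$ and all parts $k\ge n$ are finite in number, being the partitions of $D$ into parts $\ge n$. Hence only finitely many monomials contribute to component $j$, which is precisely pro-summability; the expansion therefore defines a well-defined operator $\Phi$ on $\mhat$.

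Finally I would identify $\Phi$ as the limit of the partial products $P_N:=\prod_{k=n}^N\exp(\ad(x_k))$ and place it in $\Uhp_n$. Each factor $\exp(\ad(x_k))$ is pro-summable (Lemma~\ref{L-expad}) and, as the exponential of the derivation $\ad(x_k)$, is an automorphism lying in $\Uhp_k\subseteq\Uhp_n$, since a nontrivial power $\ad(x_k)^m$ raises degree by $mk\ge k$; as $\Uhp_n$ is a group (Lemma~\ref{L-Uhat}), each $P_N\in\Uhp_n$. For fixed $y$ and $j$, every monomial reaching component $j$ has degree $D\le j-N(y)$ and so is supported on $\{n,\dots,j-N(y)\}$; thus $(P_N(y))_j=(\Phi(y))_j$ once $N\ge j-N(y)$, whence $P_N\to\Phi$ in the pointwise (sequential) topology. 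Since $\Uhp_n$ is closed in $\Aut(\mhat)$ (Lemma~\ref{L-Uhat}), the limit $\Phi$ lies in $\Uhp_n$, which is the desired well-defined automorphism. The hard part will be the combinatorial bookkeeping of the pro-summability step---pinning down exactly which monomials can reach a prescribed target degree---together with confirming that the pro-summable operator genuinely coincides with the limit of the partial products rather than remaining a merely formal object.
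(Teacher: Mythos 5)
Your proposal is correct and follows essentially the same route as the paper: both arguments rest on the observation that every nontrivial monomial in the $\ad(x_k)$ with $k\ge n\ge 1$ strictly raises degree, so the degree-$j$ component of the image of $y$ is a finite sum determined by the finite partial product $\prod_{k=n}^{j-N(y)}\exp\left(\ad\left(x_k\right)\right)$. The only differences are organizational: the paper argues factor-by-factor and asserts membership in $\Uhp_n$ directly from each factor lying in $\Uhp_k$, whereas you expand the whole product into monomials (counting partitions of the degree into parts $\ge n$) and finish via the group property and closedness of $\Uhp_n$ (Lemma~\ref{L-Uhat}), which is consistent with how the paper itself treats limits of partial products in Proposition~\ref{C-closure}.
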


\begin{longver}
\begin{proof}
Let $x_\ell \in \frak m_\ell$ for some $\ell>0$. Now
 $$\exp\left(\ad\left(x_\ell \right) \right) =1+\ad\left(x_\ell \right)+\frac{1}{2!}\left(\ad\left(x_\ell \right) \right)^2+\cdots$$ 
is a well-defined  map $\widehat{\frak m}\to \widehat{\frak m}$ since
$\left(\exp\left(\ad\left(x_\ell \right) \right) \left(y \right) \right)_k=0$ is zero for $k<N(y)$, and for $k\ge N(y)$ it is the finite sum
$$ \left(\exp\left(\ad\left(x_\ell \right) \right) \left(y \right) \right)_k=\left(y_k +\ad\left(x_\ell \right)\left(\left(y \right)_{k-\ell} \right)\right)+\frac{1}{2!}\left(\ad\left(x_\ell \right) \right)^2\left(\left(y \right)_{k-2\ell} \right)+\cdots +\frac{1}{M!}\left(\ad\left(x_\ell \right) \right)^M\left(\left(y \right)_{k-M\ell} \right),$$
where  $M=\left\lfloor \frac{k-N(y)}{\ell} \right\rfloor$.
Furthermore, if $\ell> k-N(y)$ then  $\left(\exp\left(\ad\left(x_\ell \right) \right) \left(y \right)\right)_k=(y)_k$. So,
$$\varphi:=\prod_{\ell=1}^\infty \exp\left(\ad\left(x_\ell \right) \right)$$
is also well-defined, since 
$$\left(\phi \left(y \right) \right)_k = \left(\prod_{\ell=1}^{k-N(y)} \exp\left(\ad\left(x_\ell \right) \right)\left(y \right)\right)_k.$$
Now $\exp\left(\ad\left(x_\ell \right) \right)$ is an automorphism and it is easy to see that 
$\exp\left(\ad\left(x_\ell \right) \right)\in\Uhp_\ell$. 
Hence $\varphi$ is an automorphism and $\prod_{\ell=n}^\infty \exp\left(\ad\left(x_\ell \right) \right)\in \Uhp_n$.
The remaining results are similar.
\end{proof}
\end{longver}

\begin{lemma}\label{L-explim}
Let $x_{i}, x \in \nhimp$ for $i\in\N$. Then $\exp\left(\ad\left(x_i\right) \right)\to \exp(\ad(x))$ as $i\to\infty$ if and only if $x_i\to x$ as $i\to\infty$.
Let $x_{i,\ell}, x_\ell \in \frak m_\ell$  for $i,\ell\in\N$. Then
$$\prod_{\ell=1}^\infty \exp\left(\ad\left(x_{i,\ell} \right) \right)\to \prod_{\ell=1}^\infty \exp\left(\ad\left(x_\ell \right) \right) \qquad\text{as $i\to\infty$}$$
if and only if $x_{i,\ell}\to x_\ell$ as $i\to\infty$ for every $\ell$.
\end{lemma}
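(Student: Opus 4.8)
The plan is to prove both biconditionals by the same two-part strategy: the ``if'' (forward) implications are continuity statements, valid because each graded matrix coefficient depends on only finitely many components of the data, while the ``only if'' (reverse) implications are extracted from a triangular structure by induction on the grading degree. Throughout I use that the ambient topologies are sequential and coordinatewise, so that $\phi_i\to\phi$ in $\Aut(\mhat)$ means $\left(\phi_i(y)\right)_k\to\left(\phi(y)\right)_k$ in the finite-dimensional space $\frak m_k$ for every $y\in\mhat$ and every $k\in\Z$.

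For the forward implications I would invoke the explicit finite formula from the proof of Lemma~\ref{L-expad}, together with its product analog in the proof of Lemma~\ref{L-welldef}. Fixing $y\in\mhat$ and $k\in\Z$, that formula expresses $\left(\exp(\ad(x))(y)\right)_k$ --- and likewise $\left(\prod_{\ell}\exp(\ad(x_\ell))(y)\right)_k=\left(\prod_{\ell=1}^{k-N(y)}\exp(\ad(x_\ell))(y)\right)_k$ --- as a fixed polynomial expression in the components $(x)_1,\dots,(x)_{k-N(y)}$ (respectively $x_1,\dots,x_{k-N(y)}$) and in the components of $y$. Since a polynomial map between finite-dimensional spaces is continuous, componentwise convergence $x_i\to x$ (respectively $x_{i,\ell}\to x_\ell$ for each $\ell$) forces convergence of every such coefficient, which is exactly $\exp(\ad(x_i))\to\exp(\ad(x))$ (respectively convergence of the products).

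For the reverse implications the key is the \emph{triangular} behaviour of these operators on the toral subalgebra. Applying either operator to $h_1\in\frak h\subseteq\frak m_0$ and isolating the degree-$n$ part, a degree count (using that $\frak m_1=0$, so all data sit in degrees $\ge2$) shows that the only term involving the degree-$n$ datum is a single bracket, giving
$$
\left(\exp(\ad(x))(h_1)\right)_n = [x_n,h_1] + R_n
\quad\text{and}\quad
\left(\textstyle\prod_\ell\exp(\ad(x_\ell))(h_1)\right)_n = [x_n,h_1]+R_n',
$$
where $R_n$ and $R_n'$ depend only on the components in degrees $<n$. The second ingredient is that $\ad(h_1)$ is invertible on $\frak m_n$ for $n\ge1$: every specialized weight $(a,b)$ occurring in $\frak m_n$ satisfies $a\ge1$, since $\overline\Delta_+=\{(1,-1)\}\cup(\N\times\N)$ and $a+b=n\ge1$ rules out $(1,-1)$ (cf.\ \eqref{L-h1e} and Subsection~\ref{SS-mroots}). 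Hence $z\mapsto[z,h_1]=-a\,z$ is a linear automorphism of the finite-dimensional space $\frak m_n$, and so has continuous inverse.

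With these two facts I would induct on $n\ge2$. Assuming $(x_i)_m\to x_m$ for all $m<n$, the forward (continuity) reasoning gives $R_n^{(i)}\to R_n$, while the hypothesis $\exp(\ad(x_i))\to\exp(\ad(x))$, applied to $h_1$ and projected onto degree $n$, gives $[(x_i)_n,h_1]\to[x_n,h_1]$; invertibility of $\ad(h_1)$ on $\frak m_n$ then yields $(x_i)_n\to x_n$, closing the induction (the base case $n=2$ has no lower terms). The identical argument with $R_n'$ in place of $R_n$ establishes the product statement. The main obstacle is precisely this reverse direction: one must recognize the triangular structure together with the invertibility of $\ad(h_1)$, which allow the components to be peeled off one degree at a time; the forward directions, by contrast, reduce to continuity of explicit finite polynomial expressions.
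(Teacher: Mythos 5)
Your proof is correct and is essentially an expanded version of the paper's own argument: the paper disposes of this lemma in one line by appealing to the explicit graded formulas for $\left(\exp\left(\ad\left(x_\ell\right)\right)(y)\right)_k$ from the proof of Lemma~\ref{L-welldef}, which is precisely your forward (continuity) direction. The only difference is that the paper treats the ``only if'' direction as equally immediate from those formulas, whereas you make the mechanism explicit — testing the operators against $h_1$, observing the triangular structure $\left(\exp(\ad(x))(h_1)\right)_n=[x_n,h_1]+R_n$ with $R_n$ depending only on lower-degree components, and using that $z\mapsto[z,h_1]$ is invertible on $\mathfrak{m}_n$ for $n\ge1$ (every specialized weight occurring there has first coordinate at least $1$) to recover the components by induction on degree; this is a sound and genuinely useful elaboration of what the paper leaves implicit.
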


\begin{longver}
\begin{proof}
This follows immediately from the formulas for $\left(\exp\left(\ad\left(x_\ell\right) \right) \left(y \right) \right)_k$ in the proof of Lemma~\ref{L-welldef}.
\end{proof}
\end{longver}

The following proposition shows that the smearing automorphisms of $\Uhp_n$ are infinite series with leading term~$1$.
\begin{proposition}\label{P-Uprod}
$$\displaystyle \Uhp_n = \left\{\prod_{\ell=n}^\infty \exp\left(\ad\left(x_\ell \right) \right) \;\middle|\; x_\ell\in{\frak m}_\ell \,
\text{ for all $\ell\ge n$}\right\}.$$
\end{proposition}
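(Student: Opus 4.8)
The plan is to prove the two inclusions separately. The inclusion $\supseteq$ is immediate from Lemma~\ref{L-welldef}, which already asserts that for any choice of $x_\ell\in\frak m_\ell$ with $\ell\ge n$ the product $\prod_{\ell=n}^\infty\exp(\ad(x_\ell))$ converges to a well-defined automorphism lying in $\Uhp_n$. So the substance is the reverse inclusion, which I would prove by successively peeling off one exponential factor per graded degree. The engine is the following claim: \emph{if $\psi\in\Uhp_m$ for some $m\ge n$, then there is an $x_m\in\frak m_m$ (unique, since $\ad$ is injective on $\frak m_m$ for $m\ge1$) with $\psi\,\exp(-\ad(x_m))\in\Uhp_{m+1}$.} Granting this, I would set $\psi_n:=\varphi$ and $\psi_{m+1}:=\psi_m\exp(-\ad(x_m))$, so that $\psi_m=\psi_{m+1}\exp(\ad(x_m))$ and, unwinding, $\varphi=\psi_{m+1}\exp(\ad(x_m))\cdots\exp(\ad(x_n))$ for every $m$. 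Since $\psi_{m+1}\in\Uhp_{m+1}$ moves each $y\in\frak m_k$ only in degrees $\ge k+m+1$, fixing a graded component and letting $m\to\infty$ shows $\varphi$ agrees with $\prod_{\ell=n}^\infty\exp(\ad(x_\ell))$ in every component, which gives the equality.

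To prove the claim I would first observe that the \emph{leading correction} of $\psi\in\Uhp_m$, namely the map $D_m$ sending $y\in\frak m_k$ to the degree-$(k+m)$ component of $\psi(y)$, is a graded derivation of $\frak m$ of degree $m$; this follows by matching the degree-$(k_y+k_z+m)$ parts of $\psi([y,z])=[\psi(y),\psi(z)]$. Right-multiplying by $\exp(-\ad(x_m))$ changes the leading correction by $-\ad(x_m)$, so the claim reduces to showing that $D_m$ is inner, i.e.\ equal to $\ad(x_m)$ for some $x_m\in\frak m_m$.

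The main obstacle, and the heart of the argument, is exactly this innerness, which I would establish in two stages. First, restricting $D_m$ to $\gl_2(-1)=\frak m_0$ yields a $1$-cocycle valued in the finite-dimensional $\gl_2(-1)$-module $\frak m_m$. The decisive point is that the central element $h_1+h_2$ of $\gl_2(-1)$ acts on $\frak m_m$ as the nonzero scalar $m$ (it is the grading derivation, by \eqref{L-h1e} and \eqref{L-h2e}); together with Whitehead's lemma for the semisimple part $\mathfrak{sl}_2(-1)$ this forces $H^1(\gl_2(-1),\frak m_m)=0$, so $D_m$ agrees with $\ad(x_m)$ on $\gl_2(-1)$ for a suitable $x_m\in\frak m_m$. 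Second, I would show the derivation $D':=D_m-\ad(x_m)$, which now vanishes on $\gl_2(-1)$, also kills each $e_{jk}$ and $f_{jk}$: applying $D'$ to \eqref{Mhe} shows $D'(e_{jk})$ has $(h_1,h_2)$-weight $(1,j)$, hence lies in $\frak m_{(1,j)}\subseteq\frak m_{j+1}$, whereas as a degree-$m$ image it lies in $\frak m_{j+1+m}$; since $m\ge1$ these intersect only in $0$, forcing $D'(e_{jk})=0$, and symmetrically $D'(f_{jk})=0$.

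Because $\gl_2(-1)$ together with the $e_{jk}$ and $f_{jk}$ generate $\frak m$ (Proposition~\ref{P-Monster}), a derivation vanishing on all of them vanishes identically, so $D'=0$ and $D_m=\ad(x_m)$, completing the claim. I expect the cohomological vanishing and the weight-versus-grading bookkeeping in the two stages above to be the delicate points; the convergence statement at the end is then routine given Lemmas~\ref{L-welldef} and~\ref{L-explim}, and the opening inclusion is a direct appeal to Lemma~\ref{L-welldef}.
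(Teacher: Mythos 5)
Your proof is correct, and it shares the paper's overall skeleton — peel off one exponential factor per graded degree via the leading correction, then pass to the limit, with the inclusion $\supseteq$ quoted from Lemma~\ref{L-welldef} exactly as the paper implicitly does — but your proof of the key step (innerness of the leading correction) runs along a genuinely different track. The paper works entirely against the torus: it expands $\varphi_{0,n}(h_m)=\sum_{\a\in\Delta_n}p_{m,\a}$ over root spaces, extracts $\a(h_1)p_{2,\a}=\a(h_2)p_{1,\a}$ from $[\varphi(h_1),\varphi(h_2)]=0$, solves for $p_\a$ with $p_{m,\a}=\a(h_m)p_\a$, sets $x_n:=-\sum_\a p_\a$, and then verifies $\varphi_{k,n}=\ad(x_n)|_{\frak m_k}$ on every root vector $y_\b$ by matching graded components of $\varphi([h,y_\b])=[\varphi(h),\varphi(y_\b)]$. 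You instead observe that the leading correction $D_m$ is a degree-$m$ derivation, kill its restriction to $\gl_2(-1)$ cohomologically, and eliminate the residual derivation $D_m-\ad(x_m)$ on the generators $e_{jk},f_{jk}$ by the weight-versus-degree clash, finishing with the fact that a derivation vanishing on the generating set of Proposition~\ref{P-Monster} vanishes identically. One small simplification you could make: Whitehead's lemma is not needed in your Stage 1, since the cocycle identity applied to the central element $z=h_1+h_2$ alone gives $m\,D_m(y)=[y,D_m(z)]$ for all $y\in\gl_2(-1)$, hence $D_m=\ad\bigl(-\tfrac{1}{m}D_m(z)\bigr)$ on $\gl_2(-1)$ directly. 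Both arguments ultimately exploit the same nondegeneracy (roots do not vanish on $\frak h$), but they buy different things: yours is more structural, avoids checking $\ad(x_n)$ on each root space separately, and would transfer to any graded Lie algebra with a grading element and a known generating set; the paper's computation is longer but needs only the torus and the root-space decomposition — no cohomology, no generation facts — which makes it robust under changes of grading and positive root system, as used later for the groups $\UhPi$.
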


\begin{longver}
\begin{proof}
Suppose $\varphi\in\Uhp_n$. We define linear maps $ \varphi_{k,i}:{\frak m}_{k}\to{\frak m}_{k+i}$ via the formula
$$\varphi\left(x_k \right) = x_k + \sum_{i=n}^\infty \varphi_{k,i}\left(x_k \right)$$
for $x_k\in\m_k$.

For $m=1,2$, we can write
$ \varphi_{0,n}\left(h_m \right)= \sum_{\a\in\Delta_n} p_{m,\a},$ with $p_{m,\a}\in{\frak m}_\a$.
Now
\begin{align*}
0&=\left(\varphi(0)\right)_n= \left(\varphi\left(\left[h_1,h_2 \right] \right)\right)_n 
  = \left(\left[\varphi\left(h_1 \right),\phi\left(h_2 \right)\right]\right)_n\\
&= \left(\left[ h_1 + \sum_{k=n}^\infty \varphi_{0,k}\left(h_1 \right),\; h_2+ \sum_{k=n}^\infty \varphi_{0,k}\left(h_2 \right)\right]\right)_n\\
&= \left[ h_1, \varphi_{0,n}\left(h_2 \right)\right] + \left[ \varphi_{0,n}\left(h_1 \right),h_2\right] \\
&=
\sum_{\a\in\Delta_n} \left[h_1,p_{2,\a} \right] -
\sum_{\a\in\Delta_n} \left[h_2,p_{1,\a} \right]\\
 &=
\sum_{\a\in\Delta_n}  \left(\a\left(h_1 \right)p_{2,\a}- \a\left(h_2 \right)p_{1,\a}\right).
\end{align*}
Hence
$\a(h_1) p_{2,\a}=\a(h_2) p_{1,\a},$
and so there are $p_{\a}\in\frak m_\a$ such that
$$p_{1,\a}=\a\left(h_1\right)p_{\a} \quad\text{and}\quad p_{2,\a}=\a\left(h_2\right)p_{\a}.$$
Now define 
$ x_n := -\sum_{\a\in\Delta_n} p_\a,$
so that 
$$ \varphi_{0,n}\left(h_m\right)=\sum_{\a\in\Delta_n} p_{m,\a} =\sum_{\a\in\Delta_n} \a\left(h_m\right)p_{\a}=\sum_{\a\in\Delta_n} \left[h_m,p_{\a}\right]
 =-\left[h_m,x_n\right]=\left[x_n,h_m\right].$$
Hence $ \varphi_{0,n}=\ad\left(x_n\right)|_{{\frak m}_0}$.

Fix  $\b\in\Delta_k$ and $y_{\b}\in\frak m_\b$.
We can write
$ \varphi_{k,n}\left(y_{\b}\right)= \sum_{\g\in\Delta_{n+k}} q_\g,$
with $q_\g\in\frak m_\g$.
Now, for $h\in\frak h$,
\begin{align*}
\b\left(h\right) \varphi_{k,n}\left(y_{\b}\right) &=  \varphi_{k,n}\left(\b(h)y_{\b}\right)=\left(\varphi\left(\left[h,y_{\b}\right]\right)\right)_{k+n} = \left(\left[\varphi\left(h\right),\varphi\left(y_{\b}\right)\right]\right)_{k+n} \\
 &=\left(\left[ h + \sum_{j=n}^\infty \varphi_{0,j}\left(h\right),\; y_{\b}+ \sum_{j=n}^\infty \varphi_{k,j}\left(y_{\b}\right)\right]\right)_{k+n}\\
 &= \left[h, \varphi_{k,n}\left(y_{\b}\right)\right] + \left[ \varphi_{0,k}\left(h\right), y_{\b}\right].
\end{align*}
Hence
\begin{align*}
 \b\left(h\right) \sum_{\g\in\Delta_{n+k}}   q_{\g}
 &= \sum_{\g\in\Delta_{n+k}} \g\left(h\right) q_{\g}  + 
 \sum_{\a\in\Delta_n}  \a\left(h\right) \left[p_{\a},y_{\b}\right],
\end{align*}
so
\begin{align*}
  \sum_{\g\in\Delta_{n+k}}  \left(\b(h)-\g(h)\right) q_{\g}&=  \sum_{\a\in\Delta_n}  \a(h) \left[p_{\a},y_{\b}\right]
  = \sum_{\g\in\Delta_{n+k}}  \left(\g(h)-\b(h)\right)  \left[p_{\g-\b},y_{\b}\right],
\end{align*}
where we take $p_{\g-\b}=0$ if $\g-\b$ is not a root.
Since this holds for arbitrary $h\in\frak h$,  we get $q_{\g} = -\left[p_{\g-\b},y_{\b}\right]$ and so
\begin{align*}
 \varphi_{k,n}\left(y_{\b}\right) &= \sum_{\g\in\Delta_{n+k}}q_\g 
=- \sum_{\g\in\Delta_{n+k}}  \left[p_{\g-\b},y_{\b}\right] 
= - \sum_{\a\in\Delta_{n}} \left[p_{\a}, y_{\b}\right]=\left[x_n,y_{\b}\right].
\end{align*}
That is, $ \varphi_{k,n}=\ad\left(x_n\right)|_{{\frak m}_k}$.

Hence $\varphi = \varphi^{(n+1)}\exp\left(\ad\left(x_n\right)\right)$ where  $\varphi^{(n+1)}\in\Uhp_{n+1}$. 
By induction, we can write
$$\varphi =\varphi^{(N+1)} \prod_{\ell=n}^N \exp\left(\ad\left(x_\ell\right)\right) $$ 
with $\varphi^{(N+1)}\in \Uhp_{N+1}$.
But $\varphi^{(N+1)}\to1$ as $N\to\infty$, and hence
$$\varphi=\lim_{N\to\infty}  \varphi^{(N+1)}\prod_{\ell=n}^N \exp\left(\ad\left(x_\ell\right)\right) =\lim_{N\to\infty} \prod_{\ell=n}^N \exp\left(\ad\left(x_\ell\right)\right)
 = \prod_{\ell=n}^\infty \exp\left(\ad\left(x_\ell\right)\right)$$
as required.
\end{proof}
\end{longver}

\begin{shortver}
\begin{proof}
Suppose $\varphi\in\Uhp_n$. For $x_k\in {\frak m}_k$, we write
$\varphi\left(x_k \right) = x_k + \sum_{i=n}^\infty \varphi_{k,i}\left(x_k \right),$
where $ \varphi_{k,i}$ is a linear map ${\frak m}_{k}\to{\frak m}_{k+i}$.

For $m=1,2$, we can write
$ \varphi_{0,n}\left(h_m \right)= \sum_{\a\in\Delta_n} p_{m,\a},$ with $p_{m,\a}\in{\frak m}_\a$.
Now
\begin{align*}
0&=\left(\varphi(0)\right)_n= \left(\varphi\left(\left[h_1,h_2 \right] \right)\right)_n 
  = \left(\left[\varphi\left(h_1 \right),\phi\left(h_2 \right)\right]\right)_n\\
&= \left(\left[ h_1 + \sum_{k=n}^\infty \varphi_{0,k}\left(h_1 \right),\; h_2+ \sum_{k=n}^\infty \varphi_{0,k}\left(h_2 \right)\right]\right)_n\\
&=
\sum_{\a\in\Delta_n} \left[h_1,p_{2,\a} \right] -
\sum_{\a\in\Delta_n} \left[h_2,p_{1,\a} \right]\\
 &=
\sum_{\a\in\Delta_n}  \left(\a\left(h_1 \right)p_{2,\a}- \a\left(h_2 \right)p_{1,\a}\right).
\end{align*}
Hence
$\a(h_1) p_{2,\a}=\a(h_2) p_{1,\a},$
and so we have $p_{\a}\in\frak m_\a$ such that
$p_{1,\a}=\a\left(h_1\right)p_{\a}$ and $ p_{2,\a}=\a\left(h_2\right)p_{\a}$.
Now define 
$ x_n := -\sum_{\a\in\Delta_n} p_\a,$
so that 
$ \varphi_{0,n}=\ad\left(x_n\right)|_{{\frak m}_0}$.

Fix  $\b\in\Delta_k$ and $y_{\b}\in\frak m_\b$.
We can write
$ \varphi_{k,n}\left(y_{\b}\right)= \sum_{\g\in\Delta_{n+k}} q_\g,$
with $q_\g\in\frak m_\g$.
Now, for $h\in\frak h$,
\begin{align*}
\b\left(h\right) \varphi_{k,n}\left(y_{\b}\right) &= 
\left(\varphi\left(\left[h,y_{\b}\right]\right)\right)_{k+n} 
 = \left[h, \varphi_{k,n}\left(y_{\b}\right)\right] + \left[ \varphi_{0,k}\left(h\right), y_{\b}\right].
\end{align*}
Hence
\begin{align*}
 \b\left(h\right) \sum_{\g\in\Delta_{n+k}}   q_{\g}
 &= \sum_{\g\in\Delta_{n+k}} \g\left(h\right) q_{\g}  + 
 \sum_{\a\in\Delta_n}  \a\left(h\right) \left[p_{\a},y_{\b}\right],
\end{align*}
so
\begin{align*}
  \sum_{\g\in\Delta_{n+k}}  \left(\b(h)-\g(h)\right) q_{\g}&=  \sum_{\a\in\Delta_n}  \a(h) \left[p_{\a},y_{\b}\right]
  = \sum_{\g\in\Delta_{n+k}}  \left(\g(h)-\b(h)\right)  \left[p_{\g-\b},y_{\b}\right],
\end{align*}
where we take $p_{\g-\b}=0$ if $\g-\b$ is not a root.
Since this holds for arbitrary $h\in\frak h$,  we get $q_{\g} = -\left[p_{\g-\b},y_{\b}\right]$ and so
\begin{align*}
 \varphi_{k,n}\left(y_{\b}\right) &= \sum_{\g\in\Delta_{n+k}}q_\g 
=- \sum_{\g\in\Delta_{n+k}}  \left[p_{\g-\b},y_{\b}\right] 
= - \sum_{\a\in\Delta_{n}} \left[p_{\a}, y_{\b}\right]=\left[x_n,y_{\b}\right].
\end{align*}
That is, $ \varphi_{k,n}=\ad\left(x_n\right)|_{{\frak m}_k}$.

Hence $\varphi = \varphi^{(n+1)}\exp\left(\ad\left(x_n\right)\right)$ where  $\varphi^{(n+1)}\in\Uhp_{n+1}$. 
By induction, we can write
$$\varphi =\varphi^{(N+1)} \prod_{\ell=n}^N \exp\left(\ad\left(x_\ell\right)\right) $$
with $\varphi^{(N+1)}\in \Uhp_{N+1}$.
But $\varphi^{(N+1)}\to1$ as $N\to\infty$, and hence
$$\varphi=\lim_{N\to\infty}  \varphi^{(N+1)}\prod_{\ell=n}^N \exp\left(\ad\left(x_\ell\right)\right) 
 = \prod_{\ell=n}^\infty \exp\left(\ad\left(x_\ell\right)\right)$$
as required.
\end{proof}
\end{shortver}

\begin{corollary} The group $\Uhimp$ consists of the Lie algebra automorphisms of $\mhat$ that can be written as infinite series with leading term~$1$, with respect to the $\Z$-grading of $\mhat$.
\end{corollary}

\begin{proposition} \label{C-closure}
$\Uhp_n$ is the closure of 
$\left\langle \exp(\ad(x)) \mid x\in{\frak m}_k,\; k\ge n \right\rangle.$
\end{proposition}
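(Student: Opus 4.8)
The plan is to prove the two inclusions $\overline{H}\subseteq\Uhp_n$ and $\Uhp_n\subseteq\overline{H}$, where I abbreviate $H:=\langle \exp(\ad(x))\mid x\in\mathfrak{m}_k,\ k\ge n\rangle$ for the generated subgroup. The entire argument rests on Proposition~\ref{P-Uprod}, which identifies $\Uhp_n$ with the set of convergent infinite products $\prod_{\ell=n}^\infty\exp(\ad(x_\ell))$, together with the closedness of $\Uhp_n$ from Lemma~\ref{L-Uhat} and the convergence statements in Lemmas~\ref{L-welldef} and~\ref{L-explim}. No new computation is needed: the work is organizing these pieces.

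For the inclusion $\overline{H}\subseteq\Uhp_n$, I would first observe that the defining condition $\varphi(y)\in y+\nhp_{k+m}$ becomes more restrictive as $m$ increases, since $\nhp_{k+m}\subseteq\nhp_{k+n}$ for $m\ge n$; hence $\Uhp_m\subseteq\Uhp_n$ whenever $m\ge n$. Each generator $\exp(\ad(x))$ with $x\in\mathfrak{m}_k$ and $k\ge n$ lies in $\Uhp_k$ (as noted in the proof of Lemma~\ref{L-welldef}), and therefore in $\Uhp_n$. Since $\Uhp_n$ is a subgroup of $\Aut(\widehat{\mathfrak{m}})$, it contains the subgroup $H$ generated by these elements, and since $\Uhp_n$ is closed by Lemma~\ref{L-Uhat}, passing to closures gives $\overline{H}\subseteq\Uhp_n$.

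For the reverse inclusion $\Uhp_n\subseteq\overline{H}$, I would take an arbitrary $\varphi\in\Uhp_n$ and invoke Proposition~\ref{P-Uprod} to write $\varphi=\prod_{\ell=n}^\infty\exp(\ad(x_\ell))$ with $x_\ell\in\mathfrak{m}_\ell$. The finite partial products $\varphi_N:=\prod_{\ell=n}^N\exp(\ad(x_\ell))$ are finite products of generators, so $\varphi_N\in H$ for every $N$. By the meaning of the infinite product and the convergence established in Lemma~\ref{L-welldef}, one has $\varphi_N\to\varphi$ in the topology on $\Aut(\widehat{\mathfrak{m}})$, so $\varphi$ is the limit of a sequence in $H$ and thus $\varphi\in\overline{H}$. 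Combining the two inclusions yields $\Uhp_n=\overline{H}$.

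I do not expect a serious obstacle, since the hard analytic content has already been packaged into Proposition~\ref{P-Uprod} and the preceding lemmas; what remains is essentially a bookkeeping argument. The one point that merits care is the use of the \emph{sequential} nature of the topology: membership in $\overline{H}$ must be detectable by limits of sequences rather than nets, which is precisely why the authors earlier recorded that $\Aut(\widehat{\mathfrak{m}})\subseteq\End(\mathfrak{X})$ carries a second-countable, hence sequential, topology. A secondary point is to confirm that the partial-product convergence $\varphi_N\to\varphi$ is genuinely a statement in this topology; this follows from the explicit graded formula for $(\varphi(y))_k$ appearing in the proof of Lemma~\ref{L-welldef}, which shows that each graded component of $\varphi_N(y)$ stabilizes once $N$ exceeds $k-N(y)$.
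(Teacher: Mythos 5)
Your proof is correct and follows essentially the same route as the paper: the paper's own (one-line) proof is precisely your second inclusion, namely that $\prod_{\ell=n}^\infty \exp(\ad(x_\ell)) = \lim_{N\to\infty}\prod_{\ell=n}^N \exp(\ad(x_\ell))$, with the first inclusion left implicit via Lemma~\ref{L-Uhat} and Proposition~\ref{P-Uprod}. You have simply written out both directions explicitly, which matches the intended argument.
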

\begin{proof} Since
$\prod_{\ell=n}^\infty \exp(\ad(x_\ell)) =\lim_{N\to\infty} \prod_{\ell=n}^N \exp(\ad(x_\ell)).$
\end{proof}

\subsection{The Baker--Campbell--Hausdorff formula and commutators in $\Uhimp$}
Note that  $(X,Y)$ denotes the group commutator $XYX^{-1}Y^{-1}$. 

Recall the Baker--Campbell--Hausdorff  formula in the completion of the free Lie algebra generated by $X$ and $Y$ (see, for example \cite[p.\ 32]{Stern}):
$$ \exp(X)\exp (Y) = \exp(Z)$$
where $Z$ is given by the formula 
$$Z= X+Y+\frac{1}{2}[X,Y]+\frac{1}{12}\left([X,[X,Y]]-[Y,[X,Y]]\right)+\cdots.$$

\begin{shortver}
This leads immediately to:
\end{shortver}

\begin{lemma}\label{BCH}
If $x,y\in \nhimp$, then
$$ \exp(\ad(x))\exp (\ad(y)) = \exp(\ad(z))$$
for some
$z \in \nhimp$.
\end{lemma}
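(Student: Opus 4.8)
The plan is to produce $z$ explicitly from the Baker--Campbell--Hausdorff (BCH) series and then verify the operator identity one graded component at a time, using the $\Z$-grading to reduce each check to a finite, classical computation. First I would define $z$ to be the BCH combination of $x$ and $y$, namely the Lie series
$$ z := x + y + \tfrac12[x,y] + \tfrac1{12}\bigl([x,[x,y]]-[y,[x,y]]\bigr)+\cdots, $$
a formal sum of iterated brackets in $x$ and $y$. The key observation is that any bracket monomial built from $m$ factors, each lying in $\nhimp=\prod_{k\ge1}\m_k$, lands in $\prod_{k\ge m}\m_k$, since every factor has degree $\ge 1$ and brackets add degrees. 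Hence for each fixed $k$ only the finitely many monomials of length $\le k$ contribute to the component $(z)_k\in\m_k$, so $z$ is a well-defined element of $\nhimp$. Equivalently, this is just the statement that $\nhimp$ is a pro-nilpotent Lie algebra, so the BCH series converges there; in particular $\exp(\ad(z))$ is pro-summable by Lemma~\ref{L-expad}.

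Next I would set up the operator identity. Because $\ad$ is a Lie algebra homomorphism, $\ad([a,b])=[\ad(a),\ad(b)]$, and since $z$ is a Lie series in $x,y$, applying $\ad$ monomial by monomial gives $\ad(z)=C(\ad(x),\ad(y))$, where $C(A,B)$ is the universal BCH series in two noncommuting variables satisfying the formal identity $e^A e^B=e^{C(A,B)}$ in the completed free associative algebra. Thus, modulo convergence, $\exp(\ad(x))\exp(\ad(y))=\exp(C(\ad(x),\ad(y)))=\exp(\ad(z))$, and it remains only to check that this formal identity specializes to a genuine identity of operators on $\mhat$ under $A=\ad(x)$, $B=\ad(y)$.

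The main obstacle---essentially the only content beyond bookkeeping---is justifying this specialization, since (as the excerpt notes) a product of pro-summable series need not be pro-summable. The grading rescues us. By linearity, and by comparing graded components separately, it suffices to prove that $\bigl(\exp(\ad(x))\exp(\ad(y))(w)\bigr)_k=\bigl(\exp(\ad(z))(w)\bigr)_k$ for each homogeneous $w\in\m_j$ and each $k$. Each $\ad(x_\ell)$ and $\ad(y_\ell)$ raises degree by exactly $\ell\ge1$, so any word in these operators of length $>k-j$ sends $w$ into degrees $>k$; hence only finitely many terms of either exponential---and only the finitely many bracket monomials of length $\le k-j$ in $z$---affect the degree-$k$ output. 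This already shows the product $\exp(\ad(x))\exp(\ad(y))$ is pro-summable, hence well-defined without limits. Concretely, the degree-$k$ component of each side is then computed by restricting all operators to the finite-dimensional space $\bigoplus_{d=j}^{k}\m_d$, on which $\ad(x)$ and $\ad(y)$ act nilpotently; there the classical finite BCH identity $e^A e^B=e^{C(A,B)}$ holds exactly, with $C(\ad(x),\ad(y))=\ad(z)$ by the homomorphism property. So the degree-$k$ components agree; as $k$ and $w$ were arbitrary, $\exp(\ad(x))\exp(\ad(y))=\exp(\ad(z))$ with $z\in\nhimp$, as required.
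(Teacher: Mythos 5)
Your proof is correct and follows the same route as the paper: the paper's own proof simply defines $z$ by the Baker--Campbell--Hausdorff series $z = x+y+\tfrac12[x,y]+\cdots$ and observes that it lies in $\nhimp$, declaring the rest immediate. Your degree-by-degree verification of pro-summability and of the operator identity on finite-dimensional truncations is exactly the bookkeeping the paper leaves implicit, so there is no substantive difference in approach.
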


\begin{longver}
\begin{proof}
The proof is immediate since we have 
\begin{align*}
z&:= x+y+\frac{1}{2}[x,y]+\frac{1}{12}\left([x,[x,y]]-[y,[x,y]]\right)+\cdots\in \nhimp.\qedhere
\end{align*}
\end{proof}
\end{longver}

\begin{lemma}\label{L-nilpexp}   For $n\in\N$, we have: 
\begin{enumerate}
\item\label{L-nilpexp-Uhat}  $\Uhp_n=\exp\left(\ad\left(\widehat{\frak n}^+_n\right)\right)$.
\item\label{L-nilpexp-coset}  $\exp\left(\ad(x)\right) \Uhp_{n+1} = \exp\left(\ad(x + \widehat{\frak n}^+_{n+1})\right)$ for $x\in\mathfrak{m}_n$.
\item\label{L-nilpexp-sum}  $\exp\left(\ad(x+y)\right) \Uhp_{n+1} = \exp\left(\ad(x))\exp(\ad(y)\right) \Uhp_{n+1}$ for $x,y\in\mathfrak{m}_n$.
\item\label{L-nilpexp-prod} 
  $\exp\left(\ad\left([x,y]\right)\right) \in \left( \exp\left(\ad(x)\right), \exp\left(\ad(y)\right) \right)\Uhp_{n+m+1}$ for $x\in\mathfrak{m}_n$ and~$y\in\mathfrak{m}_m$.
\end{enumerate}
\end{lemma}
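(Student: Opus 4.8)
The plan is to deduce all four parts from Proposition~\ref{P-Uprod}, which presents each element of $\Uhp_n$ as an infinite product of exponentials, together with the Baker--Campbell--Hausdorff formula (Lemma~\ref{BCH}). The unifying principle is that $\ad(x)$ is graded of degree $j$ whenever $x\in\mathfrak{m}_j$, so every Lie bracket produced by BCH strictly raises the $\Z$-degree. This makes all the manipulations upper triangular with respect to the grading, and it lets me detect which coset $\Uhp_k$ an element lies in by tracking only the components of degree $<k$ of the relevant exponent. I will first record this \emph{coset principle}: once part~\ref{L-nilpexp-Uhat} is known, $\exp(\ad(w))\Uhp_k$ depends only on $w$ modulo $\widehat{\frak n}^+_k$.

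For part~\ref{L-nilpexp-Uhat} I would prove both inclusions. The inclusion $\exp(\ad(\widehat{\frak n}^+_n))\subseteq\Uhp_n$ is the easy direction: for $x\in\widehat{\frak n}^+_n$ the series $\exp(\ad(x))$ is pro-summable by Lemma~\ref{L-expad}, and since $\ad(x)$ raises degree by at least $n$ its leading term is the identity, so $\exp(\ad(x))\in\Uhp_n$. For the reverse inclusion I would start from $\varphi=\prod_{\ell=n}^\infty\exp(\ad(x_\ell))$ with $x_\ell\in\mathfrak{m}_\ell\subseteq\nhimp$ and collapse the product using Lemma~\ref{BCH} inductively, writing $\prod_{\ell=n}^N\exp(\ad(x_\ell))=\exp(\ad(z_N))$ for some $z_N\in\widehat{\frak n}^+_n$. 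The key observation is that passing from $z_N$ to $z_{N+1}$ alters only components of degree $\ge N+1$, because the new factor $\exp(\ad(x_{N+1}))$ and every BCH bracket involving it lie in degree $\ge N+1$; hence each graded piece of $z_N$ stabilizes and $z_N$ converges to a well-defined $z\in\widehat{\frak n}^+_n$. Lemma~\ref{L-explim} then gives $\exp(\ad(z_N))\to\exp(\ad(z))$, so $\varphi=\exp(\ad(z))$.

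Parts~\ref{L-nilpexp-coset}--\ref{L-nilpexp-prod} are then BCH together with degree counting. For part~\ref{L-nilpexp-coset}, given $x\in\mathfrak{m}_n$ and $w\in\widehat{\frak n}^+_{n+1}$, Lemma~\ref{BCH} yields $\exp(\ad(x))\exp(\ad(w))=\exp(\ad(z))$ with $z\equiv x\pmod{\widehat{\frak n}^+_{n+1}}$, since $w$ and every bracket term have degree $\ge n+1$; solving $w=(-x)\cdot_{\mathrm{BCH}}z'$ gives the reverse containment, so after identifying $\Uhp_{n+1}=\exp(\ad(\widehat{\frak n}^+_{n+1}))$ via part~\ref{L-nilpexp-Uhat} we obtain $\exp(\ad(x))\Uhp_{n+1}=\exp(\ad(x)+\widehat{\frak n}^+_{n+1})$. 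For part~\ref{L-nilpexp-sum}, with $x,y\in\mathfrak{m}_n$ the bracket $[x,y]\in\mathfrak{m}_{2n}$ and all higher BCH terms lie in degree $\ge 2n\ge n+1$, so the BCH exponent of $\exp(\ad(x))\exp(\ad(y))$ is congruent to $x+y$ modulo $\widehat{\frak n}^+_{n+1}$, and part~\ref{L-nilpexp-coset} (the coset principle) finishes it. For part~\ref{L-nilpexp-prod}, the commutator form of BCH gives $(\exp(\ad(x)),\exp(\ad(y)))=\exp(\ad([x,y])+v)$, where $\ad([x,y])$ has degree $n+m$ and $v$ is a sum of brackets of weight $\ge 3$ in $\ad(x),\ad(y)$, hence of degree at least $2\min(n,m)+\max(n,m)\ge n+m+1$; the coset principle with $k=n+m+1$ then yields $\exp(\ad([x,y]))\in(\exp(\ad(x)),\exp(\ad(y)))\Uhp_{n+m+1}$.

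The main obstacle is the convergence step inside part~\ref{L-nilpexp-Uhat}: showing that the BCH-collapse of an \emph{infinite} product of exponentials converges to a genuine element $z\in\widehat{\frak n}^+_n$, rather than merely to a formal series. Everything rests on the upper triangularity of BCH with respect to the $\Z$-grading, which forces each graded component of the partial exponents $z_N$ to stabilize after finitely many steps. Once this stabilization is justified and combined with Lemma~\ref{L-explim}, the remaining three parts reduce to routine bookkeeping with degrees.
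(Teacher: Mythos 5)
Your proposal is correct, and its skeleton is the same as the paper's: both rest on Proposition~\ref{P-Uprod}, the BCH formula (Lemma~\ref{BCH}), Lemma~\ref{L-explim}, and the fact that BCH brackets strictly raise the $\Z$-degree. The one genuine divergence is in the inclusion $\exp\left(\ad\left(\widehat{\frak n}^+_n\right)\right)\subseteq\Uhp_n$: the paper proves this by repeatedly peeling off the lowest homogeneous component via BCH, writing $\exp(\ad(x))$ as an infinite product $\prod_{\ell=n}^\infty\exp\left(\ad\left(x_\ell\right)\right)$ whose factors lie in ever deeper subgroups, and then concluding membership in $\Uhp_n$; you instead prove it directly from the definition of $\Uhp_n$, using pro-summability (Lemma~\ref{L-expad}) and the fact that $\ad(x)$ raises degree by at least $n$. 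Your direct argument is shorter and avoids the convergence question for the peeling; the paper's version yields, as a byproduct, the factorization of every such exponential into homogeneous factors. The only point you should make explicit in your version is that $\exp(\ad(x))$ is actually an \emph{automorphism} of $\widehat{\frak m}$ (the exponential of a pro-summable derivation, with inverse $\exp(\ad(-x))$), since that is part of the definition of $\Uhp_n$; this is the same standard fact the paper uses inside Lemma~\ref{L-welldef}. In the other direction, $\Uhp_n\subseteq\exp\left(\ad\left(\widehat{\frak n}^+_n\right)\right)$, your stabilization-of-graded-components argument is exactly what underlies the paper's terser appeal to the closedness of $\exp\left(\ad\left(\widehat{\frak n}^+_n\right)\right)$, so the two are essentially identical there. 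Finally, your ``coset principle'' plus degree counting for parts (2)--(4) correctly fills in what the paper dismisses as straightforward, including the bound $2\min(n,m)+\max(n,m)\ge n+m+1$ needed in part (4).
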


\begin{longver}
\begin{proof}
Lemma~\ref{L-explim} shows that $\exp\left(\ad\left(\widehat{\frak n}^+_n\right)\right)$ is a closed set and Lemma~\ref{BCH} implies that~$\Uhp_n\subseteq\exp\left(\ad\left(\widehat{\frak n}^+_n\right)\right)$. 

Suppose $x\in\widehat{\frak n}^+_n$. Take  $x_n\in\frak{m}_n$ with $x-x_n\in\widehat{\frak n}^+_{n+1}$.
So Lemma~\ref{BCH} gives us  $$x^{(n+1)} = x-x_n+ \frac{1}{2}\left[x,-x_n\right]+\cdots\in\widehat{\frak n}^+_{n+1}$$ such that
$$\exp\left(\ad\left(x\right)\right) \exp\left(\ad\left(-x_n\right)\right)= \exp\left(\ad\left(x^{(n+1)}\right)\right)$$
and so
$$\exp\left(\ad\left(x\right)\right)= \exp\left(\ad\left(x^{(n+1)}\right)\right) \exp\left(\ad\left(x_n\right)\right).$$
By induction we get
$$\exp\left(\ad(x)\right)= \exp\left(\ad\left(x^{(N+1)}\right)\right) \prod_{\ell=n}^N\exp\left(\ad\left(x_l\right)\right) $$
with $x^{(N+1)}\in\widehat{\frak n}^+_{N+1}$.
Now
$$\exp\left(\ad\left({\widehat{\frak n}}^+_n\right)\right) \supseteq \exp\left(\ad\left({\widehat{\frak n}}^+_{n+1}\right)\right)\supseteq \dots$$
is a sequence of closed sets whose intersection is the trivial group.
Hence $\lim_{N\to\infty}\exp\left(\ad\left(x^{(N+1)}\right)\right)=1$, and so $\exp\left(\ad(x)\right)=\prod_{\ell=n}^\infty\exp\left(\ad\left(x_\ell\right)\right)\in \Uhp_n.$
Thus $\exp\left(\ad\left(\widehat{\frak n}^+_n\right)\right) \subseteq \Uhp_n$ and \eqref{L-nilpexp-Uhat} is proved. 
Parts~\eqref{L-nilpexp-coset} and \eqref{L-nilpexp-sum} are now straightforward, and (\ref{L-nilpexp-prod}) follows from Lemma~\ref{BCH}.
\end{proof}
\end{longver}

\begin{shortver}
\begin{proof}
Part~\eqref{L-nilpexp-Uhat} follows Lemma~\ref{BCH} and the fact that
 $\exp\left(\ad\left(\widehat{\frak n}^+_n\right)\right)$ is a closed set (by
 Lemma~\ref{L-explim}).
 Parts~\eqref{L-nilpexp-coset} and \eqref{L-nilpexp-sum} are now straightforward, and (\ref{L-nilpexp-prod}) follows from Lemma~\ref{BCH}.
\end{proof}
\end{shortver}

\begin{lemma}\label{BCHcomm} For $x,y\in\nhimp$ and $u,v\in\C$ we have
\begin{align*}
\left(\exp(ux),\exp(vy)\right)&=\exp\left(c_{1,1}uv[x,y]+c_{2,1}u^2v[x,[x,y]]+c_{1,2}uv^2[y,[x,y]]\right. \\
&\qquad\left. +c_{3,1}u^3v[x,[x,[x,y]]]+c_{2,2}u^2v^2[y,[x,[x,y]]]+\cdots \right)
\end{align*}
where $c_{i,j}\in\Q$ are constants independent of $x,y,u,v$.
\end{lemma}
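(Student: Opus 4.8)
The plan is to reduce the statement to the universal Baker--Campbell--Hausdorff identity for a group commutator in a free Lie algebra, and then transport that identity to $\nhimp$ using the $\Z$-grading to control convergence. Throughout I read $\exp(w)$ as $\exp(\ad(w))$ for $w\in\nhimp$, so that each factor is a well-defined automorphism of $\mhat$ lying in $\Uhimp$ by Lemmas~\ref{L-expad} and~\ref{L-nilpexp}.

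First I would record the formal identity. Let $\mathcal{L}$ be the free Lie algebra over $\Q$ on two generators $X,Y$, bigraded by the number of $X$'s and the number of $Y$'s, and let $\widehat{\mathcal{L}}$ be its completion with respect to total degree. In $\widehat{\mathcal L}$ the group commutator satisfies
$$\exp(X)\exp(Y)\exp(-X)\exp(-Y)=\exp\left(\Psi(X,Y)\right),$$
where $\Psi$ is obtained by composing the Hausdorff series three times, $\Psi=H\left(H\left(H(X,Y),-X\right),-Y\right)$. By Dynkin's theorem each $H$, and hence $\Psi$, is a \emph{Lie} series with coefficients in $\Q$; its total-degree-$1$ part cancels (the $X$ of the first slot against the $-X$ of the third, and likewise for $Y$), and the only total-degree-$2$ bracket is $[X,Y]$, so $\Psi=[X,Y]+(\text{brackets of total degree}\ge3)$. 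Thus $\Psi=\sum_{i,j\ge1}\Psi_{i,j}$, where $\Psi_{i,j}$ is the bihomogeneous component that is a fixed $\Q$-linear combination of iterated brackets containing exactly $i$ copies of $X$ and $j$ copies of $Y$, with $i+j\ge2$ and $\Psi_{1,1}=[X,Y]$.

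Next I would specialize. By freeness the assignment $X\mapsto x$, $Y\mapsto y$ extends to a Lie algebra homomorphism $\rho:\mathcal{L}\to\nhimp$. Since $x,y\in\nhimp=\prod_{k\ge1}\mathfrak{m}_k$ and $\ad$ raises the $\Z$-degree by at least $1$, the image under $\rho$ of any bracket with $i+j$ letters lies in $\prod_{k\ge i+j}\mathfrak{m}_k$. Hence for each fixed target degree only the finitely many components $\Psi_{i,j}$ with $i+j$ not exceeding that degree contribute, so the series $z:=\sum_{i,j}s^it^j\,\rho(\Psi_{i,j})$ is pro-summable and defines an element of $\nhimp$; the scalar $s^it^j$ appears because each bracket is $(i+j)$-linear and $\Psi_{i,j}$ is bihomogeneous of bidegree $(i,j)$, which also shows the resulting coefficients are independent of $s,t,x,y$. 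Applying $\ad$, and using that $\ad$ is a Lie homomorphism together with Lemma~\ref{BCH} (applied three times to guarantee that each partial product is again an exponential in $\Uhimp$), the formal identity for $\Psi$ transports term by term to
$$\left(\exp(\ad(sx)),\exp(\ad(ty))\right)=\exp\left(\ad(z)\right),$$
and reading $z=\sum_{i,j}c_{i,j}\,s^it^j\,[\cdots]$ off from $\rho(\Psi_{i,j})$ yields the displayed formula, with $c_{1,1}=1$ coming from $\Psi_{1,1}=[X,Y]$.

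The main obstacle is the passage from the purely formal identity in $\widehat{\mathcal L}$ to an honest identity of automorphisms in $\Uhimp$: a priori the substituted object is an infinite product of exponentials and an infinite Lie series, neither of which converges without justification. The point that makes everything work is that every term of the commutator series $\Psi$ has total bracket-degree at least $2$ and that $\rho$ is compatible with the two gradings, so the specialization homomorphism extends continuously to the completions and each graded component of $z$ is a finite sum. Verifying this compatibility --- equivalently, that $\rho(\Psi_{i,j})\in\prod_{k\ge i+j}\mathfrak{m}_k$ and that the regrouping by bidegree is legitimate in the product topology --- is the only real content; the explicit values of the higher $c_{i,j}$ are not needed and, if desired, follow from the standard low-degree terms of the Hausdorff series.
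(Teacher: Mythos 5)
Your proof is correct and takes essentially the same route as the paper: the paper's own proof is the one-line ``Apply Lemma~\ref{BCH} to $\exp(sx)\exp(ty)$ and $\exp(ty)\exp(sx)$,'' i.e., exactly the composition of Hausdorff series that you carry out. Your version merely makes explicit the bookkeeping the paper leaves implicit --- the universal free Lie algebra, Dynkin's theorem, the bihomogeneity that produces the $s^it^j$ scaling and the vanishing of pure-$x$ and pure-$y$ components, and the grading argument for pro-summability of the specialized series.
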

\begin{proof} Apply Lemma~\ref{BCH} to $\exp(sx)\exp(ty)$ and $\exp(ty)\exp(sx)$.
\end{proof}

We can now give an infinite dimensional analog of the Chevalley group commutator formula \cite{St}.
Recall the definition of $\Sigma(\a,\b)$ from Subsection~\ref{SS-strings}. 
\begin{theorem}\label{T-commrel}
Suppose $\a,\b\in\Delta^+$, $x_{\a}\in{\frak m}_{\a}$, $y_{\b}\in{\frak m}_{\b}$. Then there are unique $z_{\g}\in{\frak m}_{\g}$ for each
$\g\in\Sigma(\a,\b)$ such that
$$\left(\exp\left(u\ad\left(x_{\a}\right)\right),\exp\left(v\ad\left(y_{\b}\right)\right)\right) =\prod_{\g=i\a+j\b\in\Sigma(\a,\b)} \exp\left(u^i v^j\ad\left(z_{\g}\right)\right).$$
In particular, $z_{\a+\b}=\left[x_{\a},y_{\b}\right]$.
\end{theorem}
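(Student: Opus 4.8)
The plan is to reduce the group commutator to a single exponential via Baker--Campbell--Hausdorff, to control the support of its exponent using the combinatorics of $\Sigma(\a,\b)$, and then to re-expand that single exponential as an ordered product of root-graded exponentials. Throughout, all the operators involved are pro-summable by Lemma~\ref{L-expad}, so the formal power-series manipulations are justified; when $\a$ or $\b$ equals the real root $\a_{-1}$ the relevant exponential is even summable on $\frak m$, and the grade-$0$ contributions cancel in the commutator $XYX^{-1}Y^{-1}$, so the argument is uniform in all cases of Proposition~\ref{P-Sigma}.

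First I would apply Lemma~\ref{BCH} and Lemma~\ref{BCHcomm} to write
$$\left(\exp\left(u\ad\left(x_\a\right)\right),\exp\left(v\ad\left(y_\b\right)\right)\right)=\exp(\ad(W)),$$
where $W$ is the BCH commutator series: a pro-summable sum of iterated Lie brackets in $u\,x_\a$ and $v\,y_\b$, each bracket carrying $i$ factors of $x_\a$ and $j$ factors of $y_\b$ together with the scalar $u^iv^j$, and with $i,j\ge1$. Since $\ad$ is a Lie homomorphism, such a bracket equals $\ad$ of the corresponding iterated bracket of $x_\a,y_\b$ in $\frak m$, which lies in $\frak m_{i\a+j\b}$ by Proposition~\ref{P-all}(5); its lowest-order term is $uv\,[x_\a,y_\b]$, which already yields $z_{\a+\b}=[x_\a,y_\b]$.

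The key structural step is the \emph{support claim}: if an iterated bracket in $x_\a,y_\b$ of bidegree $(i,j)$ is nonzero, then $i\a+j\b\in\Sigma(\a,\b)$. I would prove this by using the Jacobi identity to expand any such bracket into left-normed brackets $[z_1,[z_2,[\cdots,[z_{k-1},z_k]]]]$ with $z_t\in\{x_\a,y_\b\}$, and then evaluating from the inside out: a nonzero left-normed bracket forces its innermost pair to be $\pm[x_\a,y_\b]\in\frak m_{\a+\b}$ with $\a+\b\in\Delta$, and each successive bracketing adds a single $\a$ or $\b$ while remaining in a nonzero root space. Thus the sequence of partial degrees traces a path $\a+\b,\ \dots,\ i\a+j\b$ realizing the inductive definition $\Sigma_1\subseteq\Sigma_2\subseteq\cdots$, so $i\a+j\b\in\Sigma(\a,\b)$. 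This is precisely the point where $\Sigma(\a,\b)$, rather than the a priori larger set $S(\a,\b)=\{a\a+b\b\}\cap\Delta$, appears. Consequently $W$ is supported on $\prod_{\gamma\in\Sigma}\frak m_\gamma$, and when $\a,\b$ are linearly independent the bidegree $(i,j)$ is determined by $\gamma=i\a+j\b$, so the $\frak m_\gamma$-component of $W$ is $u^iv^j$ times a $u,v$-independent element.

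Finally I would convert $\exp(\ad(W))$ into the asserted product by a triangular change of coordinates. Order $\Sigma(\a,\b)$ by increasing value of the grading $\lambda$, refining arbitrarily within each grade; each graded piece contains only finitely many roots by Lemma~\ref{L-rts}, and same-grade factors commute to leading order since their bracket raises the grade. Peeling off factors in order of increasing grade, I would define $z_\gamma$ recursively: having chosen all factors of grade below that of $\gamma$, multiply the commutator by their inverses and read off the $\frak m_\gamma$-component, which must equal $u^iv^j\ad(z_\gamma)$; this pins down $z_\gamma\in\frak m_\gamma$ uniquely. Every correction term produced by Lemma~\ref{BCH} in this recursion is again an iterated bracket of $x_\a$ and $y_\b$, hence remains supported on $\Sigma(\a,\b)$ and carries the matching monomial $u^iv^j$, so the recursion never escapes the claimed form. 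The resulting infinite product converges in $\Uhp$ by Proposition~\ref{P-Uprod}. I expect the main obstacle to be exactly the support claim together with checking that the regrouping stays inside $\Sigma(\a,\b)$: a priori the brackets know only that they live in $S(\a,\b)$, and it is the inner-to-outer chain argument that confines them to $\Sigma(\a,\b)$. The proportional case (e.g.\ $\b=2\a$, allowed by Corollary~\ref{L-rtmults}), where $\gamma$ no longer determines $(i,j)$, requires reading the coefficient $u^iv^j$ as the appropriate homogeneous polynomial absorbed into $z_\gamma$.
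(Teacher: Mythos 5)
Your proposal is correct and takes essentially the same route as the paper's proof: use Lemma~\ref{BCHcomm} to collapse the group commutator into a single exponential whose exponent is supported on $\Sigma(\a,\b)$ with coefficients $u^iv^j$, then peel off one root-graded factor at a time (ordered compatibly with the grading) via repeated application of Lemma~\ref{BCH}, and pass to the limit to get the infinite product. The only differences are refinements rather than a new approach: you make explicit the support claim (which the paper asserts implicitly by writing the BCH series as a sum over $\Sigma(\a,\b)$) and you flag the proportional-root case where $\g$ does not determine $(i,j)$, a subtlety the paper's statement and proof gloss over.
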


\begin{longver}
\begin{proof} 
First write the equation from Lemma~\ref{BCHcomm}  as
\begin{align*}
 \left(\exp\left(u\ad\left(x_\a\right)\right),\exp\left(v\ad\left(y_\b\right)\right)\right) &= 
    \exp\left( \sum_{\g=i\a+j\b\in\Sigma(\a,\b)} u^i v^j\ad\left(x^{(1)}_{\g}\right) \right), 
\end{align*}
for $x^{(1)}_\g\in {\frak m}_{\g}$ with $x^{(1)}_{\a+\b}=\left[x_\a,y_\b\right]$.
Fix a linear ordering `$<$' on the set $\Delta^+$ with $\a<\b$ when $\lambda(a)<\lambda(\b)$ by taking an arbitrary linear ordering on each finite subset $\Delta_k$.
So this is more precisely written
\begin{align*}
\left(\exp\left(u\ad\left(x_\a\right)\right),\exp\left(v\ad\left(y_\b\right)\right)\right) &= 
    \exp\left( \sum_{n=1}^\infty u^{i_n} v^{j_n} \ad\left(x^{(1)}_{\g_n}\right) \right), 
\end{align*}
where $\g_n := i_n\a+j_n\b\in\Delta^+$, $\Sigma(\a,\b)=\left\{\g_n \mid n\in\N\right\}$ and $\g_1< \g_2<\cdots$.
Assume, for induction on $N$, that
$$\exp\left( \sum_{n=1}^\infty u^{i_n} v^{j_n} \ad\left(x^{(1)}_{\g_n}\right) \right) = 
  \prod_{n=1}^{N-1} \exp\left(u^{i_n} v^{j_n} \ad\left(x^{(n)}_{\g_n}\right)\right)
  \exp\left( \sum_{n=N}^\infty u^{i_n} v^{j_n} \ad\!\left(x^{(N)}_{\g_n}\right) \right)$$
for some $x^{(N)}_{\g_n} \in \mathfrak{m}_{\g_n}$ for $n\ge N$, recalling that products are read right-to-left.
We now apply Lemma~\ref{BCH} with
 $$x= - u^{i_N} v^{j_N} \ad\left(x^{(N)}_{\g_N}\right)\quad \text{and}\quad  
     y= \sum_{n=N}^\infty u^{i_n} v^{j_n} \ad\left(x^{(N)}_{\g_n}\right)$$ 
to get
$$\exp\left(- u^{i_N} v^{j_N} \ad\left(x^{(N)}_{\g_N}\right)\right) 
  \exp \left(\sum_{n=N}^\infty u^{i_n} v^{j_n} \ad\left(x^{(N)}_{\g_n}\right)\right) 
= \exp\left(\sum_{n=N+1}^\infty u^{i_n} v^{j_n} \ad\left(x^{(N+1)}_{\g_n}\right)\right), $$
for some $x^{(N+1)}_{\g_n} \in \mathfrak{m}_{\g_n}$,
and so
$$\exp\left(\sum_{n=N}^\infty u^{i_n} v^{j_n} \ad\left(x^{(N)}_{\g_n}\right)\right)  
  = \exp\left( u^{i_N} v^{j_N} \ad\left(x^{(N+1)}_{\g_N}\right)\right)
\exp\left(\sum_{n=N+1}^\infty u^{i_n} v^{j_n} \ad\left(x^{(N+1)}_{\g_n}\right)\right) $$

Now, setting $z_{\g_n}:= x^{(n)}_{\g_n}$, we have inductively proved that
\begin{align*}
  \exp\left( \sum_{n=1}^\infty u^{i_n} v^{j_n} \ad\left(x^{(1)}_{\g_n} \right) \right) 
    &= \prod_{n=1}^{N-1} \exp\left(u^{i_n} v^{j_n} \ad\left(z_{\g_n}\right)\right) \exp\left( \sum_{n=N}^\infty u^{i_n} v^{j_n} \ad\left(x^{(N)}_{\g_n}\right) \right)\\
    &\in  \prod_{n=1}^{N-1} \exp\left(u^{i_n} v^{j_n} \ad\left(z_{\g_n}\right)\right) \widehat{U}^+_{\lambda(\g_N)}
\end{align*}
The result now follows by taking $N\to\infty$, and note that $z_{\g_1}:= x^{(1)}_{\g_1}=\left[x_\a,y_b\right]$.
\end{proof}
\end{longver}

\begin{shortver}
\begin{proof} 
First write the equation from Lemma~\ref{BCHcomm}  as
\begin{align*}
 \left(\exp\left(u\ad\left(x_\a\right)\right),\exp\left(v\ad\left(y_\b\right)\right)\right) &= 
    \exp\left( \sum_{\g=i\a+j\b\in\Sigma(\a,\b)} u^i v^j\ad\left(x^{(1)}_{\g}\right) \right), 
\end{align*}
for $x^{(1)}_\g\in {\frak m}_{\g}$ with $x^{(1)}_{\a+\b}=\left[x_\a,y_\b\right]$.
Recall that the positive roots have a fixed linear ordering `$<$' that respects height, so this is more precisely written
\begin{align*}
\left(\exp\left(u\ad\left(x_\a\right)\right),\exp\left(v\ad\left(y_\b\right)\right)\right) &= 
    \exp\left( \sum_{n=1}^\infty u^{i_n} v^{j_n} \ad\left(x^{(1)}_{\g_n}\right) \right), 
\end{align*}
where $\g_n := i_n\a+j_n\b$, $\Sigma(\a,\b)=\left\{\g_n \mid n\in\N\right\}$ and $\g_1< \g_2<\cdots$.
We can now split the right hand side into a finite  product of exponentials by repeated use of Lemma~\ref{BCH}, and take limits to get the infinite product of exponentials.
\end{proof}
\end{shortver}


%

\subsection{$\nhimp$ as a pro-Lie algebra and $\widehat{U}_{\im}^+$ as a pro-Lie group}\label{SS-pro}
In the previous subsections, we have considered $\nhimp$ as a direct product, in order to keep our approach as straightforward as possible.
In this section we relate our approach to the concepts of pro-Lie algebras and pro-Lie groups (see for example \cite{kumar2012kac}).

Let $\Lambda$ be a directed index set (that is, a poset where, for all $i,j\in\Lambda$, there exists $k\in\Lambda$ with $i\le k$, $j\le k$).
Recall from \cite{kumar2012kac}, that the Lie algebra $\mathfrak{L}$ is a \emph{pro-Lie algebra} if there is a system $(\mathfrak{L}_i)_{i\in \Lambda}$ of ideals with $\mathfrak{L}_i\subseteq \mathfrak{L}_j$ whenever $i<j$ and each quotient $\mathfrak{L}/\mathfrak{L}_i$ is finite dimensional.
Further a pro-Lie algebra $\mathfrak{L}$ is \emph{pro-nilpotent} if each quotient $\mathfrak{L}/\mathfrak{L}_i$ is nilpotent.
Equivalently  $\mathfrak{L}$ is a pro-Lie algebra if and only if it is an inverse limit of finite dimensional Lie algebras; and is pro-nilpotent if and only if it is an inverse limit of finite dimensional nilpotent algebras.
We define pro-solvable Lie algebras similarly.

Each quotient $\mathfrak{L}/\mathfrak{L}_i$ has the topology of a finite dimensional $\C$-vector space, which induces a topology on 
$\mathfrak{L}$. 
This topology will serve a similar role as the pro-topology defined in \cite{kumar2012kac}, although Kumar starts with the discrete topology on vector spaces
in that book (see also
\cite{rousseau2012almost}).

We can now generalize the concept of summability \cite{LL} to this setting.
\begin{defn}
Let $\vartheta_k$ for $k\in\N\cup\{0\}$ be operators on a pro-Lie algebra 
$\mathfrak{L} := \lim_i \mathfrak{L/L}_i$. 
We say that $\vartheta:=\sum_{k=0}^\infty \vartheta_k$ is pro-summable if, for all $x\in \mathfrak{L}$ and $i\in\Lambda$,
$\vartheta_k(x)$ is in $\mathfrak{L}_i$ for all but finitely many values of $k$.
\end{defn}
It follows that $\sum_{k=0}^\infty\vartheta_k(x)+\mathfrak{L}_i$ is a well-defined finite sum in $\mathfrak{L}/\mathfrak{L}_i$, and so $\sum_{k=0}^\infty\vartheta_k(x)$ is well defined in $\mathfrak{L}$

Recall that
\begin{align*}
\nimp &= \bigoplus_{k=1}^\infty \mathfrak{m}_k, & \mathfrak{n}^+_n &= \bigoplus_{k\geq n}^\infty \mathfrak{m}_k,&
\nhimp &= \prod_{k=1}^\infty \mathfrak{m}_k, & \nhp_n &= \prod_{k\geq n}^\infty \mathfrak{m}_k.
\end{align*}
\begin{lemma}\label{nilpotent} 
For all $k\ge0$, $\nhimp/\nh_k= \nimp/{\mathfrak{n}}_k^+$
is nilpotent. Hence $$\nhimp=\varprojlim_k \nhimp/\nh_k\quad\text{ and }\quad \nimp=\varprojlim_k\nimp/{\mathfrak{n}}_k^+$$ are pro-nilpotent pro-Lie algebras, and $\nhimp$ is the completion of $\nimp$.
\end{lemma}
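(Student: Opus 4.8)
The plan is to deduce everything from the $\Z$-grading $\m=\bigoplus_k\m_k$, using only that each $\m_k$ is finite dimensional and that $[\m_i,\m_j]\subseteq\m_{i+j}$ (the graded-endomorphism lemma). Throughout, for $k\ge1$ I write $\nhp_k=\prod_{j\ge k}\m_j$ (abbreviated $\nh_k$ in the statement) and $\n_k=\bigoplus_{j\ge k}\m_j$. The bracket condition shows at once that each is an ideal (if $i\ge1$ and $j\ge k$, then $i+j\ge k$), that both filtrations are decreasing, and that $\bigcap_k\nhp_k=0=\bigcap_k\n_k$. I would then identify the quotients: the inclusion $\nimp\hookrightarrow\nhimp$ satisfies $\nimp\cap\nhp_k=\n_k$ and $\nimp+\nhp_k=\nhimp$, since every element of $\nhimp$ is the sum of its degree-$<k$ part (finitely supported, hence in $\nimp$) and its degree-$\ge k$ part (in $\nhp_k$). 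The second isomorphism theorem then gives a canonical isomorphism $\nimp/\n_k\xrightarrow{\ \sim\ }\nhimp/\nhp_k\cong\bigoplus_{1\le j<k}\m_j$, a finite-dimensional graded Lie algebra.

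For nilpotency I would run the lower central series of $\mathfrak g:=\bigoplus_{1\le j<k}\m_j$, setting $C^1(\mathfrak g)=\mathfrak g$ and $C^{n+1}(\mathfrak g)=[\mathfrak g,C^n(\mathfrak g)]$. An easy induction on $n$, using $[\m_i,\m_j]\subseteq\m_{i+j}$, gives $C^n(\mathfrak g)\subseteq\bigoplus_{j\ge n}\m_j$; since $\mathfrak g$ is concentrated in degrees $1,\dots,k-1$, the term $C^k(\mathfrak g)$ vanishes, so $\mathfrak g$ is nilpotent of class at most $k-1$. This proves the first assertion, the cases $k=0,1$ giving the trivial algebra.

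Finally I would assemble the pro-Lie statements. The decreasing filtration $(\nhp_k)_{k\ge1}$ by ideals of finite codimension with nilpotent quotients exhibits $\nhimp$ as a pro-nilpotent pro-Lie algebra, and likewise $(\n_k)_{k\ge1}$ exhibits $\nimp$. A compatible family in $\varprojlim_k\nhimp/\nhp_k$ is exactly a choice of one vector in each $\m_j$, i.e.\ an element of $\prod_{j\ge1}\m_j=\nhimp$, so $\nhimp=\varprojlim_k\nhimp/\nhp_k$ is complete. Because the quotient maps agree under the isomorphism of the first paragraph, $\varprojlim_k\nimp/\n_k=\varprojlim_k\nhimp/\nhp_k=\nhimp$, and $\nimp$ (the finitely supported sequences) is dense in the product $\nhimp$; hence $\nhimp$ is the completion of $\nimp$.

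The computation is routine; the only point needing care is this last one. Taken literally, the inverse limit of the system $\nimp/\n_k$ is the full product $\nhimp$, not $\nimp$ itself, so I would read ``$\nimp=\varprojlim_k\nimp/\n_k$'' as the assertion that $\nimp$ is a pro-nilpotent pro-Lie algebra whose completion is $\nhimp$, and make that identification explicit rather than conflate $\nimp$ with its completion.
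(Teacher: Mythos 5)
Your proof is correct. The paper in fact states this lemma with no proof at all (it is treated as an immediate consequence of the grading), so there is nothing to compare against; your argument supplies exactly the reasoning the paper leaves implicit: the identification $\nimp/\n_k \cong \nhimp/\nhp_k \cong \bigoplus_{1\le j<k}\m_j$ via $\nimp\cap\nhp_k=\n_k$, $\nimp+\nhp_k=\nhimp$ and the second isomorphism theorem; nilpotency of that quotient from the degree estimate $C^n\subseteq\bigoplus_{j\ge n}\m_j$ on the lower central series; and $\varprojlim_k\nhimp/\nhp_k=\prod_{j\ge1}\m_j=\nhimp$. Your closing caveat is also well taken and is a genuine (if minor) imprecision in the paper's statement: under the paper's own definition of pro-Lie algebra (an inverse limit of finite dimensional algebras), the literal inverse limit $\varprojlim_k\nimp/\n_k$ is the full product $\nhimp$, not the direct sum $\nimp$, so the clause ``$\nimp=\varprojlim_k\nimp/\n_k$'' must be read the way you read it --- $\nimp$ carries the pro-nilpotent filtration $(\n_k)$ and its completion with respect to that filtration is $\nhimp$ --- which is consistent with the lemma's final assertion and with how it is used later in the paper.
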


Similarly a group $G$ is {\it pro-nilpotent}  if there is a system $({G}_i)_{i\in \Lambda}$ of  subgroups satisfying ${G}_i\unlhd{G}_j$ whenever $i<j$ and each quotient ${G}/{G}_i$ is nilpotent.
A group $G$ is {\it pro-unipotent} if there is a system $({G}_i)_{i\in \Lambda}$, with each $G_i$
acting unipotently on a 
finite-dimensional vector space $V_i,$ so that there is an inclusion $G_i\hookrightarrow G_j$ induced by an inclusion 
$V_i \hookrightarrow V_j$ for all $i<j$.
It is easily seen that a pro-unipotent group is pro-nilpotent.

\begin{lemma}\label{L-Uhat-pro-nilp} The group
 $
\Uhimp = \varprojlim_{\,i} \Uhimp/\Uhp_i\
$
is complete, pro-unipotent, and  hence pro-nilpotent. 
\end{lemma}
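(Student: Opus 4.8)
The plan is to realize $\Uhimp$ as the inverse limit of the finite-dimensional unipotent groups $\Uhimp/\Uhp_n$, reading off unipotency and nilpotency from the pro-nilpotent Lie algebra $\nhimp$ of Lemma~\ref{nilpotent}, and reading off completeness from the explicit product description of Proposition~\ref{P-Uprod} together with the pro-summability of Lemma~\ref{L-welldef}. The three things to check are: the $\Uhp_n$ form a descending chain of closed normal subgroups with trivial intersection; each quotient $\Uhimp/\Uhp_n$ is a finite-dimensional unipotent (hence nilpotent) group acting faithfully and unipotently on a finite-dimensional space; and the canonical map $\Uhimp\to\varprojlim_n\Uhimp/\Uhp_n$ is an isomorphism.

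First I would record the filtration data. Recall $\Uhimp=\Uhp_1$, so by Lemma~\ref{L-nilpexp}\eqref{L-nilpexp-Uhat} we have $\Uhimp=\exp(\ad(\nhimp))$ and $\Uhp_n=\exp(\ad(\widehat{\frak n}^+_n))$, where $\widehat{\frak n}^+_n=\prod_{j\ge n}\frak m_j$. Since $[\frak m_a,\frak m_b]\subseteq\frak m_{a+b}$, each $\widehat{\frak n}^+_n$ is an ideal of $\nhimp$ for $n\ge1$. Any $\varphi\in\Uhimp$ preserves this ideal, because $\varphi(\frak m_k)\subseteq\frak m_k+\nhp_{k+1}\subseteq\nhp_k$ forces $\varphi(\widehat{\frak n}^+_n)\subseteq\widehat{\frak n}^+_n$; as the elements of $\Uhimp$ are Lie algebra automorphisms, $\varphi\exp(\ad x)\varphi^{-1}=\exp(\ad(\varphi x))$, and this together with Lemma~\ref{L-nilpexp}\eqref{L-nilpexp-Uhat} gives $\Uhp_n\unlhd\Uhimp$. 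Thus $\Uhimp=\Uhp_1\supseteq\Uhp_2\supseteq\cdots$ is a descending chain of closed (Lemma~\ref{L-Uhat}) normal subgroups, and $\bigcap_n\Uhp_n=\{1\}$ as the intersection of the nested closed sets $\exp(\ad(\widehat{\frak n}^+_n))$ appearing in the proof of Lemma~\ref{L-nilpexp}.

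Next I would analyze the quotients. The group $\Uhimp$ acts on the finite-dimensional space $V_n:=\nhimp/\widehat{\frak n}^+_n\cong\bigoplus_{1\le k<n}\frak m_k$, and by Proposition~\ref{P-Uprod} the action of $\varphi=\prod_{\ell\ge1}\exp(\ad(x_\ell))$ on $V_n$ depends only on $x_1,\dots,x_{n-1}$ and is trivial precisely when $\varphi\in\Uhp_n$; hence $\Uhimp/\Uhp_n$ acts faithfully on $V_n$. Because $\ad(x)$ strictly raises degree, this action is unipotent, and Lemma~\ref{nilpotent} identifies $\Uhimp/\Uhp_n=\exp\!\bigl(\ad(\nhimp/\widehat{\frak n}^+_n)\bigr)$ as the unipotent group of the finite-dimensional nilpotent Lie algebra $\nhimp/\widehat{\frak n}^+_n$, which is in particular nilpotent. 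The representations $V_n$, together with the evident inclusions $V_n\hookrightarrow V_{n+1}$ of vector spaces, supply exactly the data required to exhibit $\Uhimp$ as pro-unipotent.

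Finally I would prove completeness, namely that $\Uhimp\to\varprojlim_n\Uhimp/\Uhp_n$ is an isomorphism. Injectivity is immediate from $\bigcap_n\Uhp_n=\{1\}$. For surjectivity, a compatible family $(\varphi_n\Uhp_n)_n$ forces, via the product form of Proposition~\ref{P-Uprod}, the low-degree generators to stabilize to a single sequence $x_\ell\in\frak m_\ell$ for $\ell\ge1$; Lemma~\ref{L-welldef} then guarantees that $\varphi:=\prod_{\ell\ge1}\exp(\ad(x_\ell))$ is pro-summable and a well-defined element of $\Uhimp$ with $\varphi\Uhp_n=\varphi_n\Uhp_n$ for every $n$. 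I expect this surjectivity step to be the main obstacle: it is where pro-summability must be invoked to convert a coherent system of finite truncations into an honest automorphism, and where one must verify that the inverse-limit topology coincides with the subspace topology inherited from $\Aut(\widehat{\frak m})$. Assembling the three parts shows that $\Uhimp$ is complete and pro-unipotent, and pro-nilpotency follows since every pro-unipotent group is pro-nilpotent.
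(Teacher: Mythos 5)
Your overall skeleton is sound, and your completeness argument (via Proposition~\ref{P-Uprod} and Lemma~\ref{L-welldef}) is actually more explicit than the paper's, which simply exhibits the central series $\Uhimp/\Uhp_n\ge\Uhp_2/\Uhp_n\ge\cdots\ge\Uhp_{n-1}/\Uhp_n\ge1$ (a consequence of Lemma~\ref{BCH}) and cites Theorem 16.2.6 of \cite{Springer} for unipotence of each quotient. However, your second step contains a genuine error: the action of $\Uhimp/\Uhp_n$ on $V_n:=\nhimp/\widehat{\frak n}^+_n$ is \emph{not} faithful. Every element of $\nhimp$ has degree at least $2$ (positive imaginary roots specialize to $(a,b)$ with $a,b\in\N$, so $\lambda\ge2$). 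Hence for $n\ge 4$, any nonzero $x\in\frak m_{n-2}$ gives $\exp(\ad(x))(y)-y\in\nhp_{k+n-2}\subseteq\widehat{\frak n}^+_n$ for all $y\in\frak m_k$ with $k\ge2$, so $\exp(\ad(x))$ acts trivially on $V_n$. Yet $\exp(\ad(x))\notin\Uhp_n$: membership in $\Uhp_n$ is tested on \emph{all} of $\widehat{\frak m}$, in particular on $\frak m_0\supseteq\frak h$, and $\exp(\ad(x))(h)-h$ has nonzero degree-$(n-2)$ component $[x,h]$ for suitable $h\in\frak h$ (the roots in the support of $x$ are nonzero on $\frak h$). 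So the kernel of the action on $V_n$ strictly contains $\Uhp_n$, your claim ``trivial precisely when $\varphi\in\Uhp_n$'' fails, and with it the identification of $\Uhimp/\Uhp_n$ with $\exp\bigl(\ad(\nhimp/\widehat{\frak n}^+_n)\bigr)$ acting on $V_n$. The root of the problem is that $\nhimp$ itself starts in degree $2$, so the quotient module cannot detect deviations of degree close to $n$.

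The repair stays entirely within your framework: test on a module that remembers degree zero. Take instead $V_n:=\php/\nhp_n$, where $\php=\gl_2(-1)\oplus\nhimp$ --- the very space the paper uses for $\Ad_n$ in Subsection~\ref{SS-adjoint}. If $\varphi=\prod_{\ell\ge m}\exp(\ad(x_\ell))$ in canonical form with $x_m\ne0$ and $m<n$, then $\varphi(h)-h$ has nonzero degree-$m$ component $[x_m,h]$ for suitable $h\in\frak h$, so $\varphi$ acts nontrivially on this $V_n$; hence the kernel of $\Uhimp\to\GL(V_n)$ is exactly $\Uhp_n$, and the action is unipotent since every element of $\Uhimp$ strictly raises degree. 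With this substitution your argument for pro-unipotence goes through, and your parts on normality, closedness, trivial intersection, and completeness are correct as written. Alternatively, you could avoid faithful modules altogether, as the paper does, by observing that $(\Uhimp,\Uhp_k)\subseteq\Uhp_{k+1}$, so the chain above is a central series and each quotient $\Uhimp/\Uhp_n$ is nilpotent, indeed unipotent.
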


\begin{longver}
\begin{proof} 
Each group $\Uhimp/\Uhp_n$ is unipotent with central series
$$
 \Uhimp /\Uhp_n\ge\Uhp_2/\Uhp_n\ge\cdots \ge\Uhp_{n-1}/\Uhp_n\ge1,
$$
by Theorem 16.2.6 of \cite{Springer}. 
Hence $\Uhimp $ is complete, pro-unipotent and hence pro-nilpotent.
\end{proof}
\end{longver}

\subsection{Complete unipotent groups}\label{SS-PositiveRootSys}
In this section, we construct a complete algebra for every positive  subsystem of $\Delta$, and the corresponding complete unipotent groups.
Let $\Pi$ be a positive  subsystem as in Subsection~\ref{SS-pos}.
Recall that $ \mathfrak{n}^{\Pi} := \bigoplus_{\a\in\Pi}\m_\a$, $\nh^{\,\Pi}:=\prod_{\a\in\Pi}\m_\a$, and 
$${\frak m}^{\,\Pi} :=    \mathfrak{n}^{-\Pi} \oplus \frak h \oplus \n^{\,\Pi}.$$
We define the completion of $\m^\Pi$ by
$$\widehat{\frak m}^{\,\Pi} :=    \mathfrak{n}^{-\Pi} \oplus \frak h \oplus \nh^{\,\Pi}.$$
In particular, write $\mathfrak{n}^{\pm}:= \mathfrak{n}^{\Delta_{\pm}}$, $\nh^{\pm}:= \nh^{\,\Delta_{\pm}}$, and 
$\mhat^{\pm}:= \mhat^{\,\Delta_{\pm}}$, and note that $\mhat^+$ is identical to $\mhat$.
Note that 
\begin{align*}
\np&=\C e_{-1}\oplus \nimp, &\nm&=\C f_{-1}\oplus \nimm,\\
\nhp&=\C e_{-1}\oplus \nhimp, &\nhm&=\C f_{-1}\oplus \nhimm.
\end{align*}

We can define a complete group $\widehat{U}^{\,\Pi}$ 
 by a slight modification of the methods in Section~\ref{SS-Uhat} choosing a suitable map $\lambda$ for each $\Pi$.
In particular, write $\widehat{U}^{\pm}:=\widehat{U}^{\Delta_\pm}$, and note that
$\Uhimp$ can be considered as a subgroup of $\widehat{U}^{+}$ in a natural way.

The following result now follows by the same method as Theorem~\ref{T-commrel}:
\begin{theorem}\label{T-commrel2}
Suppose $\a,\b\in\Delta$, $x_{\a}\in{\frak m}_{\a}$, $y_{\b}\in{\frak m}_{\b}$. Let $\Pi$ be a positive  subsystem containing $\a$ and $\b$. Then there are unique $z_{\g}\in{\frak m}_{\g}$ for each
$\g\in\Sigma(\a,\b)$ such that
$$\left(\exp\left(u\ad\left(x_{\a}\right)\right),\exp\left(v\ad\left(y_{\b}\right)\right)\right) =\prod_{\g=i\a+j\b\in\Sigma(\a,\b)} \exp\left(u^i v^j\ad\left(z_{\g}\right)\right)$$
holds in $\UhPi$.
In particular, $z_{\a+\b}=\left[x_{\a},y_{\b}\right]$, and each $z_\g$ can be written as an expression in the free Lie algebra generated by $\left\{x_{\a},y_{\b}\right\}$, independent of the choice of $\,\Pi$.
\end{theorem}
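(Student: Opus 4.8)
The plan is to transport the proof of Theorem~\ref{T-commrel} verbatim to the completion $\widehat{\frak m}^{\,\Pi}$ and its group $\UhPi$, and then to extract the $\Pi$-independence of the $z_\g$ from the purely formal nature of the computation. First I would record the structural input that places the right-hand side inside $\UhPi$. Recall from Subsection~\ref{SS-strings} that $\Sigma(\a,\b)$ is constructed from $\a+\b$ by repeatedly adjoining $\a$ or $\b$ and intersecting with $\Delta$, via the filtration $\Sigma_n$. Since $\a,\b\in\Pi$ and $\Pi$ is closed under addition of its elements whenever the sum is a root (Subsection~\ref{SS-pos}), induction on $n$ gives $\Sigma_n\subseteq\Pi$, hence $\Sigma(\a,\b)\subseteq\Pi$. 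Thus every factor $\exp(u^iv^j\ad(z_\g))$ is the exponential of a graded operator of strictly positive $\lambda_\Pi$-degree, and the a priori infinite product converges to an element of $\UhPi$.

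Next I would observe that the grading machinery of Subsection~\ref{SS-Uhat} applies after replacing $\lambda$ by the functional $\lambda_\Pi$ used to construct $\UhPi$: with respect to this $\Z$-grading all of $\a$, $\b$, and $\Sigma(\a,\b)$ have positive degree, so Lemma~\ref{BCH}, Lemma~\ref{L-explim}, and Lemma~\ref{BCHcomm} hold for $\nh^{\,\Pi}$ and $\UhPi$ with identical proofs. With these in hand, the inductive peeling argument of Theorem~\ref{T-commrel} goes through unchanged: expand the commutator by Lemma~\ref{BCHcomm}, split off one exponential at a time in order of increasing degree using Lemma~\ref{BCH}, and pass to the limit, which exists because $\UhPi=\varprojlim_i \UhPi/\UhPi_i$ is complete and the tail of the product at stage $N$ lies in the analogue $\UhPi_n$ of the filtration $\Uhp_n$, which tends to the identity. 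This yields the displayed identity with uniquely determined $z_\g\in\frak m_\g$ and with $z_{\a+\b}=[x_\a,y_\b]$ as the lowest-degree term.

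The genuinely new clause is that each $z_\g$ is a fixed element of the free Lie algebra on $\{x_\a,y_\b\}$, independent of $\Pi$. Here I would note that the entire computation is the image, under the Lie algebra map sending two free generators $X,Y$ to $\ad(x_\a),\ad(y_\b)$, of a formal identity in the degree-completion of the free Lie algebra on $X,Y$, bigraded by the number of occurrences of $X$ and of $Y$. Carrying out the Baker--Campbell--Hausdorff expansion and the peeling procedure inside this free object produces, for each bidegree $(i,j)$, a canonical homogeneous element $P_{i,j}(X,Y)$, and then $z_\g=P_{i,j}(x_\a,y_\b)$ for $\g=i\a+j\b$. Since $P_{i,j}$ is computed with no reference to $\Pi$ or to $\lambda_\Pi$, its evaluation is the same for every positive root system containing $\a$ and $\b$, which proves the independence.

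The main obstacle I anticipate is ensuring that the ordering of factors used in the peeling step does not secretly depend on $\Pi$: distinct choices of $\Pi$ give distinct $\lambda_\Pi$ and hence potentially distinct linear orders on $\Sigma(\a,\b)$, so the homogeneous pieces could be extracted in different orders. The resolution is that the bidegree refines every $\lambda_\Pi$-grading, since the $P_{i,j}$ sit in distinct bidegree components of the free Lie algebra, and the value of a fixed homogeneous component of a formal series is independent of the order in which the other components are removed. When $\a$ and $\b$ are linearly independent the map $(i,j)\mapsto i\a+j\b$ is injective, so each $\g$ determines a single bidegree and $z_\g=P_{i,j}(x_\a,y_\b)$ unambiguously; the linearly dependent case is handled by grouping the finitely many bidegrees mapping to a common $\g$. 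Once this refinement is established, both the convergence in $\UhPi$ and the $\Pi$-independence of the $z_\g$ follow.
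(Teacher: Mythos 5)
Your first three paragraphs are correct and are exactly the paper's own (one-sentence) proof: the paper simply asserts that the result ``follows by the same method as Theorem~\ref{T-commrel}'', and your checks that $\Sigma(\a,\b)\subseteq\Pi$ and that Lemmas~\ref{BCH}, \ref{BCHcomm} and \ref{L-explim} survive the change of grading supply details the paper leaves implicit. The gap is in your final paragraph. You correctly isolate the ordering problem, but the claim you use to dismiss it --- that ``the value of a fixed homogeneous component of a formal series is independent of the order in which the other components are removed'' --- is false: the factors produced by peeling genuinely depend on the enumeration, even among enumerations refining the componentwise partial order on bidegrees. Indeed, suppose that after the $(1,1)$ term has been peeled, the remaining logarithm has components $a$ in bidegree $(2,1)$ and $b$ in bidegree $(1,2)$. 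With the paper's right-to-left products, peeling $a$ first contributes $+\tfrac12[a,b]$ to the bidegree-$(3,3)$ component of what remains, while peeling $b$ first contributes $-\tfrac12[a,b]$; in the Heisenberg situation $[a,[a,b]]=[b,[a,b]]=0$ this is just the identity
$$\exp(a+b)=\exp\bigl(\tfrac12[a,b]\bigr)\exp(b)\exp(a)=\exp\bigl(-\tfrac12[a,b]\bigr)\exp(a)\exp(b),$$
so the two orders produce values of $z_{3\a+3\b}$ differing by exactly $[a,b]$. This scenario is realizable here: $(2,1)$ and $(1,2)$ are incomparable, and for $\a=\a_{1k}$, $\b=\a_{2q}$ (specializing to $(1,1)$ and $(1,2)$) the functionals $\pi_1=\sqrt2\,x+y$ and $\pi_2=(2+\sqrt2)\,x-y$ both define positive root systems containing $\a,\b$ but order $2\a+\b$ and $\a+2\b$ oppositely, while the relevant bracket is nonzero because $x_\a,y_\b$ are free generators of $\nimp$ (case (4) of Proposition~\ref{P-Sigma}). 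In short, you have conflated the order-independent homogeneous components of the logarithm of the commutator with the order-dependent factors of the product decomposition; your $P_{i,j}$ are not well defined until an order is chosen.

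The clause is nevertheless true, and the repair is the opposite of what you propose: rather than arguing that all orders agree, fix one enumeration of the bidegrees once and for all, independently of $\Pi$ --- for example by total degree $i+j$ with a fixed tie-breaking rule, the analog of the fixed height-respecting order used in the paper's proof of Theorem~\ref{T-commrel}. Such an enumeration refines the componentwise partial order, which is all the peeling needs to be well defined in the completed free Lie algebra on $\{X,Y\}$, and it produces universal polynomials $P_{i,j}(X,Y)$ with no reference to $\Pi$. Moreover, since $\lambda_\Pi(i\a+j\b)=i\lambda_\Pi(\a)+j\lambda_\Pi(\b)\ge(i+j)\min\{\lambda_\Pi(\a),\lambda_\Pi(\b)\}$ with $\lambda_\Pi(\a),\lambda_\Pi(\b)>0$, the partial products taken in this fixed order converge in every $\UhPi$ with $\a,\b\in\Pi$, so evaluation at $(x_\a,y_\b)$ yields the displayed identity in each $\UhPi$ with the same elements $z_\g=P_{i,j}(x_\a,y_\b)$. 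This also makes explicit that the uniqueness asserted in the theorem is uniqueness relative to this fixed ordering of the factors --- a point left implicit in the statement and which your proposal, as written, would obscure.
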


As in Subsection~\ref{SS-pro}, we have:
\begin{lemma}
For every positive subsystem $\Pi$,
$\mathfrak{n}^{\Pi}$ is pro-nilpotent with completion $\nh^{\Pi}$ and
$\UhPi$ is complete pro-unipotent.
\end{lemma}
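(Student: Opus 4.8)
The plan is to reduce the statement to the graded situation already treated in Subsections~\ref{SS-Uhat} and~\ref{SS-pro}, replacing the grading map $\lambda$ used there by one adapted to $\Pi$. First I would choose an integral grading $\lambda_\Pi : Q \to \Z$ of the form $\lambda_\Pi(\a) = \mu(\overline{\a})$, where $\overline{\a}$ is the specialization of $\a$ and $\mu : \text{II}_{1,1} \to \Z$ is an integral functional with $\mu(\overline{\a}) \geq 1$ for every $\a \in \Pi$. Setting $\m^{\Pi}_k := \bigoplus_{\a\in\Pi,\ \lambda_\Pi(\a)=k}\m_\a$, this exhibits $\mathfrak{n}^{\Pi} = \bigoplus_{k\geq 1}\m^{\Pi}_k$ as a positively $\Z$-graded Lie algebra and $\nh^{\,\Pi} = \prod_{k\geq 1}\m^{\Pi}_k$ as its completion, exactly parallel to the decompositions $\nimp = \bigoplus_{k\in\N}\m_k$ and $\nhimp = \prod_{k\in\N}\m_k$ used before.

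With such a grading in hand, the remaining steps mirror Lemmas~\ref{nilpotent} and~\ref{L-Uhat-pro-nilp} almost verbatim. I would define the tail ideals $\mathfrak{n}^{\Pi}_{(n)} := \bigoplus_{k\geq n}\m^{\Pi}_k$; since the bracket raises degree, each quotient $\mathfrak{n}^{\Pi}/\mathfrak{n}^{\Pi}_{(n)} = \bigoplus_{1\leq k<n}\m^{\Pi}_k$ is nilpotent, and it is finite-dimensional provided each $\m^{\Pi}_k$ is. Hence $\mathfrak{n}^{\Pi} = \varprojlim_n \mathfrak{n}^{\Pi}/\mathfrak{n}^{\Pi}_{(n)}$ is a pro-nilpotent pro-Lie algebra with completion $\nh^{\,\Pi}$. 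For the group, I would introduce $\widehat{U}^{\Pi}_n$ in exact analogy with $\Uhp_n$, establish $\UhPi = \varprojlim_n \UhPi/\widehat{U}^{\Pi}_n$, and invoke the same central-series argument (Theorem~16.2.6 of~\cite{Springer}) to see that each quotient $\UhPi/\widehat{U}^{\Pi}_n$ is unipotent; thus $\UhPi$ is complete and pro-unipotent, hence pro-nilpotent.

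The main obstacle is the very first step: producing $\mu$ so that the components $\m^{\Pi}_k$ are all finite-dimensional. Since specialization is finite-to-one on $\Delta$ by Lemma~\ref{L-rts} and $\dim\m_{(m,n)}=c(mn)$ is finite, it suffices that $\{\beta\in\overline{\Pi}\mid \mu(\beta)=k\}$ be finite for each $k$, which holds precisely when $\ker\mu$ is not a limiting direction of $\overline{\Pi}$. For the systems actually used --- $\Delta_{\pm}$, the set $\{-\a_{-1}\}\cup\Delta^{\im}_{+}$, and the like --- such a $\mu$ is immediate (for instance $\mu(x,y)=2x+y$ for $\Delta_{+}$, or $\mu(x,y)=x+2y$ for $\{-\a_{-1}\}\cup\Delta^{\im}_{+}$), and more generally whenever $\overline{\Pi}$ lies in a pointed rational cone. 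I expect the delicate case to be a positive root system cut out by a functional of irrational slope, where $\overline{\Pi}$ spans a closed half-plane whose two boundary directions are parallel to the unique separating line; there no integral $\mu$ has finite fibers on $\overline{\Pi}$, and one must either restrict to rational positive systems or replace the single grading by a direct exhaustion of $\mathfrak{n}^{\Pi}$ by finite-codimensional ideals. Checking that such an exhaustion exists --- equivalently, that the principal lower sets of $\Pi$ under the order $\a\preceq\a+\b$ are finite --- is the point I would treat with the most care.
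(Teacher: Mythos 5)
Your plan coincides with the paper's own (largely implicit) argument: the paper defines $\UhPi$ ``by a slight modification of the methods in Subsection~\ref{SS-Uhat} using a different map $\lambda$'' and then asserts the lemma ``as in Subsection~\ref{SS-pro}''. For the rational positive root systems --- $\Delta_\pm$ and $\{\mp\a_{-1}\}\cup\Delta^{\im}_\pm$, i.e.\ exactly those admitting an integral functional $\mu$ that is positive with finite fibers on $\overline\Pi$ --- your write-up is complete and is what the paper intends.

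However, the obstacle you flag for irrational $\Pi$ is not merely ``a point to treat with care'': it is fatal, and your fallback via finite principal lower sets fails as well. Take $\pi(x,y)=\sqrt2\,x-y$ and $\Pi=\{\a\in\Delta\mid\pi(\overline\a)>0\}$, so $\a_{-1}\in\Pi$. For each $\ell\ge1$ the open interval $\bigl(\sqrt2(\ell+1)+\ell,\;\sqrt2(\ell+1)+\ell+\sqrt2+1\bigr)$ has length $\sqrt2+1>1$, hence contains an integer $j$; for such $j$ (and any $k\le c(j)$; note $\ell+1\le j-1$, so both indices below are in $E$) we get $\pi(\overline{\a_{\ell,jk}})<0<\pi(\overline{\a_{\ell+1,jk}})$, i.e.\ $\a_{\ell+1,jk}\in\Pi$ and $-\a_{\ell,jk}\in\Pi$, while $\a_{-1}=\a_{\ell+1,jk}+(-\a_{\ell,jk})$. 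So the principal lower set of $\a_{-1}$ is infinite. Worse, by \eqref{L-efe-} the bracket $[e_{\ell+1,jk},f_{\ell,jk}]$ is a nonzero multiple of $e_{-1}$, so $e_{-1}\in[\n^{\Pi},\n^{\Pi}]$; and iterating --- choose $n$ roots $\a_{\ell_i,j_ik_i}\in\Pi$ with distinct $(j_i,k_i)$, each $\ell_i\ge1$, whose (positive) $\pi$-values sum to less than $\sqrt2+1$ (possible by equidistribution of $\sqrt2(\ell+1)$ modulo $1$), so that $\g:=\a_{-1}-\sum_i\a_{\ell_i,j_ik_i}$ is a negative root lying in $\Pi$ and all partial sums are roots in $\Pi$; then freeness of $\nimp$ (Theorem~\ref{T-m-free}), relation \eqref{L-f-e}, and nondegeneracy of the pairing $\m_\g\times\m_{-\g}$ show that a suitable $(n+1)$-fold bracket of elements of $\n^{\Pi}$ is a nonzero multiple of $e_{-1}$ --- one obtains $e_{-1}\in C^k(\n^{\Pi})$ for every $k$, where $C^k$ is the lower central series. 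Since any ideal with nilpotent quotient contains some $C^k$, every such ideal contains $e_{-1}$: $\n^{\Pi}$ is not residually nilpotent, so no separating family of finite-codimensional ideals with nilpotent quotients can exist, and neither $\n^{\Pi}$ nor $\UhPi$ can be pro-nilpotent/pro-unipotent with the asserted completion. In other words, the lemma as stated is false for irrational $\Pi$, so no proof (yours or the paper's) can close this case; the statement must be restricted to the rational systems you isolated --- which is a real problem for the paper, since Lemma~\ref{L-posrtpair} and Theorem~\ref{T-commrel2} are applied to pairs of roots (e.g.\ one positive and one negative imaginary root) that lie in no rational positive root system.
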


From the cases in Proposition~\ref{P-Sigma} where $\Sigma(\a,\b)$ is finite, we get:
\begin{corollary}
If $x_{-1}\in \m_{\a_{-1}}, y_{-1}\in \m_{-\a_{-1}}, x_{\ell,j,k}\in\m_{\a_{\ell,jk}}, y_{\ell,j,k}\in\m_{-\a_{\ell,jk}}$ 
and $u,v\in\C$, then
\begin{enumerate}
\item $\left(\exp\left(u\ad\left(x_{-1}\right)\right),\exp\left(v\ad\left(x_{j-1,jk}\right)\right)\right) 
          =1$,\\ $\left(\exp\left(u\ad\left(y_{-1}\right)\right),\exp\left(v\ad\left(y_{j-1,jk}\right)\right)\right) = 1$,
\item $\left(\exp\left(u\ad\left(x_{-1}\right)\right),\exp\left(v\ad\left(y_{0,jk}\right)\right)\right)=1$,\\ 
           $\left(\exp\left(u\ad\left(y_{-1}\right)\right),\exp\left(v\ad\left(x_{0,jk}\right)\right)\right) =1$,
\item $\left(\exp\left(u\ad\left(x_{\ell,jk}\right)\right),\exp\left(v\ad\left(y_{m,pq}\right)\right)\right) =1$
         for $j\ne p$, $k\ne q$, or $|\ell-m|>1$.
\item $\left(\exp\left(u\ad\left(x_{1,2k}\right)\right),\exp\left(v\ad\left(y_{0,2k}\right)\right)\right) =\exp\left(uv\ad\left([x_{1,2k},y_{0,2k}]\right)\right)$,\\
         $\left(\exp\left(u\ad\left(x_{0,2k}\right)\right),\exp\left(v\ad\left(y_{1,2k}\right)\right)\right) =\exp\left(uv\ad\left([x_{0,2k},y_{1,2k}]\right)\right)$.
\end{enumerate}
These relations hold in every $\UhPi$ in which the exponentials are well defined.
\end{corollary}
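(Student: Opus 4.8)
The plan is to read this statement off directly from Theorem~\ref{T-commrel2}: in each case the group commutator equals the product $\prod_{\g=i\a+j\b\in\Sigma(\a,\b)}\exp(u^iv^j\ad(z_\g))$ over the root span $\Sigma(\a,\b)$, so whenever $\Sigma(\a,\b)=\emptyset$ this is an empty product, which is the identity $1$. Thus the entire proof reduces to identifying, for each commutator, the pair of roots $\a,\b$ attached to the two exponentials and quoting the relevant clause of Proposition~\ref{P-Sigma} that forces $\Sigma(\a,\b)=\emptyset$, together with the symmetry $\Sigma(\a,\b)=\Sigma(\b,\a)$ and the negation rule $\Sigma(-\a,-\b)=-\Sigma(\a,\b)$ recorded just before Proposition~\ref{P-Sigma}.

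Concretely, I would match the cases as follows. For the first identity in~(1), take $\a=\a_{-1}$ and $\b=\a_{j-1,jk}$; by symmetry and clause~(2) of Proposition~\ref{P-Sigma} (with $\ell=j-1$) we have $\Sigma(\a_{-1},\a_{j-1,jk})=\Sigma(\a_{j-1,jk},\a_{-1})=\emptyset$, and the second identity in~(1) then follows by applying the negation rule to the first. For~(2), the vectors $x_{-1},y_{0,jk}$ lie in $\m_{\a_{-1}}$ and $\m_{-\a_{0,jk}}$, so I use $\Sigma(\a_{-1},-\a_{0,jk})=-\Sigma(\a_{0,jk},-\a_{-1})=\emptyset$ by clause~(3) with $\ell=0$; the companion identity invokes $\Sigma(\a_{0,jk},-\a_{-1})=\emptyset$ directly. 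Finally, for~(3) the relevant pair is $\a_{\ell,jk}$ and $-\a_{m,pq}$: if $(j,k)\ne(p,q)$ then clause~(6) gives $\Sigma=\emptyset$, while if $(j,k)=(p,q)$ and $|\ell-m|>1$ then $\ell-m\ne\pm1$ and clause~(5) gives $\Sigma=\emptyset$.

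To apply Theorem~\ref{T-commrel2} in each case I first invoke Lemma~\ref{L-posrtpair} to produce a positive root system $\Pi$ containing the two roots in question, so that the commutator formula is available in $\UhPi$. Since the right-hand product is empty, the commutator is $1$ there; and because the theorem guarantees that the $z_\g$ (here, none) are independent of the choice of $\Pi$, the same triviality persists in any $\UhPi$ in which the two exponentials are pro-summable, which yields the final clause of the statement.

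I expect the only real work to be the sign bookkeeping in the mixed-sign commutators of~(2) and~(3), where one root is positive and the other negative: here one cannot reduce to an all-positive pair via the negation rule alone, and one must instead align the pair with clauses~(3), (5), (6) of Proposition~\ref{P-Sigma} exactly as stated, tracking the symmetry and negation steps carefully. None of this is deep, since the content is entirely contained in Proposition~\ref{P-Sigma} and Theorem~\ref{T-commrel2}.
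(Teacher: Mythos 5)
Your proposal is correct and follows essentially the same route as the paper: the corollary is stated there as an immediate consequence of Theorem~\ref{T-commrel2}, with the vanishing of each commutator read off from the empty cases of Proposition~\ref{P-Sigma} (clauses~(2) with $\ell=j-1$, (3) with $\ell=0$, (5), and (6)), exactly as you match them, and with Lemma~\ref{L-posrtpair} supplying the positive root system and the $\Pi$-independence clause of Theorem~\ref{T-commrel2} giving the final assertion. Your sign bookkeeping via $\Sigma(\b,\a)=\Sigma(\a,\b)$ and $\Sigma(-\a,-\b)=-\Sigma(\a,\b)$ is also accurate, so nothing is missing.
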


For $u\in\C$, define
\begin{itemize}
\item $X_{-1}(u)=\exp\left(u\ad\left(e_{-1}\right)\right)$, which can be considered an element of  $\widehat{U}^{\,\Pi}$ 
whenever $\a_{-1}\in\Pi$;
\item $Y_{-1}(u)=\exp\left(u\ad\left(f_{-1}\right)\right)$, which can be considered an element of  $\widehat{U}^{\,\Pi}$ 
whenever $-\a_{-1}\in\Pi$;
\item $X_{\ell,jk}(u)=\exp\left(u\ad\left(e_{\ell,jk}\right)\right)$, which can be considered an element of  
$\widehat{U}^{\,\Pi}$ whenever $\a_{\ell,jk}\in\Pi$; and
\item $Y_{\ell,jk}(u)=\exp\left(u\ad\left(f_{\ell,jk}\right)\right)$, which can be considered an element of  $\widehat{U}^{\,\Pi}$ 
whenever $-\a_{\ell,jk}\in\Pi$.
\end{itemize}
\begin{corollary} $\;$\label{C-comm}
The following relations hold in every $\widehat{U}^{\,\Pi}$ in which the elements involved are defined:
\begin{enumerate}
\item\label{C-comm-XX-YY} $\left(X_{-1}(u),X_{j-1,jk}(v)\right) = \left(Y_{-1}(u),Y_{j-1,jk}(v)\right) = 1$, 
\item $\left(X_{-1}(u),Y_{0,jk}(v)\right) = \left(Y_{-1}(u),X_{0,jk}(v)\right) = 1$, 
\item $\left(X_{\ell,jk}(u),Y_{m,pq}(v)\right) =1$ for $j\ne p$, $k\ne q$, or $\left|\ell-m\right|>1$. 
\item $\left(X_{1,2k}(u),Y_{0,2k}(v)\right)=X_{-1}(uv)$ and $\left(X_{0,2k}(u),Y_{1,2k}(v)\right)=Y_{-1}(-uv)$.
\end{enumerate}
\end{corollary}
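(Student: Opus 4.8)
The plan is to deduce each of the three relations as a direct specialization of the preceding Corollary (the one built from the empty-root-span cases of Proposition~\ref{P-Sigma}), simply unwinding the abbreviations $X_{-1}(u)$, $Y_{-1}(u)$, $X_{\ell,jk}(u)$, $Y_{\ell,jk}(u)$. Recall that these are defined as $X_{-1}(u)=\exp(u\ad(e_{-1}))$, $Y_{-1}(u)=\exp(u\ad(f_{-1}))$, $X_{\ell,jk}(u)=\exp(u\ad(e_{\ell,jk}))$ and $Y_{\ell,jk}(u)=\exp(u\ad(f_{\ell,jk}))$. Since $e_{-1}\in\m_{\a_{-1}}$, $f_{-1}\in\m_{-\a_{-1}}$, $e_{\ell,jk}\in\m_{\a_{\ell,jk}}$ and $f_{\ell,jk}\in\m_{-\a_{\ell,jk}}$, these are exactly the exponentials appearing in the preceding Corollary upon taking $x_{-1}=e_{-1}$, $y_{-1}=f_{-1}$, $x_{\ell,j,k}=e_{\ell,jk}$ and $y_{\ell,j,k}=f_{\ell,jk}$. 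Thus each item here is literally the corresponding item there, and the three commutators vanish for precisely the stated index ranges.

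Concretely, for item~\eqref{C-comm-XX-YY} I would invoke part~(1) of the preceding Corollary, which asserts $\left(\exp(u\ad(x_{-1})),\exp(v\ad(x_{j-1,jk}))\right)=\left(\exp(u\ad(y_{-1})),\exp(v\ad(y_{j-1,jk}))\right)=1$; substituting the chosen root vectors gives exactly $\left(X_{-1}(u),X_{j-1,jk}(v)\right)=\left(Y_{-1}(u),Y_{j-1,jk}(v)\right)=1$. Item~(2) follows identically from part~(2) of that Corollary, and item~(3) from part~(3). The underlying reason each holds is that the relevant root span $\Sigma(\a,\b)$ is empty—for instance $\Sigma(\a_{-1},\a_{j-1,jk})=\emptyset$ by Proposition~\ref{P-Sigma}(2) with $\ell=j-1$, $\Sigma(\a_{-1},-\a_{0,jk})=\emptyset$ by the $\ell=0$ case of Proposition~\ref{P-Sigma}(3) together with the symmetry $\Sigma(-\a,-\b)=-\Sigma(\a,\b)$, and $\Sigma(\a_{\ell,jk},-\a_{m,pq})=\emptyset$ when $(j,k)\neq(p,q)$ or $|\ell-m|>1$ by Proposition~\ref{P-Sigma}(5)--(6)—so that Theorem~\ref{T-commrel2} yields an empty product, namely the identity. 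I should also note that each relation is asserted only in those $\widehat{U}^{\,\Pi}$ where the elements are defined, i.e.\ where $\Pi$ contains the relevant roots; by Lemma~\ref{L-posrtpair} such a $\Pi$ always exists for any pair of roots, and Theorem~\ref{T-commrel2} guarantees the relation holds in every such $\UhPi$.

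There is essentially no obstacle here: the statement is a notational repackaging of the immediately preceding Corollary, which in turn rests on the commutator formula of Theorem~\ref{T-commrel2} and the vanishing of the root spans recorded in Proposition~\ref{P-Sigma}. The only point requiring a word of care is the implicit quantifier on $\Pi$—ensuring that for each listed pair of root vectors there is a positive root system in which both exponentials make sense so that the commutator relation is non-vacuous—but this is handled uniformly by Lemma~\ref{L-posrtpair}.
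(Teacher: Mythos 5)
Your proposal is correct and matches the paper's (implicit) argument exactly: the paper offers no separate proof of Corollary~\ref{C-comm}, treating it as the immediate specialization of the preceding corollary obtained by taking $x_{-1}=e_{-1}$, $y_{-1}=f_{-1}$, $x_{\ell,j,k}=e_{\ell,jk}$, $y_{\ell,j,k}=f_{\ell,jk}$, which is precisely what you do. Your tracing of the empty root spans back through Proposition~\ref{P-Sigma} and Theorem~\ref{T-commrel2}, and the remark on Lemma~\ref{L-posrtpair} for non-vacuousness, are accurate and consistent with the paper's chain of reasoning.
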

In particular, \eqref{C-comm-XX-YY} holds in $\Uhpm$.

\section{Rank 2 linear subgroups}\label{S-gl2}
We have already noted that $\left\{e_{-1},f_{-1},h_1,h_2\right\}$ is a basis for a subalgebra $\mathfrak{gl}_2(-1)\cong \mathfrak{gl}_2$ of $\mathfrak m$ and, for each $(\ell,j,k)\in E$,  $\left\{e_{\ell,jk},f_{\ell,jk},h_1,h_2 \right\}$ is a basis for a subalgebra 
$\mathfrak{gl}_2(\ell,j,k)\cong \mathfrak{gl}_2$ of $\mathfrak m$. 
In this section, we construct groups $\GL_2(-1)$ and $\GL_2(\ell,j,k)$ associated to each of these subalgebras by exponentiation in the adjoint representation. 
Our motivation is to show  that these groups can be realized as groups of automorphisms of $\frak m$ or of a $\frak{gl}_2$ subalgebra of $\frak m$.

Note that we use a fixed branch of the complex logarithm in this section. 
We also define fractional powers in terms of this logarithm ($b^{1/n} := e^{\log(b)/n}$). 
The particular choice is not critical, since we eventually show that all our actions and presentations are independent of the branch chosen.

\subsection{Standard rank 2 linear groups}\label{SS-stdgl2}
Here we review the presentations of $\gl_2$ and $\GL_2(\C)$.

We define a \emph{standard basis} for a $\gl_2$ Lie algebra to be elements
$\left\{\overline{e},\overline{f},\overline{h_1},\overline{h_2} \right\}$ satisfying the 
defining relations 
\begin{align*}
\tag{gl:1e}\label{gl2-h1h2}
  \left[\overline{h}_1,\overline{h}_2 \right]&=0,\\
\tag{gl:4a}\label{gl2-h1e}
  \left[\overline{h}_1,\e \right] &=  \e,\\
\tag{gl:4b}\label{gl2-h2e}
  \left[\overline{h}_2, \e \right] &= -\e,\\
\tag{gl:4c}\label{gl2-h1f}
  \left[\overline{h}_1,\f \right] &= - \f,\\
\tag{gl:4d}\label{gl2-h2f}
  \left[\overline{h}_2,\f \right] &= \f,\\
\tag{gl:5}\label{gl2-ef}
  \left[\e,\f \right]&=\overline{h}_1-\overline{h}_2.
\end{align*}
In the usual matrix representation of $\gl_2$, we have a standard basis
$\overline{e}=\left(\begin{smallmatrix} 0&1\\0&0\end{smallmatrix}\right)$,
$\overline{f}=\left(\begin{smallmatrix} 0&0\\1&0\end{smallmatrix}\right)$,
$\overline{h}_1=\left(\begin{smallmatrix} 1&0\\0&0\end{smallmatrix}\right)$,
$\overline{h}_2=\left(\begin{smallmatrix} 0&0\\0&1\end{smallmatrix}\right)$.

%

Similarly, 
for a group isomorphic to $\GL_2(\C)$,
we define \emph{standard generators} to be elements
$\overline{X}(u),$ $
\overline{Y}(u),$ $
\overline{H}_1(s),$ $
\overline{H}_2(s),$ $ \overline{w}(s),$ $\overline{w}$ for $u\in\C$ and $s\in\C^\times$
satisfying the defining relations 
\begingroup\allowdisplaybreaks
\begin{align*}
\tag{GLH:0}\label{GL2-wdefn} 
  \overline{w}(s) &:= \overline{X}(s) \overline{Y}\left(-s^{-1} \right) \overline{X}(s),\quad \overline{w} := \overline{w}(1),\\
\tag{GLH:1a}\label{GL2-H1H1}
   \overline{H}_1(s)\overline{H}_1(t)&=\overline{H}_1(st),\\
\tag{GLH:1b}\label{GL2-H2H2}
   \overline{H}_2(s)\overline{H}_2(t)&=\overline{H}_2(st),\\
\tag{GLH:2}\label{GL2-H1H2}
    \overline{H}_1(s)\overline{H}_2(t)&=\overline{H}_2(t)\overline{H}_1(s),\\
\tag{GL:1a}\label{GL2-XX}
   \overline{X}(u)\overline{X}(v)&=\overline{X}(u+v),\\
\tag{GL:1b}\label{GL2-YY}
   \overline{Y}(u)\overline{Y}(v)&=\overline{Y}(u+v),\\
\tag{GL:2}\label{GL2-XY}
  \overline{Y}(-t)\overline{X}(s)\overline{Y}(t)&= \overline{X}\left(-t^{-1} \right) \overline{Y}\left(-t^{2}s \right) \overline{X}\left(t^{-1} \right),\\
\tag{GL:3}\label{GL2-wwH}
  \overline{w}(s)\overline{w}&=\overline{H}_1\left(-s \right)\overline{H}_2\left(-s^{-1} \right). \\
\intertext{Some useful additional relations are}
\tag{GL:4a}\label{GL2-wX}
  \overline{w}\overline{X}(u)\overline{w}^{-1}&=\overline{Y}(-u),\\
\tag{GL:4b}\label{GL2-wY}
  \overline{w}\overline{Y}(u)\overline{w}^{-1}&=\overline{X}(-u),\\
\tag{GL:5a}\label{GL2-wH1}
  \overline{w} \overline{H}_1(s)\overline{w}^{-1} &= \overline{H}_2(s),\\
\tag{GL:5b}\label{GL2-wH2}
  \overline{w} \overline{H}_2(s)\overline{w}^{-1}  &= \overline{H}_1(s),\\
\tag{GL:6a}\label{GL2-H1X}
  \overline{H}_1(s)\overline{X}(u)\overline{H}_1(s)^{-1}&=\overline{X}(su),\\
\tag{GL:6b}\label{GL2-H2X}
 \overline{H}_2(s)\overline{X}(u)\overline{H}_2(s)^{-1}&=\overline{X}\left(s^{-1}u\right),\\
\tag{GL:6c}\label{GL2-H1Y}
  \overline{H}_1(s)\overline{Y}(u)\overline{H}_1(s)^{-1}&=\overline{Y}\left(s^{-1}u \right),\\
\tag{GL:6d}\label{GL2-H2Y}
  \overline{H}_2(s)\overline{Y}(u)\overline{H}_2(s)^{-1}&=\overline{Y}(su),
\end{align*}
\begin{align*}
\tag{GL:7}\label{GL2-Yelim}
  \overline{Y}(s)& =\overline{X}\left(s^{-1} \right)\overline{H}_1\left(-s^{-1} \right)\overline{H}_2\left(-s \right)\overline{w}\overline{X}\left(s^{-1} \right).
\end{align*}

\endgroup
In the usual matrix representation of $\GL_2(\C)$, we have standard generators
$\overline{X}(u)=\left(\begin{smallmatrix} 1&u\\0&1\end{smallmatrix}\right)$,
$\overline{Y}(u)=\left(\begin{smallmatrix} 1&0\\u&1\end{smallmatrix}\right)$,
$\overline{H}_1(s)=\left(\begin{smallmatrix} s&0\\0&1\end{smallmatrix}\right)$,
$\overline{H}_2(s)=\left(\begin{smallmatrix} 1&0\\0&s\end{smallmatrix}\right)$,
$\overline{w}(s)=\left(\begin{smallmatrix} 0&s\\-s^{-1}&0\end{smallmatrix}\right)$,
$\overline{w}=\left(\begin{smallmatrix} 0&1\\-1&0\end{smallmatrix}\right)$.

\begin{lemma}
Let $\mathfrak{L}$ be a Lie algebra and suppose $\mathfrak{g}\subseteq\mathfrak{L}$
is a $\gl_2$ subalgebra with standard basis $\left\{\overline{e},\overline{f},\overline{h_1},\overline{h_2} \right\}$.
Let $\ad$ denote the adjoint action on $\mathfrak{L}$.
If the series
\begin{align*}
\overline{X}(u) &:= \exp\left({u} \ad\left(\overline{e} \right) \right),&
\overline{Y}(u) &:= \exp\left({u} \ad\left(\overline{f} \right) \right),\\
\overline{H}_1(s) &:= \exp\left(\log(s)\ad\left(\overline{h}_1) \right) \right),&
\overline{H}_2(s) &:= \exp\left(\log(s)\ad\left(\overline{h}_2 \right) \right),
\end{align*}
converge, then they give a standard generating set for a subgroup of $\Aut(\mathfrak{L})$
isomorphic to $\GL_2(\C)$.
\end{lemma}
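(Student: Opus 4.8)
The plan is to introduce $\overline{w}(s)$ and $\overline{w}$ through \eqref{GL2-wdefn}, and then to check directly that the six families of operators satisfy the defining relations \eqref{GL2-H1H1}--\eqref{GL2-wwH}. The first point is that each series, where it converges pro-summably in the sense of Subsection~\ref{SS-graded}, is an automorphism: for every $x\in\mathfrak{g}$ the operator $\ad(x)$ is a derivation, so the Leibniz expansion shows $\exp$ of it respects the bracket, and hence $\overline{X}(u),\overline{Y}(u),\overline{H}_1(s),\overline{H}_2(s)\in\Aut(\mathfrak{L})$. The computational engine throughout is the conjugation identity $\varphi\,\ad(x)\,\varphi^{-1}=\ad(\varphi(x))$ for $\varphi\in\Aut(\mathfrak{L})$, which yields $\varphi\exp(\ad(x))\varphi^{-1}=\exp(\ad(\varphi(x)))$.

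The abelian relations are immediate. The additive laws \eqref{GL2-XX} and \eqref{GL2-YY} are just the exponential law $\exp(aD)\exp(bD)=\exp((a+b)D)$ for the single operator $D=\ad(\e)$ (respectively $\ad(\f)$), and \eqref{GL2-H1H2} follows from $[\ad(\overline{h}_1),\ad(\overline{h}_2)]=\ad([\overline{h}_1,\overline{h}_2])=0$ by \eqref{gl2-h1h2}. For the torus laws \eqref{GL2-H1H1} and \eqref{GL2-H2H2} I would use that $\ad(\overline{h}_1),\ad(\overline{h}_2)$ act semisimply with integer eigenvalues on $\mathfrak{L}$ (this is precisely what makes the $\overline{H}_i$ series converge and is automatic for the grading-defining elements $h_1,h_2$ in our applications): on a weight-$n$ vector $\overline{H}_i(s)$ acts by the branch-independent scalar $s^{n}$, so $\overline{H}_i(s)\overline{H}_i(t)=\overline{H}_i(st)$ and the choice of logarithm branch is immaterial. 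Combining this eigenvalue bookkeeping with the conjugation identity and the weight computations $\ad(\overline{h}_1)(\e)=\e$, $\ad(\overline{h}_2)(\e)=-\e$, etc.\ from \eqref{gl2-h1e}--\eqref{gl2-ef} then gives the normaliser relations \eqref{GL2-wX}--\eqref{GL2-wH2} and \eqref{GL2-H1X}--\eqref{GL2-H2Y}; for instance $\overline{H}_1(s)\overline{X}(u)\overline{H}_1(s)^{-1}=\exp(u\,\ad(s\e))=\overline{X}(su)$.

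The genuinely non-abelian relations \eqref{GL2-XY} and \eqref{GL2-wwH} are the crux, and I would reduce them to the two-dimensional matrix representation. Both sides are products of exponentials of $\ad(\e)$ and $\ad(\f)$; since $\{\e,\f,\overline{h}_1,\overline{h}_2\}$ satisfy exactly the $\gl_2$ relations, $\ad$ extends to an associative-algebra homomorphism $U(\gl_2)\to\End(\mathfrak{L})$, compatible with pro-summation, and transporting the identities \eqref{GL2-XY} and \eqref{GL2-wwH}---which hold among the standard generators of $\GL_2(\C)$ by a finite matrix check---through this homomorphism gives them in $\Aut(\mathfrak{L})$. When $\ad(\e)$ and $\ad(\f)$ are locally nilpotent (as for $\gl_2(-1)$) this is cleanest: $\mathfrak{L}$ decomposes as a direct sum of finite-dimensional $\mathfrak{g}$-submodules, and each relation may be verified on a single summand $W$, where it is literally the relation satisfied by the integrated representation $\GL_2(\C)\to\GL(W)$.

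Once all of \eqref{GL2-wdefn}--\eqref{GL2-wwH} hold, the six families form a standard generating set and the assignment on generators defines a surjection $\GL_2(\C)\twoheadrightarrow\langle \overline{X},\overline{Y},\overline{H}_1,\overline{H}_2\rangle$. I expect the main obstacle to be the final step: upgrading this surjection to an isomorphism, that is, proving faithfulness. The danger is collapse onto a proper quotient such as $\PGL_2(\C)$, since the central element $\overline{H}_1(s)\overline{H}_2(s)$ acts on a $(\overline{h}_1,\overline{h}_2)$-weight vector of weight $(n_1,n_2)$ by $s^{n_1+n_2}$, hence acts trivially exactly when every weight of $\mathfrak{L}$ satisfies $n_1+n_2=0$---which is precisely what occurs for the adjoint action on $\mathfrak{g}=\gl_2$ by itself. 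Injectivity therefore must be extracted from the weights of the ambient $\mathfrak{L}$: one exhibits weight vectors whose $(\overline{h}_1,\overline{h}_2)$-weights span a rank-two sublattice, so that the torus $\{\overline{H}_1(s)\overline{H}_2(t)\}$ acts faithfully, and uses the $\mathfrak{sl}_2$-action to separate the unipotent parts. In the Monster setting this is supplied by the explicit weights recorded in Subsection~\ref{SS-mroots}, and it is the point where $\mathfrak{L}$, rather than the abstract $\gl_2$, genuinely enters.
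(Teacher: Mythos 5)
Your relation-checking is, in expanded form, exactly what the paper's proof consists of: the paper's entire argument is to set $\overline{w}(s) := \overline{X}(s)\overline{Y}\left(-s^{-1}\right)\overline{X}(s)$, $\overline{w} := \overline{w}(1)$ and assert that ``the result now follows by the usual properties of exponentials.'' Your filling-in of those properties --- exponentials of derivations preserve the bracket, the conjugation identity $\varphi\exp(\ad(x))\varphi^{-1}=\exp(\ad(\varphi(x)))$, the one-parameter group laws, and the reduction of \eqref{GL2-XY} and \eqref{GL2-wwH} to integrated finite-dimensional $\gl_2$-modules --- is correct and is precisely the argument the paper leaves implicit. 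You are also right that the torus laws \eqref{GL2-H1H1}, \eqref{GL2-H2H2} and branch-independence require $\ad(\overline{h}_i)$ to act semisimply with integer eigenvalues; the paper never states this hypothesis, though it holds in every application it makes.

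The obstacle you flag at the end is not a defect of your write-up: it is a genuine gap in the lemma as stated, and the paper's one-line proof passes over it in silence. As you argue, satisfying the defining relations only gives a surjection $\GL_2(\C)\twoheadrightarrow\langle\overline{X},\overline{Y},\overline{H}_1,\overline{H}_2\rangle$ whose kernel is central (it cannot contain $\SL_2(\C)$ since $\ad(\overline{e})\ne0$), so the generated subgroup is $\GL_2(\C)/K$ with $K$ a subgroup of the scalars; $K$ is trivial if and only if the integers $n_1+n_2$, over all $(\overline{h}_1,\overline{h}_2)$-weights $(n_1,n_2)$ of $\mathfrak{L}$, have greatest common divisor $1$ (this, rather than spanning a rank-two sublattice, is the exact criterion: a gcd $d>1$ gives the non-isomorphic quotient $\GL_2(\C)/\mu_d$). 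Nothing in the lemma's hypotheses forces this. Indeed, for $\mathfrak{L}=\mathfrak{g}=\gl_2$ the element $\overline{h}_1+\overline{h}_2$ is central, so $\overline{H}_1(s)\overline{H}_2(s)=1$ and the generated group is $\PGL_2(\C)$, not $\GL_2(\C)$ --- and this is exactly the degenerate situation in which the paper invokes the lemma in the corollary of Subsection~\ref{SS-gl2im}, where $\ad$ is the adjoint action of $\gl_2(\ell,j,k)$ on itself; the paper effectively concedes the point in Subsection~\ref{SS-inner}, where it notes the center of $\GL_2(\C)$ acts trivially on $\gl_2$ and $\Inn(\gl_2(\ell,j,k))\cong\PGL_2(\C)$. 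For $\GL_2(-1)\le\Aut(\mathfrak{m})$ your weight argument does close the gap: by Proposition~\ref{P-GL-1action} the scalar $H_1(s)H_2(s)$ acts on $e_{\ell,jk}$ by $s^{j+1}$, and $\gcd\{j+1 : j\in\N\}=1$. So the honest conclusion is the one you reach: the exponentials generate a central quotient of $\GL_2(\C)$, with isomorphism exactly when the ambient weights supply the gcd condition --- a hypothesis the lemma should state and the paper's proof does not address.
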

\begin{proof}
Extend the generating set by taking $\overline{w}(s) := \overline{X}(s) \overline{Y}\left(-s^{-1} \right) \overline{X}(s)$, $\overline{w} := \overline{w}(1)$. The result now follows by the usual properties of exponentials.
\end{proof}

\subsection{The group $\GL_2(-1)$ corresponding to the real roots}\label{SS-gl2re}

\begin{lemma} The group $\GL_2(-1) \cong \GL_2(\C)$ has standard generators
\begin{align*}
X_{-1}(u) &:= \exp\left({u} \ad\left(e_{-1} \right) \right),&
Y_{-1}(u) &:= \exp\left({u} \ad\left(f_{-1} \right) \right),\\
{H}_1(s) &:= \exp\left(\log(s)\ad\left({h}_1) \right) \right),&
{H}_2(s) &:= \exp\left(\log(s)\ad\left({h}_2 \right) \right),\\
\widetilde{w}_{-1}(s) &:=X_{-1}(s)Y_{-1}(-s^{-1})X_{-1}(s),& \widetilde{w}_{-1} &:= \widetilde{w}_{-1}(1).
\end{align*}

\end{lemma}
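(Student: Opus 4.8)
The plan is to reduce the statement to the general exponentiation lemma of Subsection~\ref{SS-stdgl2}, applied with $\mathfrak{L}=\mathfrak{m}$ and $\mathfrak{g}=\mathfrak{gl}_2(-1)$. That lemma has exactly two hypotheses: that $\{e_{-1},f_{-1},h_1,h_2\}$ is a \emph{standard basis} for a $\mathfrak{gl}_2$ subalgebra, and that the four series $X_{-1}(u)$, $Y_{-1}(u)$, $H_1(s)$, $H_2(s)$ converge as operators. Once both are verified, the lemma delivers a subgroup of $\Aut(\mathfrak{m})$ isomorphic to $\GL_2(\C)$ whose standard generators are the listed ones, with $\widetilde{w}_{-1}(s)$ and $\widetilde{w}_{-1}$ being precisely the Weyl-type elements adjoined in the proof of that lemma. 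So the real work is checking the two hypotheses.

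The standard-basis hypothesis is a term-for-term comparison of relations and requires no computation beyond matching labels. Restricting Proposition~\ref{P-Monster} to the generators $e_{-1},f_{-1},h_1,h_2$, the relations \eqref{Mhh}, \eqref{Mhe-}, \eqref{Mhf-} and \eqref{Me-f-} read off exactly as the standard-basis relations \eqref{gl2-h1h2}, \eqref{gl2-h1e}--\eqref{gl2-h2f} and \eqref{gl2-ef}. This is consistent with the isomorphism $\mathfrak{gl}_2(-1)\cong\mathfrak{gl}_2$ from Theorem~\ref{T-m-free}, so $\{e_{-1},f_{-1},h_1,h_2\}$ is a standard basis.

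The substance lies in the convergence of the four series, and this is where the non-integrability of $\mathfrak{m}$ must be confronted. For $X_{-1}(u)$ and $Y_{-1}(u)$ I would cite Lemma~\ref{L-expad}: since $\alpha_{-1}$ is a real root, every $\alpha_{-1}$-root string is finite, so $\ad(e_{-1})$ and $\ad(f_{-1})$ act locally nilpotently and the exponential series are summable on $\mathfrak{m}$, hence converge. The elements $H_i(s)$ are the delicate case, precisely because $\ad(h_1)$ is \emph{not} locally nilpotent and $\exp(\ad(h_1))$ is not even pro-summable on $\widehat{\mathfrak{m}}$. Here I would argue instead by semisimplicity: since $h_i\in\mathfrak{h}$, the operator $\ad(h_i)$ preserves each root space $\mathfrak{m}_\alpha$ and acts there as the scalar $\alpha(h_i)$; because $\dim\mathfrak{m}_\alpha<\infty$ by Proposition~\ref{P-all}(6) and every $x\in\mathfrak{m}$ has finite root-space support, the series $\exp(\log(s)\ad(h_i))(x)=\sum_\alpha s^{\alpha(h_i)}x_\alpha$ is a finite sum defining the diagonal automorphism $x_\alpha\mapsto s^{\alpha(h_i)}x_\alpha$. (On $\widehat{\mathfrak{m}}$ the same formula converges in the product topology, each graded piece being finite dimensional, even though the series is not pro-summable.)

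Finally I would invoke the exponentiation lemma of Subsection~\ref{SS-stdgl2} to conclude $\GL_2(-1)\cong\GL_2(\C)$. The one point I would flag is that the image is the \emph{full} $\GL_2(\C)$ and not a proper quotient such as $\PGL_2(\C)$: unlike the adjoint action of $\mathfrak{gl}_2$ on itself, here the central element $h_1+h_2$ acts nontrivially on $\mathfrak{m}$, since $[h_1+h_2,e_{jk}]=(1+j)e_{jk}$ with $1+j\neq0$ by \eqref{Mhe}, so the central torus $s\mapsto H_1(s)H_2(s)$ is faithful. I expect the main obstacle to be nothing deeper than the correct handling of $H_1(s)$ and $H_2(s)$: they fall outside the summability/pro-summability machinery used for the root-vector exponentials elsewhere in the paper, so their convergence must be justified directly from the semisimple action of $\mathfrak{h}$ together with the finite dimensionality of the root spaces.
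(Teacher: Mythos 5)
Your proposal is correct and follows essentially the same route as the paper's proof: verify that $\{e_{-1},f_{-1},h_1,h_2\}$ is a standard basis using \eqref{Mhh}, \eqref{Mhe-}, \eqref{Mhf-}, \eqref{Me-f-}, check convergence of the four series (local nilpotence of $\ad(e_{-1})$ and $\ad(f_{-1})$ on $\mathfrak{m}$, diagonal action of $\ad(h_1)$ and $\ad(h_2)$ on the root space decomposition), and then invoke the exponentiation lemma of Subsection~\ref{SS-stdgl2}. Your closing observation that the central torus acts faithfully on $\mathfrak{m}$ (unlike the adjoint action of $\gl_2$ on itself, where it would collapse to $\PGL_2(\C)$) is a worthwhile refinement that the paper leaves implicit, since it is exactly what justifies the conclusion $\GL_2(\C)$ rather than a proper quotient.
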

\begin{proof}
The set $\left\{e_{-1},f_{-1},h_1,h_2\right\}$ is a
standard basis for $\mathfrak{gl}_2(-1)$ by \eqref{Mhh}, \eqref{Mhe-}, \eqref{Mhf-}, and \eqref{Me-f-}.
It is easily checked that these particular series converge, since $\ad(e_{-1})$ and $\ad(f_{-1})$ are locally nilpotent on $\m$, while $\ad(h_{1})$ and $\ad(h_{2})$ act  diagonally with respect to the root space decomposition of $\m$.
(See Subsection~\ref{SS-graded} on the topology used for this convergence). 
 \end{proof}

The rest of this section is devoted to determining the action of $\GL_2(-1)$ on $\mathfrak{m}$,
which is needed in Section~\ref{S-related}.
The following result gives explicit formulas for the action of $\GL_2(-1)$ on $e_{\ell,jk},f_{\ell,jk}\in\mathfrak{m}$.

\begingroup\allowdisplaybreaks
\begin{proposition}\label{P-GL-1action}
 For $u\in\C$, $s\in\C^\times$, and $(\ell,j,k)\in E$, we have:
\begin{align*}
&X_{-1}(u):& 
 e_{\ell,jk} \mapsto \sum_{m=\ell}^{j-1} \binom{m}{\ell} u^{m-\ell}e_{m,jk},\; f_{\ell,jk}   \mapsto \sum_{m=0}^\ell   \binom{j-1-m}{j-1-\ell} u^{\ell-m}f_{m,jk},\\
&Y_{-1}(u):&  
  e_{\ell,jk}   \mapsto \sum_{m=0}^\ell   \binom{j-1-m}{j-1-\ell} u^{\ell-m}e_{m,jk}, \; f_{\ell,jk} \mapsto \sum_{m=\ell}^{j-1} \binom{m}{\ell} u^{m-\ell}f_{m,jk},
\end{align*}
\begin{align*}
&H_1(s):&   e_{\ell,jk}&\mapsto s^{(\ell+1)} e_{\ell,jk}, &  f_{\ell,jk}& \mapsto s^{-(\ell+1)} f_{\ell,jk},\\
&H_2(s):&     e_{\ell,jk}& \mapsto s^{(j-\ell)} e_{\ell,jk},  &  f_{\ell,jk}& \mapsto s^{-(j-\ell)} f_{\ell,jk},\\
&\widetilde{w}_{-1}:&   e_{\ell,jk} & \mapsto  (-1)^{j-1-\ell} e_{j-1-\ell,jk}, &  f_{\ell,jk} & \mapsto  (-1)^{j-1-\ell} f_{j-1-\ell,jk}.
\end{align*}
\end{proposition}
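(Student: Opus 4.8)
The plan is to handle the five operators one at a time, computing each directly from its definition as an exponential of an adjoint operator. The key structural fact to exploit is that, for fixed $(j,k)\in I^{\im}$, the spaces $V_{jk}^+=\bigoplus_{\ell=0}^{j-1}\C e_{\ell,jk}$ and $V_{jk}^-=\bigoplus_{\ell=0}^{j-1}\C f_{\ell,jk}$ are finite-dimensional $\mathfrak{gl}_2(-1)$-submodules of $\mathfrak{m}$ (as recorded after Theorem~\ref{T-m-free}); hence every series below terminates and convergence is automatic. The two diagonal generators are immediate: by \eqref{L-h1e} and \eqref{L-h2e} the operators $\ad(h_1)$ and $\ad(h_2)$ scale $e_{\ell,jk}$ by $\ell+1$ and $j-\ell$, so $H_1(s)=\exp(\log(s)\ad(h_1))$ multiplies $e_{\ell,jk}$ by $e^{\log(s)(\ell+1)}=s^{\ell+1}$ and $H_2(s)$ by $s^{j-\ell}$, with the signs reversed on $f_{\ell,jk}$ by \eqref{L-h1f} and \eqref{L-h2f}.

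For $X_{-1}(u)=\exp(u\,\ad(e_{-1}))$ I would iterate \eqref{L-e-e} to obtain $\ad(e_{-1})^n e_{\ell,jk}=\frac{(\ell+n)!}{\ell!}e_{\ell+n,jk}$ (which vanishes once $\ell+n>j-1$), feed this into the exponential series, and reindex by $m=\ell+n$; the coefficient $\frac{u^{m-\ell}}{(m-\ell)!}\frac{m!}{\ell!}$ collapses to $\binom{m}{\ell}u^{m-\ell}$, yielding the stated sum. The action on $f_{\ell,jk}$ is the same computation run through \eqref{L-e-f}, where now $\ad(e_{-1})$ lowers the index and the product $\prod_{i=0}^{n-1}(j-\ell+i)$ reorganizes into $\binom{j-1-m}{j-1-\ell}$. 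The two formulas for $Y_{-1}(u)$ follow either by the identical argument with \eqref{L-f-f} and \eqref{L-f-e} in place of \eqref{L-e-e} and \eqref{L-e-f}, or more cleanly by conjugating with the Cartan involution $\eta$, which swaps $e_{-1}\leftrightarrow f_{-1}$ and $e_{\ell,jk}\leftrightarrow f_{\ell,jk}$ and therefore intertwines $X_{-1}(u)$ with $Y_{-1}(u)$.

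For $\widetilde{w}_{-1}$ I would avoid composing three power series head-on and instead use equivariance. Since $\widetilde{w}_{-1}$ is an automorphism of $\mathfrak{m}$ restricting to the standard $\GL_2$ Weyl element on $\mathfrak{gl}_2(-1)$, relations \eqref{GL2-wX} and \eqref{GL2-wY} give $\widetilde{w}_{-1}(e_{-1})=-f_{-1}$ and $\widetilde{w}_{-1}(f_{-1})=-e_{-1}$. Writing $e_{\ell,jk}=\frac{1}{\ell!}\ad(e_{-1})^\ell e_{jk}$ and applying the automorphism then yields the intermediate identity $\widetilde{w}_{-1}(e_{\ell,jk})=\frac{(-1)^\ell}{\ell!}\ad(f_{-1})^\ell\,\widetilde{w}_{-1}(e_{jk})$, which reduces the whole computation to the images of the extreme weight vectors $e_{jk}=e_{0,jk}$ and $f_{jk}=f_{0,jk}$. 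Those I would read off from the $X_{-1}$ and $Y_{-1}$ formulas just established: expanding $\widetilde{w}_{-1}(e_{0,jk})=X_{-1}(1)Y_{-1}(-1)X_{-1}(1)(e_{0,jk})$ produces alternating binomial sums that telescope through the identity $\sum_p(-1)^p\binom{N}{p}=0^{N}$, leaving a single surviving term that is a multiple of $e_{j-1,jk}$, after which $\ad(f_{-1})^\ell e_{j-1,jk}=\ell!\,e_{j-1-\ell,jk}$ completes the propagation (and symmetrically for the $f$'s). The main obstacle is precisely this final step: tracking the accumulated powers of $-1$ through the triple composition and the equivariance shift is delicate, and it is the one place where a sign can slip, so I would pin down the exponent against a small case such as $j=2$ before committing to the general formula.
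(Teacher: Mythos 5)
Your treatment of $H_1(s)$, $H_2(s)$, $X_{-1}(u)$ and $Y_{-1}(u)$ is essentially the paper's own computation: closed forms for $\ad(e_{-1})^n$, $\ad(f_{-1})^n$, $\ad(h_i)^n$ on $e_{\ell,jk}$ and $f_{\ell,jk}$ coming from \eqref{L-e-e}--\eqref{L-h2f}, substituted into the exponential series and reindexed (the paper packages the same computation as the statement that $\bigoplus_{(j,k)} V^{\pm}_{jk}$ is the standard $\GL_2(\C)$-module $\left(\C[X,Y]\right)^*$). Your Cartan-involution shortcut for $Y_{-1}(u)$ is a harmless variant. For $\widetilde{w}_{-1}$, however, you genuinely diverge from the paper: the paper applies all three exponentials to a general $e_{\ell,jk}$ and evaluates the resulting triple binomial sum, whereas you use that $\widetilde{w}_{-1}$ is an automorphism with $\widetilde{w}_{-1}(e_{-1})=-f_{-1}$ to reduce everything to the extreme vectors, where the composition telescopes. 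Your route is cleaner and every ingredient is justified: $\widetilde{w}_{-1}(e_{-1})=-f_{-1}$ is the adjoint action of the Weyl element on $\gl_2(-1)$, and $\ad(f_{-1})^\ell e_{j-1,jk}=\ell!\,e_{j-1-\ell,jk}$ follows from \eqref{L-f-e}.

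The gap is that you stop exactly at the point carrying all the content, the sign --- and when your method is completed it does not reproduce the statement as printed. The telescoping gives $\widetilde{w}_{-1}(e_{0,jk})=+\,e_{j-1,jk}$ (coefficient exactly $1$), while the analogous computation on the highest-weight side gives $\widetilde{w}_{-1}(f_{0,jk})=(-1)^{j-1}f_{j-1,jk}$; your propagation identity then yields
$$\widetilde{w}_{-1}(e_{\ell,jk})=(-1)^{\ell}\,e_{j-1-\ell,jk},\qquad \widetilde{w}_{-1}(f_{\ell,jk})=(-1)^{j-1-\ell}\,f_{j-1-\ell,jk}.$$
The $e$-sign agrees with the stated $(-1)^{j-1-\ell}$ only when $j$ is odd. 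The $j=2$ test you propose settles it: there $X_{-1}(1)Y_{-1}(-1)X_{-1}(1)$ sends $e_{0,jk}\mapsto e_{1,jk}$ and $e_{1,jk}\mapsto -e_{0,jk}$, i.e.\ sign $(-1)^{\ell}$, whereas Proposition~\ref{P-GL-1action} asserts $e_{0,jk}\mapsto -e_{1,jk}$. The same discrepancy sits inside the paper's own proof: its triple sum, evaluated at $j=2$, $\ell=0$, equals $+e_{1,jk}$, not the claimed $(-1)^{j-1-\ell}e_{j-1-\ell,jk}$; and the standard $\mathfrak{sl}_2$ fact $w\cdot v_k=(-1)^{n-k}v_{n-k}$ (with $v_k$ descending from the highest-weight vector), applied to the lowest-weight realization $e_{\ell,jk}\leftrightarrow v_{j-1-\ell}$, confirms $(-1)^{\ell}$. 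So your approach is sound and, carried out, actually corrects the $e$-part of the statement (the $f$-part is correct as printed); but as written your proposal is incomplete at the decisive step, and you should finish the sign computation and state the amended formula rather than leave it to a spot check.
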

\begin{proof} 
By Theorem~\ref{T-identities-v2}, the action of $\mathfrak{gl}_2(-1)$ on $\bigoplus_{(j,k)\in I^{\im}} V^+_{jk}$ is equivalent to the action of $\mathfrak{gl}_2$ on $\left(\C[X,Y] \right)^*$ with
$e_{\ell,jk}\mapsto \left(X^\ell Y^{j-1-\ell} \right)^*$. Here $\left(X^a Y^b \right)^*$ denotes the map $X^cY^d\mapsto \delta_{ac}\delta_{bd}X^a Y^b$ in $\left(\C[X,Y] \right)^*$. Similarly the action on $\bigoplus_{(j,k)\in I^{\im}} V^-_{jk}$ is equivalent to the action on $\left(\C[X,Y] \right)^*$ with
$f_{\ell,jk}\mapsto \left(X^{j-1-\ell}Y^\ell \right)^*$.
The formulas given are now the standard action of $\GL_2(\C)$ on $\left(\C[X,Y] \right)^*$.
\begin{longver}
Explicitly
\begin{align*}
\ad(e_{-1})^i e_{\ell,jk} &= \begin{cases} \frac{(\ell+i)!}{\ell!} 
  e_{\ell+i,jk}&\text{if $0\le i< j-\ell$,}\\ 0&\text{if $i\ge j-\ell$,}\end{cases} \\
\ad(f_{-1})^i e_{\ell,jk} &= \begin{cases}  \frac{(j-\ell-1+i)!}{(j-\ell-1)!} e_{\ell-i,jk}&\text{if $0\le i\le \ell$,}\\ 0&\text{if $i> \ell$,}\end{cases} \\
\ad(h_1)^i e_{\ell,jk}&= (\ell+1)^i e_{\ell,jk},\\
\ad(h_2)^i e_{\ell,jk}&= (j-\ell)^i e_{\ell,jk},\\
\ad(e_{-1})^i f_{\ell,jk} &= \begin{cases} \frac{(j-\ell-1+i)!}{(j-\ell-1)!} 
  f_{\ell-i,jk}&\text{if $0\le i\le \ell$,}\\ 0&\text{if $i >\ell$,}\end{cases} \\
\ad(f_{-1})^i f_{\ell,jk} &= \begin{cases}  \frac{(\ell+i)!}{\ell!} e_{\ell+i,jk}&\text{if $0\le i< j-\ell$,}\\ 0&\text{if $i\ge j-\ell$,}\end{cases} \\
\ad(h_1)^i f_{\ell,jk}&= (-1)^i(\ell+1)^i f_{\ell,jk},\\
\ad(h_2)^i f_{\ell,jk}&= (-1)^i(j-\ell)^i f_{\ell,jk}.
\end{align*}
Hence
\begin{align*}
\exp(a\ad(e_{-1})) e_{\ell,jk} &=\sum_{i=0}^{j-\ell-1} \binom{\ell+i}{i} a^{i}e_{\ell+i,jk}
  =\sum_{m=\ell}^{j-1} \binom{m}{\ell}   a^{m-\ell}e_{m,jk}, \\
\exp(a\ad(f_{-1})) e_{\ell,jk} &=\sum_{i=0}^{\ell} \binom{j-\ell-1+i}{i} a^{i}e_{\ell-i,jk}
  = \sum_{m=0}^\ell   \binom{j-1-m}{j-1-\ell} a^{\ell-m}e_{m,jk} ,\\
\exp(a\ad(h_1)) e_{\ell,jk}&= e^{(\ell+1)a} e_{\ell,jk},\\
\exp(a\ad(h_2)) e_{\ell,jk}&= e^{(j-\ell)a} e_{\ell,jk},\\
\exp(a\ad(e_{-1})) f_{\ell,jk} &=\sum_{i=0}^{\ell} \binom{j-\ell-1+i}{i} a^{i}f_{\ell-i,jk}
  =\sum_{m=\ell}^{j-1} \binom{m}{\ell}   a^{m-\ell}f_{m,jk}, \\
\exp(a\ad(f_{-1})) f_{\ell,jk} &=\sum_{i=0}^{j-\ell-1} \binom{\ell+i}{i} a^{i}f_{\ell+i,jk}
  = \sum_{m=0}^\ell   \binom{j-1-m}{j-1-\ell} a^{\ell-m}f_{m,jk}, \\
\exp(a\ad(h_1)) f_{\ell,jk}&= f^{-(\ell+1)a} e_{\ell,jk},\\
\exp(a\ad(h_2)) f_{\ell,jk}&= f^{-(j-\ell)a} e_{\ell,jk}.
\end{align*}

Now 
\begin{align*}
\widetilde{w}_{-1} e_{\ell,jk} &= \sum_{m=\ell}^{j-1} \binom{m}{\ell}   \sum_{p=0}^m   \binom{j-1-p}{j-1-m} (-1)^{m-p} 
  \sum_{q=p}^{j-1}\binom{q}{p}  e_{q,jk}\\
&= \sum_{m=\ell}^{j-1}  \sum_{p=0}^m \sum_{q=p}^{j-1}  \binom{m}{\ell} \binom{j-1-p}{m-p} \binom{q}{p} (-1)^{m-p} e_{q,jk}\\
&=  \sum_{m=\ell}^{j-1} \sum_r \sum_{q=m-r}^{j-1}   \binom{m}{\ell} \binom{j-1+r-m}{r} \binom{q}{m-r} (-1)^{r} e_{q,jk}\\
&= (-1)^{\ell} e_{j-1-\ell,jk}.
\end{align*}
Similarly $\widetilde{w}_{-1} f_{\ell,jk} =(-1)^{j-1-\ell} f_{j-1-\ell,jk}$.
\end{longver}
\end{proof}

This proposition also gives the explicit action of $\GL_2(-1)$ on the modules $V^+_{jk} = \bigoplus_{\ell=1}^{j-1} \C e_{\ell,jk}$ and $V^-_{jk} = \bigoplus_{\ell=1}^{j-1} \C f_{\ell,jk}$ \cite{JurJPAA}.
Another immediate consequence is that the elements $H_1(s),H_2(s)\in\Aut(\mathfrak{m})$ are independent of the branch of the complex logarithm chosen.

\subsection{The imaginary linear groups $\GL_2(\ell,j,k)$}\label{SS-gl2im}
\begin{proposition}
For $(\ell,j,k)\in E$,  
$\mathfrak{gl}_2(\ell,j,k)$ 
has standard basis  ${e_{\ell,jk}}$, ${f_{\ell,jk}}/{{c_{\ell j}}}$, ${h_1}/{(\ell+1)}$,
 ${-h_2}/{(j-\ell)}$, where $c_{\ell j} := (-1)^{\ell+1} \binom{j-1}{\ell} \, (\ell+1)(j-\ell).$
\end{proposition}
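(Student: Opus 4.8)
The plan is to take the four candidate elements
$$\overline{e} := e_{\ell,jk}, \qquad \overline{f} := \frac{f_{\ell,jk}}{c_{\ell j}}, \qquad \overline{h}_1 := \frac{h_1}{\ell+1}, \qquad \overline{h}_2 := \frac{-h_2}{j-\ell},$$
and to verify directly that they satisfy the six defining relations \eqref{gl2-h1h2}--\eqref{gl2-ef} of a standard $\gl_2$ basis. First I would observe that all the scalars in the denominators are nonzero: for $(\ell,j,k)\in E$ we have $0\le\ell<j$, so $\ell+1\ge1$, $j-\ell\ge1$, and $\binom{j-1}{\ell}\ne0$, whence $c_{\ell j}\ne0$ and the four elements are well defined. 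Since rescaling by nonzero constants changes neither the span nor linear independence, $\{\overline{e},\overline{f},\overline{h}_1,\overline{h}_2\}$ spans the same four-dimensional space as the (linearly independent) set $\{e_{\ell,jk},f_{\ell,jk},h_1,h_2\}$, namely $\gl_2(\ell,j,k)$.

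The toral relation \eqref{gl2-h1h2} is immediate from \eqref{Mhh}. The four relations \eqref{gl2-h1e}--\eqref{gl2-h2f} follow by simply rescaling the identities \eqref{L-h1e}, \eqref{L-h2e}, \eqref{L-h1f}, \eqref{L-h2f}; for instance $[\overline{h}_1,\overline{e}]=\frac{1}{\ell+1}[h_1,e_{\ell,jk}]=\frac{\ell+1}{\ell+1}e_{\ell,jk}=\overline{e}$, and the minus signs built into $\overline{h}_2$ and into $\overline{f}=f_{\ell,jk}/c_{\ell j}$ are exactly what is needed to convert the eigenvalues $j-\ell$ and $-(\ell+1)$ coming from \eqref{L-h2e} and \eqref{L-h1f} into the required $\mp1$.

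The one relation requiring care is \eqref{gl2-ef}. Using \eqref{L-efh},
$$[\overline{e},\overline{f}]=\frac{1}{c_{\ell j}}[e_{\ell,jk},f_{\ell,jk}]=\frac{(-1)^{\ell+1}\binom{j-1}{\ell}}{c_{\ell j}}\left((j-\ell)h_1+(\ell+1)h_2\right).$$
The definition $c_{\ell j}=(-1)^{\ell+1}\binom{j-1}{\ell}(\ell+1)(j-\ell)$ is tailored precisely so that $(-1)^{\ell+1}\binom{j-1}{\ell}/c_{\ell j}=1/((\ell+1)(j-\ell))$; substituting this gives $\frac{h_1}{\ell+1}+\frac{h_2}{j-\ell}$, which equals $\overline{h}_1-\overline{h}_2$. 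This is the main (indeed essentially the only) obstacle: matching the normalizing constant $c_{\ell j}$ and the two terms of \eqref{L-efh} against $\overline{h}_1-\overline{h}_2$, where the sign built into $\overline{h}_2$ must turn the subtraction in $\overline{h}_1-\overline{h}_2$ into the plus sign appearing in \eqref{L-efh}. Once this computation is checked, all six defining relations hold, so $\{\overline{e},\overline{f},\overline{h}_1,\overline{h}_2\}$ is a standard basis and $\gl_2(\ell,j,k)\cong\gl_2$.
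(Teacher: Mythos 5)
Your proposal is correct and follows essentially the same route as the paper: define the rescaled elements $\overline{e}$, $\overline{f}$, $\overline{h}_1$, $\overline{h}_2$ and verify the six defining relations of a standard $\gl_2$ basis directly from \eqref{Mhh}, \eqref{L-h1e}--\eqref{L-h2f}, and \eqref{L-efh}, with the key step being exactly the cancellation of $c_{\ell j}$ in the computation of $[\overline{e},\overline{f}]$. Your added remarks on the nonvanishing of the denominators and the preservation of the span are harmless extras not present in the paper's proof.
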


\begin{longver}
\begin{proof}  Let
\begin{align*}
 \overline{e}  &:= {e_{\ell,jk}}&
 \overline{f} &:= \frac{f_{\ell,jk}}{{c_{\ell j}}}, &
 \overline{h}_1 &:= \frac{h_1}{\ell+1},& 
 \overline{h}_2 &:=  \frac{-h_2}{j-\ell}.
\end{align*}
Then
\begin{align*}
 \left[\overline{h}_1,\overline{h}_2 \right] &= \frac{-\left[h_1.h_2 \right]}{(\ell+1)(j-\ell)}=0,\\
 \left[\overline{h}_1,\overline{e} \right] &=  \frac{\left[h_1,e_{\ell,jk} \right]}{(\ell+1)}   = \frac{(\ell+1)e_{\ell,jk}}{(\ell+1) }=\e,\\
 \left[\overline{h}_2,\overline{e} \right] &=  \frac{-\left[h_2,e_{\ell,jk} \right]}{(j-\ell)}   = \frac{-(j-\ell)e_{\ell,jk}}{(j-\ell) }=-\e,\\
 \left[\overline{h}_1,\overline{f} \right] &=  \frac{\left[h_1,f_{\ell,jk} \right]}{(\ell+1){c_{\ell,j}}}   = \frac{-(\ell+1)f_{\ell,jk}}{(\ell+1){c_{\ell,j}} }=-\f,\\
 \left[\overline{h}_2,\overline{f} \right] &=  \frac{-\left[h_2,f_{\ell,jk} \right]}{(j-\ell){c_{\ell,j}}}   = \frac{(j-\ell)f_{\ell,jk}}{(j-\ell){c_{\ell,j}} }=\f,\\ 
 \left[\overline{e},\overline{f} \right] &=  \frac{\left[e_{\ell,jk},f_{\ell,jk} \right]}{c_{\ell,j}}   =
 \frac{(j-\ell) h_1 +(\ell+1)h_2}{(\ell+1)(j-\ell)}=
 \frac{h_1}{\ell+1}+\frac{h_2}{j-\ell}=
  \overline{h}_1-\overline{h}_2.\qedhere
\end{align*}
\end{proof}
\end{longver}

Write
$$ X_{\ell,jk}\left(u\right) := \exp\left({u}\ad( e_{\ell,jk})\right) \quad\text{ and }\quad
 Y_{\ell,jk}\left(u\right) := \exp\left({u} \ad(f_{\ell,jk})\right),$$
where $\ad$ is the adjoint action on $\mathfrak{gl}_2(\ell,j,k)$.
\begin{corollary}
The group $\GL_2(\ell,j,k) \cong \GL_2(\C)$ has standard generators 
\begin{align*}
X_{\ell,jk}\left(u\right) &= \exp\left({u} \ad(e_{\ell,jk})\right),\qquad
&Y_{\ell,jk}\left(\frac{u}{{c_{\ell j}}}\right) &=\exp\left(\frac{u \ad(f_{\ell,jk})}{{c_{\ell j}}} \right) ,\\
H_1\left(s^{1/(\ell+1)}\right)&= \textstyle\exp \left(\frac{\log(s)\ad(h_1)}{(\ell+1)} \right),\;
&H_2\left(s^{-1/(j-\ell)}\right)&= \textstyle \exp \left(\frac{-\log(s)\ad(h_2)}{(j-\ell)} \right),\\
\widetilde{w}_{\ell,jk}(s) &=
 X_{\ell,jk}\left({s }\right) Y_{\ell,jk}\left(\frac{-s^{-1}}{{c_{\ell j}}}\right) 
X_{\ell,jk}\left({s}\right), \qquad &\widetilde{w}_{\ell,jk} &=\widetilde{w}_{\ell,jk}(1)
\end{align*}
for $u\in\C$ and $s\in\C^\times$.
\end{corollary}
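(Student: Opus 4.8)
The plan is to derive this directly from the general lemma of Subsection~\ref{SS-stdgl2} relating a $\gl_2$ subalgebra to a copy of $\GL_2(\C)$ in its automorphism group, applied to the standard basis $\overline{e} = e_{\ell,jk}$, $\overline{f} = f_{\ell,jk}/c_{\ell j}$, $\overline{h}_1 = h_1/(\ell+1)$, $\overline{h}_2 = -h_2/(j-\ell)$ furnished by the preceding Proposition. First I would take $\mathfrak{L} = \mathfrak{g} = \mathfrak{gl}_2(\ell,j,k)$, so that $\ad$ is the adjoint action on this four-dimensional subalgebra, and check that the four exponential series in the lemma converge. Here the restriction to $\mathfrak{gl}_2(\ell,j,k)$ is exactly the standard representation of $\gl_2$ on itself: $\ad(\overline{e})$ and $\ad(\overline{f})$ are nilpotent, so $\exp(u\ad(\overline{e}))$ and $\exp(u\ad(\overline{f}))$ are finite sums, while $\ad(\overline{h}_1)$ and $\ad(\overline{h}_2)$ act semisimply, so the corresponding exponentials converge and act by scaling. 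Thus the lemma produces a subgroup of $\Aut(\mathfrak{gl}_2(\ell,j,k))$ isomorphic to $\GL_2(\C)$ with standard generators $\overline{X}(u),\overline{Y}(u),\overline{H}_1(s),\overline{H}_2(s),\overline{w}(s),\overline{w}$.

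The remaining step is bookkeeping: rewrite each of these standard generators in terms of the original vectors $e_{\ell,jk}, f_{\ell,jk}, h_1, h_2$ using linearity of $\ad$ and the identity $\log(s^a) = a\log(s)$ for the fixed branch chosen at the start of the section. Explicitly, $\overline{X}(u) = \exp(u\ad(e_{\ell,jk})) = X_{\ell,jk}(u)$; pulling the scalar $1/c_{\ell j}$ out of $\ad(\overline{f})$ gives $\overline{Y}(u) = \exp((u/c_{\ell j})\ad(f_{\ell,jk})) = Y_{\ell,jk}(u/c_{\ell j})$; and pulling out $1/(\ell+1)$ and $-1/(j-\ell)$ gives $\overline{H}_1(s) = \exp((\log(s)/(\ell+1))\ad(h_1)) = H_1(s^{1/(\ell+1)})$ and $\overline{H}_2(s) = H_2(s^{-1/(j-\ell)})$, matching the fractional-power conventions of the statement. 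Substituting these identifications into $\overline{w}(s) = \overline{X}(s)\overline{Y}(-s^{-1})\overline{X}(s)$ yields precisely $\widetilde{w}_{\ell,jk}(s) = X_{\ell,jk}(s)\,Y_{\ell,jk}(-s^{-1}/c_{\ell j})\,X_{\ell,jk}(s)$ and $\widetilde{w}_{\ell,jk} = \widetilde{w}_{\ell,jk}(1)$, so the claimed list is exactly the image of the standard generating set.

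The one genuine subtlety, and the reason this is a corollary rather than a place where new convergence analysis is needed, is that one must work inside the finite-dimensional subalgebra $\mathfrak{gl}_2(\ell,j,k)$ rather than on all of $\mathfrak{m}$. For an imaginary index $(\ell,j,k)\in E$ the operator $\ad(e_{\ell,jk})$ is \emph{not} locally nilpotent on $\mathfrak{m}$, so $\exp(u\ad(e_{\ell,jk}))$ does not converge as an automorphism of $\mathfrak{m}$; this is precisely the obstruction motivating the completions and pro-summability of Section~\ref{S-unip}. Restricting the adjoint action to $\mathfrak{gl}_2(\ell,j,k)$ sidesteps this entirely, and no independence-of-branch argument beyond the one already noted for $H_1, H_2$ is required, since the fractional powers only rescale arguments already handled in the real case.
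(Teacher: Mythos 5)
Your proposal is correct and follows the paper's own (largely implicit) route exactly: the corollary is intended as an immediate consequence of the preceding proposition giving the standard basis $\bigl\{e_{\ell,jk},\, f_{\ell,jk}/c_{\ell j},\, h_1/(\ell+1),\, -h_2/(j-\ell)\bigr\}$, combined with the lemma of Subsection~\ref{SS-stdgl2} applied with $\mathfrak{L}=\mathfrak{gl}_2(\ell,j,k)$, where convergence is automatic by finite-dimensionality, and the rest is the same rescaling bookkeeping you carry out. Your closing remark on why one must restrict $\ad$ to $\mathfrak{gl}_2(\ell,j,k)$ rather than use all of $\mathfrak{m}$ matches the paper's own observation that the $\GL_2(\ell,j,k)$-action does not extend to $\mathfrak{m}$.
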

Note that $\GL_2(\ell,j,k)$ acts on the subgroup $\mathfrak{gl}_2(\ell,j,k)$ of $\mathfrak{m}$ as automorphisms, but  this cannot be extended to a well-defined action on all of $\mathfrak{m}$. 

%
%
%
%
%
%

We should note that there are many other $\frak{gl}_2$ subalgebras associated to the imaginary roots.
Suppose $\mathfrak{m}_{(\ell+1,j-\ell)}$ has dimension $d=c\left((\ell+1)(j-\ell) \right)>1$ with basis $e_{\ell,jk}=e_1$ and $e_2, \dots , e_d$. Then the Cartan involution gives a basis $f_1,\dots,f_d$ for $\mathfrak{m}_{-(\ell+1,j-\ell)}$ so that each  $\left\{e_i,f_i,h_1,h_2 \right\}$ generates a $\mathfrak{gl}_2$ subalgebra. However, $\left\{e_1,\dots,e_d,f_1,\dots,f_d,h_1,h_2 \right\}$ could generate an algebra that is much larger than a cartesian  product of $d$ copies of a $\mathfrak{gl}_2$ algebra, because we could have $\left[e_i,e_j \right]$ being a nonzero element of $\mathfrak{m}_{2(\ell+1,j-\ell)}$. 
So we have just restricted ourselves to constructing a single  $\mathfrak{gl}_2$ subalgebra corresponding to each free generator of our free  Lie subalgebras $\nimpm$.

\subsection{Inner automorphism groups}\label{SS-inner}

For a Lie algebra $\frak L$,  we recall that 
 $$\Inn(\frak L)=\left\langle \exp(\ad x)\mid x\in\frak L\text{  is locally $\ad$-nilpotent} \right\rangle.$$
\begin{proposition}
 $x\in\m$ is locally $\ad$-nilpotent if and only if $x$ is a nilpotent element of $\mathfrak{gl}_2(-1)$.
\end{proposition}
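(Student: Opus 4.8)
The plan is to prove the two implications separately, reading ``unipotent element of $\mathfrak{gl}_2(-1)$'' as a nilpotent matrix in $\mathfrak{gl}_2(-1)\cong\mathfrak{gl}_2$ (equivalently, an $x$ whose exponential is unipotent in $\GL_2(-1)$). For the easy direction, suppose $x$ is such a nilpotent element. The nilpotent cone of $\mathfrak{gl}_2(-1)$ is exactly the $\Ad(\GL_2(-1))$-orbit of the line $\C e_{-1}$. Since $\ad(e_{-1})$ is locally nilpotent on $\m$ (Lemma~\ref{L-expad}), and since $\GL_2(-1)\le\Aut(\m)$ acts by $\ad(\Ad(g)x)=g\,\ad(x)\,g^{-1}$, local $\ad$-nilpotence is preserved along this orbit. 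Hence every nilpotent element of $\mathfrak{gl}_2(-1)$ is locally $\ad$-nilpotent on $\m$.

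For the converse I would first show that a locally $\ad$-nilpotent $x$ lies in $\mathfrak{gl}_2(-1)=\m_0$, using the $\Z$-grading $\m=\bigoplus_k\m_k$ of Subsection~\ref{SS-graded}, for which $\m_{\pm1}=0$ and $\nimp=\bigoplus_{k\ge1}\m_k$. Write $x=\sum_k x_k$ (a finite sum) and let $N$ be its top nonzero degree. Since each $\ad(x_k)$ with $k\le N$ raises degree by at most $N$, the degree-$(d+nN)$ component of $(\ad x)^n y$, for homogeneous $y\in\m_d$, is exactly $(\ad x_N)^n y$. Thus if $N\ge2$ it suffices to exhibit some $y$ with $(\ad x_N)^n y\ne0$ for all $n$, contradicting local nilpotence; here $0\ne x_N\in\m_N\subseteq\nimp$, which is a free Lie algebra by Theorem~\ref{T-m-free}.

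The crux is therefore the claim that \emph{no nonzero element of the free Lie algebra $\nimp$ is locally $\ad$-nilpotent}. I would prove this by embedding $\nimp\hookrightarrow T:=U(\nimp)$, the free associative algebra on the generators $\{e_{\ell,jk}\}$, replacing $x_N$ by its lowest length-degree homogeneous component $\overline{x}$ (of length $d\ge1$), and choosing a free generator $g=e_{\ell,jk}$ that does not occur in $\overline{x}$ --- possible because the index set $E$ is infinite. In $T$ one has $(\ad\overline{x})^n g=\sum_{i=0}^n\binom{n}{i}(-1)^i\,\overline{x}^{\,n-i}\,g\,\overline{x}^{\,i}$; since $g$ does not occur in $\overline{x}$, every monomial appearing in the $i$-th term contains a single $g$ at letter-position $(n-i)d+1$, and these positions are distinct for distinct $i$. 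Hence there is no cancellation between terms, and as $T$ is a domain the sum is nonzero. This gives $(\ad\overline{x})^n g\ne0$, hence $(\ad x_N)^n g\ne0$ for all $n$, the desired contradiction. So $N\le0$; applying the same argument to $\eta(x)$, where $\eta$ is the Cartan involution (an automorphism interchanging $\m_k$ and $\m_{-k}$, hence preserving local $\ad$-nilpotence), forces the bottom degree of $x$ to be $\ge0$. Therefore $x\in\m_0=\mathfrak{gl}_2(-1)$.

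Finally I would show that an $x\in\mathfrak{gl}_2(-1)$ that is locally $\ad$-nilpotent on $\m$ is a nilpotent matrix. Take the Jordan decomposition $x=x_{ss}+x_n$ in $\mathfrak{gl}_2$; nilpotence of $x$ is equivalent to $x_{ss}=0$. If $x_{ss}$ is regular semisimple then $x_n=0$ and $\ad(x)=\ad(x_{ss})$ is a nonzero semisimple operator on the $\ad(x)$-invariant subalgebra $\mathfrak{gl}_2(-1)$ with nonzero eigenvalues, hence not nilpotent. If $x_{ss}=c(h_1+h_2)$ is a nonzero scalar, then on the finite-dimensional submodule $V_{jk}^+$ the element $h_1+h_2$ acts by the nonzero scalar $j+1$ (by \eqref{L-h1e} and \eqref{L-h2e}) while $x_n$ acts nilpotently, so $\ad(x)|_{V_{jk}^+}=\ad(x_n)+c(j+1)\Id$ has the nonzero eigenvalue $c(j+1)$ and is not nilpotent. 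In either case $x_{ss}\ne0$ contradicts local $\ad$-nilpotence, so $x_{ss}=0$ and $x$ is nilpotent. The main obstacle is the free-Lie-algebra claim of the third paragraph; once the leading-term computation in $T=U(\nimp)$ is established, the remaining steps reduce to the known local nilpotence of $\ad(e_{-1})$, degree bookkeeping, and elementary $\mathfrak{gl}_2$-representation theory on the modules $V_{jk}^+$.
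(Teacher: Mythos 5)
Your proof is correct, and for the main implication it shares the paper's basic skeleton: show that the components of $x$ in the free subalgebras $\nimpm$ must vanish, so that $x\in\gl_2(-1)$, then treat the $\gl_2$ part and the converse separately. The differences are in execution, and they are substantive. The paper's own proof writes $x=x_-+x'+x_+$, neutralizes $x_-$ by choosing a generator $e_{m,pq}$ that commutes with it via Theorem~\ref{T-identities-v2}\eqref{L-ef0}, and then simply asserts that ``from properties of free Lie algebras'' one can choose $(m,p,q)$ with $\ad(x_+)^n(e_{m,pq})\ne0$ for all $n$; note that the bracket identity $[x,e_{m,pq}]=[x_++x',e_{m,pq}]$ only controls the first application of $\ad(x)$, since higher powers reintroduce brackets with $x_-$. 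Your argument replaces this with the $\Z$-grading --- the top-degree component of $(\ad x)^n y$ is exactly $(\ad x_N)^n y$, which controls all powers at once --- and disposes of the negative components by applying the Cartan involution, rather than by a generator choice. You also prove, rather than assert, the key free-Lie-algebra fact, via the embedding of $\nimp$ into its enveloping algebra (the free associative algebra), the expansion $\ad(\overline{x})^n g=\sum_i\binom{n}{i}(-1)^i\overline{x}^{\,n-i}g\overline{x}^{\,i}$, and the observation that the single occurrence of $g$ sits in distinct letter-positions for distinct $i$, so no cancellation can occur; this is precisely the lemma the paper leaves implicit. Finally, you supply two steps the paper's proof omits altogether: that a locally $\ad$-nilpotent element of $\gl_2(-1)$ is in fact nilpotent (your Jordan-decomposition argument, with the central case $x_{ss}=c(h_1+h_2)$ ruled out by its scalar action $c(j+1)$ on $V_{jk}^+$), and an explicit proof of the converse (every nilpotent element of $\gl_2(-1)$ is an $\Ad(\GL_2(-1))$-translate of a multiple of $e_{-1}$, and conjugation by automorphisms preserves local $\ad$-nilpotence), which the paper dismisses as ``straightforward.'' In short, your write-up is correct and strictly more complete than the paper's own proof.
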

\begin{proof} 
Let $x$ be $\ad$-nilpotent and write $x=x_-+x'+x_+$ with $x_\pm\in\nimpm$ and $x'\in \gl_2(-1)$.
Then $x_+$ is a (finite) linear combination of Lie products of the free generators $f_{\ell,jk}$ for $(\ell,j,k)\in I^{\im}$. Choose $m$ so that $m-1$ is greater than every $\ell$ from a nonzero term in this combination, and $p>m, q=1$.
Then $[x_{-},e_{m,pq}]=0$ by Theorem~\ref{T-identities-v2}(\ref{L-ef0}).
Now $[x,e_{m,pq}] = [x_++x',e_{m,pq}]$.
From properties of free Lie algebras, we can easily choose $(m,p,q)$ so that $\ad(x_+)^n(e_{m,pq})$ is nonzero for all $n\ge 1$. Hence $x_+=0$. Similarly $x_-=0$ and so $x=x'\in\gl_2(-1)$.
The converse is straightforward.
\end{proof}
\begin{corollary}
$\Inn(\frak m) = \left\langle X_{-1}(u), Y_{-1}(u) \mid u\in\C \right\rangle \cong \SL_2(\C).$
\end{corollary}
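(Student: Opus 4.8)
The plan is to read off everything from the preceding Proposition, which says that the locally $\ad$-nilpotent elements of $\m$ are exactly the nilpotent (``unipotent'') elements of the subalgebra $\gl_2(-1)\cong\gl_2$, i.e.\ the nilpotent cone $\mathcal N$ of $\gl_2(-1)$. Setting $G := \langle X_{-1}(u), Y_{-1}(u) \mid u\in\C\rangle$, I would establish the two claims in the statement separately: that $G\cong\SL_2(\C)$, and that $\Inn(\m)=G$.

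For the isomorphism, I would appeal to the Lemma of Subsection~\ref{SS-gl2re}, which exhibits $\GL_2(-1)$ as a subgroup of $\Aut(\m)$ isomorphic to $\GL_2(\C)$ under which $X_{-1}(u)$ and $Y_{-1}(u)$ correspond to the upper- and lower-triangular unipotent one-parameter subgroups $\overline X(u)$ and $\overline Y(u)$. Since these transvections generate $\SL_2(\C)$ inside $\GL_2(\C)$ and the realization is faithful, $G\cong\SL_2(\C)$. I would also note that $\ad(e_{-1})$ and $\ad(f_{-1})$ preserve the subalgebra $\gl_2(-1)$, so $G$ does too and acts on it as $\SL_2(\C)$ by the adjoint (conjugation) representation; in the matrix picture $e_{-1}$ is the nilpotent $\overline e$.

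The inclusion $G\subseteq\Inn(\m)$ is immediate, since $X_{-1}(u)=\exp(\ad(u e_{-1}))$ and $Y_{-1}(u)=\exp(\ad(u f_{-1}))$ with $u e_{-1}, u f_{-1}\in\mathcal N$ locally $\ad$-nilpotent. For the reverse inclusion I would take an arbitrary locally $\ad$-nilpotent $x$; by the Proposition $x\in\mathcal N$. If $x=0$ then $\exp(\ad x)=\Id\in G$. If $x\neq 0$, the crux is that $x$ is then a nonzero nilpotent $2\times 2$ matrix, so by Jordan form it is $\GL_2(\C)$-conjugate to $\overline e$; rescaling the conjugating matrix by a scalar (which does not change the conjugation action) makes its determinant $1$, so in fact $x=g(e_{-1})$ for some $g\in G\cong\SL_2(\C)$. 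Using the automorphism identity $g\,\ad(y)\,g^{-1}=\ad(g(y))$ valid for any $g\in\Aut(\m)$, I then get $\exp(\ad x)=\exp(\ad(g(e_{-1})))=g\,\exp(\ad(e_{-1}))\,g^{-1}=g\,X_{-1}(1)\,g^{-1}\in G$. Hence $\Inn(\m)\subseteq G$ and the two groups coincide.

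The step I expect to be the main obstacle is the reverse inclusion, specifically the reduction of an arbitrary nonzero element of $\mathcal N$ to the single generator $e_{-1}$ via the $\SL_2(\C)$-action, together with checking that the conjugation-equivariance $g\,\exp(\ad y)\,g^{-1}=\exp(\ad(g(y)))$ may be applied termwise here. Both are underwritten by the fact, supplied by the preceding Proposition, that elements of $\mathcal N$ act locally $\ad$-nilpotently on all of $\m$ (not merely on $\gl_2(-1)$), so the exponentials involved are genuine, well-defined elements of $\Aut(\m)$ and no new convergence analysis is required.
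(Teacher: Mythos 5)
Your proof is correct and takes essentially the route the paper intends: the paper states this corollary without a separate proof, treating it as immediate from the preceding Proposition together with the Lemma identifying $\GL_2(-1)\cong\GL_2(\C)$, and your argument simply supplies the standard details (the $\SL_2(\C)$-conjugacy of every nonzero nilpotent of $\gl_2(-1)$ to $e_{-1}$, and the equivariance $g\exp(\ad y)g^{-1}=\exp\left(\ad(g(y))\right)$ for $g\in\Aut(\m)$). No gaps.
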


In contrast, the center of $\GL_2(\C)$ acts trivially on $\gl_2$, which gives the following.
\begin{corollary} $\Inn\left(\frak{gl}_2(\ell,j,k) \right)\cong\PGL_2(\C)$.
\end{corollary}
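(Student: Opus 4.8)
The plan is to transport the problem through the isomorphism $\gl_2(\ell,j,k)\cong\gl_2$ established above and to work with the conjugation action on $2\times2$ matrices. First I would pin down the locally $\ad$-nilpotent elements. Because $\gl_2(\ell,j,k)$ is finite dimensional, $x$ is locally $\ad$-nilpotent if and only if $\ad(x)$ is nilpotent. The center of $\gl_2(\ell,j,k)$ is the line spanned by $\overline h_1+\overline h_2$ (the identity matrix $I$ in the matrix model), and it lies in $\ker(\ad)$; writing $x=x_0+z$ with $x_0\in\mathfrak{sl}_2$ and $z$ central gives $\ad(x)=\ad(x_0)$. Since for $\mathfrak{sl}_2$ the operator $\ad(x_0)$ is nilpotent exactly when $x_0$ is a nilpotent matrix, the locally $\ad$-nilpotent elements are precisely the $x_0+z$ with $x_0$ in the nilpotent cone of $\mathfrak{sl}_2$, and $\exp(\ad(x))=\exp(\ad(x_0))$.

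Next I would realize $\Inn(\gl_2(\ell,j,k))$ inside the conjugation action $\Ad:\GL_2(\C)\to\Aut(\gl_2)$, $g\mapsto(y\mapsto gyg^{-1})$. For nilpotent $x_0$ one has $\exp(\ad(x_0))=\Ad(\exp(x_0))$, where $\exp(x_0)\in\SL_2(\C)$ is unipotent. As the unipotent elements generate $\SL_2(\C)$, this identifies $\Inn(\gl_2(\ell,j,k))=\left\langle\exp(\ad(x))\right\rangle$ with the image $\Ad(\SL_2(\C))$.

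Finally I would compute the kernel. The kernel of $\Ad$ is the centralizer of $\gl_2$ in $\GL_2(\C)$, namely the scalar matrices $\C^\times I$; this is exactly the ``in contrast'' remark that the center of $\GL_2(\C)$ acts trivially on $\gl_2$. Since $\GL_2(\C)=\C^\times I\cdot\SL_2(\C)$ with the scalar factor in $\ker(\Ad)$, the images of $\SL_2(\C)$ and of $\GL_2(\C)$ coincide, so $\Inn(\gl_2(\ell,j,k))=\Ad(\SL_2(\C))=\Ad(\GL_2(\C))\cong\GL_2(\C)/\C^\times I=\PGL_2(\C)$, as claimed.

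The one point requiring care, and the genuine contrast with the preceding corollary, is that here the automorphisms act only on the subalgebra $\gl_2(\ell,j,k)$, on which the whole center acts trivially; whereas for $\frak m$ the central element $-I\in\SL_2(-1)$ acts nontrivially (it scales $e_{\ell,jk}$ by $(-1)^{j+1}$ on the imaginary root spaces), so that action is faithful and yields $\SL_2(\C)$ rather than $\PGL_2(\C)$. I would therefore state explicitly that local $\ad$-nilpotence coincides with $\ad$-nilpotence in finite dimension, since this is what confines the generators to the unipotents and makes the kernel computation exact.
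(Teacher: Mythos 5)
Your proof is correct and follows essentially the same route as the paper, which disposes of the corollary with the single observation that the center of $\GL_2(\C)$ acts trivially on $\gl_2$: in both arguments the generators $\exp(\ad(x))$ are identified with $\Ad$ of unipotent matrices, these generate the image of $\SL_2(\C)$ under the adjoint action, and the kernel (the scalars, equivalently $\pm I$ in $\SL_2$) collapses the image to $\PGL_2(\C)$. Your added details — that local $\ad$-nilpotence equals $\ad$-nilpotence in finite dimension, and that $-I$ acts nontrivially on $\frak m$ (scaling $e_{\ell,jk}$ by $(-1)^{j+1}$) but trivially on $\gl_2(\ell,j,k)$ — are exactly the points the paper leaves implicit.
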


\section{Complete parabolic subgroups and the Adjoint representation}\label{S-related}
In this section, we define   complete  parabolic groups $\Php$ associated to $\frak m$ and we establish an analog of the adjoint representation $\Ad$ on $\Php$. We also prove the relations for $\Php$.

\subsection{Definition of the complete parabolic subgroup}\label{SS-parab}
We consider a negative completion of $\m$:
$$\widehat{\frak m}^- = \nhimm \oplus \frak{gl}_2({-1}) \oplus \nimp 
  =  \nhm \oplus \frak h \oplus \np$$
where
$\nhm = \mathfrak{m}_{-\a_{-1}} \oplus\nhimm$ and $\np = \mathfrak{m}_{\a_{-1}} \oplus\nimp$.
In Subsection~\ref{SS-PositiveRootSys}, we  defined $\Uhm\subseteq \Aut\left(\widehat{\frak m}^-\right)$. 

Recall that $\GL_2(-1)$ consists of automorphism of $\mathfrak{m}$, which can be extended to automorphisms of either $\mhat=\mhat^+$ or $\mhat^-$. Recall also the definitions of $H_1(s)$, $H_2(s)$, $X_{-1}(u)$, $Y_{-1}(u)$, $\widetilde{w}_{-1}\in\GL_2(-1)$.
We define subgroups
$$ H:= \left\{H_1(s),H_2(t) \mid s,t\in\C^\times\right\},$$
$$  \Up_{-1} := \left\{X_{-1}(u)\mid u\in\C\right\},\qquad
\Um_{-1} := \left\{Y_{-1}(u)\mid u\in\C\right\}.$$
We can take
\begin{align*}
 \Uhpm &:= \left\langle \Upm_{-1}, \Uhimpm\right\rangle,\\
\intertext{and we define parabolic groups}
 \Phpm &:= \left\langle \GL_2(-1), \Uhpm\right\rangle.
\end{align*}

The Cartan involution $\eta$ extends to an isomorphism $\eta: \mhat\to \mhat^-$, which induces an isomorphism
$$\eta: \Php \to \Phm.$$
Hence, for every property of $\Php$ that we prove, there is an analogous property of $\Phm$.
Note that $\UhPi$ is not generally closed under the action $\GL_2(-1)$, so we cannot define parabolic subgroups corresponding to an arbitrary positive subsystem $\Pi$.


\subsection{Adjoint representation}\label{SS-adjoint}

In this subsection, we construct an analog of the adjoint representation $\Ad$ for $\Php$.
It is easily checked using Theorem~\ref{T-identities-v2} that $\nhp_n$ for $n\in\N$, $\nhimp$, and $\nhp$ are ideals
in $$\widehat{\mathfrak{p}}^+ := \gl_2(-1)\oplus\ \nhimp.$$
We also define
$$\widehat{\mathfrak p}^{-}
:=
\widehat{\mathfrak n}^{-}_{\mathrm{im}}
\oplus
\mathfrak{gl}_2(-1)$$
$${\mathfrak{p}}^+ := \gl_2(-1)\oplus\ \nimp$$
and
$${\mathfrak p}^{-}
=
{\mathfrak n}^{-}_{\mathrm{im}}
\oplus
\mathfrak{gl}_2(-1).$$
The subalgebra $\nhp$ is pro-nilpotent and $\widehat{\mathfrak{p}}^+$ is a pro-Lie algebra.

Let
 $$\Ad_n:\Php / \Uhp_n \to\Aut\left(\php/\nhp_n\right)$$
be the adjoint representation of the finite dimensional linear algebraic group $\Php/ \widehat{U}^+_n$ and let
$$\Exp_n: \php/\nhp_n  \to \Php / \Uhp_n$$
be the exponential map. 
Then the following is a standard result:
$$\Exp_n\left(\Ad_n(g)\right)(x)=g\Exp_n(x) g^{-1}$$
for $x\in  \php/\nhp_n$, and $g \in  \Php / \Uhp_n$.

Taking inverse limits, as in Definition~4.4.25 of \cite{kumar2012kac}, allows us to define $\Ad$ and $\Exp$ as the unique maps making the following diagrams commute:
\begin{center}
\begin{tikzcd}
\php \arrow[r] \arrow[d, "\Exp", swap]
& \php/\nhp_n \arrow[d, "\Exp_n"] \\
\Php \arrow[r]
& \Php/\Uhp_n
\end{tikzcd}
$\qquad\qquad$
\begin{tikzcd}
\Php \arrow[r] \arrow[d, "\Ad", swap]
& \Php/\Uhp_n \arrow[d, "\Ad_n"] \\
\Aut\left(\php\right) \arrow[r]
& \Aut\left(\php/\nhp_n\right).
\end{tikzcd}
\end{center}

Now take $x\in \widehat{\mathfrak{n}}$ and $g\in \widehat{{P}}^+$.
So we have
$$\Exp\left(\Ad(g)\right)(x)=g\Exp(x) g^{-1}.$$
Since $\Php / \Uhp_1 \cong \GL_2(\C)$ is connected, we can identify it with the finite dimensional algebraic group 
$$\exp\left(\ad_{\php/\nhp_n}\left(\php / \nhp_n\right)\right)\subseteq\End\left(\php/\nhp_n\right).$$
With this identification, we get
$$ \Exp_n\left(x+\nhp_n\right) = \exp\left(\ad_{\php/\nhp_n}\left(x+\nhp_n\right)\right)\quad\text{and}$$
$$\quad \Ad_n\left(g\cdot\Uhp_n\right)\left(x\cdot\nhp_n\right)=\left(g+\Uhp_n\right)\left(x+\nhp_n\right),$$
and so in the limit
$$ \Exp(x) = \exp\left(\ad_{\php}(x)\right)\quad\text{and}\quad \Ad(g)(x)=gx.$$
Thus we have
$$ \exp\left(\ad_{\php}(gx)\right)= g\left(\exp\left(\ad_{\php}(x)\right)\right)g^{-1}$$
It is easily checked that an element of $\Php$ that acts as the identity on $\php$ must be the identity on all of $\mhat$.
Hence we have proved:
\begin{theorem}\label{T-adjoint}
If $g\in\Php$ and $x\in \nhp$, then
$$ \exp\left(\ad(gx)\right)= g\left(\exp\left(\ad(x)\right)\right)g^{-1}.$$
\end{theorem}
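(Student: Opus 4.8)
The plan is to prove the identity first as operators on the pro-Lie algebra $\php=\gl_2(-1)\oplus\nhimp$, working level by level through the finite-dimensional quotients, and then to promote it to an identity of automorphisms of all of $\mhat$. First I would record the two facts that make the finite-dimensional machinery applicable: that $x\in\nhp\subseteq\php$, and that $\Php$ stabilizes $\php$. The latter holds because $\GL_2(-1)$ preserves $\gl_2(-1)$ and each module $V^+_{jk}$ by Proposition~\ref{P-GL-1action}, while $\ad(\nhp)$ maps $\php$ into itself by Theorem~\ref{T-identities-v2}; hence every generator of $\Php$ preserves $\php$ and $gx\in\php$ for all $g\in\Php$, so that $\exp(\ad_\php(gx))$ is a well-defined operator on $\php$.

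Next I would pass to the quotients $\Php/\Uhp_n\cong\GL_2(\C)$, which are connected linear algebraic groups with Lie algebras $\php/\nhp_n$. On each such group the classical relationship between the adjoint representation and the exponential map, $\Exp_n(\Ad_n(g))(\xi)=g\,\Exp_n(\xi)\,g^{-1}$, holds. Using connectedness to identify $\Php/\Uhp_n$ with $\exp(\ad(\php/\nhp_n))\subseteq\End(\php/\nhp_n)$, these maps become explicit: $\Exp_n(\xi)=\exp(\ad_{\php/\nhp_n}(\xi))$ and $\Ad_n(g\Uhp_n)(\xi+\nhp_n)=(g\Uhp_n)(\xi+\nhp_n)$, so that $\Ad_n$ is simply the induced action of $g$. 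Because these identifications are compatible with the projection maps of the inverse systems $(\php/\nhp_n)_n$ and $(\Php/\Uhp_n)_n$, taking inverse limits in the sense of Definition~4.4.25 of~\cite{kumar2012kac} defines $\Ad$ and $\Exp$ with $\Exp(x)=\exp(\ad_\php(x))$ and $\Ad(g)(x)=gx$, and assembles the finite-level identities into $\exp(\ad_\php(gx))=g\,\exp(\ad_\php(x))\,g^{-1}$ as operators on $\php$.

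The remaining, and main, step is to upgrade this from $\php$ to all of $\mhat$, and this is where I expect the real work to be. Both sides are automorphisms of $\mhat$ lying in $\Php$ (here $g\in\Php$, $\exp(\ad(x))\in\Php$ since $x\in\nhp$, and $\exp(\ad(gx))$ is read as the element of $\Php$ restricting to the operator just computed on $\php$), so $\psi:=\exp(\ad(gx))^{-1}\,g\,\exp(\ad(x))\,g^{-1}$ lies in $\Php$ and fixes $\php$ pointwise. It then suffices to show that any $\phi\in\Php$ fixing $\php$ is the identity on $\mhat$. Since $\php\supseteq\frak h\oplus\nhp$, such a $\phi$ fixes $\frak h$ and hence, being a Lie-algebra automorphism, preserves each weight space $\mathfrak m_\a$; moreover every generator of $\Php$ is an isometry of the invariant form $(\cdot,\cdot)_{\mathfrak m}$ (each $\ad$ is skew for an invariant form, so its exponential is an isometry), so $\phi$ preserves it. For $\a\in\Delta_+$ and $z\in\mathfrak m_{-\a}$ we then have $(\phi(z)-z,\,w)_{\mathfrak m}=(\phi(z),\phi(w))_{\mathfrak m}-(z,w)_{\mathfrak m}=0$ for every $w\in\mathfrak m_\a$, and the nondegeneracy of the pairing between $\mathfrak m_{-\a}$ and $\mathfrak m_\a$ (from~\eqref{B-gAe-gg}, these spaces being finite-dimensional by Proposition~\ref{P-all}) forces $\phi(z)=z$. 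Thus $\phi$ fixes $\nm$ as well as $\frak h\oplus\nhp$, hence is the identity on $\mhat$, and the stated formula follows.

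The subtlety I would flag is precisely this faithfulness together with the well-definedness of $\exp(\ad(gx))$ on $\mhat$: since $gx$ may acquire a Cartan component, on which $\exp(\ad(\cdot))$ is well-defined but not pro-summable, it is cleanest to \emph{define} the left-hand side through its restriction to $\php$ and to identify it with the genuine automorphism $g\,\exp(\ad(x))\,g^{-1}$ by the argument above, rather than to analyze the series on $\mhat$ directly. The bilinear-form argument is attractive here because it avoids computing the operator $\phi$ explicitly and reduces the whole extension to the nondegeneracy of the root-space pairing.
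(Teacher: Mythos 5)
Your proposal follows essentially the same route as the paper's proof: pass to the finite-dimensional quotients $\Php/\Uhp_n$, apply the classical relationship $\Exp_n(\Ad_n(g))(\xi)=g\,\Exp_n(\xi)\,g^{-1}$ there, take inverse limits as in Definition~4.4.25 of \cite{kumar2012kac} to identify $\Exp(x)=\exp\left(\ad_{\php}(x)\right)$ and $\Ad(g)(x)=gx$, and then conclude by faithfulness of the action of $\Php$ on $\php$. The only difference is that you flesh out the final faithfulness step (which the paper asserts is ``easily checked'') with a correct invariant-bilinear-form argument.
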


We now prove some relations in $\Php$:
\begin{corollary}
If $x_{\ell,jk}\in\m_{\a_{\ell,jk}}$, $s\in\C^\times$ $u\in\C$, then
\begin{enumerate}
\item $H_1(s)\exp\left(u\ad\left(x_{\ell,jk}\right)\right)H_1(s)^{-1} = \exp\left(s^{\ell-1}u\ad\left(x_{\ell,jk}\right)\right)$,
\item $H_2(s)\exp\left(u\ad\left(x_{\ell,jk}\right)\right)H_2(s)^{-1} = \exp\left(s^{j-\ell}u\ad\left(x_{\ell,jk}\right)\right)$.
\item $\widetilde{w}_{-1}\exp(t\ad(x_{\ell,jk}))\widetilde{w}_{-1}^{-1} = \exp(t\ad(\widetilde{w}_{-1}x_{\ell,jk}))$.
\end{enumerate}
\end{corollary}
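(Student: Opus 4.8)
The plan is to obtain all three relations as direct specializations of Theorem~\ref{T-adjoint}, feeding in the explicit $\GL_2(-1)$-action recorded in Proposition~\ref{P-GL-1action}. First I would check that the hypotheses of Theorem~\ref{T-adjoint} are met: the generators $H_1(s),H_2(s),\widetilde{w}_{-1}$ all lie in $\GL_2(-1)\subseteq\Php$, and since $\a_{\ell,jk}=\ell\a_{-1}+\a_{jk}$ is a positive imaginary root, the vector $x_{\ell,jk}\in\m_{\a_{\ell,jk}}$ lies in $\nhimp\subseteq\nhp$. Thus for $g\in\{H_1(s),H_2(s),\widetilde{w}_{-1}\}$ and $x=u\,x_{\ell,jk}$, Theorem~\ref{T-adjoint} together with linearity of the automorphism $g$ gives
$$g\exp\bigl(u\ad(x_{\ell,jk})\bigr)g^{-1}=\exp\bigl(\ad(g\cdot u x_{\ell,jk})\bigr)=\exp\bigl(u\,\ad(g\cdot x_{\ell,jk})\bigr),$$
so everything reduces to evaluating $g$ on the single vector $x_{\ell,jk}$.

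For parts~(1) and~(2) I would use that $\m_{\a_{\ell,jk}}=\C e_{\ell,jk}$ is one-dimensional. Writing $x_{\ell,jk}=c\,e_{\ell,jk}$, Proposition~\ref{P-GL-1action} records the torus weights $H_1(s)\cdot e_{\ell,jk}=s^{\ell+1}e_{\ell,jk}$ and $H_2(s)\cdot e_{\ell,jk}=s^{j-\ell}e_{\ell,jk}$, so $H_1(s)\cdot x_{\ell,jk}=s^{\ell+1}x_{\ell,jk}$ and $H_2(s)\cdot x_{\ell,jk}=s^{j-\ell}x_{\ell,jk}$. As these are scalars they pull straight through $\ad$, turning the right-hand side of the displayed identity into $\exp\bigl(s^{\ell+1}u\,\ad(x_{\ell,jk})\bigr)$ and $\exp\bigl(s^{j-\ell}u\,\ad(x_{\ell,jk})\bigr)$ respectively; in each case the exponent on $s$ is precisely the $\ad(h_i)$-eigenvalue of $e_{\ell,jk}$ from Theorem~\ref{T-identities-v2}.

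Part~(3) needs no computation beyond the theorem itself: taking $g=\widetilde{w}_{-1}$ and $x=t\,x_{\ell,jk}$, the displayed identity reads $\widetilde{w}_{-1}\exp(t\ad(x_{\ell,jk}))\widetilde{w}_{-1}^{-1}=\exp\bigl(t\,\ad(\widetilde{w}_{-1}x_{\ell,jk})\bigr)$, which is the assertion verbatim. If an explicit target is wanted, one substitutes $\widetilde{w}_{-1}\cdot e_{\ell,jk}=(-1)^{j-1-\ell}e_{j-1-\ell,jk}$ from Proposition~\ref{P-GL-1action}, so that conjugation by $\widetilde{w}_{-1}$ carries the root group at $\a_{\ell,jk}$ to the one at $\a_{j-1-\ell,jk}$.

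I do not expect a genuine obstacle here: all the substance is already packaged in Theorem~\ref{T-adjoint}, and the corollary is exactly the list of conjugation rules obtained by running the diagonal and Weyl-type generators of $\GL_2(-1)$ through its adjoint formula. The only two points meriting explicit mention are the containment $x_{\ell,jk}\in\nhp$ (so that the $\ad$-exponential and Theorem~\ref{T-adjoint} are legitimate) and the fact that one-dimensionality of $\m_{\a_{\ell,jk}}$ lets the torus act by a single scalar that commutes past $\ad$.
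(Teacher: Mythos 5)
Your proposal is correct and is exactly the argument the paper intends: the corollary is stated as an immediate consequence of Theorem~\ref{T-adjoint}, with the evaluations $g\cdot x_{\ell,jk}$ supplied by Proposition~\ref{P-GL-1action} and the scalar pulled through $\ad$ by linearity, just as you do. Note that your computation for part~(1) yields the exponent $s^{\ell+1}$, consistent with Proposition~\ref{P-GL-1action} and with relation \eqref{G-H1X}, whereas the corollary as printed says $s^{\ell-1}$; this is a typo in the paper's statement, and your derivation gives the correct value.
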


In summary, we have hierarchies of Lie algebras and groups as in Figure~\ref{fig:lie-algebra-hierarchy} and Figure~\ref{fig:complete-parabolic-groups}.
The uncompleted
parabolic subalgebras \(\mathfrak p^{+}\) and \(\mathfrak p^{-}\) embed
respectively in the completed parabolic subalgebras
\(\widehat{\mathfrak p}^{+}\) and \(\widehat{\mathfrak p}^{-}\). These
completed parabolic subalgebras embed in the corresponding one-sided
completions \(\widehat{\mathfrak m}^{+}\) and
\(\widehat{\mathfrak m}^{-}\). The subalgebra \(\mathfrak{gl}_2(-1)\)
embeds as a common Levi factor of \(\mathfrak p^{+}\) and
\(\mathfrak p^{-}\).
The groups
\(\widehat U^{+}\) and \(\widehat U^{-}\) embed respectively in
\(\widehat P^{+}\) and \(\widehat P^{-}\), while
\(\mathrm{GL}_2(-1)\) embeds as a common subgroup of both complete
parabolic groups.
\begin{figure}[ht]
\centering
\[
\begin{tikzpicture}[
  every node/.style={inner sep=2pt, align=center},
  incl/.style={->, thick}
]

\node (Mhatp) at (-4.6,5.8)
{$\widehat{\mathfrak m}^{+}
=
\mathfrak n^- \oplus \mathfrak h \oplus \widehat{\mathfrak n}^{+}$};

\node (Mhatm) at (4.6,5.8)
{$\widehat{\mathfrak m}^{-}
=
\widehat{\mathfrak n}^{-} \oplus \mathfrak h \oplus \mathfrak n^{+}$};

\node (m) at (0,4.1)
{$\mathfrak m
=
\mathfrak n^-_{\mathrm{im}}
\oplus
\mathfrak{gl}_2(-1)
\oplus
\mathfrak n^+_{\mathrm{im}}$};

\node (Phatp) at (-4.6,2.6)
{$\widehat{\mathfrak p}^{+}
=
\mathfrak{gl}_2(-1)
\oplus
\widehat{\mathfrak n}^{+}_{\mathrm{im}}$};

\node (Phatm) at (4.6,2.6)
{$\widehat{\mathfrak p}^{-}
=
\widehat{\mathfrak n}^{-}_{\mathrm{im}}
\oplus
\mathfrak{gl}_2(-1)$};

\node (Pp) at (-4.6,1.1)
{$\mathfrak p^{+}
:=
\mathfrak{gl}_2(-1)
\oplus
\mathfrak n^{+}_{\mathrm{im}}$};

\node (Pm) at (4.6,1.1)
{$\mathfrak p^{-}
=
\mathfrak n^{-}_{\mathrm{im}}
\oplus
\mathfrak{gl}_2(-1)$};

\node (gl2) at (0,-0.6)
{$\mathfrak{gl}_2(-1)$};

\draw[incl] (m) -- (Mhatp);
\draw[incl] (m) -- (Mhatm);

\draw[incl] (Phatp) -- (Mhatp);
\draw[incl] (Phatm) -- (Mhatm);

\draw[incl] (Pp) -- (Phatp);
\draw[incl] (Pm) -- (Phatm);

\draw[incl] (Pp) -- (m);
\draw[incl] (Pm) -- (m);

\draw[incl] (gl2) -- (Pp);
\draw[incl] (gl2) -- (Pm);

\end{tikzpicture}
\]
\caption{Hierarchy of Lie subalgebras and completions }
\label{fig:lie-algebra-hierarchy}
\end{figure}

\begin{figure}[ht]
\centering
\[
\begin{tikzpicture}[
  every node/.style={inner sep=2pt},
  incl/.style={->, thick}
]
\node (Pp) at (-2.7,4.2) {$\widehat P^{+}$};
\node (Pm) at ( 2.7,4.2) {$\widehat P^{-}$};

\node (Up) at (-2.7,2.6) {$\widehat U^{+}$};
\node (Um) at ( 2.7,2.6) {$\widehat U^{-}$};

\node (Uimp) at (-1.8,1.2) {$\widehat U^{+}_{\mathrm{im}}$};
\node (Uimm) at ( 1.8,1.2) {$\widehat U^{-}_{\mathrm{im}}$};

\node (G) at (0,0) {$\mathrm{GL}_2(-1)$};

\draw[incl] (Up) -- (Pp);
\draw[incl] (Um) -- (Pm);

\draw[incl] (Uimp) -- (Up);
\draw[incl] (Uimm) -- (Um);

\draw[incl] (G) -- (Pp);
\draw[incl] (G) -- (Pm);
\end{tikzpicture}
\]
\caption{Hierarchy of the complete parabolic  and unipotent groups}
\label{fig:complete-parabolic-groups}
\end{figure}
\section{Compatibility of $\M$-actions}\label{S-compat}
In this section, we define the action of the Monster finite simple group $\M$ on ~$\widehat{\mathfrak p}^+$ and on the complete parabolic group~$\Php$. 

Let $V^\natural$ be the Moonshine module of \cite{FLM}. Borcherds proved  \cite{BoInvent} that $$\mathfrak m_{(m,n)}\cong V^\natural_{mn+1}$$ as  $\mathbb{M}$-modules for $(m,n)\neq (0,0)$, and it follows easily that
$$\m_{(1,-1)}\oplus \m_{(0,0)}\oplus \m_{(-1,1)}= \gl_2(-1)$$
 is a trivial $\mathbb{M}$-module. This gives an action $\mathbb{M}   \to    \Aut(\mathfrak m)$ acting trivially on $\gl_2(-1)$ and preserving root spaces.

\begin{theorem}\label{action} The action of $\M$ on $\mathfrak m$ induces an action of $\M$ on $\mhat$. This in turn induces an action of $\M$ on~$\Php$. 
For $g\in\M$, $u\in\Php$, $x\in\mhat$, we have
$$ g\cdot(u\cdot x) = (g\cdot u)\cdot (g\cdot x).$$
\end{theorem}

\begin{proof} As in Section~\ref{S-unip}, we use the grading
${\frak{m}}=\bigoplus_{n\in\Z} \frak{m}_{n}$, where $$\mathfrak{m}_n := \bigoplus_{i\in\Z} \mathfrak{m}_{(i,n-i)}.$$
The group $\M$ acts on each $\frak{m}_{n}$. 
This induces an action of  $\mathbb{M}$  on $\mhat$:
every $x \in \mhat$ can be written as
$x =\sum_{n=N(x)}^\infty x_n$ for $x_n\in \frak{m}_{n}$ and
the action of $g\in\M$ is
  $$g\cdot x :=  \sum_{n=N(x)}^\infty g\cdot x_n. $$
This in turn induces an action of  $\mathbb{M}$  on $\Uhimp$:
every $u\in \Uhimp$ can be written as
$u =\prod_{n=1}^\infty \exp(\ad(y_n))$ for $y_n\in \frak{m}_{n}$, and
the action of $g\in\M$ is
  $$g\cdot  u :=  \prod_{n=1}^\infty \exp(\ad(g\cdot y_n)). $$
Since $\mathbb{M}$ acts trivially on $\m_0=\gl_2(-1)$, the action on $\GL_2(-1)$ must also  be trivial, and so the action of $\M$ on $\Uhimp$ extends to $\Php$.
The formula for $g\cdot(u\cdot x)$ is now clear.
\end{proof}

\begin{corollary}\label{compat} For every $g\in\mathbb{M}$, the following diagram commutes:
\begin{center}
\begin{tikzcd}
\php \arrow[r, "g"] \arrow[d, "\Exp", swap]
& \php \arrow[d, "\Exp"] \\
\Php \arrow[r,"g"]
& \Php
\end{tikzcd}
\end{center}
where $\Exp$ is defined as in Subsection~\ref{SS-adjoint}.
\end{corollary}

\section{A Lie group analog for $\mathfrak{m}$}\label{S-Grels}
In this section, we define an analog of a  Kac--Moody group associated to $\frak m$. This group, denoted~$G(\frak m)$, will be given by an infinite presentation.

To motivate our   construction of the group $G(\frak m)$, recall that the adjoint Kac--Moody group corresponding to a symmetrizable Kac--Moody algebra $\mathfrak{g}$ can be constructed as automorphisms of  $\mathfrak{g}$. However, as explained in the introduction, it is not possible to construct all the group elements of $G(\frak m)$ as automorphisms of $\frak m$. 
We give generators and relations for a group $\GL_2(-1)$ (constructed in Subsection~\ref{SS-gl2re}), corresponding to the unique  index $(-1,1)\in I^{\re}$. The unipotent elements of this group act locally nilpotently on the whole Lie algebra~$\mathfrak{m}$, hence $\GL_2(-1)$ acts as automorphisms of $\frak m$. We also give generators and relations for the groups $\GL_2(\ell,j,k)$ constructed in Subsection~\ref{SS-gl2im} that are automorphisms of the subalgebras $\frak{gl}_2(\ell,j,k)$, corresponding to each extended index $(\ell,j,k)\in E$.

We conjecture that the group $G(\frak m)$ is an amalgam of the $\GL_2$ subgroups and pro-unipotent subgroups discussed in the previous two sections. We hope to investigate this in upcoming work. 

\begin{defn}\label{Defn}
Let $\mathcal{X}$ be the set of symbols
$$\mathcal{X} :=\{H_1(s), H_2(s), X_{-1}(u),  Y_{-1}(u), X_{\ell,jk}(u), Y_{\ell,jk}(u) \mid$$ 
$$\qquad \qquad \qquad \qquad\qquad \qquad \qquad \qquad s\in \C^\times, u\in \C, (\ell,j,k)\in E\},$$
where $E=  \left\{(\ell,j,k)\mid j\in\N,\, 1 \leq k\leq c(j),\, 0\leq \ell<j \right\}$ as in Subsection~\ref{SS-decomp}. Let $F(\mathcal{X})$ denote the free group on~$\mathcal{X}$.

We  call  $X_{-1}(u)$, $Y_{-1}(u)$, $X_{\ell,jk}(u)$ and $Y_{\ell,jk}(u)$ \emph{unipotent generators}.

Define the constant
$$c_{\ell j} := (-1)^{\ell+1} \binom{j-1}{\ell} \, (\ell+1)(j-\ell).$$
Define the set of relations $\mathcal{R}\subseteq F(\mathcal{X})$  and additional symbols $ \widetilde{w}_{-1}(s)$ and   $\widetilde{w}_{\ell,jk}(s)$ as follows, for all $s,t\in \C^\times$, $u,v\in \C$, $(\ell,j,k),(m,p,q) \in E$.
\begingroup\allowdisplaybreaks
\begin{align*}
\tag{Re:0}\label{GL2-G-wdefn} 
  \widetilde{w}_{-1}(s) &:= X_{-1}(s) Y_{-1}\left(-s^{-1} \right) X_{-1}(s), \\ \widetilde{w}_{-1} &:= \widetilde{w}_{-1}(1),\\
\tag{Im:0}\label{G-wdefn}
  \widetilde{w}_{\ell,jk}(s) &:= X_{\ell,jk}\left({s}\right) Y_{\ell,jk}\left(\frac{-s^{-1}}{{c_{\ell j}}}\right) 
  X_{\ell,jk}\left({s}\right),\\  \widetilde{w}_{\ell,jk} &:= \widetilde{w}_{\ell,jk}(1),\\
\intertext{Relations from the group $\GL_2(-1)$:}
\tag{H:1a}\label{GL2-G-H1H1}
   H_1(s)H_1(t)&=H_1(st),\\
\tag{H:1b}\label{GL2-G-H2H2}
   H_2(s)H_2(t)&=H_2(st),\\
\tag{H:2}\label{GL2-G-H1H2}
    H_1(s)H_2(t)&=H_2(t)H_1(s),\\
\tag{Re:1a}\label{GL2-G-XX}
   X_{-1}(u)X_{-1}(v)&=X_{-1}(u+v),\\
\tag{Re:1b}\label{GL2-G-YY}
   Y_{-1}(u)Y_{-1}(v)&=Y_{-1}(u+v),\\
\tag{Re:2}\label{GL2-G-XY}
  Y_{-1}(-t)X_{-1}(s)Y_{-1}(t)&= X_{-1}\left(-t^{-1}\right) Y_{-1}\left(-t^{2}s \right)X_{-1}\left(t^{-1} \right),\\ 
\tag{Re:3}\label{GL2-G-wwH}
  \widetilde{w}_{-1}(s)\widetilde{w}_{-1}&=H_1(-s)H_2\left(-s^{-1} \right), \\
\tag{Re:4a}\label{GL2-G-wX}
  \widetilde{w}_{-1}X_{-1}(u)\widetilde{w}_{-1}^{-1}&=Y_{-1}(-u),\\
\tag{Re:4b}\label{GL2-G-wY}
  \widetilde{w}_{-1}Y_{-1}(u)\widetilde{w}_{-1}^{-1}&=X_{-1}(-u),\\
\tag{Re:5a}\label{GL2-G-wH1}
  {\widetilde{w}_{-1}} H_1(s)\widetilde{w}_{-1}^{-1} &= H_2(s),\\
\tag{Re:5b}\label{GL2-G-wH2}
  {\widetilde{w}_{-1}} H_2(s)\widetilde{w}_{-1}^{-1} &= H_1(s),\\
\tag{Re:6a}\label{GL2-G-H1X}
  H_1(s)X_{-1}(u)H_1(s)^{-1}&=X_{-1}(su),\\
\tag{Re:6b}\label{GL2-G-H2X}
  H_2(s)X_{-1}(u)H_2(s)^{-1}&=X_{-1}\left(s^{-1}u \right),\\
\tag{Re:6c}\label{GL2-G-H1Y}
  H_1(s)Y_{-1}(u)H_1(s)^{-1}&=Y_{-1}\left(s^{-1}u \right),\\
\tag{Re:6d}\label{GL2-G-H2Y}
  H_2(s)Y_{-1}(u)H_2(s)^{-1}&=Y_{-1}(su),\\
\intertext{Relations from the groups $\GL_2(\ell,j,k)$:}
\tag{Im:1a}\label{G-XX}
  X_{\ell,jk}(u)X_{\ell,jk}(v)&=X_{\ell,jk}(u+v),\\
\tag{Im:1b}\label{G-YY}
  Y_{\ell,jk}(u)Y_{\ell,jk}(v)&=Y_{\ell,jk}(u+v),\\
\tag{Im:2}\label{G-XY}
  Y_{\ell,jk}(-t)X_{\ell,jk}(s)Y_{\ell,jk}(t)&= \\
  \qquad \qquad X_{\ell,jk}&\left(\frac{-t^{-1}}{c_{\ell j}}\right)Y_{\ell,jk}\left(-c_{\ell j}t^2s\right)X_{\ell,jk}\left(\frac{t^{-1}}{c_{\ell j}}\right),\\
\tag{Im:3}\label{G-wwH}
  \widetilde{w}_{\ell,jk}\left(s^{(\ell+1)(j-\ell)} \right)\widetilde{w}_{\ell,jk}&=H_1\left((-s)^{j-\ell}\right) H_2\left((-s)^{\ell+1}\right), \\
\tag{Im:4a}\label{G-wX}
  \widetilde{w}_{\ell,jk} X_{\ell,jk}(u) {\widetilde{w}_{\ell,jk}}^{-1}&= Y_{\ell,jk}\left(\frac{-u}{c_{\ell j}}\right),\\
\tag{Im:4b}\label{G-wY}
  \widetilde{w}_{\ell,jk} Y_{\ell,jk}(u) {\widetilde{w}_{\ell,jk}}^{-1}&= X_{\ell,jk}\left(-c_{\ell j}u \right),\\
\tag{Im:5a}\label{G-wH1}
  \widetilde{w}_{\ell,jk} H_1\left(s^{j-\ell} \right) {\widetilde{w}_{\ell,jk}}^{-1}&=  H_2\left(s^{-(\ell+1)}\right),\\
\tag{Im:5b}\label{G-wH2}
  \widetilde{w}_{\ell,jk} H_2\left(s^{\ell+1} \right) {\widetilde{w}_{\ell,jk}}^{-1}&=  H_1\left(s^{-(j-\ell)}\right),\\
\tag{Im:6a}\label{G-H1X}
  H_1(s) X_{\ell,jk}(u) H_1(s)^{-1}&=X_{\ell,jk}\left(s^{\ell+1}u \right),\\
\tag{Im:6b}\label{G-H2X}
  H_2(s) X_{\ell,jk}(u) H_2(s)^{-1}&=X_{\ell,jk}\left(s^{j-\ell}u \right),\\
\tag{Im:6c}\label{G-H1Y}
  H_1(s) Y_{\ell,jk}(u) H_1(s)^{-1}&=Y_{\ell,jk}\left(s^{-(\ell+1)}u \right),\\
\tag{Im:6d}\label{G-H2Y}
  H_2(s) Y_{\ell,jk}(u) H_2(s)^{-1}&=Y_{\ell,jk}\left(s^{-(j-\ell)}u \right),\\
\intertext{Relations between the unipotent generators:}
\tag{U:1a}\label{G-X-1X}
  (X_{-1}(u), X_{j-1,jk}(v))&=1,\\
\tag{U:1b}\label{G-Y-1Y} 
  (Y_{-1}(u), Y_{j-1,jk}(v))&=1.\\
\tag{U:1c}\label{G-Y-1X}
  (Y_{-1}(u), X_{0,jk}(v))&=1,\\
\tag{U:1d}\label{G-X-1Y}
  (X_{-1}(u), Y_{0,jk}(v))&=1,\\
\tag{U:2}\label{G-XYcomm}
  (X_{\ell,jk}(u), Y_{m,pq}(v))&=1\qquad\text{for $j\neq p$, or $k\neq q$, or $|\ell-m|>1$,}\\
\tag{U:3a}\label{G-XYj=2a} (X_{1,2k}(u),Y_{0,2k}(v))&=X_{-1}(uv),\\
\tag{U:3b}\label{G-XYj=2b}(X_{0,2k}(u),Y_{1,2k}(v))&=Y_{-1}(-uv),\\
\intertext{Action of $\widetilde{w}_{-1}$ on the unipotent generators:}
\tag{U:4a}\label{G-w-X}
  \widetilde{w}_{-1}X_{\ell,jk}(u)\widetilde{w}_{-1}^{-1}&=X_{j-1-\ell,jk}\left((-1)^{\ell} u \right), \\
\tag{U:4b}\label{G-w-Y}
  \widetilde{w}_{-1}Y_{\ell,jk}(u)\widetilde{w}_{-1}^{-1}&=Y_{j-1-\ell,jk}\left((-1)^{j-\ell-1} u \right), 
\end{align*}
\begin{align*} 
\tag{U:5a}\label{GL2-G-Yelim}
 Y_{-1}(s)& =X_{-1}\left(s^{-1} \right)H_1 \left(-s^{-1} \right)H_2 \left(-s \right)\widetilde{w}_{-1}X_{-1} \left(s^{-1} \right).\\
\tag{U:5b}\label{G-Yelim}
Y_{\ell,jk}(s) &=
X_{\ell,jk}\left(\frac{ s^{-1}}{c_{\ell j}}\right)
H_1\left(\left[-{{c_{\ell j}}}s\right]^{-1/(\ell+1)}\right)
H_2\left(\left[-{{c_{\ell j}}}s\right]^{-1/(j-\ell)}\right)\widetilde{w}_{\ell,jk}X_{\ell,jk}\left(\frac{ s^{-1}}{c_{\ell j}}\right).
\end{align*}

\endgroup

We now define $G(\mathfrak{m})$ as the group given by this presentation, that is,
$${G}(\mathfrak{m})= \left\langle \mathcal{X}\mid \mathcal{R}\right\rangle={F}(\mathcal{X})/N_\mathcal{R},$$
where $N_\mathcal{R}$ is the normal closure of the relations $\mathcal{R}$.
\end{defn}

We do not claim that this presentation is  minimal -- some of the relations 
follow from the other relations but are included for their usefulness. 

\begin{defn}\label{subgps}  We define the following subgroups of $G(\m)$:
\begin{align*}
\GL_2(-1) &:= \langle X_{-1}(u), Y_{-1}(u), H_1(s), H_2(s)\mid u\in\C,\,s\in\C^\times\rangle,\\
\GL_2(\ell,j,k) &:= \langle X_{\ell,jk}(u), Y_{\ell,jk}(u), H_1(s), H_2(s)\mid u\in\C,\,s\in\C^\times\rangle,\\
\intertext{for $(\ell,j,k)\in E$,}
\Pp &:= \left\langle \GL_2(-1), X_{\ell,jk}(u) \mid (\ell,j,k)\in E,\,u\in\C\right\rangle,\\
\Pm &:= \left\langle \GL_2(-1), Y_{\ell,jk}(u) \mid  (\ell,j,k)\in E,u\in\C\right\rangle,\\
U_{\ell,jk;\,m,pq} &:= \left\langle X_{\ell,jk}(u), Y_{m,pq}(u) \mid u\in\C\right\rangle,
\end{align*}
for distinct $(\ell,j,k),\,(m,p,q)\in E$.
\end{defn}

The following is a restatement of Corollary~\ref{C-comm} and represents the commutator relations in $G(\mathfrak m)$ where the product on 
the right hand side is finite.
\begin{corollary} $\;$\label{C-comm}
The following relations hold in every $\widehat{U}^{\,\Pi}$ in which the elements involved are defined:
\begin{enumerate}
\item \eqref{GL2-G-XX},  \eqref{GL2-G-YY}, \eqref{G-XX}, \eqref{G-YY};
\item $\left(X_{-1}(u),X_{j-1,jk}(v)\right) = \left(Y_{-1}(u),Y_{j-1,jk}(v)\right) = 1$, that is, \eqref{G-X-1X} and \eqref{G-Y-1Y};
\item $\left(X_{-1}(u),Y_{0,jk}(v)\right) = \left(Y_{-1}(u),X_{0,jk}(v)\right) = 1$, that is, \eqref{G-X-1Y} and \eqref{G-Y-1X};
\item $\left(X_{\ell,jk}(u),Y_{m,pq}(v)\right) =1$ for $j\ne p$, $k\ne q$, or $\left|\ell-m\right|>1$, that is, \eqref{G-XYcomm}.
\item $\left(X_{1,2k}(u),Y_{0,2k}(v)\right)=X_{-1}(uv)$ and $\left(X_{0,2k}(u),Y_{1,2k}(v)\right)=Y_{-1}(-uv)$.
\end{enumerate}
\end{corollary}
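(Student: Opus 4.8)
The plan is to deduce every relation from Theorem~\ref{T-commrel2}, Proposition~\ref{P-Sigma}, and the one-parameter subgroup property of the exponential, since inside each $\widehat{U}^{\,\Pi}$ the generators are the concrete automorphisms $X_{-1}(u)=\exp(u\ad(e_{-1}))$, $Y_{-1}(u)=\exp(u\ad(f_{-1}))$, $X_{\ell,jk}(u)=\exp(u\ad(e_{\ell,jk}))$, and $Y_{\ell,jk}(u)=\exp(u\ad(f_{\ell,jk}))$, all of which are well defined and pro-summable by Lemma~\ref{L-expad}.

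For the additive relations \eqref{GL2-G-XX}, \eqref{GL2-G-YY}, \eqref{G-XX}, \eqref{G-YY}, I would observe that each pair involves only powers of a single adjoint operator, which commute with themselves. Hence $\exp(sT)\exp(tT)=\exp((s+t)T)$ holds level-by-level in the grading of the relevant completion, each graded component being a finite Cauchy sum. Equivalently, this is the degenerate case $x=se_{\ell,jk}$, $y=te_{\ell,jk}$ of Lemma~\ref{BCH}, where $[x,y]=0$ forces $z=(s+t)e_{\ell,jk}$; the same computation in the appropriate completion yields the identities for $Y_{-1}$, $X_{\ell,jk}$, and $Y_{\ell,jk}$.

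For the commutator relations (2)--(4), I would apply Theorem~\ref{T-commrel2} to each listed pair, using Proposition~\ref{P-Sigma} to show that the governing root span is empty, so that the product on the right-hand side of the commutator formula is empty and the commutator equals~$1$. Lemma~\ref{L-posrtpair} supplies a positive root system $\Pi$ containing both roots, and the independence of the $z_\g$ from $\Pi$ asserted in Theorem~\ref{T-commrel2} makes each resulting relation valid in every $\widehat{U}^{\,\Pi}$ in which the two generators are defined. Concretely: for \eqref{G-X-1X} one pairs $\a_{-1}$ with $\a_{j-1,jk}$ and invokes Proposition~\ref{P-Sigma}(2) at $\ell=j-1$, while \eqref{G-Y-1Y} follows via the symmetry $\Sigma(-\a,-\b)=-\Sigma(\a,\b)$; for \eqref{G-X-1Y} and \eqref{G-Y-1X} one pairs $\pm\a_{-1}$ with $\mp\a_{0,jk}$ and uses Proposition~\ref{P-Sigma}(3) at $\ell=0$ together with $\Sigma(\b,\a)=\Sigma(\a,\b)$ and the negation symmetry; and for \eqref{G-XYcomm} one pairs $\a_{\ell,jk}$ with $-\a_{m,pq}$ and uses Proposition~\ref{P-Sigma}(6) when $(j,k)\ne(p,q)$, and the empty (``otherwise'') branch of Proposition~\ref{P-Sigma}(5) when $(j,k)=(p,q)$ but $|\ell-m|>1$.

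I expect no serious obstacle: all the analytic content---well-definedness of the exponentials and convergence of the commutator product---is already packaged in Lemma~\ref{L-expad}, Lemma~\ref{BCH}, and Theorem~\ref{T-commrel2}. The only step demanding care is the combinatorial bookkeeping with the symmetries $\Sigma(\b,\a)=\Sigma(\a,\b)$ and $\Sigma(-\a,-\b)=-\Sigma(\a,\b)$, so that each of the four families of commutator relations is matched to exactly the right empty-span case of Proposition~\ref{P-Sigma} and the signs distinguishing the positive root vectors $e$ from the negative ones $f$ are tracked correctly.
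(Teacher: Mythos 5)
Your proposal is correct and follows essentially the same route as the paper: the paper obtains items (2)--(4) precisely as the empty-span cases of Proposition~\ref{P-Sigma} fed into the commutator formula of Theorem~\ref{T-commrel2} (this is its earlier corollary in Section~\ref{SS-PositiveRootSys}), and item (1) is the standard one-parameter-subgroup property of the exponentials. Your bookkeeping with the symmetries $\Sigma(\b,\a)=\Sigma(\a,\b)$ and $\Sigma(-\a,-\b)=-\Sigma(\a,\b)$ matches the cases of Proposition~\ref{P-Sigma} exactly as intended.
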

In particular, (2) holds in $\Uhpm$ and \eqref{GL2-G-XX},  \eqref{G-XX}.

We can now show that $\GL_2(-1)$ satisfies the relations given in Definition~\ref{Defn} for this subgroup. To be precise:
\begin{lemma}\label{L-GL2-1}
The subgroup $\GL_2({-1})\subseteq \Aut(\mathfrak{m})$ has generators
$H_1(s)$, $H_2(s)$, $X_{-1}(u)$,  $Y_{-1}(u)$ for  $s\in\C^\times,\,u\in\C $,
and  relations \eqref{GL2-G-H1H1}, \eqref{GL2-G-H2H2}, \eqref{GL2-G-H1H2}, \eqref{GL2-G-XX}, 
\eqref{GL2-G-YY}, \eqref{GL2-G-XY}, \eqref{GL2-G-wwH}, \eqref{GL2-G-wX}, \eqref{GL2-G-wY}, \eqref{GL2-G-wH1},  
\eqref{GL2-G-wH2}, \eqref{GL2-G-H1X}, \eqref{GL2-G-H2X}, \eqref{GL2-G-H1Y},  \eqref{GL2-G-H2Y} and \eqref{GL2-G-Yelim}.
\end{lemma}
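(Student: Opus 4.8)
The plan is to recognize the relations listed for $\GL_2(-1)$ in Definition~\ref{Defn} as the standard presentation of $\GL_2(\C)$ from Subsection~\ref{SS-stdgl2}, written in a different alphabet. Under the dictionary $X_{-1}(u)\leftrightarrow\overline{X}(u)$, $Y_{-1}(u)\leftrightarrow\overline{Y}(u)$, $H_i(s)\leftrightarrow\overline{H}_i(s)$, $\widetilde{w}_{-1}(s)\leftrightarrow\overline{w}(s)$, $\widetilde{w}_{-1}\leftrightarrow\overline{w}$, the abbreviation \eqref{GL2-G-wdefn} becomes \eqref{GL2-wdefn}, and the relations \eqref{GL2-G-H1H1}, \ldots, \eqref{GL2-G-H2Y} of the statement become, term for term, the standard relations \eqref{GL2-H1H1}, \ldots, \eqref{GL2-H2Y}. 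Thus the lemma splits into three claims: (i) these symbols generate $\GL_2(-1)$; (ii) the relations hold in $\GL_2(-1)$; and (iii) they are \emph{defining}, i.e.\ every relation among the generators is a consequence of them.

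For (i) and (ii) I would appeal to Subsection~\ref{SS-gl2re}: the elements $X_{-1}(u)=\exp(u\,\ad(e_{-1}))$, $Y_{-1}(u)=\exp(u\,\ad(f_{-1}))$, $H_1(s)$, $H_2(s)$ together with $\widetilde{w}_{-1}(s):=X_{-1}(s)Y_{-1}(-s^{-1})X_{-1}(s)$ already form a standard generating set for $\GL_2(-1)\cong\GL_2(\C)$ acting on $\m$, which gives (i). For (ii) each relation can be checked in any one of three interchangeable ways: directly from the explicit action of $\GL_2(-1)$ on the root vectors recorded in Proposition~\ref{P-GL-1action}; abstractly from the defining property of a standard generating set in Subsection~\ref{SS-stdgl2}; or, most efficiently, by transporting along $\GL_2(-1)\cong\GL_2(\C)$ to the $2\times 2$ realization $\overline{X}(u)=\left(\begin{smallmatrix}1&u\\0&1\end{smallmatrix}\right)$, $\overline{Y}(u)=\left(\begin{smallmatrix}1&0\\u&1\end{smallmatrix}\right)$, $\overline{H}_1(s)=\left(\begin{smallmatrix}s&0\\0&1\end{smallmatrix}\right)$, $\overline{H}_2(s)=\left(\begin{smallmatrix}1&0\\0&s\end{smallmatrix}\right)$, where each relation reduces to a one-line matrix identity.

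The substance of the lemma, and the step I expect to be the main obstacle, is (iii): establishing that no relations beyond those listed are needed. Here I would first observe that the listed set is redundant, the core \eqref{GL2-G-H1H1}, \eqref{GL2-G-H2H2}, \eqref{GL2-G-H1H2}, \eqref{GL2-G-XX}, \eqref{GL2-G-YY}, \eqref{GL2-G-XY}, \eqref{GL2-G-wwH} (with \eqref{GL2-G-wdefn}) already presenting $\GL_2(\C)$, while \eqref{GL2-G-wX}, \ldots, \eqref{GL2-G-H2Y} (and, in the full version, \eqref{GL2-G-Yelim}) are consequences retained only for later use. To prove the core set complete I would let $\widetilde{G}$ be the abstract group it presents, use the relations and (i)--(ii) to obtain a surjection from $\widetilde{G}$ onto the matrix group $\GL_2(\C)$, and then produce a Bruhat-type normal form inside $\widetilde{G}$: using the torus relations \eqref{GL2-G-H1X}, \ldots, \eqref{GL2-G-H2Y} and the relation \eqref{GL2-G-XY}, every word reduces either to $X_{-1}(u)\,H_1(s)\,H_2(t)$ or to $X_{-1}(u)\,H_1(s)\,H_2(t)\,\widetilde{w}_{-1}\,X_{-1}(v)$, and comparing these normal forms against the Bruhat decomposition of $\GL_2(\C)$ forces the kernel to be trivial. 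This is the classical (Steinberg-type) presentation of $\GL_2(\C)$ (cf.\ \cite{St}); once it is in hand, and since $X_{-1},Y_{-1},H_1,H_2$ realize its standard generators concretely inside $\Aut(\m)$, the presentation transports verbatim to $\GL_2(-1)$, completing the proof. Everything apart from this normal-form argument is either a transcription between two notations or a finite matrix check.
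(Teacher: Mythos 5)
Your proposal is correct and follows essentially the same route as the paper: the paper's entire proof is the observation that the listed relations are, under your dictionary, exactly the corresponding standard relations \eqref{GL2-H1H1}--\eqref{GL2-H2Y} of Subsection~\ref{SS-stdgl2}, which hold for the standard generating set of $\GL_2(-1)$ constructed in Subsection~\ref{SS-gl2re}. The only difference is that the paper treats the completeness of the standard presentation (your step (iii)) as the classical fact built into the definition of standard generators in Subsection~\ref{SS-stdgl2}, whereas you supply the Bruhat normal-form argument for it explicitly; that argument is sound but is detail the paper deliberately leaves implicit.
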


\begin{longver}
\begin{proof} These follow from the corresponding relations \eqref{GL2-H1H1}, \eqref{GL2-H2H2}, \eqref{GL2-H1H2}, \eqref{GL2-XX}, 
\eqref{GL2-YY}, \eqref{GL2-XY}, \eqref{GL2-wwH}, \eqref{GL2-wX}, \eqref{GL2-wY}, \eqref{GL2-wH1},  
\eqref{GL2-wH2}, \eqref{GL2-H1X}, \eqref{GL2-H2X}, \eqref{GL2-H1Y}, \eqref{GL2-H2Y} and \eqref{GL2-Yelim}
\end{proof}
\end{longver}

We can now show that each $\GL_2(\ell,j,k)$ satisfies those relations given in Definition~\ref{Defn} that  involve the generators $X_{\ell,jk}(u)$ and  $Y_{\ell,jk}(u)$.
\begin{lemma}\label{L-GL2ljk}
The subgroup $\GL_2({\ell,j,k})\subseteq\Aut(\mathfrak{gl}_2(\ell,j,k))$ has generators
$H_1(s)$, $H_2(s)$, $X_{\ell,jk}(u)$,  $Y_{\ell,jk}(u)$ for  $s\in\C^\times,\ u\in\C $,
and defining relations \eqref{GL2-G-H1H1}, \eqref{GL2-G-H2H2}, \eqref{GL2-G-H1H2}, \eqref{G-XX}, 
\eqref{G-YY}, \eqref{G-XY}, \eqref{G-wwH}, \eqref{G-wX}, \eqref{G-wY}, \eqref{G-wH1},  
\eqref{G-wH2}, \eqref{G-H1X}, \eqref{G-H2X}, \eqref{G-H1Y}, \eqref{G-H2Y}, and \eqref{G-Yelim}
\end{lemma}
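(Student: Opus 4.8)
The plan is to mirror the proof of Lemma~\ref{L-GL2-1}, reducing every relation to the standard $\GL_2(\C)$ presentation of Subsection~\ref{SS-stdgl2} by means of the explicit identification of standard generators carried out in the Corollary of Subsection~\ref{SS-gl2im}. There the standard generators $\overline{X}(u),\overline{Y}(u),\overline{H}_1(s),\overline{H}_2(s),\overline{w}(s)$ of $\GL_2(\ell,j,k)$ are displayed as the reparametrized elements $\overline{X}(u)=X_{\ell,jk}(u)$, $\overline{Y}(u)=Y_{\ell,jk}(u/c_{\ell j})$, $\overline{H}_1(s)=H_1(s^{1/(\ell+1)})$, $\overline{H}_2(s)=H_2(s^{-1/(j-\ell)})$ and $\overline{w}(s)=\widetilde{w}_{\ell,jk}(s)$. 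By the general Lemma of Subsection~\ref{SS-stdgl2} these barred generators satisfy the full standard list \eqref{GL2-XX}, \eqref{GL2-YY}, \eqref{GL2-XY}, \eqref{GL2-wwH}, \eqref{GL2-wX}, \eqref{GL2-wY}, \eqref{GL2-wH1}, \eqref{GL2-wH2}, \eqref{GL2-H1X}--\eqref{GL2-H2Y}, together with the torus relations \eqref{GL2-H1H1}, \eqref{GL2-H2H2}, \eqref{GL2-H1H2}, and this list is a defining presentation of $\GL_2(\C)$. The reparametrization above is a change of generating set, invertible at the level of families of automorphisms (each target generator is recovered from a barred one via $Y_{\ell,jk}(v)=\overline{Y}(c_{\ell j}v)$, $H_1(r)=\overline{H}_1(r^{\ell+1})$, $H_2(r)=\overline{H}_2(r^{-(j-\ell)})$, the power maps being surjective on $\C^\times$, while $X_{\ell,jk}$ and $\widetilde{w}_{\ell,jk}$ are barred generators verbatim), so transporting a defining presentation through it again gives a defining presentation. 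Thus it suffices to check that each target relation \eqref{G-XX}--\eqref{G-H2Y} is the image, under this substitution, of the correspondingly numbered standard relation.

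The relations not involving $\widetilde{w}_{\ell,jk}$ acting on the torus are routine. I would first note that \eqref{GL2-G-H1H1}, \eqref{GL2-G-H2H2}, \eqref{GL2-G-H1H2} are literally \eqref{GL2-H1H1}, \eqref{GL2-H2H2}, \eqref{GL2-H1H2}, since $H_1,H_2$ are the same commuting diagonal automorphisms used for $\GL_2(-1)$. Relations \eqref{G-XX} and \eqref{G-YY} drop out of \eqref{GL2-XX} and \eqref{GL2-YY} after the linear rescaling of the $Y$-argument by $c_{\ell j}$, and \eqref{G-XY} follows from \eqref{GL2-XY} applied with parameter $c_{\ell j}t$, the factors of $c_{\ell j}$ recombining to give exactly $X_{\ell,jk}(-t^{-1}/c_{\ell j})\,Y_{\ell,jk}(-c_{\ell j}t^2 s)\,X_{\ell,jk}(t^{-1}/c_{\ell j})$. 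The two Weyl-conjugation relations on root groups, \eqref{G-wX} and \eqref{G-wY}, are the images of \eqref{GL2-wX} and \eqref{GL2-wY}, and the four conjugation relations \eqref{G-H1X}--\eqref{G-H2Y} follow from \eqref{GL2-H1X}--\eqref{GL2-H2Y}, where the integer exponents $s^{\ell+1}$ and $s^{j-\ell}$ are forced by the weights $[h_1,e_{\ell,jk}]=(\ell+1)e_{\ell,jk}$ and $[h_2,e_{\ell,jk}]=(j-\ell)e_{\ell,jk}$ from \eqref{L-h1e} and \eqref{L-h2e}.

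The hard part will be \eqref{G-wwH}, \eqref{G-wH1}, and \eqref{G-wH2}. Under the substitution these come from \eqref{GL2-wwH}, \eqref{GL2-wH1}, \eqref{GL2-wH2}, but the barred $H_i$ carry fractional powers $s^{1/(\ell+1)}$ and $s^{-1/(j-\ell)}$, and a naive fixed-branch computation need not respect $(s^{a})^{b}=s^{ab}$; the integer exponents $(\ell+1)(j-\ell)$, $j-\ell$, $\ell+1$ appearing in these three relations have been inserted precisely to clear the denominators. Rather than manipulate branches formally, I would verify these identities directly as equalities of automorphisms of $\mathfrak{gl}_2(\ell,j,k)$: as recorded after Proposition~\ref{P-GL-1action}, $H_1(s)$ and $H_2(s)$ are genuine diagonal automorphisms independent of the branch, scaling $e_{\ell,jk}$ by $s^{\ell+1}$ and $s^{j-\ell}$ respectively, while $\widetilde{w}_{\ell,jk}$ acts as the standard $\overline{w}$ on the standard basis (sending $\overline{e}\mapsto-\overline{f}$, $\overline{f}\mapsto-\overline{e}$ and interchanging $\overline{h}_1,\overline{h}_2$). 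Evaluating both sides of each relation on $e_{\ell,jk}$ and $f_{\ell,jk}$ then reduces it to matching integer powers of $s$, with no branch ambiguity, the $h_1,h_2$ coordinates agreeing because the $H_i$ fix the torus and $\widetilde{w}_{\ell,jk}$ permutes it consistently on both sides. With these three relations in hand, the invertibility of the reparametrization yields that the listed relations are defining, completing the proof.
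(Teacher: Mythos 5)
Your proposal is correct and takes essentially the same route as the paper: both reduce the listed relations to the standard $\GL_2(\C)$ presentation through the standard-generator identification of the Corollary in Subsection~\ref{SS-gl2im}, using the substitutions $u\to c_{\ell j}u$, $t\to c_{\ell j}t$, $s\to s^{\ell+1}$, $s\to s^{j-\ell}$, and $s\to s^{(\ell+1)(j-\ell)}$, and both treat the transported presentation as defining because the reparametrization of generators is invertible. The only (minor) divergence is in handling the fractional powers: where you verify the branch-sensitive relations \eqref{G-wwH}, \eqref{G-wH1}, \eqref{G-wH2} directly as identities of automorphisms by evaluating on a basis of $\gl_2(\ell,j,k)$, the paper instead observes that any instance where the fixed branch fails to satisfy $\left(t^{\ell+1}\right)^{1/(\ell+1)}=t$ is a redundant relation that still holds in $\GL_2(\ell,j,k)$ --- two equivalent ways of making the same point that the relations are independent of the chosen branch of logarithm.
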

\begin{proof}
Now \eqref{G-XX} is equivalent to \eqref{GL2-XX}; 
\eqref{G-YY} is equivalent to \eqref{GL2-YY} with substitutions $u\to c_{\ell j} u$, $v\to c_{\ell j} v$;
\eqref{G-wwH} is equivalent to \eqref{GL2-wwH};
\eqref{G-XY} is equivalent to \eqref{GL2-XY} with substitution $t\to c_{\ell j} t$;
\eqref{G-wH1} is equivalent to \eqref{GL2-wH1};
\eqref{G-wH2} is equivalent to \eqref{GL2-wH2};
 \eqref{G-wX} is equivalent to \eqref{GL2-wX};
 \eqref{G-wY} is equivalent to \eqref{GL2-wY} with substitution $u\to c_{\ell j} u$.
 
Now
\eqref{GL2-H1X}  is equivalent to
\begin{align*}
{H}_1\left(t^{1/(\ell+1)} \right)X_{\ell,jk}\left(u \right){H}_1\left(t^{-1/(\ell+1)} \right)&=X_{\ell,jk}(tu),
\end{align*}
which gives \eqref{G-H1X} in the particular case where $s=t^{\ell+1}$ and $\left(t^{\ell+1} \right)^{1/(\ell+1)}=t$ for our chosen branch of the logarithm.
When $\left(t^{\ell+1} \right)^{1/(\ell+1)}\ne t$,  \eqref{G-H1X} is a redundant equation that also holds in  $\GL_2({\ell,j,k})$.
Note that these equations ensure that this relation is independent of the chosen  branch of logarithm.
Similar arguments give
\eqref{G-wwH} from \eqref{GL2-wwH} with substitution $s\to s^{(\ell+1)(j-\ell)}$;
\eqref{G-H1Y} from \eqref{GL2-H1Y} with substitutions $u\to c_{\ell j} u$, $s\to s^{\ell+1}$;
\eqref{G-H2X} from \eqref{GL2-H2X}  with substitution $s\to s^{j-\ell}$; and
\eqref{G-H2Y} from \eqref{GL2-H2Y} with substitutions $u\to c_{\ell j} u$, $s\to s^{j-\ell}$, and \eqref{G-Yelim} from \eqref{GL2-Yelim} with substitution $s\to c_{\ell j}s$.
\end{proof}

From Corollary~\ref{C-comm}\eqref{G-X-1X} and \eqref{G-Y-1Y}:
 $\left(X_{-1}(u),X_{j-1,jk}(v)\right) =1$ and $\left(Y_{-1}(u),Y_{j-1,jk}(v)\right) = 1$ hold in $\Uhpm$ respectively.

It now follows that $\Phpm$ satisfies the relations in Corollary~\ref{C-comm}, in addition to the following relations.
\begin{corollary}\label{Prelns} The relations
\eqref{G-H1X}, \eqref{G-H2X}, and \eqref{G-w-X} in $G(\frak m)$ hold in $\Php$ and 
\eqref{G-H1Y}, \eqref{G-H2Y}, and \eqref{G-w-Y} hold in $\Phm$. Moreover,  \eqref{G-Y-1X}:  $(Y_{-1}(u), X_{0,jk}(v)) = 1$ holds in $\widehat{P}^+$ and   \eqref{G-X-1Y}: $(X_{-1}(u), Y_{0,jk}(v)) = 1$ holds in $\widehat{P}^-$.
\end{corollary}

We have  shown that each relation in Definition~\ref{Defn} holds in whichever of the subgroups
$\GL_2(-1)\subseteq\Aut(\mathfrak{m})$, $\GL_2(\ell,j,k)\subseteq\Aut(\gl_2(\ell,j,k))$, $\widehat{{P}}^+\subseteq\Aut(\mhat)$, $\widehat{{P}}^-\subseteq\Aut(\mhat^-)$,  or
$\UhPi\subseteq\Aut(\widehat{\frak m}^\Pi)$ contains all the 
generators that appear in that relation. 

This implies that there are homomorphisms $$\GL_2(-1)\to\Aut(\mathfrak{m}),
\GL_2(\ell,j,k)\to\Aut(\gl_2(\ell,j,k)),$$
$$\widehat{{P}}^+\to\Aut(\mhat),\widehat{{P}}^-\to \Aut(\mhat^-).$$
 
In particular, we have constructed a group $\GL_2(-1)$ acting as automorphisms of $\mathfrak{m}$,
groups $\GL_2(\ell,j,k)$ acting as automorphisms on $\gl_2(\ell,j,k)$ 
  and the corresponding subgroups of $\Ppm$ and $U_{\ell,jk;\,m,pq}$ acting as automorphisms of various completions of $\mathfrak{m}$.
For each of the groups $\GL_2(-1), \GL_2(\ell,j,k), \Ppm$, and~$ U_{\ell,jk;\,m,pq}$,  we have shown 
that  the relations from Definition~\ref{Defn} involving the generators of the subgroup hold in the corresponding automorphism group $\Aut(\mathfrak{L})$.

We conjecture that the groups $\GL_2(-1), \GL_2(\ell,j,k), \Ppm$ and $ U_{\ell,jk;\,m,pq}$  are  isomorphic to the corresponding group of automorphisms in $\Aut(\mathfrak{L})$,  where
 $\mathfrak{L}$ is either $\mathfrak{m}$, or some $\gl_2$ subalgebra of $\mathfrak{m}$, or some completion of $\mathfrak{m}$.

\section{A homomorphism from $G(\mathfrak{m})$}\label{rep}
We conjecture that the subgroups $\GL_2(-1), \GL_2(\ell,j,k), \Ppm$ and $ U_{\ell,jk;\,m,pq}$  in $G(\m)$ are isomorpohic to the corresponding groups of
automorphisms of Lie algebras.
As a step in this direction,  we now give a detailed construction of  the negative completion $\widehat{\mathfrak{m}}^-$, and then construct a homomorphism from  $G(\mathfrak{m})$ to a quotient of $\text{Aut}(\widehat{\mathfrak{m}}^+) *_{\GL_2(-1)} \text{Aut}(\widehat{\mathfrak{m}}^-).$

\begin{longver}
\subsection{Negative completion $\widehat{\mathfrak{m}}^-$}

We use the $\mathbb{Z}$-grading of  $\mathfrak{m}$ induced by the specialization map $\lambda: Q \to \mathbb{Z}$, which yields the decomposition:
$\mathfrak{m} = \bigoplus_{k \in \mathbb{Z}} \mathfrak{m}_k $, 
where $\mathfrak{m}_k = \bigoplus_{\alpha \in \lambda^{-1}(k) \cap \Delta} \mathfrak{m}_\alpha$. The zero-graded component is $\mathfrak{m}_0 = \mathfrak{gl}_2(-1)$. Recall that  $\mathfrak{n}_{\im}^{-} = \bigoplus_{k \in -\mathbb{N}} \mathfrak{m}_k$.
The completions of $\mathfrak{n}^-$ and $\mathfrak{n}_{\im}^-$ are now
\[ 
\widehat{\mathfrak{n}}^- := \prod_{\alpha \in \Delta_-} \mathfrak{m}_\alpha \quad \text{and} \quad \widehat{\mathfrak{n}}_{\im}^- := \prod_{\alpha \in \Delta_-^{\im}} \mathfrak{m}_\alpha  = \prod_{k=1}^\infty \mathfrak{m}_{-k} ,
\]
respectively. The negative formal of $\m$ is
\[ 
\widehat{\mathfrak{m}}^- := \widehat{\mathfrak{n}}^- \oplus \mathfrak{h} \oplus \mathfrak{n}^+.
\]
Since $\mathfrak{gl}_2(-1) = \mathfrak{m}_{-\alpha_{-1}} \oplus \mathfrak{h} \oplus \mathfrak{m}_{\alpha_{-1}}$, $\widehat{\mathfrak{n}}^- = \mathfrak{m}_{-\alpha_{-1}} \oplus \widehat{\mathfrak{n}}_{\im}^-$, and $\mathfrak{n}^+ = \mathfrak{m}_{\alpha_{-1}} \oplus \mathfrak{n}_{\im}^+$, we can also decompose $\widehat{\mathfrak{m}}^-$ as:
\[ 
\widehat{\mathfrak{m}}^- = \widehat{\mathfrak{n}}_{\im}^- \oplus \mathfrak{gl}_2(-1) \oplus \mathfrak{n}_{\im}^+ 
\]

We define a filtration of  subspaces 
\(
\widehat{\mathfrak{n}}_k^- := \prod_{j \le k} \mathfrak{m}_{j} 
\)
for each $k \in \mathbb{Z}$.
Then each $\widehat{\mathfrak{n}}_k^-$ is a module under the action of $\widehat{\mathfrak{n}}^-$ and $\widehat{\mathfrak{n}}_k^-$ is a pro-nilpotent Lie algebra for $k\leq -1$.
Note that  $\widehat{\mathfrak{n}}_{-1}^- = \widehat{\mathfrak{n}}_{\im}^-$.
For each $n \in \mathbb{N}$, we define a subgroup $\widehat{U}_n^-$ of $\operatorname{Aut}(\widehat{\mathfrak{m}}^-)$ consisting of automorphisms:
\[ 
\widehat{U}_n^- := \left\{ \varphi \in \operatorname{Aut}(\widehat{\mathfrak{m}}^-) \;\middle|\; \varphi(y) \in y + \widehat{\mathfrak{n}}_{k-n}^- \text{ whenever } y \in \mathfrak{m}_k \text{ for some } k \in \mathbb{Z} \right\} .
\]
In particular,
\(
\widehat{U}_{\im}^- := \widehat{U}_1^- .
\)

We set
\begin{align*}
\prod_{k=1}^\infty \exp\left(\ad\left(x_{-k} \right) \right)
&:= \cdots  \exp\left(\ad\left(x_{-3} \right) \right)  \exp\left(\ad\left(x_{-2} \right) \right) \exp\left(\ad\left(x_{-1} \right) \right)\\
&= \lim_{n\to\infty}\,  \exp\left(\ad\left(x_{-n} \right) \right) \cdots  \exp\left(\ad\left(x_{-2} \right) \right) \exp\left(\ad\left(x_{-1} \right) \right).
\end{align*}

\begin{lemma} Let $n\in\N$.
Let $x_{-k}\in \frak m_{-k}$  for all $k\geq1$. Then
$$\prod_{k=n}^\infty \exp\left(\ad\left(x_{-k} \right) \right)$$
expands to a formal sum which is pro-summable. Hence this infinite product converges to
 a well-defined automorphism in $\widehat{U}^-_{n}$.
\end{lemma}

For any $k \ge 1$, the quotient $\widehat{\mathfrak{n}}_{\im}^- / \widehat{\mathfrak{n}}_{-k}^-$ is  the nilpotent Lie algebra $\mathfrak{n}_{\im}^- / \mathfrak{n}_{-k}^-$. Thus
\[ 
\widehat{\mathfrak{n}}_{\im}^- = \varprojlim_{k} \left( \widehat{\mathfrak{n}}_{\im}^- / \widehat{\mathfrak{n}}_{-k}^- \right) 
\]
and so $\widehat{\mathfrak{n}}_{\im}^-$ is a {pro-nilpotent pro-Lie algebra}. Similarly,  $\widehat{U}_{\im}^- / \widehat{U}_{n}^-$ are finite-dimensional pro-unipotent groups. The group $\widehat{U}_{\im}^-$ is the topological inverse limit:
\[ 
\widehat{U}_{\im}^- = \varprojlim_{n} \left( \widehat{U}_{\im}^- / \widehat{U}_n^- \right) 
\]
Thus, $\widehat{U}_{\im}^-$ is  a {complete, pro-unipotent, and pro-nilpotent group}, acting  as smearing automorphisms on  $\widehat{\mathfrak{m}}^-$.

\subsection{The homomorphism from $G(\mathfrak m)$}
\end{longver}

Consider the free product with amalgamation
$$\widehat{G} := \text{Aut}(\widehat{\mathfrak{m}}^+) *_{\GL_2(-1)} \text{Aut}(\widehat{\mathfrak{m}}^-).$$

We form the quotient of $\widehat{G}$ by the following  relations
\begin{align*}
\tag{Im:2}\label{G-XY}
  Y_{\ell,jk}(-t)X_{\ell,jk}(s)Y_{\ell,jk}(t)&= \\
  \qquad \qquad X_{\ell,jk}&\left(\frac{-t^{-1}}{c_{\ell j}}\right)Y_{\ell,jk}\left(-c_{\ell j}t^2s\right)X_{\ell,jk}\left(\frac{t^{-1}}{c_{\ell j}}\right),\\
\tag{Im:3}\label{G-wwH}
\widetilde{w}_{\ell,jk}\left(s^{(\ell+1)(j-\ell)} \right)\widetilde{w}_{\ell,jk}&=H_1\left((-s)^{j-\ell}\right) H_2\left((-s)^{\ell+1}\right), \\
\tag{Im:5a}\label{G-wH1}
  \widetilde{w}_{\ell,jk} H_1\left(s^{j-\ell} \right) {\widetilde{w}_{\ell,jk}}^{-1}&=  H_2\left(s^{-(\ell+1)}\right),\\
\tag{Im:5b}\label{G-wH2}
  \widetilde{w}_{\ell,jk} H_2\left(s^{\ell+1} \right) {\widetilde{w}_{\ell,jk}}^{-1}&=  H_1\left(s^{-(j-\ell)}\right),\\
\tag{U:2}\label{G-XYcomm}
  (X_{\ell,jk}(u), Y_{m,pq}(v))&=1\qquad\text{for $j\neq p$, or $k\neq q$, or $|\ell-m|>1$,}\\
\tag{U:3a}\label{G-XYj=2a} (X_{1,2k}(u),Y_{0,2k}(v))&=X_{-1}(uv),\\
\tag{U:3b}\label{G-XYj=2b}(X_{0,2k}(u),Y_{1,2k}(v))&=Y_{-1}(-uv)\\
\tag{Im:4a}\label{G-wX}
  \widetilde{w}_{\ell,jk} X_{\ell,jk}(u) {\widetilde{w}_{\ell,jk}}^{-1}&= Y_{\ell,jk}\left(\frac{-u}{c_{\ell j}}\right),\\
\tag{Im:4b}\label{G-wY}
  \widetilde{w}_{\ell,jk} Y_{\ell,jk}(u) {\widetilde{w}_{\ell,jk}}^{-1}&= X_{\ell,jk}\left(-c_{\ell j}u \right),\\
 \tag{U:5b} \label{G-Yelim}
Y_{\ell,jk}(s) =
X_{\ell,jk}\left(\frac{ s^{-1}}{c_{\ell j}}\right)
&H_1\left(\left[-{{c_{\ell j}}}s\right]^{-1/(\ell+1)}\right)
H_2\left(\left[-{{c_{\ell j}}}s\right]^{-1/(j-\ell)}\right)\widetilde{w}_{\ell,jk}X_{\ell,jk}\left(\frac{ s^{-1}}{c_{\ell j}}\right).
\end{align*}
We denote this quotient by $\widehat{G}/N_{XY}$, where $N_{XY}$ is the normal closure of the relations  \eqref{G-XY}, \eqref{G-wwH}, \eqref{G-wX}, \eqref{G-wY},
\eqref{G-wH1}, \eqref{G-wH2},  \eqref{G-XYcomm}, \eqref{G-XYj=2a}, \eqref{G-XYj=2b}, and \eqref {G-Yelim}.

\begin{theorem}\label{homom}
There is a homomorphism 
$$G(\mathfrak m)\to \widehat{G}/N_{XY}$$
\end{theorem}
\begin{proof} 
Let $\mathcal{X}$ be the set of abstract free generators for $G(\mathfrak{m})$ from Definition 7.1, consisting of $X_{-1}(u)$, $Y_{-1}(u)$, $H_1(s)$, $H_2(s)$, $X_{\ell,jk}(u)$, and $Y_{\ell,jk}(u)$ for $s \in \mathbb{C}^\times$ and $u \in \mathbb{C}$. Let $F(\mathcal{X})$ be the free group on these generators, so that $G(\mathfrak{m}) = F(\mathcal{X})/N_{\mathcal{R}}$ where $N_{\mathcal{R}}$ is the normal closure of the defining relations $\mathcal{R}$.

Let $\iota^+ \colon \text{Aut}(\widehat{\mathfrak{m}}^+) \to \widehat{G}$ and $\iota^- \colon \text{Aut}(\widehat{\mathfrak{m}}^-) \to \widehat{G}$ be the canonical inclusion homomorphisms into $\widehat{G}$. The inclusions agree on the shared $\GL_2(-1)$ subgroup: $\iota^+(g) = \iota^-(g)$ for all $g \in GL_2(-1)$. Let $\pi \colon \widehat{G} \to \widehat{G}/N_{XY}$ be the  quotient projection.

We define a map $\Phi$ on the generators $\mathcal{X}$ of $G(\mathfrak m)$ as follows:
\begin{align*}
\Phi(H_i(s)) &= \pi(\iota^+(H_i(s))) = \pi(\iota^-(H_i(s))) \quad \text{for } i \in \{1,2\},\\
\Phi(X_{-1}(u)) &= \pi(\iota^+(X_{-1}(u))) = \pi(\iota^-(X_{-1}(u))),\\
\Phi(Y_{-1}(u)) &= \pi(\iota^+(Y_{-1}(u))) = \pi(\iota^-(Y_{-1}(u))),\\
\Phi(X_{\ell,jk}(u)) &= \pi(\iota^+(X_{\ell,jk}(u))),\\
\Phi(Y_{\ell,jk}(u)) &= \pi(\iota^-(Y_{\ell,jk}(u))).
\end{align*}

Then $\Phi$ extends to a unique group homomorphism $\Phi \colon F(\mathcal{X}) \to \widehat{G}/N_{XY}$. This induces a well-defined homomorphism from $G(\mathfrak{m})$ if and only if $\Phi(r) = 1$ for all relations $r \in \mathcal{R}$.

In Corollary ~\ref{C-comm}, we showed that the relations 
 \eqref{GL2-G-XX}, \eqref{G-XX}, and \eqref{G-X-1X}
are satisfied in $\widehat{U}^+\subseteq \text{Aut}(\widehat{\mathfrak{m}}^+)$ and the relations \eqref{GL2-G-YY}, \eqref{G-YY}, and \eqref{G-Y-1Y} are satisfied in $\widehat{U}^-\subseteq \text{Aut}(\widehat{\mathfrak{m}}^-)$.

In Lemma~\ref{L-GL2-1}  we showed that the relations \eqref{GL2-G-H1H1}, \eqref{GL2-G-H2H2}, \eqref{GL2-G-H1H2}, \eqref{GL2-G-XX}, 
\eqref{GL2-G-YY}, \eqref{GL2-G-XY}, \eqref{GL2-G-wwH}, \eqref{GL2-G-wX}, \eqref{GL2-G-wY}, \eqref{GL2-G-wH1},  
\eqref{GL2-G-wH2}, \eqref{GL2-G-H1X}, \eqref{GL2-G-H2X}, \eqref{GL2-G-H1Y}, \eqref{GL2-G-H2Y}, and \eqref{GL2-G-Yelim}  are satisfied in  $\GL_2({-1})$ which is contained in $\text{Aut}(\widehat{\mathfrak{m}}^+)$ and $\text{Aut}(\widehat{\mathfrak{m}}^-)$.

In Corollary ~\ref{Prelns} we showed that the relations
\eqref{G-H1X} and \eqref{G-H2X} in $G(\frak m)$ hold in $\Php\subseteq\Aut(\mhat^+)$ and 
\eqref{G-H1Y} and \eqref{G-H2Y} hold in $\Phm\subseteq\Aut(\mhat^-)$, \eqref{G-w-X} is satisfied in $\Php$ and \eqref{G-w-Y} is satisfied in $\Phm$.   Also, \eqref{G-Y-1X} holds in $\widehat{P}^+$ and \eqref{G-X-1Y} holds in $\widehat{P}^-$.

The remaining relations of $G(\mathfrak{m})$ generate the normal closure $N_{XY}$. Since $\Phi(r) = 1$ for every relation $r \in \mathcal{R}$, the normal closure $N_{\mathcal{R}}$ is contained in the kernel of $\Phi$, hence there is a uniquely defined group homomorphism $G(\mathfrak{m}) \to \widehat{G}/N_{XY}$.
\end{proof}

We make the following conjecture.
\begin{conjecture}\label{NXY} In the group
$
\widehat{G}
=
\Aut(\widehat{\mathfrak m}^{+})
*_{\GL_2(-1)}
\Aut(\widehat{\mathfrak m}^{-})
$
we have
\[
N_{XY}\cap \GL_2(-1)=\{1\}.
\]
\end{conjecture}

\begin{theorem} Assuming that  Conjecture~\ref{NXY} is true, the group $\widehat{G}/N_{XY}$ is nontrivial. That is, there is an injective homomorphism $$\GL_2(-1)\to \widehat{G}/N_{XY}.$$
\end{theorem}

\begin{proof}
Let
$
q:\widehat{G}\longrightarrow
\widehat{G}/N_{XY}
$
be the quotient homomorphism. The canonical embeddings
\[
\GL_2(-1)\hookrightarrow \Aut(\widehat{\mathfrak m}^{+}),
\qquad
\GL_2(-1)\hookrightarrow \Aut(\widehat{\mathfrak m}^{-})
\]
induce an injective homomorphism
\[
\iota:\GL_2(-1)\hookrightarrow
\widehat{G}.
\]
We claim that \(q\circ \iota\) is injective. We have
\[
\ker(q\circ \iota)
=
\{g\in \GL_2(-1)\mid \iota(g)\in N_{XY}\}
=
\iota^{-1}\bigl(N_{XY}\cap \iota(\GL_2(-1))\bigr).
\]
By Conjecture~\ref{NXY}, this kernel is trivial.
Therefore \(q\circ \iota\) is an injective homomorphism. 
In particular, $\widehat{G}/N_{XY}$ is nontrivial.
\end{proof}

 conjecture  that the homomorphism $G(\mathfrak m)\to \widehat{G}/N_{XY}$ of Theorem~\ref{homom} is injective, which would establish   that $G(\mathfrak m)$ is not a presentation of the trivial group.

We also conjecture that $G(\mathfrak m)$ can be constructed as a generalized amalgamated product, which would also prove that $G(\mathfrak m)$ is nontrivial. We hope to address this in an upcoming work.

\bibliographystyle{amsalpha}
\bibliography{GPmath}{}

\end{document}